\numberwithin{equation}{section}
\theoremstyle{plain}
\newtheorem{thm}{Theorem}[section]
\newtheorem{thm-defn}[thm]{Theorem-Definition}
\newtheorem{lem}[thm]{Lemma}
\newtheorem{cor}[thm]{Corollary}
\newtheorem{prop}[thm]{Proposition}
\theoremstyle{definition}
\newtheorem{defn}[thm]{Definition}
\newtheorem{rem}[thm]{Remark}
\newtheorem{eg}[thm]{Example}
\newtheorem{prob}[thm]{Problem}
\DeclareMathOperator{\Li}{{Li}}
\DeclareMathOperator{\bs}{{bs}}
\DeclareMathOperator{\Ree}{{Re}}
\newcommand{\R}{\mathbb{R}}
\newcommand{\N}{\mathbb{N}}
\newcommand{\Z}{\mathbb{Z}}
\newcommand{\Q}{\mathbb{Q}}
\newcommand{\CC}{\mathbb{C}}
\newcommand{\bfv}{{\boldsymbol{v}}}
\newcommand{\bfk}{{\boldsymbol{k}}}
\newcommand{\bfm}{{\boldsymbol{m}}}
\newcommand{\tbfm}{{\widetilde{\bfm}}}
\newcommand{\calZ}{{\mathcal{Z}}}
\newcommand{\calD}{{\mathcal{D}}}
\newcommand{\myrho}{{\tau}}
\newcommand{\st}{{\mathrm{st}}}
\newcommand{\tm}{{\widetilde{m}}}
\newcommand{\om}{{\omega}}
\newcommand{\must}{{\overset{\mu}{*}}}
\newcommand{\gs}{{\sigma}}
\newcommand{\gt}{{\tau}}
\begin{document}

\title{Regularized double shuffle relations of \\
$\mu$-multiple Hurwitz zeta values}

\author{Jia Li}
\author{Jianqiang zhao}

%\date{\today}

\email{1901110015@pku.edu.cn}
\address{Jia Li \\ School of Mathematical Sciences, Peking University, Beijing, China}

\email{zhaoj@ihes.fr}
\address{Jianqiang zhao \\ Department of Mathematics, The Bishop's School, La Jolla, CA 92037, USA}

\begin{abstract}
In this paper we consider a family of multiple Hurwitz zeta values with bi-indices parameterized by $\mu$ with $\Ree(\mu)>0$. These values are equipped with both the $\mu$-stuffle product from their series definition and the shuffle product from their integral expressions. We will give a detailed analysis of the two different products and discuss their regularization when the bi-indices are non-admissible. Our main goal is to prove the comparison theorem relating the two ways of regularization, which is an analog of Ihara, Kaneko, and Zagier's celebrated result concerning the regularization of multiple zeta values. We will see that the comparison formula is $\mu$-invariant, that is, it is independent of the parameter $\mu$. As applications, we will first provide a few interesting closed formulas for some multi-fold infinite sums that are closely related to the multiple zeta values of level $N$ studied by Yuan and the second author. We will also derive two sum formulas for $\mu$-double Hurwitz zeta values which generalize the sum formulas for double zeta values and double zeta star values, respectively, and a weighted sum formula which generalizes the weighted sum formulas for both double zeta values and double $T$-values simultaneously. At the end of the paper, we will propose a problem extending that for multiple zeta values first conjectured by Ihara et al.
\end{abstract}

\maketitle

\medskip
\noindent{\bf Keywords}:  multiple zeta values; Hurwitz multiple zeta values; stuffle algebra; shuffle algebra; double shuffle relations; regularization.

\medskip
\noindent{\bf AMS Subject Classifications (2020):} 11M32, 11M35.

\tableofcontents

\section{Introduction}
\subsection{Multiple zeta values and some variants}
 Let $\Z$ denote the integer set, $\N$ the positive integer set, $\Q$ the rational number set, and $\R$ the real
number set. For any $r\in\N$ and any index $\bfk=(k_1,\cdots,k_r)\in\N^r$, the multiple zeta values (MZVs)
and the multiple zeta star values (MZSVs) are defined by
\begin{align}\label{equ:defnMZVs}
\zeta(\bfk):= \sum_{0<n_1< \cdots<n_r}\frac{1}{n_1^{k_1} \cdots n_r^{k_r}},\quad
\zeta^{\star}(\bfk):=\sum_{1\leqslant n_1 \leqslant\cdots\leqslant n_r}\frac{1}{n_1^{k_1} \cdots n_r^{k_r}},
\end{align}
respectively. These series converge if and only if $k_r>1$ when we say $\bfk$ is admissible.
It is easy to convert these values from one type to the other and both of them have
played important and sometimes unexpected roles in the studies of many subjects in mathematics
and physics alike, such as the knot theory, motives, modular forms, and the Feynman integrals,
to name just a few. See, for example, the two books \cite{bj,ZhaoBook} and the references therein.

Recently, by restricting the summation indices $n_j$'s in \eqref{equ:defnMZVs} to even or odd numbers,
Hoffman \cite{Ho}, Kaneko and Tsumura \cite{KanekoTs2020}, and Xu and the second author \cite{XuZhao2020a}
have defined a few different variations of MZVs. For example, Hoffman's multiple $t$-values and
multiple $t$-star values (MtSVs) are defined as follows:
\begin{align*}
t(\bfk):= \sum_{ 0<n_1< \cdots<n_r,\, 2\nmid n_i}\frac{2^r}{n_1^{k_1} \cdots n_r^{k_r}},\quad
t^{\star}(\bfk):=\sum_{ 1\leqslant n_1\leqslant \cdots\leqslant n_r,\, 2\nmid n_i}\frac{2^r}{n_1^{k_1} \cdots n_r^{k_r}},
\end{align*}
where $\bfk=(k_1,\cdots,k_r)\in\N^r$ and $k_r>1$. Note that there exist different conventions to define such variations
(e.g., see \cite{KanekoTs2020,Zhao2023Nov}) where the 2-power might be omitted. In these previous works, many
interesting results and insightful relations have been discovered and some conjectures have been proposed.
It is not hard to see that all these values can be expressed as $\Q$-linear combinations of Euler sums
defined as follows. For $s_1,\dots,s_r\in\N$ and $\varepsilon_1,\dots,\varepsilon_r=\pm 1$, we define the \emph{Euler sums}
\begin{equation}\label{equ:EulerSums}
\zeta\binom{k_1,\cdots,k_r}{\varepsilon_1,\cdots,\varepsilon_r}
   := \sum_{0<n_1<\cdots<n_r}\;\prod_{j=1}^r  \frac{\varepsilon_j^{n_j}}{n_j^{s_j}}.
\end{equation}
To save space, if $\varepsilon_j=-1$ then $\overline s_j$ will be used and if a substring $S$ repeats $n$
times in the list then $\{S\}_n$ will be used. For example, $\zeta(\{3\}_n)=8^n\zeta(\{1,\bar2\}_n)$ for all $n\in\N$
(see \cite{Zhao2010a}).

\subsection{$\mu$-multiple Hurwitz zeta values}
In this article, we will consider a further generalization of MZVs as follows.
Fix a parameter $\mu$ with $\Ree(\mu)>0$. For any admissible $\bfk=(k_1,\cdots,k_r)\in\N^r$ and $\bfm=(m_1,\cdots,m_r)\in(\Q_{>0})^r$,
we define the \emph{$\mu$-multiple Hurwitz zeta value} ($\mu$-MHZV) by
$$
\zeta^{\mu}(\bfk;\bfm):=\sum_{0\leqslant n_1\leqslant\cdots\leqslant n_r}\frac{\mu^r}{(n_1\mu+m_1)^{k_1}\cdots(n_r\mu+m_r)^{k_r}}.
$$
It is easy to see that for all admissible $\bfk\in\N^r$
\begin{alignat*}{4}
\zeta(\bfk)=&\, \zeta^{\mu=1}(\bfk;1,2,\cdots,r),\quad  &
\zeta^{\star}(\bfk)=  &\, \zeta^{\mu=1}(\bfk;1,1,\cdots,1),\\
t(\bfk)=&\, \zeta^{\mu=2}(\bfk;1,3,\cdots,2r-1),\quad   &
t^{\star}(\bfk)= &\, \zeta^{\mu=2}(\bfk;1,1,\cdots,1).
\end{alignat*}
Then we have a canonical embedding from MZSVs (resp.\ MtSVs) to 1-MHZVs (resp.\ 2-MHZVs) which agrees with the algebraic
structure, respectively. We use the name $\mu$-MHZVs because they can be regarded as special cases of the more general
multiple Hurwitz zeta values defined by
\begin{equation*}
\zeta^H(k_1,\cdots,k_r;a_1,\cdots,a_r):=\sum_{0<n_1<n_2<\cdots<n_r}\frac{1}{(n_1+a_1)^{k_1}\cdots (n_r+a_r)^{k_r}},
\end{equation*}
where $k_j\in\N$ and $a_j\in\CC$ with $\Ree(a_j)>0$ for all $j$. Many analytic properties and some arithmetic
properties of multiple Hurwitz zeta values have been investigated, see
\cite{KelliherMa2008,Essouabri1997,EssouabriMaTs2011,EssouabriMa2020}.
However, very little has been done on their algebraic structures generalizing that for the MZVs.
Our primary goal of
this paper is to initiate such a study, at least when they have the special form of $\mu$-MHZVs.

\begin{defn}
A \emph{bi-index}
$$
(\bfk;\bfm)=(k_1,\cdots,k_r;m_1,\cdots,m_r)\in\N^r\times\Q^r
$$
is called \emph{positive} if $m_1,\cdots,m_r>0$, and is called \emph{admissible} if it is positive and, in addition,
satisfies $k_r>1$. We sometimes also say $\bfk$ is \emph{admissible} if $k_r>1$ and $\bfm$ is positive.
The number $r$ is called the \emph{length} of $\bfk$, denoted by $\ell(\bfk)=r.$
Furthermore, the sum $|\bfk|:=k_1+\cdots+k_r$ is called the \emph{weight} of $\bfk$.
By convention, the empty bi-index (when $r=0$) will also be considered admissible and we set $\zeta^{\mu}(\emptyset):=1$.
\end{defn}

By the above notation, the $\mu$-MHZV $\zeta^{\mu}(\bfk;\bfm)$ converges
if  $(\bfk;\bfm)$ is admissible. Furthermore, let
\begin{align*}
(\bfk;\bfm)&\, =(k_1,\cdots,k_{r-1},1;m_1,\cdots,m_{r-1},m_r),\\
(\bfk;\tbfm)&\, =(k_1,\cdots,k_{r-1},1;m_1,\cdots,m_{r-1},\tm_r)
\end{align*}
be a pair of non-admissible positive bi-indices with $m_r\ne\tm_r$, in which case
we call $(\bfk;\bfm;\tbfm)$ a \emph{special triple}. Then the value
\begin{align*}
&\calD^{\mu}(\bfk;\bfm;\tbfm)\\
&:=(\tm_r-m_r)\sum_{0\leqslant n_1\leqslant\cdots\leqslant n_r}\frac{\mu^r}{(n_1\mu+m_1)^{k_1}\cdots(n_{r-1}\mu+m_{r-1})^{k_{r-1}}(n_r\mu+m_r)(n_r\mu+\tm_r)}
\end{align*}
is called a \emph{quasi-$\mu$-multiple Hurwitz zeta value} (quasi-$\mu$-MHZV),
which clearly always converges. Let $\calZ^{\mu}$ denote the $\Q[\mu]$-span of all (quasi-)$\mu$-MHZVs, that is,
$$\calZ^{\mu}:=
\left\langle \left.
\aligned
& 1,\zeta^{\mu}(\bfk;\bfm),\\
& \calD^{\mu}(\bfk_1;\bfm_1;\tbfm_1)
\endaligned \right|
\aligned
& (\bfk;\bfm): \text{ admissible}, \\
& (\bfk_1;\bfm_1;\tbfm_1): \text{ special triple}
\endaligned \right\rangle_{\Q[\mu]}.$$
One of our main goals is to explore a couple of useful algebraic structures on $\calZ^{\mu}$.

\subsection{$\mu$-stuffle and shuffle products and regularization}
It is well-known that MZVs are equipped with a stuffle product due to their series representations. It is
natural to expect that this product structure can be extended to the $\mu$-stuffle product on $\calZ^{\mu}$ --
the algebra of $\mu$-MHZVs -- using the series definition of $\mu$-MHZVs, see Theorem~\ref{thm:ddog} for the details.

Further, for every classical MZV, we have an integral representation (first discovered by Kontsevich) as follows:
$$
\zeta(\bfk)=\zeta(k_1,\cdots,k_r)=\int_{\Delta^{|\bfk|}}\omega_{\bfk}
=\int_0^1 \left(\frac{dt}{t}\right)^{k_1-1}\frac{dt}{1-t} \cdots \left(\frac{dt}{t}\right)^{k_r-1}\frac{dt}{1-t}.
$$
See \cite[Ch.~1]{bj} for the simplex notation and \cite[Ch.~2]{ZhaoBook} for the iterated integral notation
(see \S\ref{sec:intRep} for more details).
Using this representation, we can obtain another product on $\calZ$ -- the algebra of MZVs -- called the shuffle product. For the $\mu$-MHZVs, we have similar representations given below.

\begin{thm}\label{cmzv}(Integral representation)
Let $(\bfk;\bfm)$ be an admissible bi-index. Then we have the following integral representation
for $\mu$-MHZVs:
$$\zeta^{\mu}(\bfk;\bfm)=\int_{\Delta^{|\bfk|}}\omega^{\mu}_{(\bfk;\bfm)}.$$
For quasi-$\mu$-MHZVs, we have for any special triple $(\bfk;\bfm;\tbfm)$
$$\calD^{\mu}(\bfk;\bfm;\tbfm)=\int_{\Delta^{|\bfk|}}\left(\omega^{\mu}_{(\bfk;\bfm)}-\omega^{\mu}_{(\bfk;\tbfm)}\right).$$
\end{thm}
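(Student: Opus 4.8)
The plan is to deduce both integral representations from a single term-by-term computation: expand the Hurwitz-type $1$-forms occurring in $\omega^{\mu}_{(\bfk;\bfm)}$ as geometric series and integrate the resulting monomials over the simplex $\Delta^{|\bfk|}$. Recall that $\omega^{\mu}_{(\bfk;\bfm)}$ is a word in the two kinds of $1$-forms $dt/t$ and the Hurwitz forms whose denominator is $1-t^{\mu}$; the engine of the argument is the expansion
\[
\frac{\mu\,t^{a-1}\,dt}{1-t^{\mu}}=\sum_{n\geqslant 0}\mu\,t^{n\mu+a-1}\,dt ,
\]
valid for $0\leqslant t<1$, which turns each such form into a sum of monomial $1$-forms indexed by a nonnegative integer.

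First I would record the elementary monomial integral over the standard simplex, namely
\[
\int_{0<t_1<\cdots<t_w<1}\ \prod_{i=1}^{w} t_i^{a_i-1}\,dt_i=\prod_{i=1}^{w}\frac{1}{a_1+\cdots+a_i},\qquad w=|\bfk|,
\]
which follows at once by integrating from the innermost variable outward. Substituting the geometric expansions into $\int_{\Delta^{|\bfk|}}\omega^{\mu}_{(\bfk;\bfm)}$, the $j$-th block of the word --- the Hurwitz form attached to $m_j$ together with its $k_j-1$ companion factors $dt/t$ --- contributes a single summation index $c_j\geqslant 0$. The crucial bookkeeping step is the reindexing $n_j:=c_1+\cdots+c_j$: since the $c_i$ are nonnegative this yields exactly the nested range $0\leqslant n_1\leqslant\cdots\leqslant n_r$, while the exponent shifts telescope so that the partial sum $a_1+\cdots+a_i$ at the end of the $j$-th block equals $n_j\mu+m_j$. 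The intervening factors $dt/t$ leave this partial sum unchanged, so the $j$-th block contributes precisely $(n_j\mu+m_j)^{-k_j}$; collecting the $r$ factors of $\mu$ then reproduces the defining series of $\zeta^{\mu}(\bfk;\bfm)$.

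The step that needs genuine care --- and the one I expect to be the main obstacle --- is the justification of the term-by-term integration. For real $\mu>0$ and positive $\bfm$ every summand is nonnegative, so Tonelli's theorem permits the interchange; for general $\mu$ with $\Ree(\mu)>0$ one dominates $|t^{c\mu}|$ by $t^{c\Ree(\mu)}$ and reduces to the convergent real case, admissibility $k_r>1$ ensuring that the outermost letter of the word is of type $dt/t$, so that $\omega^{\mu}_{(\bfk;\bfm)}$ carries no non-integrable singularity on $\Delta^{|\bfk|}$. For the quasi-$\mu$-MHZV identity I would argue by linearity: the words $\omega^{\mu}_{(\bfk;\bfm)}$ and $\omega^{\mu}_{(\bfk;\tbfm)}$ differ only in their final block, which now has $k_r=1$ and therefore consists of a single Hurwitz form, individually non-integrable at $t=1$. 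In the difference, however, the two numerators agree to leading order as $t\to 1$, so the pole of $1/(1-t^{\mu})$ cancels and the combined form extends integrably to the closed simplex; expanding as above and using the partial fraction
\[
\frac{1}{n_r\mu+m_r}-\frac{1}{n_r\mu+\tm_r}=\frac{\tm_r-m_r}{(n_r\mu+m_r)(n_r\mu+\tm_r)}
\]
then yields exactly the defining series of $\calD^{\mu}(\bfk;\bfm;\tbfm)$. One could instead organize everything as an induction on the length $r$ by peeling off the outermost block, but the direct computation above seems cleanest.
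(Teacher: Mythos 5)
Your proposal is correct, but it takes a genuinely different route from the paper. The paper never integrates over the full simplex directly: it introduces the $\mu$-multiple polylogarithm $\Li^{\mu}(\bfk;\bfm;t)$, proves two one-variable iteration identities (Lemma~\ref{lem:Li-Iteration}) by term-by-term integration on $[0,t]$ with $t<1$, where the geometric expansion converges absolutely and no boundary singularity is in play, then deduces $t^{m_r}\Li^{\mu}(\bfk;\bfm;t)=\int_{\Delta^{|\bfk|}(t)}\omega^{\mu}_{(\bfk;\bfm)}$ by induction on the weight (Theorem~\ref{pzci}), and only at the very end lets $t\to1^-$ (Corollary~\ref{cor:muMZVitIntegral}), both for admissible bi-indices and for the differences attached to special triples. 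Your single global Tonelli computation, with the monomial integral over the simplex and the reindexing $n_j=c_1+\cdots+c_j$, is shorter and self-contained, and your domination of $|t^{c\mu}|$ by $t^{c\Ree(\mu)}$ correctly reduces complex $\mu$ to the real case. What the paper's longer route buys is that every interchange of sum and integral happens strictly inside the unit interval, so no Fubini-type justification at the singular boundary is ever needed, and moreover the truncated integrals $\int_{\Delta^{|\bfk|}(t)}$ and the function $\Li^{\mu}$ are precisely the objects reused for the shuffle regularization later (Section~\ref{sec:RegHomo}), so its lemmas are not single-use.

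One spot in your argument needs one more sentence to be airtight. In the quasi-$\mu$-MHZV case the two series of integrals diverge individually (each form has $k_r=1$), so ``expanding as above'' is not yet a licence to integrate term by term: Tonelli does not apply to either expansion separately, and the difference of the expansions is not a series of one-signed terms until you group it correctly. The fix is to pair the two expansions by the common index tuple $(c_1,\dots,c_r)$; each paired term then contains the factor $t^{m_r-m_{r-1}}-t^{\tm_r-m_{r-1}}$, which has constant sign on $(0,1)$ (assume $m_r<\tm_r$ without loss of generality), so Tonelli applies to the grouped series for real $\mu>0$, and domination by the $\Ree(\mu)$ case handles complex $\mu$. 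Since each individual monomial is integrable, each paired integral splits, and your partial-fraction identity then produces exactly the defining series of $\calD^{\mu}(\bfk;\bfm;\tbfm)$. With that grouping made explicit, the proof is complete.
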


We know that the multiplication rules on the MZV space
obtained by the stuffle product and the shuffle product are generally different, which yield the
famous double shuffle relations which in turn provide one of the main tools to generate linear dependencies among MZVs.
In fact, to conjecturally generate all possible $\Q$-linear relations, it is necessary to
regularize these values in two different ways according to the product structures to use,
as done by Ihara, Kaneko and Zagier. Moreover, they proved a comparison theorem relating
these two ways of regularization \cite[Theorem~2]{ikz}. We now extend this to the $\mu$-MHZVs.

\begin{thm}\label{tseh}
\emph{(The $\mu$-stuffle regularization homomorphism)}
Let $Y=\{y_{k,m}|k\in\N,m\in\Q_{>0}\}$ be an infinite alphabet and $\Q[\mu]\langle Y\rangle$ the non-commutative polynomial
algebra on words over $Y$. For any given $m\in\Q_{>0}$, there exists a unique $\Q[\mu]$-linear map
$$
\zeta_{*,m}^{\mu,T}:(\Q[\mu]\langle Y\rangle,\must)\longrightarrow(\calZ^{\mu},\cdot)[T]
$$
satisfying
$$\begin{cases}
\zeta_{*,m}^{\mu,T}(w)=\zeta_{*}^{\mu}(w)\qquad&\forall w\in\Q[\mu]\langle Y\rangle^0;\\
\zeta_{*,m}^{\mu,T}(y_{1,m})=T;\\
\zeta_{*,m}^{\mu,T}(v\stackrel{\mu}{*}w)=\zeta_{*,m}^{\mu,T}(v)\cdot\zeta_{*,m}^{\mu,T}(w)&\forall v,w\in\Q[\mu]\langle Y\rangle,
\end{cases}$$
where $\Q[\mu]\langle Y\rangle^0$ is the sub-algebra of $\Q[\mu]\langle Y\rangle$ generated by (quasi-)admissible words
(see Prop.~\ref{prop:Y0subalgebra}).
\end{thm}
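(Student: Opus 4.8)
The plan is to imitate the argument of Ihara, Kaneko, and Zagier, reducing the whole statement to a single algebraic structure theorem: that $(\Q[\mu]\langle Y\rangle,\must)$ is a polynomial algebra in the one indeterminate $y_{1,m}$ over the subalgebra of (quasi-)admissible words,
\begin{equation*}
(\Q[\mu]\langle Y\rangle,\must)=(\Q[\mu]\langle Y\rangle^0,\must)[y_{1,m}],
\end{equation*}
where $m$ is the fixed parameter and $y_{1,m}^{\must i}$ denotes the $i$-th power of $y_{1,m}$ under $\must$. Granting this, every $w\in\Q[\mu]\langle Y\rangle$ has a unique expression $w=\sum_{i\geqslant 0}w_i\must y_{1,m}^{\must i}$ with $w_i\in\Q[\mu]\langle Y\rangle^0$, and I would define $\zeta_{*,m}^{\mu,T}$ to be the composite of the structure isomorphism with the unique $\Q[\mu]$-algebra homomorphism
\begin{equation*}
(\Q[\mu]\langle Y\rangle^0,\must)[y_{1,m}]\longrightarrow(\calZ^{\mu},\cdot)[T]
\end{equation*}
that restricts to $\zeta_{*}^{\mu}$ on $\Q[\mu]\langle Y\rangle^0$ (a homomorphism by Theorem~\ref{thm:ddog}, with values finite by Theorem~\ref{cmzv}) and sends $y_{1,m}\mapsto T$; the latter exists and is unique by the universal property of the polynomial ring. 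Equivalently, $\zeta_{*,m}^{\mu,T}(w)=\sum_{i\geqslant 0}\zeta_{*}^{\mu}(w_i)\,T^i$.

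With the map in hand the three conditions are immediate. If $w\in\Q[\mu]\langle Y\rangle^0$ then $w_0=w$ and $w_i=0$ for $i>0$, so $\zeta_{*,m}^{\mu,T}(w)=\zeta_{*}^{\mu}(w)$; for $w=y_{1,m}$ one has $w_1=1$ (the empty word) and $w_i=0$ otherwise, whence $\zeta_{*,m}^{\mu,T}(y_{1,m})=\zeta_{*}^{\mu}(1)\,T=T$; and multiplicativity holds because $\zeta_{*,m}^{\mu,T}$ is a composite of two homomorphisms. For uniqueness, any $\Q[\mu]$-linear map satisfying the three conditions must send $w_i\must y_{1,m}^{\must i}$ to $\zeta_{*}^{\mu}(w_i)\cdot T^{i}$ by multiplicativity together with the first two conditions, and hence must agree with $\zeta_{*,m}^{\mu,T}$ on all of $\Q[\mu]\langle Y\rangle$ by uniqueness of the decomposition. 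Thus existence and uniqueness both follow formally once the structure theorem is established, so the real work lies there.

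The main obstacle is therefore proving the structure theorem, which I would split into a spanning statement and a freeness (algebraic independence) statement. For spanning I would argue by induction on the number $t$ of trailing letters of the form $y_{1,\cdot}$ in a word $w=u\,y_{1,p_1}\cdots y_{1,p_t}$, with $u$ admissible or empty; the base case $t=0$ puts $w$ in $\Q[\mu]\langle Y\rangle^0$, and when $t=1$ the quasi-admissible generator $u(y_{1,p_1}-y_{1,m})$ reduces $w$ to $u\,y_{1,m}$ modulo $\Q[\mu]\langle Y\rangle^0$. For the inductive step I would expand $\big(u\,y_{1,p_1}\cdots y_{1,p_{t-1}}\big)\must y_{1,m}$ using the explicit shape of the $\mu$-stuffle product (Theorem~\ref{thm:ddog}): the term appending $y_{1,m}$ at the end reproduces a word with $t$ trailing $y_{1,\cdot}$ letters, while the diagonal contributions — which, when the two $m$-values differ, expand through the same partial fractions that define $\calD^{\mu}$ — produce words of strictly smaller length, handled by induction. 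Freeness of $y_{1,m}$ I would then obtain from a grading argument: filtering by length (or by the number of trailing $y_{1,\cdot}$ letters), the top power $y_{1,m}^{\must i}$ contributes a nonzero leading term that cannot be cancelled by lower powers, so a relation $\sum_i w_i\must y_{1,m}^{\must i}=0$ forces every $w_i=0$. The genuinely new difficulty compared with the classical case is that here there are infinitely many non-admissible letters $y_{1,p}$, $p\in\Q_{>0}$, rather than a single $z_1$, so the interleaving terms of distinct trailing letters do not collapse into one word; the quasi-admissible generators are exactly what permit normalizing every $y_{1,p}$ to the distinguished $y_{1,m}$, and the crux is to coordinate this normalization with the $\mu$-stuffle reduction — in effect a double induction on length and on trailing $y_{1,\cdot}$-count — while checking that all the diagonal corrections stay within $\Q[\mu]\langle Y\rangle^0$ or drop in complexity.
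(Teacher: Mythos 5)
Your proposal is correct, and its core is exactly the paper's argument: your ``structure theorem'' is precisely the paper's Theorem~\ref{thm:stRegPsi} (that $\psi_{*,m}\colon(\Q[\mu]\langle Y\rangle^0,\must)[T]\to(\Q[\mu]\langle Y\rangle,\must)$, $T\mapsto y_{1,m}$, is an isomorphism), your sketch of its proof --- normalize the trailing letters $y_{1,p}$ to $y_{1,m}$ using quasi-admissible words, then a double induction on length and on trailing-$y_{1,\cdot}$ count for spanning, plus a leading-term argument (the top power of trailing $y_{1,m}$, with coefficient $s!$, cannot cancel) for freeness --- is the paper's proof of that theorem, and your uniqueness argument via the decomposition $w=\sum_i w_i\must y_{1,m}^{\must i}$ is verbatim the paper's. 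The one point where you genuinely diverge is existence: you define $\zeta^{\mu,T}_{*,m}$ purely formally by the universal property of the polynomial ring over $(\Q[\mu]\langle Y\rangle^0,\must)$, whereas the paper defines it analytically (Theorem-Definition~\ref{thm:stReg_zetaMuT}), by proving that the partial sums admit an asymptotic expansion
$\zeta^{\mu}_M(\bfk;\bfm)=\sum_i a_i(\zeta^{\mu}_M(1;m))^i+O\big(\ln^{r+s-1}(M)/M\big)$
and setting $\zeta^{\mu,T}_{*,m}(y_{\bfk;\bfm})=\sum_i a_iT^i$, with multiplicativity (Theorem~\ref{rtxz}) coming from the estimates of Lemma~\ref{epvh}. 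Your formal route is shorter and fully sufficient for the statement at hand (given that $\zeta^{\mu}_{*}$ is already well defined and multiplicative on $\Q[\mu]\langle Y\rangle^0$, which is the paper's Theorem~\ref{thm:zetamuAstWellDefined} together with Theorem~\ref{thm:ddog} --- your citation of Theorem~\ref{cmzv} for this is slightly off). What the paper's analytic construction buys, and what you would eventually need to supply anyway, is the interpretation of $\zeta^{\mu,T}_{*,m}$ as the asymptotic expansion of divergent partial sums in powers of $\zeta^{\mu}_M(1;m)$; this is used essentially later, in the proof of the mixed comparison map (Theorem~\ref{thm:mixedComparisonMap}), where the $T$-polynomial must be matched against the generating series $(1-t^{\mu})\sum_n\zeta^{\mu}_n(\bfk;\bfm)t^{n\mu+m_r}$.
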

Similarly, we also have

\begin{thm}\label{mseh}
\emph{(The shuffle regularization homomorphism)}
Let $X^{\mu}=\{x,y^{\mu}_m|m\in\Q\}$ be an infinite alphabet and $\Q[\mu]\langle X^{\mu}\rangle$ the non-commutative
polynomial $\Q[\mu]$-algebra on words over $X^{\mu}$.
For any $m\in\Q_{>0}$, there exists a unique $\Q[\mu]$-linear map
$$
\zeta_{\shuffle,m}^{\mu,T}:(\Q[\mu]\langle X^{\mu}\rangle^1,\shuffle)\longrightarrow(\calZ^{\mu},\cdot)[T]
$$
satisfying
$$\begin{cases}
\zeta_{\shuffle,m}^{\mu,T}(w)=\zeta_{\shuffle}^{\mu}(w)\qquad&\forall w\in\Q[\mu]\langle X^{\mu}\rangle^0;\\
\zeta_{\shuffle,m}^{\mu,T}(y^{\mu}_{m})=T;\\
\zeta_{\shuffle,m}^{\mu,T}(v\shuffle w)=\zeta_{\shuffle,m}^{\mu,T}(v)\cdot\zeta_{\shuffle,m}^{\mu,T}(w)&\forall v,w\in\Q[\mu]\langle X^{\mu}\rangle^1,
\end{cases}$$
where $\Q[\mu]\langle X\rangle^0$ is the sub-algebra of $\Q[\mu]\langle X\rangle$ generated by (quasi-)admissible words
(see Prop.~\ref{prop:X0subalgebra}).
\end{thm}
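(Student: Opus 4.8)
The plan is to follow the template of Ihara, Kaneko and Zagier's construction of the shuffle-regularized MZV map, adapting it to the larger alphabet $X^{\mu}$. The whole argument rests on one structural fact, namely that $(\Q[\mu]\langle X^{\mu}\rangle^1,\shuffle)$ is a \emph{polynomial algebra in a single variable} over the admissible subalgebra, so that every $w\in\Q[\mu]\langle X^{\mu}\rangle^1$ admits a unique finite expansion
$$
w=\sum_{i\geqslant 0}w_i\shuffle (y^{\mu}_m)^{\shuffle i},\qquad w_i\in\Q[\mu]\langle X^{\mu}\rangle^0 .
$$
Granting this, I would simply \emph{define} $\zeta_{\shuffle,m}^{\mu,T}(w):=\sum_{i\geqslant 0}\zeta_{\shuffle}^{\mu}(w_i)\,T^i$, after which all three defining properties become bookkeeping. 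Here $\zeta_{\shuffle}^{\mu}\colon(\Q[\mu]\langle X^{\mu}\rangle^0,\shuffle)\to(\calZ^{\mu},\cdot)$ is already a $\Q[\mu]$-algebra homomorphism: this is exactly the integral representation of Theorem~\ref{cmzv} together with the usual fact that the product of two iterated integrals over the same simplex expands as a shuffle, which applies verbatim because admissible and quasi-admissible words have absolutely convergent associated integrals.

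The subtle point, and the reason only the \emph{single} generator $y^{\mu}_m$ appears even though $X^{\mu}$ contains infinitely many letters $y^{\mu}_{m'}$, is the presence of the quasi-admissible words. For $m'\ne m$ a word ending in $y^{\mu}_{m'}$ is non-admissible on its own, but its difference with the corresponding word ending in $y^{\mu}_m$ is, up to the scalar $\tm_r-m_r$, a quasi-admissible word and therefore already lies in $\Q[\mu]\langle X^{\mu}\rangle^0$. Thus, working modulo the admissible subalgebra, every divergent direction collapses onto $y^{\mu}_m$, which is precisely what makes the one-variable decomposition possible and explains why the choice of reference index $m$ enters the construction. I would isolate this reduction as the first lemma, deducing it from Prop.~\ref{prop:X0subalgebra}.

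With the decomposition in hand the verification is routine. The first property holds because for $w\in\Q[\mu]\langle X^{\mu}\rangle^0$ the expansion is just $w=w_0$; the second because $y^{\mu}_m=1\shuffle(y^{\mu}_m)^{\shuffle 1}$; and the homomorphism property follows from
$$
v\shuffle w=\sum_{i,j\geqslant 0}(v_i\shuffle w_j)\shuffle (y^{\mu}_m)^{\shuffle(i+j)},\qquad v_i\shuffle w_j\in\Q[\mu]\langle X^{\mu}\rangle^0,
$$
combined with the multiplicativity of $\zeta_{\shuffle}^{\mu}$ on the admissible subalgebra. Uniqueness is immediate: the three conditions pin down the map on $\Q[\mu]\langle X^{\mu}\rangle^0$ and on $y^{\mu}_m$, and multiplicativity together with the polynomial structure then forces its value on every $w$.

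The real work, and the step I expect to be the main obstacle, is establishing the polynomiality itself, in particular the \emph{freeness}, i.e.\ that $y^{\mu}_m$ is transcendental over $\Q[\mu]\langle X^{\mu}\rangle^0$ in the shuffle algebra, so that the expansion $w=\sum_i w_i\shuffle(y^{\mu}_m)^{\shuffle i}$ is genuinely unique. In the classical case this is a consequence of Radford's theorem on Lyndon-word bases of a shuffle algebra, and I would aim to import that machinery, the new ingredient being to organize the infinitely many letters $y^{\mu}_{m'}$ and the quasi-admissible reductions so that $y^{\mu}_m$ is exhibited as the unique Lyndon generator not absorbed into $\Q[\mu]\langle X^{\mu}\rangle^0$. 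Assuming Prop.~\ref{prop:X0subalgebra} supplies the subalgebra structure, this freeness is where essentially all of the remaining content lies.
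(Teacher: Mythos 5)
Your outline reproduces the paper's skeleton correctly in two places: the reduction of every positive word to a polynomial expression $w=\sum_i w_i\shuffle (y^{\mu}_m)^{\shuffle i}$ with $w_i\in\Q[\mu]\langle X^{\mu}\rangle^0$ (this is Theorem~\ref{thm:shaRegPsi}, proved there by induction on the number of trailing $y$-letters, using exactly the ``collapse onto $y^{\mu}_m$'' mechanism you describe), and the uniqueness argument, which indeed needs only the \emph{existence} of such a decomposition. The genuine gap is in your existence step. You define $\zeta^{\mu,T}_{\shuffle,m}(w):=\sum_i\zeta^{\mu}_{\shuffle}(w_i)\,T^i$, whose well-definedness rests entirely on the freeness claim (uniqueness of the decomposition), and you defer that claim to Radford's theorem. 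But Radford's theorem does not apply off the shelf here: $\Q[\mu]\langle X^{\mu}\rangle^0$ is \emph{not} the linear span of a set of words --- its quasi-admissible generators are differences $(y,x)^{\mu}_{\bfk;\bfm}-(y,x)^{\mu}_{\bfk;\tbfm}$ --- the alphabet is infinite, and both admissibility and positivity are conditions on partial sums of the $y$-subscripts, none of which fits the Lyndon-word framework without substantial reworking. So the step you yourself identify as carrying ``essentially all of the remaining content'' is left unestablished, and no viable route to it is given beyond an analogy that breaks down.

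The paper's proof shows this obstacle can be avoided altogether: freeness is never used. The regularized map is \emph{defined analytically} (Theorem~\ref{thm:shaReg_zetaMuT}): for the truncated integrals $\zeta^{\mu}_{\shuffle,t}\big((y,x)^{\mu}_{\bfk;\bfm}\big)=\int_{\Delta^{|\bfk|}(t)}\omega^{\mu}_{(\bfk;\bfm)}$ one proves an asymptotic expansion $\sum_i a_i\,T(t)^i+O\big((1-t)\ln^{r+s}\big(\tfrac1{1-t}\big)\big)$ in powers of $T(t)=\int_{\Delta^1(t)}\omega^{\mu}_{(1;m)}$, using only the \emph{existence} of a decomposition together with the error estimates of Lemma~\ref{kdov}; since $T(t)\to\infty$ as $t\to1^-$, the coefficients $a_i$ are uniquely determined real numbers, so the assignment $w\mapsto\sum_i a_iT^i$ is automatically well defined, and multiplicativity (Theorem~\ref{zsok}) follows from the exact multiplicativity of $\zeta^{\mu}_{\shuffle,t}$ for each fixed $t$. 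If you insist on keeping your purely algebraic definition, the minimal repair is not freeness but the weaker statement that $\zeta^{\mu}_{\shuffle}$ annihilates the coefficients of every relation $\sum_i w_i\shuffle(y^{\mu}_m)^{\shuffle i}=0$; this, too, is proved by applying $\zeta^{\mu}_{\shuffle,t}$ and letting $t\to1^-$, i.e.\ by the same analytic input. One way or another, the divergence-rate analysis is what makes the construction work, and it is absent from your proposal.
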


\subsection{Comparison theorems}
We have seen by Theorem~\ref{tseh} and Theorem~\ref{mseh} that there are two ways to regularize divergent $\mu$-MHZVs. However,  there are close relations between them as stated by the following comparison theorem
which plays the key role in this paper.

\begin{thm}\label{thm:ComparisonMaps} (The Comparison Theorem)
Fix $\mu\in\CC$ with $\Ree(\mu)>0$. Let $H(x):=\int_0^1\frac{1-t^x}{1-t}dt$.
Then the following three statements hold.

$(1).$ For any given $m,m'\in\Q_{>0}$, their exists a unique $\R$-linear map
\begin{align*}
\myrho^{\mu,*}_{m,m'}:\R[T]&\, \longrightarrow\R[T],\\
T^s&\, \longmapsto(T+H(m'/\mu-1)-H(m/\mu-1))^s \quad\forall s\in\Z_{\geqslant0},
\end{align*}
called the \emph{$\mu$-stuffle comparison map}, that satisfies
$$
\zeta_{*,m'}^{\mu,T}(w)=\myrho^{\mu,*}_{m,m'}(\zeta_{*,m}^{\mu,T}(w))\qquad\forall w\in \Q[\mu]\langle Y\rangle.
$$
That is, the following diagram commutes:
\begin{center}
\begin{tikzpicture}[scale=0.9]
\node (A) at (0,2) {$(\Q[\mu]\langle Y\rangle,\must)$};
\node (B) at (-2,0) {$(\calZ^{\mu},\cdot)[T]$};
\node (C) at (2,0) {$(\calZ^{\mu},\cdot)[T]$};
\draw[->] (A) to node[pos=.8,above] {$\zeta_{*,m}^{\mu,T}\ \ $}(B);
\draw[->] (A) to node[pos=.8,above] {$\ \ \zeta_{*,m'}^{\mu,T}$}(C);
\draw[->] (B) to (C) node[pos=.5,below] {$\myrho^{\mu,*}_{m,m'}$};
\end{tikzpicture}
\end{center}

$(2).$ For any $m,m'\in\Q_{>0}$, their exists a unique $\R$-linear map
\begin{align*}
\myrho^{\mu,\shuffle}_{m,m'}:\R[T]&\, \longrightarrow\R[T],\\
T^s&\, \longmapsto(T+H(m'/\mu-1)-H(m/\mu-1))^s\quad\forall s\in\Z_{\geqslant0},
\end{align*}
called the \emph{shuffle comparison map}, that satisfies
$$
\zeta_{\shuffle,m'}^{\mu,T}(w)=\myrho^{\mu,\shuffle}_{m,m'}(\zeta_{\shuffle,m}^{\mu,T}(w))\qquad\forall w\in \Q[\mu]\langle X\rangle^1.
$$
That is, the following diagram commutes:
\begin{center}
\begin{tikzpicture}[scale=0.9]
\node (A) at (0,2) {$(\Q[\mu]\langle X^{\mu}\rangle^1,\shuffle)$};
\node (B) at (-2,0) {$(\calZ^{\mu},\cdot)[T]$};
\node (C) at (2,0) {$(\calZ^{\mu},\cdot)[T]$};
\draw[->] (A) to node[pos=.8,above] {$\zeta_{\shuffle,m}^{\mu,T}\ \ $}(B);
\draw[->] (A) to node[pos=.8,above] {$\ \ \zeta_{\shuffle,m'}^{\mu,T}$}(C);
\draw[->] (B) to (C) node[pos=.5,below] {$\myrho^{\mu,\shuffle}_{m,m'}$};
\end{tikzpicture}
\end{center}

$(3).$ For any $m,m'\in\Q_{>0}$, their exists a unique $\R$-linear map
$$\rho_{m,m'}^{\mu}:\R[T]\longrightarrow\R[T],$$
called the \emph{mixed comparison map}, that satisfies
$$
\zeta_{\shuffle,m'}^{\mu,T}(\varphi_{\mu}(w))=\rho_{m,m'}^{\mu}(\zeta_{*,m}^{\mu,T}(w))\qquad\forall w\in \Q[\mu]\langle Y\rangle
$$
where $\varphi_{\mu}$ is defined by \eqref{equ:bijection-varphi}. That is, the following diagram commutes:
$$\begin{CD}
(\Q[\mu]\langle Y\rangle,\must)@>{\varphi_{\mu}}>>(\Q[\mu]\langle X^{\mu}\rangle^1,\shuffle)\\
@V\zeta_{*,m}^{\mu,T}VV@VV\zeta_{\shuffle,m'}^{\mu,T}V\\
(\calZ^{\mu},\cdot)[T]@>{\rho_{m,m'}^{\mu}}>>(\calZ^{\mu},\cdot)[T]\end{CD}$$
\end{thm}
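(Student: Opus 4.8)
The plan is to handle the three assertions in order, deriving (3) from (1), (2) and a single same-base-point comparison. Throughout I will use two elementary facts about the maps of Theorems~\ref{tseh} and \ref{mseh}. \emph{First}, the base point intervenes only through the divergent generator: on the admissible sub-algebra $\Q[\mu]\langle Y\rangle^0$ (resp.\ $\Q[\mu]\langle X^{\mu}\rangle^0$) both $\zeta_{*,m}^{\mu,T}$ and $\zeta_{*,m'}^{\mu,T}$ (resp.\ both shuffle maps) restrict to the genuine value $\zeta_{*}^{\mu}$ (resp.\ $\zeta_{\shuffle}^{\mu}$), since a $T$ is produced only by $y_{1,m}$ (resp.\ $y^{\mu}_m$). \emph{Second}, each regularization map is surjective onto $(\calZ^{\mu},\cdot)[T]$: every $z\in\calZ^{\mu}$ is the image of an admissible or quasi-admissible word, $T^s$ is the image of the stuffle (resp.\ shuffle) power of the divergent generator, and $zT^s$ is the image of their product by the homomorphism property. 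This surjectivity immediately yields the \emph{uniqueness} of all three comparison maps, since it forces $\myrho^{\mu,*}_{m,m'}(T^s)$, $\myrho^{\mu,\shuffle}_{m,m'}(T^s)$ and $\rho^{\mu}_{m,m'}(T^s)$ to equal prescribed elements of $\calZ^{\mu}[T]$.

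For existence in (1), by Prop.~\ref{prop:Y0subalgebra} the algebra $(\Q[\mu]\langle Y\rangle,\must)$ is a polynomial algebra over $\Q[\mu]\langle Y\rangle^0$ in the single generator $y_{1,m}$, so it suffices to evaluate $\zeta_{*,m'}^{\mu,T}$ on $y_{1,m}$. Since $y_{1,m}-y_{1,m'}$ is quasi-admissible, it lies in $\Q[\mu]\langle Y\rangle^0$ and its regularized value is the genuine convergent quantity
\[
\sum_{n\geqslant0}\Big(\tfrac{\mu}{n\mu+m}-\tfrac{\mu}{n\mu+m'}\Big)=\psi(m'/\mu)-\psi(m/\mu)=H(m'/\mu-1)-H(m/\mu-1),
\]
where I use $\psi(x)=H(x-1)-\gamma$. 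Hence $\zeta_{*,m'}^{\mu,T}(y_{1,m})=T+H(m'/\mu-1)-H(m/\mu-1)$. Writing a general $w=\sum_j w_j\must y_{1,m}^{\must j}$ with $w_j\in\Q[\mu]\langle Y\rangle^0$ and using the homomorphism property gives $\zeta_{*,m}^{\mu,T}(w)=\sum_j\zeta_{*}^{\mu}(w_j)T^j$ and $\zeta_{*,m'}^{\mu,T}(w)=\sum_j\zeta_{*}^{\mu}(w_j)\big(T+H(m'/\mu-1)-H(m/\mu-1)\big)^j$, which is precisely $\myrho^{\mu,*}_{m,m'}$ applied to the former. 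Part (2) is proved identically via Prop.~\ref{prop:X0subalgebra}, the only change being that the convergent difference is now the integral $\int_0^1\big(\omega^{\mu}_{(1;m)}-\omega^{\mu}_{(1;m')}\big)$; by Theorem~\ref{cmzv} this equals the same quasi-value $\calD^{\mu}$, so the shift constant is again $H(m'/\mu-1)-H(m/\mu-1)$ and $\myrho^{\mu,\shuffle}_{m,m'}=\myrho^{\mu,*}_{m,m'}$.

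For (3) I would first reduce to a common base point. Applying Part (2) to $\varphi_{\mu}(w)\in\Q[\mu]\langle X^{\mu}\rangle^1$ gives
\[
\zeta_{\shuffle,m'}^{\mu,T}\circ\varphi_{\mu}=\myrho^{\mu,\shuffle}_{m,m'}\circ\big(\zeta_{\shuffle,m}^{\mu,T}\circ\varphi_{\mu}\big),
\]
so it suffices to construct the same-base mixed map $\rho^{\mu}_{m,m}$ with $\zeta_{\shuffle,m}^{\mu,T}\circ\varphi_{\mu}=\rho^{\mu}_{m,m}\circ\zeta_{*,m}^{\mu,T}$ and then set $\rho^{\mu}_{m,m'}:=\myrho^{\mu,\shuffle}_{m,m'}\circ\rho^{\mu}_{m,m}$; $\mu$-invariance of the composite is then inherited from that of $\myrho^{\mu,\shuffle}_{m,m'}$. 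By the uniqueness above, $\rho^{\mu}_{m,m}$ is forced to act by $T^s\mapsto\zeta_{\shuffle,m}^{\mu,T}\big(\varphi_{\mu}(y_{1,m}^{\must s})\big)$, and since $\varphi_{\mu}(y_{1,m})=y^{\mu}_m$ both regularizations send the generator to $T$, so $\rho^{\mu}_{m,m}$ fixes $\calZ^{\mu}$ and fixes $T$ on the generator.

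The remaining, and genuinely hard, point is \emph{existence} of $\rho^{\mu}_{m,m}$, i.e.\ the factorization
\[
\zeta_{\shuffle,m}^{\mu,T}\big(\varphi_{\mu}(w_0\must y_{1,m}^{\must j})\big)=\zeta_{*}^{\mu}(w_0)\cdot\zeta_{\shuffle,m}^{\mu,T}\big(\varphi_{\mu}(y_{1,m}^{\must j})\big)\qquad(w_0\in\Q[\mu]\langle Y\rangle^0,\ j\geqslant0),
\]
which says that the prescribed action on powers of $T$ really does intertwine the two regularizations on \emph{all} words. This is the main obstacle, precisely because $\varphi_{\mu}$ is only a linear bijection and does not carry $\must$ to $\shuffle$; the displayed identity is a genuine measurement of its defect against the two products. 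I expect to prove it along the lines of Ihara--Kaneko--Zagier: assemble the values $\zeta_{\shuffle,m}^{\mu,T}(\varphi_{\mu}(y_{1,m}^{\must s}))$ into a generating series $A^{\mu}_m(u)$ and show $\rho^{\mu}_{m,m}(e^{Tu})=A^{\mu}_m(u)\,e^{Tu}$, whose coefficients are explicit shuffle-regularized $\mu$-MHZVs supported on all-$1$ strings. Once $A^{\mu}_m(u)$ is identified the factorization follows by expanding the product and comparing coefficients; the core computation is the shuffle-versus-stuffle comparison of the regularized powers of the single divergent generator, which is exactly where the nontrivial $\mu$-MHZV relations are consumed.
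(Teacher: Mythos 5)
Your parts (1) and (2) are correct and follow essentially the paper's own route: decompose an arbitrary word as a polynomial in the divergent generator over the (quasi-)admissible subalgebra (Theorems~\ref{thm:stRegPsi} and \ref{thm:shaRegPsi}), note that the two regularizations can only differ through their values on that generator, and compute the shift constant as the convergent quasi-value $\calD^{\mu}(1;m;m')=H(m'/\mu-1)-H(m/\mu-1)$; uniqueness comes from evaluating at powers of the generator. Your reduction of part (3) to the same-base-point map via $\rho^{\mu}_{m,m'}=\myrho^{\mu,\shuffle}_{m,m'}\circ\rho^{\mu}_{m,m}$ is a tidy organizational device (the paper works with general $m,m'$ throughout), and your identification of the factorization
\[
\zeta_{\shuffle,m}^{\mu,T}\big(\varphi_{\mu}(w_0\must y_{1,m}^{\must j})\big)=\zeta_{*}^{\mu}(w_0)\cdot\zeta_{\shuffle,m}^{\mu,T}\big(\varphi_{\mu}(y_{1,m}^{\must j})\big),\qquad w_0\in\Q[\mu]\langle Y\rangle^0,
\]
as the crux is exactly right.

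However, part (3) is not proved, and the sketch you offer would not close the gap. Identifying the generating series $A^{\mu}_m(u)$ of the values $\zeta^{\mu,T}_{\shuffle,m}(\varphi_{\mu}(y_{1,m}^{\must s}))$ only pins down what $\rho^{\mu}_{m,m}$ does on powers of $T$ --- which is already forced by your uniqueness argument --- and ``expanding the product and comparing coefficients'' cannot yield the displayed factorization for a general admissible $w_0$: since $\varphi_{\mu}$ is not an algebra map, there is no algebraic identity relating $\varphi_{\mu}(w_0\must y_{1,m}^{\must j})$ to $\varphi_{\mu}(w_0)\shuffle\varphi_{\mu}(y_{1,m}^{\must j})$, so knowledge of the values on all-$1$ strings says nothing about words containing $w_0$. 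The paper's proof of this step (Theorem~\ref{thm:mixedComparisonMap}) is genuinely analytic and involves all words at once: by Abel summation,
\[
\int_{\Delta^{|\bfk|}(t)}\omega^{\mu}_{(\bfk;\bfm)}=(1-t^{\mu})\sum_{n=0}^{\infty}\zeta^{\mu}_n(\bfk;\bfm)\,t^{n\mu+m_r},
\]
into which one substitutes the stuffle asymptotic expansion $\zeta^{\mu}_n(\bfk;\bfm)=\sum_i a_i(\zeta^{\mu}_n(1;m))^i+O\big(\ln^{r-1}(n+1)/(n+1)\big)$ of Theorem-Definition~\ref{thm:stReg_zetaMuT}, controls the error term by Lemma~\ref{oijx}, and then recognizes $(1-t^{\mu})\sum_n(\zeta^{\mu}_n(1;m))^i t^{n\mu+m}$ as $\zeta^{\mu}_{\shuffle,t}(\varphi_{\mu}(y_{1,m}^{\must i}))$, hence (by Theorem-Definition~\ref{thm:shaReg_zetaMuT}) as $\rho^{\mu}_{m,m'}$ applied to powers of $T=\int_{\Delta^1(t)}\omega^{\mu}_{(1;m')}$ up to an admissible error; letting $t\to1^-$ then gives the intertwining for every word simultaneously. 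Some such comparison between the $M\to\infty$ (stuffle) and $t\to1^-$ (shuffle) asymptotic scales is unavoidable here, and it is precisely the ingredient missing from your proposal.
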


Theorem~\ref{thm:ComparisonMaps} yields the following result.

\begin{thm}\label{kfhy}
We have the following commutative diagram
$$\begin{CD}
(\calZ^{\mu},\cdot)[T]@>\myrho^{\mu,*}_{m_1,m_1'}>>(\calZ^{\mu},\cdot)[T]\\
@V{\rho_{m_1,m_2}^{\mu}}VV@VV{\rho_{m_1',m_2'}^{\mu}}V\\
(\calZ^{\mu},\cdot)[T]@>\myrho^{\mu,\shuffle}_{m_2,m_2'} >>(\calZ^{\mu},\cdot)[T]\end{CD}
$$
Hence, we have a network composed of comparison maps shown by the following diagram:
$$\begin{tikzpicture}
\draw (-6,3) -- (6,3);
\draw (-6,-3) -- (6,-3);
\draw (-6,1) -- (6,1);
\draw (-6,-1) -- (6,-1);
\draw (4,-3) -- (4,3);
\draw (2,-3) -- (2,3);
\draw (0,-3) -- (0,3);
\draw (-2,-3) -- (-2,3);
\draw (-4,-3) -- (-4,3);
\draw (4,-1) -- (2,-3);
\draw (4,1) -- (0,-3);
\draw (4,3) -- (-2,-3);
\draw (2,3) -- (-4,-3);
\draw (0,3) -- (-4,-1);
\draw (-2,3) -- (-4,1);
\draw (2,3) -- (4,1);
\draw (0,3) -- (4,-1);
\draw (-2,3) -- (4,-3);
\draw (-4,3) -- (2,-3);
\draw (-4,1) -- (0,-3);
\draw (-4,-1) -- (-2,-3);
\draw (-4,3) -- (0,1);
\draw (-4,3) -- (2,1);
\draw (-4,3) -- (4,1);
\draw (-2,3) -- (2,1);
\draw (-2,3) -- (4,1);
\draw (0,3) -- (-4,1);
\draw (0,3) -- (4,1);
\draw (2,3) -- (-4,1);
\draw (2,3) -- (-2,1);
\draw (4,3) -- (-4,1);
\draw (4,3) -- (-2,1);
\draw (4,3) -- (0,1);
\draw (4,-1) -- (4,1);
\draw (4,-1) -- (2,1);
\draw (4,-1) -- (0,1);
\draw (4,-1) -- (-2,1);
\draw (4,-1) -- (-4,1);
\draw (2,-1) -- (4,1);
\draw (2,-1) -- (2,1);
\draw (2,-1) -- (0,1);
\draw (2,-1) -- (-2,1);
\draw (2,-1) -- (-4,1);
\draw (0,-1) -- (4,1);
\draw (0,-1) -- (2,1);
\draw (0,-1) -- (0,1);
\draw (0,-1) -- (-2,1);
\draw (0,-1) -- (-4,1);
\draw (-2,-1) -- (4,1);
\draw (-2,-1) -- (2,1);
\draw (-2,-1) -- (0,1);
\draw (-2,-1) -- (-2,1);
\draw (-2,-1) -- (-4,1);
\draw (-4,-1) -- (4,1);
\draw (-4,-1) -- (2,1);
\draw (-4,-1) -- (0,1);
\draw (-4,-1) -- (-2,1);
\draw (-4,-1) -- (-4,1);
\draw (-4,-3) -- (0,-1);
\draw (-4,-3) -- (2,-1);
\draw (-4,-3) -- (4,-1);
\draw (-2,-3) -- (2,-1);
\draw (-2,-3) -- (4,-1);
\draw (0,-3) -- (-4,-1);
\draw (0,-3) -- (4,-1);
\draw (2,-3) -- (-4,-1);
\draw (2,-3) -- (-2,-1);
\draw (4,-3) -- (-4,-1);
\draw (4,-3) -- (-2,-1);
\draw (4,-3) -- (0,-1);

\node [above] at (-4,3) {$1$};
\node [above] at (-2,3) {$2$};
\node [above] at (-0,3) {$3$};
\node [above] at (2,3) {$4$};
\node [above] at (4,3) {$5$};
\node [below] at (-4,-3) {$1$};
\node [below] at (-2,-3) {$2$};
\node [below] at (-0,-3) {$3$};
\node [below] at (2,-3) {$4$};
\node [below] at (4,-3) {$5$};
\node [above] at (5,3) {$\cdots$};
\node at (5,0) {$\cdots$};
\node at (5,2) {$\cdots$};
\node at (5,-2) {$\cdots$};
\node [below] at (5,-3) {$\cdots$};
\node [left] at (-6,1) {$y_{1,m_1}$};
\node [left] at (-6,3) {$y_{1,m_1'}$};
\node [left] at (-6,-1) {$y^{\mu}_{m_2}$};
\node [left] at (-6,-3) {$y^{\mu}_{m_2'}$};
\node at (-5,2) {$\myrho^{\mu,*}_{m_1',m_1}$};
\node at (-5,0) {$\rho_{m_1,m_2}^{\mu}$};
\node at (-5,-2) {$\myrho^{\mu,\shuffle}_{m_2,m_2'}$};
\draw [->] (-4.3,2.75) -- (-4.3,1.25);
\draw [->] (-4.3,0.75) -- (-4.3,-0.75);
\draw [->] (-4.3,-1.25) -- (-4.3,-2.75);
\draw [->] (-6.8,2.7) arc [radius=8, start angle=160, end angle= 199];
\node [left] at (-7.3,0) {$\rho_{m_1',m_2'}^{\mu}$};
\end{tikzpicture}$$
\end{thm}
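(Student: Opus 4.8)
The plan is to obtain the commutativity of the square purely formally, by combining the three intertwining relations supplied by Theorem~\ref{thm:ComparisonMaps} with the surjectivity of the stuffle regularization onto the target.

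First I would reduce the claim to a statement about the image of $\zeta_{*,m_1}^{\mu,T}$. Each of the maps $\myrho^{\mu,*}_{m_1,m_1'}$, $\myrho^{\mu,\shuffle}_{m_2,m_2'}$, $\rho^{\mu}_{m_1,m_2}$, $\rho^{\mu}_{m_1',m_2'}$ is $\R$-linear on $\R[T]$ and is extended $\calZ^{\mu}$-linearly to $(\calZ^{\mu},\cdot)[T]$; hence each of the two composites in the square is determined by its values on the powers $T^s$ ($s\geqslant0$). By Theorem~\ref{tseh} the stuffle regularization $\zeta_{*,m_1}^{\mu,T}$ is a homomorphism from $(\Q[\mu]\langle Y\rangle,\must)$ to $(\calZ^{\mu},\cdot)[T]$ with $\zeta_{*,m_1}^{\mu,T}(y_{1,m_1})=T$, so $\zeta_{*,m_1}^{\mu,T}(\,\underbrace{y_{1,m_1}\must\cdots\must y_{1,m_1}}_{s}\,)=T^{s}$, and since the admissible and quasi-admissible words already produce all the generators of $\calZ^{\mu}$, the map $\zeta_{*,m_1}^{\mu,T}$ is onto $\calZ^{\mu}[T]$. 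Thus it suffices to check that the two composites agree on $\zeta_{*,m_1}^{\mu,T}(w)$ for every $w\in\Q[\mu]\langle Y\rangle$.

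Next I would perform the diagram chase on such an element, fixing $w\in\Q[\mu]\langle Y\rangle$. Following the top-then-right edge, part~$(1)$ of Theorem~\ref{thm:ComparisonMaps} gives $\myrho^{\mu,*}_{m_1,m_1'}\big(\zeta_{*,m_1}^{\mu,T}(w)\big)=\zeta_{*,m_1'}^{\mu,T}(w)$, and then part~$(3)$ applied at the pair $(m_1',m_2')$ gives $\rho^{\mu}_{m_1',m_2'}\big(\zeta_{*,m_1'}^{\mu,T}(w)\big)=\zeta_{\shuffle,m_2'}^{\mu,T}(\varphi_{\mu}(w))$. Following the left-then-bottom edge, part~$(3)$ at $(m_1,m_2)$ gives $\rho^{\mu}_{m_1,m_2}\big(\zeta_{*,m_1}^{\mu,T}(w)\big)=\zeta_{\shuffle,m_2}^{\mu,T}(\varphi_{\mu}(w))$, and since $\varphi_{\mu}(w)\in\Q[\mu]\langle X^{\mu}\rangle^1$, part~$(2)$ yields $\myrho^{\mu,\shuffle}_{m_2,m_2'}\big(\zeta_{\shuffle,m_2}^{\mu,T}(\varphi_{\mu}(w))\big)=\zeta_{\shuffle,m_2'}^{\mu,T}(\varphi_{\mu}(w))$. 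Both routes land on the single element $\zeta_{\shuffle,m_2'}^{\mu,T}(\varphi_{\mu}(w))$, so $\rho^{\mu}_{m_1',m_2'}\circ\myrho^{\mu,*}_{m_1,m_1'}$ and $\myrho^{\mu,\shuffle}_{m_2,m_2'}\circ\rho^{\mu}_{m_1,m_2}$ coincide on the image of $\zeta_{*,m_1}^{\mu,T}$, and therefore everywhere by the previous step; this is exactly the asserted commutative square.

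Finally, the large network diagram requires no new input: it is tiled by copies of this basic square (together with the defining triangles of Theorem~\ref{thm:ComparisonMaps}), so any two directed paths with the same source and target compose to the same map. In particular, since the comparison maps are invertible with $(\myrho^{\mu,*}_{m_1,m_1'})^{-1}=\myrho^{\mu,*}_{m_1',m_1}$, the commutative square rearranges into the outer identity $\rho^{\mu}_{m_1',m_2'}=\myrho^{\mu,\shuffle}_{m_2,m_2'}\circ\rho^{\mu}_{m_1,m_2}\circ\myrho^{\mu,*}_{m_1',m_1}$ read down the left edge of the network. I expect the only genuinely non-formal points to be the two facts borrowed from earlier in the paper, namely that $\varphi_{\mu}$ carries $\Q[\mu]\langle Y\rangle$ into $\Q[\mu]\langle X^{\mu}\rangle^1$ (so that part~$(2)$ applies to $\varphi_{\mu}(w)$) and that $\zeta_{*,m_1}^{\mu,T}$ is surjective onto $\calZ^{\mu}[T]$; everything else is a routine chase.
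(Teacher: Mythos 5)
Your proposal is correct and takes essentially the same approach as the paper: the paper's entire proof is a one-line citation of Theorems~\ref{thm:stComparisonMap}, \ref{thm:shaComparisonMap} and \ref{thm:mixedComparisonMap}, and your diagram chase --- applying part $(1)$ then part $(3)$ at $(m_1',m_2')$, versus part $(3)$ at $(m_1,m_2)$ then part $(2)$, so that both composites land on $\zeta_{\shuffle,m_2'}^{\mu,T}(\varphi_{\mu}(w))$ --- is exactly the argument that citation leaves implicit. Your reduction is also sound, since both composites are $\R$-linear and every $T^s$ lies in the image of $\zeta_{*,m_1}^{\mu,T}$ (as $\zeta_{*,m_1}^{\mu,T}$ of the $s$-fold $\must$-power of $y_{1,m_1}$), so agreement on that image forces agreement on $(\calZ^{\mu},\cdot)[T]$.
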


By Theorem~\ref{thm:ComparisonMaps} and Theorem~\ref{kfhy}, if we want to give a detailed description of the map $\rho_{m_1,m_2}^{\mu}$
we only need to consider the map $\rho_{1,1}^{\mu}$ whose property is described by the next theorem.

\begin{thm}\label{oovn}
We have
\begin{align*}
\rho_{1,1}^{\mu}:\R[T]&\, \longrightarrow\R[T],\\
T^n&\, \longmapsto n!\sum_{k=0}^n\frac{\gamma_k}{(n-k)!}\cdot T^{n-k}
\end{align*}
where
$$
\sum_{k=0}^{\infty}\gamma_ku^k=e^{\gamma u}\Gamma(1+u).
$$
Furthermore, for any $m,m'\in\Q_{>0}$
\begin{align*}
\rho_{m,m'}^{\mu}:\R[T]&\, \longrightarrow\R[T],\\
T^n&\, \longmapsto\sum_{k=0}^n\binom{n}{k}c_{m,1}^{n-k}(\mu)\left(k!\sum_{l=0}^k\frac{\gamma_l}{(k-l)!}(T+c_{1,m'}(\mu))^{k-l}\right),
\end{align*}
where $c_{m,m'}(\mu)=H(m'/\mu-1)-H(m/\mu-1)$.
\end{thm}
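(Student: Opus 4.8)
The plan is to establish the two displayed formulas in turn: computing $\rho_{1,1}^{\mu}$ is the substantive part, and the expression for the general $\rho_{m,m'}^{\mu}$ will then follow formally from Theorem~\ref{kfhy}. To compute $\rho_{1,1}^{\mu}$ I would argue by generating series, in the spirit of Ihara--Kaneko--Zagier. Let $w_n$ denote the $n$-fold $\must$-power $y_{1,1}\must\cdots\must y_{1,1}$. Since $\zeta_{*,1}^{\mu,T}$ is a $\must$-homomorphism with $\zeta_{*,1}^{\mu,T}(y_{1,1})=T$ (Theorem~\ref{tseh}), we have $\zeta_{*,1}^{\mu,T}(w_n)=T^n$, and hence the defining property of the mixed comparison map (Theorem~\ref{thm:ComparisonMaps}$(3)$) gives $\rho_{1,1}^{\mu}(T^n)=\zeta_{\shuffle,1}^{\mu,T}(\varphi_{\mu}(w_n))$. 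Packaging these into the series $E(u):=\sum_{n\geqslant0}\frac{u^n}{n!}w_n$, the left-hand sides sum to $\rho_{1,1}^{\mu}(e^{Tu})$, so the task becomes the evaluation of $\zeta_{\shuffle,1}^{\mu,T}(\varphi_{\mu}(E(u)))$.

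As in the Ihara--Kaneko--Zagier setting, $\rho_{1,1}^{\mu}$ is of exponential-multiplier type: the homomorphism property together with the weight grading forces $\rho_{1,1}^{\mu}(e^{Tu})=A(u)\,e^{Tu}$ for a unique $A(u)=\sum_{k\geqslant0}a_k u^k\in\R[[u]]$ not involving $T$, which is equivalent to $\rho_{1,1}^{\mu}(T^n)=n!\sum_{k=0}^n\frac{a_k}{(n-k)!}T^{n-k}$. The whole statement thus reduces to the identity $A(u)=e^{\gamma u}\Gamma(1+u)$, equivalently $\log A(u)=\gamma u+\log\Gamma(1+u)=\sum_{n\geqslant2}\frac{(-1)^n}{n}\zeta(n)u^n$, so that $a_k=\gamma_k$. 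To get at $A(u)$ I would pass to the integral representation of Theorem~\ref{cmzv}, expand $\varphi_{\mu}(E(u))$ in the shuffle alphabet, and compute $\log A(u)$ weight by weight; its coefficient of $u^n$ is an a priori $\mu$-dependent combination of shuffle-regularized weight-$n$ values at $m'=1$.

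The main obstacle is exactly the $\mu$-invariance claimed by the theorem: one must show that this weight-$n$ combination collapses to the classical $\zeta(n)$, independently of $\mu$. Here I expect the decisive tool to be the substitution $s=t^{\mu}$ in the iterated integrals, under which $\frac{dt}{t}\mapsto\frac{1}{\mu}\frac{ds}{s}$ while the form attached to $y^{\mu}_m$ becomes $\frac{s^{m/\mu-1}}{1-s}\,ds$; this rewrites each $\mu$-MHZV as a classical Hurwitz-type integral with parameter $m/\mu$, so that all genuine $\mu$-dependence is concentrated in the boundary data measured by $H(m/\mu-1)$. Those are precisely the shifts carried by the translation maps $\myrho^{\mu,*}$ and $\myrho^{\mu,\shuffle}$ of Theorem~\ref{thm:ComparisonMaps}$(1)$--$(2)$, and the content of $\mu$-invariance is that, once these are stripped away at the base point $m=m'=1$, the surviving multiplier $A(u)$ is the universal Ihara--Kaneko--Zagier factor $e^{\gamma u}\Gamma(1+u)$, with no residual $\mu$.

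Granting the description of $\rho_{1,1}^{\mu}$, the general formula follows from Theorem~\ref{kfhy}. Taking $m_1=m_2=1$, $m_1'=m$ and $m_2'=m'$ in its commutative square yields $\rho_{m,m'}^{\mu}\circ\myrho^{\mu,*}_{1,m}=\myrho^{\mu,\shuffle}_{1,m'}\circ\rho_{1,1}^{\mu}$, hence $\rho_{m,m'}^{\mu}=\myrho^{\mu,\shuffle}_{1,m'}\circ\rho_{1,1}^{\mu}\circ(\myrho^{\mu,*}_{1,m})^{-1}$. By Theorem~\ref{thm:ComparisonMaps}$(1)$--$(2)$ these outer maps are the translations $T^s\mapsto(T+c_{1,m'}(\mu))^s$ and, since $c_{m,1}(\mu)=-c_{1,m}(\mu)$, the inverse translation $T^s\mapsto(T+c_{m,1}(\mu))^s$. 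Expanding $(T+c_{m,1}(\mu))^n$ by the binomial theorem, applying the formula for $\rho_{1,1}^{\mu}$ to each resulting $T^k$, and finally performing the translation $T\mapsto T+c_{1,m'}(\mu)$ reproduces exactly the stated expression, which completes the proof.
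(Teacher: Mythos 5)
Your second half is correct and coincides with the paper's own derivation (Corollary~\ref{kxol}): from Theorem~\ref{kfhy} one gets $\rho_{m,m'}^{\mu}=\myrho^{\mu,\shuffle}_{1,m'}\circ\rho_{1,1}^{\mu}\circ\myrho^{\mu,*}_{m,1}$, and expanding the two translations around $\rho_{1,1}^{\mu}$ gives exactly the stated formula. Your starting point for the first half is also the right one and matches the paper's construction in Theorem~\ref{thm:mixedComparisonMap}: writing $w_n:=y_{1,1}\must\cdots\must y_{1,1}$ ($n$ factors), one has $\zeta_{*,1}^{\mu,T}(w_n)=T^n$ and hence $\rho_{1,1}^{\mu}(T^n)=\zeta_{\shuffle,1}^{\mu,T}(\varphi_{\mu}(w_n))$.

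The gap is that the analytic core of the theorem is asserted rather than proved, in two places. First, you claim that ``the homomorphism property together with the weight grading forces'' $\rho_{1,1}^{\mu}(e^{Tu})=A(u)e^{Tu}$ with $A(u)$ free of $T$. But $\rho_{1,1}^{\mu}$ is only an $\R$-linear map, not a ring homomorphism of $\R[T]$, and $\varphi_{\mu}$ does not carry $\must$ to $\shuffle$, so nothing formal forces the polynomials $\rho_{1,1}^{\mu}(T^n)$ to form an Appell sequence. Concretely, for $n=2$ the construction gives $\rho_{1,1}^{\mu}(T^2)=T^2+2\calD^{\mu}(1,1;1,1;1,2)-\mu\zeta^{\mu}(2;1)$; that this constant term equals $\zeta(2)$, independently of $\mu$ and compatibly across all $n$, is a theorem (it is the paper's identity \eqref{equ:zeta2}, deduced \emph{from} Theorem~\ref{oovn}), not a structural triviality. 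Second, and more seriously, the identification $A(u)=e^{\gamma u}\Gamma(1+u)$ is precisely the hard content, and your plan for it is circular: the substitution $s=t^{\mu}$ does rewrite everything as Hurwitz-type iterated integrals with parameter $1/\mu$, but the Ihara--Kaneko--Zagier comparison is a theorem only in the classical case $\mu=1$; for a general Hurwitz parameter the assertion ``the surviving multiplier is the universal IKZ factor'' is exactly what must be proven, so it cannot be cited.

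What is actually needed, and what the paper supplies, is an asymptotic matching of the two regularization scales. The paper shows (Lemma~\ref{zbxd}) that $\int_0^t\frac{\mu}{1-\tau^{\mu}}d\tau=\ln\frac{1}{1-t}+c_0(\mu)+O(1-t)$ with $c_0(\mu)=\ln(1/\mu)-H(1/\mu-1)$; it then uses the Abel--Plana formula (Theorem~\ref{loaz}) to convert $\sum_{m\geqslant0}\bigl(H(m+1/\mu)-H(1/\mu-1)-\gamma-c_0(\mu)\bigr)^n t^{m\mu+1}$ into $(1-t)\int_0^{\infty}\ln^n(x+1)t^x\,dx$ up to admissible errors --- this is the precise point where the $\mu$-dependence cancels, since $H(x+1/\mu)-H(1/\mu-1)-c_0(\mu)=\ln(x+1)+\gamma+O(1/(x+1))$; and it identifies that same integral, via the kernel $\Gamma(1+u)/(1-t)^{1+u}=\sum_{m\geqslant 0}\frac{\Gamma(m+1+u)}{\Gamma(m+1)}t^m$ and Lemmas~\ref{oijx}, \ref{oonb}, \ref{sfiz}, with the Appell polynomials of $e^{\gamma u}\Gamma(1+u)$ --- this is where the Gamma factor is produced. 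Some argument of this kind (Abel--Plana, or an equivalent Euler--Maclaurin comparison) is indispensable: your proposal establishes the formal reductions surrounding the theorem but takes the theorem's substance --- the $\mu$-invariant Gamma multiplier --- as granted.
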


\begin{rem}
From Theorem~\ref{oovn}, we see that the map $\rho_{1,1}^{\mu}$ is independent of the parameter $\mu$.
Hence, the results in these comparison theorems are all $\mu$-invariant.
\end{rem}

After proving the above main structural results on $\mu$-MHZVs in sections \ref{sec:algSetup} and \ref{sec:RegHomo},
as an application, we will present some identities (see Example~\ref{eg:ClauCata}) such as
\begin{align*}
\sum_{0\leqslant n\leqslant m}\frac{1}{(4n+3)(4m+3)(4m+6)}&\, =\frac{7\pi^2}{576}-\frac{G}{12},
\end{align*}
where $G=\sum\limits_{n=0}^{\infty}\frac{(-1)^n}{(2n+1)^2}$ is Catalan's constant.
These identities are closely related to the multiple zeta values of level $N$ studied by Yuan and the second author \cite{YuanZhao2015b}.

In section \ref{sec:sumFormula} we will derive two sum formulas for $\mu$-double Hurwitz zeta values which generalize the sum formulas for double zeta values and double zeta star values, respectively, and a weighted sum formula which generalizes the weighted sum formulas for both double zeta values and double $T$-values simultaneously.

At the end of the paper, we will propose a problem extending \cite[Conjecture 1]{ikz} by Ihara et al.
for multiple zeta values to the $\mu$-MHZV setting.

\medskip
\noindent

\bigskip
\noindent{\bf Acknowledgments.} The authors would like to thank Dr.\ S.\ Charlton for his detailed comments of
the first version of the paper. Jia Li is supported by the School of Mathematical Sciences, Peking University.
J. Zhao is supported by the Jacobs Prize from The Bishop's School. 

\section{Algebraic structures on $\calZ^{\mu}$} \label{sec:stANDsh}

In this section, using their series definition, we will prove that the product of two $\mu$-MHZVs is a
$\Q[\mu]$-linear combination of $\mu$-MHZVs. We call this the $\mu$-stuffle product. Then we will describe
the integral representation of $\mu$-MHZVs similar to Kontsevich's integral representation for MZVs. These two
kinds of product of $\mu$-MHZVs play the key roles in our theory, leading to many $\Q[\mu]$-linear
relations in a systematic way.

\subsection{The $\mu$-stuffle product on $\calZ^{\mu}$}

We first introduce some preliminary results.

\begin{lem}\label{lem:parFrac}
For all $m, n\in\Q_{>0}$, $m\ne n$, and $j,k\in \N$ we have the partial fraction decomposition
\begin{multline*}
\frac{1}{(x+m)^j (x+n)^k}
= \frac{(-1)^{j-1}(j+k-2)!}{(j-1)!(k-1)!(n-m)^{j+k-1}} \left(\frac{1}{x+m}-\frac{1}{x+n}\right ) \\
+ \sum_{i=0}^{k-2} \binom{i+j-1}{i} \frac{(-1)^{j}}{(n-m)^{i+j}}\frac{1}{(x+n)^{k-i}}
+\sum_{i=0}^{j-2} \binom{i+k-1}{i} \frac{(-1)^{i} }{(n-m)^{i+k}} \frac{1}{(x+m)^{j-i}}.
\end{multline*}
\end{lem}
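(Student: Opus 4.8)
The plan is to prove this partial fraction decomposition by a standard but careful residue/coefficient computation, treating $\frac{1}{(x+m)^j(x+n)^k}$ as a rational function of $x$ with poles of order $j$ at $x=-m$ and order $k$ at $x=-n$. Since the numerator has degree $0$ and the denominator has degree $j+k\geq 2$, there is no polynomial part, so the full partial fraction expansion must take the form
\begin{equation*}
\frac{1}{(x+m)^j(x+n)^k}=\sum_{a=1}^{j}\frac{A_a}{(x+m)^a}+\sum_{b=1}^{k}\frac{B_b}{(x+n)^b}
\end{equation*}
for constants $A_a,B_b$ depending on $m,n,j,k$. My goal is to compute each coefficient explicitly and then regroup the terms to match the three pieces on the right-hand side of the claimed identity.

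The key computational step is to extract the coefficients. For the pole at $x=-m$, I would write $\frac{1}{(x+n)^k}=\frac{1}{((x+m)+(n-m))^k}$ and Taylor expand in the local variable $u:=x+m$ around $u=0$, using the generalized binomial series
\begin{equation*}
\frac{1}{(n-m)^k(1+u/(n-m))^k}=\frac{1}{(n-m)^k}\sum_{i=0}^{\infty}\binom{k+i-1}{i}\frac{(-1)^i u^i}{(n-m)^i}.
\end{equation*}
Multiplying by $u^{-j}$ and reading off the coefficient of $u^{-(j-i)}$ gives $A_{j-i}=\binom{i+k-1}{i}\frac{(-1)^i}{(n-m)^{i+k}}$ for $0\le i\le j-1$; the terms with $i\le j-2$ produce the final sum in the statement, while the $i=j-1$ term (the simple-pole residue at $x=-m$) is set aside. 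By the symmetric computation expanding $\frac{1}{(x+m)^j}$ around $x=-n$ in the variable $x+n$, using the substitution $m\leftrightarrow n$ which sends $(n-m)$ to $(m-n)=-(n-m)$, I obtain $B_{k-i}=\binom{i+j-1}{i}\frac{(-1)^j}{(n-m)^{i+j}}$ for $0\le i\le k-1$; the terms with $i\le k-2$ give the middle sum, and the $i=k-1$ simple-pole residue is again isolated.

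The last step is to combine the two simple-pole residues. From the computations above these are the coefficients of $\frac{1}{x+m}$ and $\frac{1}{x+n}$, namely the $i=j-1$ and $i=k-1$ terms respectively, both equal in absolute value to $\binom{j+k-2}{j-1}\frac{1}{(n-m)^{j+k-1}}=\frac{(j+k-2)!}{(j-1)!(k-1)!(n-m)^{j+k-1}}$ up to sign. I would check the signs carefully: the residue at $x=-m$ carries $(-1)^{j-1}$ and the residue at $x=-n$ carries $(-1)^{j}=-(-1)^{j-1}$, so they assemble precisely into $\frac{(-1)^{j-1}(j+k-2)!}{(j-1)!(k-1)!(n-m)^{j+k-1}}\bigl(\frac{1}{x+m}-\frac{1}{x+n}\bigr)$, matching the first term. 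The main obstacle, and the only place real care is needed, is bookkeeping the signs and binomial coefficients consistently across the two expansions — in particular tracking how the sign $(-1)^j$ versus $(-1)^i$ arises from the asymmetry of which variable is being expanded and from the factor $(m-n)^{-j}=(-1)^j(n-m)^{-j}$ in the expansion around $x=-n$. An alternative, cleaner verification I could offer is simply to clear denominators and check that the proposed right-hand side, multiplied by $(x+m)^j(x+n)^k$, reduces identically to $1$; but the residue computation above is more illuminating and directly produces the stated closed forms.
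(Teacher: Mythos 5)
Your proof is correct, and it takes a genuinely different route from the paper's. You compute the partial fraction coefficients by Laurent-expanding at each pole: writing $\frac{1}{(x+n)^k}$ as a binomial series in $u=x+m$ (and symmetrically in $v=x+n$), reading off the principal-part coefficients, and recombining the two simple-pole residues; your closed forms $A_{j-i}=\binom{i+k-1}{i}(-1)^i(n-m)^{-(i+k)}$ and $B_{k-i}=\binom{i+j-1}{i}(-1)^j(n-m)^{-(i+j)}$, and the sign bookkeeping that merges the residues into the antisymmetric first term, all check out. The paper instead starts from the two-term identity $\frac{1}{(x+m)(x+n)}=\frac{1}{n-m}\left(\frac{1}{x+m}-\frac{1}{x+n}\right)$ and applies $\frac{\partial^{j-1}}{\partial m^{j-1}}$ followed by $\frac{\partial^{k-1}}{\partial n^{k-1}}$, with the Leibniz rule generating the three groups of terms; dividing through by $(-1)^{j-1}(j-1)!\,(-1)^{k-1}(k-1)!$ yields the lemma. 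The parameter-differentiation argument is self-contained in that it never appeals to the general existence theorem for partial fractions, which your argument invokes at the outset when you posit the form $\sum_a A_a/(x+m)^a+\sum_b B_b/(x+n)^b$; on the other hand, your method makes the origin of each coefficient and the $m\leftrightarrow n$, $j\leftrightarrow k$ symmetry (with the $(-1)^j$ arising from $(m-n)^{-(i+j)}=(-1)^{i+j}(n-m)^{-(i+j)}$) structurally transparent, whereas in the paper these emerge from Leibniz-rule bookkeeping. If you want your version to be fully self-contained, note that the difference between $\frac{1}{(x+m)^j(x+n)^k}$ and the sum of its two principal parts is a rational function with no poles that vanishes at infinity, hence is identically zero; alternatively, your clear-denominators fallback also suffices.
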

\begin{proof}
We have
\begin{align*}
&\, \frac{(-1)^{j-1}(j-1)!}{(x+m)^j (x+n)}
=\frac{\partial^{j-1}}{\partial m^{j-1}} \frac{1}{(x+m)(x+n)} \\
=&\, \frac{\partial^{j-1}}{\partial m^{j-1}}\left[ \frac{1}{n-m} \left(\frac{1}{x+m}-\frac{1}{x+n}\right)\right] \\
=&\, \frac{(j-1)!}{(n-m)^j} \left(\frac{1}{x+m}-\frac{1}{x+n}\right )
+\sum_{i=0}^{j-2} \binom{j-1}{i} \frac{ i!}{(n-m)^{i+1}} \frac{(-1)^{j-i-1}(j-i-1)!}{(x+m)^{j-i}}.
\end{align*}
Thus
\begin{align*}
&\, \frac{(-1)^{j-1}(j-1)!(-1)^{k-1}(k-1)!}{(x+m)^j (x+n)^k}
=\frac{\partial^{k-1}}{\partial n^{k-1}}\frac{\partial^{j-1}}{\partial m^{j-1}} \frac{1}{(x+m)(x+n)} \\
=&\, \frac{(-1)^{k-1}(j+k-2)!}{(n-m)^{j+k-1}} \left(\frac{1}{x+m}-\frac{1}{x+n}\right ) \\
-&\, \sum_{i=0}^{k-2} \binom{k-1}{i} \frac{(-1)^{i}(i+j-1)!}{(n-m)^{i+j}}\frac{(-1)^{k-i-1}(k-i-1)!}{(x+n)^{k-i}} \\
+&\,\sum_{i=0}^{j-2} \binom{j-1}{i} \frac{(-1)^{k-1} (i+k-1)!}{(n-m)^{i+k}} \frac{(-1)^{j-i-1}(j-i-1)!}{(x+m)^{j-i}}.
\end{align*}
The lemma follows immediately.
\end{proof}

Let $M\in\R_{\geqslant0}$, and $(\bfk;\bfm)=(k_1,\cdots,k_r;m_1,\cdots,m_r)$ be a positive bi-index. Define
$$
\zeta_M^{\mu}(\bfk;\bfm) :=\sum_{0\leqslant n_1\leqslant\cdots\leqslant n_r\leqslant M}\frac{\mu^r}{(n_1\mu+m_1)^{k_1}\cdots(n_r\mu+m_r)^{k_r}}.
$$
Set $\zeta^{\mu}(\emptyset):=1$ by convention.
If $(\bfk;\bfm)$ is a non-empty admissible bi-index, then we have
$$
\zeta^{\mu}(\bfk;\bfm)=\lim_{M\to+\infty}\zeta^{\mu}_M(\bfk;\bfm).
$$
Furthermore, if $(\bfk;\bfm;\tbfm)$ is a special triple then
$$
\calD^{\mu}(\bfk;\bfm;\tbfm):=\lim_{M\to+\infty} \zeta_M^{\mu}(\bfk;\bfm;\tbfm)
$$
where
$$
\zeta_M^{\mu}(\bfk;\bfm;\tbfm)=\zeta_M^{\mu}(\bfk;\bfm)-\zeta_M^{\mu}(\bfk;\tbfm).
$$

\begin{lem}\label{somc}
Let $(\bfk;\bfm)=(k_1,\cdots,k_r;m_1,\cdots,m_r)\in\N^r\times(\Q_{>0})^r$. Then we have
$$\zeta^{\mu}_M(\bfk;\bfm)=
\sum_{0\leqslant n_r\leqslant M}\frac{\mu}{(n_r\mu+m_r)^{k_r}}\zeta^{\mu}_{n_r}(k_1,\cdots,k_{r-1};m_1,\cdots,m_{r-1}).
$$
\end{lem}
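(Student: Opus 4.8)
The plan is to establish this identity by peeling off the outermost summation variable $n_r$ from the definition of $\zeta^{\mu}_M(\bfk;\bfm)$ and recognizing the resulting inner sum as a truncated $\mu$-MHZV of length one shorter. This is purely a combinatorial reorganization of a finite sum, so no analytic input (convergence, estimates, etc.) is needed.

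First I would recall the definition
$$\zeta_M^{\mu}(\bfk;\bfm) =\sum_{0\leqslant n_1\leqslant\cdots\leqslant n_r\leqslant M}\frac{\mu^r}{(n_1\mu+m_1)^{k_1}\cdots(n_r\mu+m_r)^{k_r}}$$
and partition the index set $\{(n_1,\dots,n_r): 0\leqslant n_1\leqslant\cdots\leqslant n_r\leqslant M\}$ according to the value of the largest index $n_r$. For each fixed value of $n_r$ with $0\leqslant n_r\leqslant M$, the remaining indices range exactly over $0\leqslant n_1\leqslant\cdots\leqslant n_{r-1}\leqslant n_r$. Since every sum here is finite, interchanging the order of summation is unconditionally justified.

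Next I would factor the summand, writing $\mu^r=\mu\cdot\mu^{r-1}$ and isolating the factor $\mu/(n_r\mu+m_r)^{k_r}$ that alone depends on $n_r$. The remaining factor $\mu^{r-1}/\prod_{j=1}^{r-1}(n_j\mu+m_j)^{k_j}$, summed over $0\leqslant n_1\leqslant\cdots\leqslant n_{r-1}\leqslant n_r$, is precisely $\zeta^{\mu}_{n_r}(k_1,\dots,k_{r-1};m_1,\dots,m_{r-1})$ by the definition of the truncated value with truncation level $M=n_r$. Substituting this into the outer sum over $n_r$ yields the claimed identity directly.

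There is essentially no serious obstacle in this argument; the only points that require a moment's care are confirming that the inner truncation condition $n_{r-1}\leqslant n_r$ matches exactly the truncation level $n_r$ in the definition of $\zeta^{\mu}_{n_r}$, and that the edge case $r=1$ is handled by the stated convention $\zeta^{\mu}(\emptyset)=1$, so that the inner factor becomes the empty product and the formula reduces to the definition of $\zeta^\mu_M(k_1;m_1)$.
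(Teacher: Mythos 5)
Your proof is correct and is essentially identical to the paper's own argument: both peel off the outermost summation index $n_r$, factor out $\mu/(n_r\mu+m_r)^{k_r}$, and identify the inner sum over $0\leqslant n_1\leqslant\cdots\leqslant n_{r-1}\leqslant n_r$ as $\zeta^{\mu}_{n_r}(k_1,\cdots,k_{r-1};m_1,\cdots,m_{r-1})$. Your explicit remark on the $r=1$ case via the convention $\zeta^{\mu}(\emptyset)=1$ is a small point of care the paper leaves implicit, but it is not a different method.
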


\begin{proof} By the definition,
\begin{align*}
&\zeta^{\mu}_M(k_1,\cdots,k_r;m_1,\cdots,m_r)\\
&=\sum_{0\leqslant n_1\leqslant\cdots\leqslant n_r\leqslant M}\frac{\mu^r}{(n_1\mu+m_1)^{k_1}\cdots(n_r\mu+m_r)^{k_r}}\\
&=\sum_{0\leqslant n_r\leqslant M}\frac{\mu}{(n_r\mu+m_r)^{k_r}}\sum_{0\leqslant n_1\leqslant\cdots\leqslant n_{r-1}\leqslant n_r}\frac{\mu^{r-1}}{(n_1\mu+m_1)^{k_1}\cdots(n_{r-1}\mu+m_{r-1})^{k_{r-1}}}\\
&=\sum_{0\leqslant n_r\leqslant M}\frac{\mu}{(n_r\mu+m_r)^{k_r}}\zeta^{\mu}_{n_r}(k_1,\cdots,k_{r-1};m_1,\cdots,m_{r-1}),
\end{align*}
as desired.
\end{proof}

\begin{thm}\label{thm:ddog}
We have

\medskip
$(1).$ For any positive bi-indices $(\bfk_1;\bfm_1)$ and $(\bfk_2;\bfm_2)$,
\begin{align} \notag
& \zeta_M^{\mu}(\bfk_1;\bfm_1)\cdot\zeta_M^{\mu}(\bfk_2;\bfm_2) \\
= & \sum_{(\bfk;\bfm)} \st_{1;1}^{\mu}\left[\binom{\bfk_1}{\bfm_1},\binom{\bfk_2}{\bfm_2};\binom{\bfk}{\bfm}\right]\zeta^{\mu}_M(\bfk;\bfm) \label{equ:Mstfl11}\\
+& \sum_{(\bfk;\bfm;\tbfm)} \st_{1;2}^{\mu}\left[\binom{\bfk_1}{\bfm_1}, \binom{\bfk_2}{\bfm_2};
 \binom{\bfk}{\bfm,\tbfm} \right]
\calD^{\mu}_M(\bfk;\bfm;\tbfm) \label{equ:Mstfl12}
\end{align}
where the coefficients $\st_{1;1}^{\mu},\st_{1;2}^{\mu}\in\Q[\mu]$. Here, in the first
sum $(\bfk;\bfm)$ runs through all positive bi-indices such that $|\bfk|\leqslant |\bfk_1|+|\bfk_2|$ and $\ell(\bfk)=\ell(\bfm)\leqslant \ell(\bfk_1)+\ell(\bfk_2)$, and in the second sum $(\bfk;\bfm;\tbfm)$ runs through all positive bi-indices such that $|\bfk|\leqslant |\bfk_1|+|\bfk_2|$ and $\ell(\bfk)=\ell(\bfm)\leqslant \ell(\bfk_1)+\ell(\bfk_2)$ such that $(\bfk;\bfm;\tbfm)$ are special triples.
Moreover,
$\st_{1;1}^{\mu}\big(-;\binom{\bfk}{\bfm}\big)=0$ for all non-admissible $\bfk$ if $\bfk_1$ and $\bfk_2$ are both admissible.

\medskip
$(2).$ For any positive bi-index $(\bfk_1;\bfm_1)$ and any special triple $(\bfk_2;\bfm_2;\tbfm_2)$,
\begin{align}
&\zeta_M^{\mu}(\bfk_1;\bfm_1)\cdot\calD_M^{\mu}(\bfk_2;\bfm_2;\tbfm_2) \notag\\
=&\sum_{(\bfk;\bfm)} \st_{2;1}^{\mu}\left[\binom{\bfk_1}{\bfm_1}, \binom{\bfk_2}{\bfm_2,\bfm_2'} ;
 \binom{\bfk}{\bfm} \right]\zeta^{\mu}_M(\bfk;\bfm) \notag\\
+ &\sum_{(\bfk;\bfm;\tbfm)} \st_{2;2}^{\mu}\left[\binom{\bfk_1}{\bfm_1}, \binom{\bfk_2}{\bfm_2,\bfm_2'} ;
 \binom{\bfk}{\bfm,\tbfm} \right] \calD^{\mu}_M(\bfk;\bfm;\tbfm)\label{equ:MSpecialPstf2}
\end{align}
where the coefficients $\st_{2;1}^{\mu},\st_{2;2}^{\mu}\in\Q[\mu]$. Here the indices in the sums have the
same restrictions as in $(1)$. Moreover, $\st_{2;1}^{\mu}\big(-;\binom{\bfk}{\bfm}\big)=0$ for all
non-admissible $\bfk$ if $\bfk_1$ is admissible.

\medskip
$(3).$ For any two special triples $(\bfk_1;\bfm_1;\tbfm_1)$ and $(\bfk_2;\bfm_2;\tbfm_2)$,
\begin{align}
&\calD_M^{\mu}(\bfk_1;\bfm_1;\tbfm_1) \cdot\calD_M^{\mu}(\bfk_2;\bfm_2;\tbfm_2) \notag\\
=&\sum_{(\bfk;\bfm)} \st_{3;1}^{\mu}\left[\binom{\bfk_1}{\bfm_1,\tbfm_1}, \binom{\bfk_2}{\bfm_2,\tbfm_2} ;
 \binom{\bfk}{\bfm} \right]\zeta^{\mu}_M(\bfk;\bfm) \notag\\
+ &\sum_{(\bfk;\bfm;\tbfm)} \st_{3;2}^{\mu}\left[\binom{\bfk_1}{\bfm_1,\tbfm_1}, \binom{\bfk_2}{\bfm_2,\tbfm_2} ;
 \binom{\bfk}{\bfm,\tbfm} \right] \calD^{\mu}_M(\bfk;\bfm;\tbfm) \label{equ:MSpecialPstf3}
\end{align}
where the coefficients $\st_{3;1}^{\mu},\st_{3;2}^{\mu}\in\Q[\mu]$. Here the indices in the sums have the
same restrictions as in $(1)$ except that in the first sum $\bfk$ can only be admissible.
\end{thm}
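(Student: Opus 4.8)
The plan is to establish part $(1)$ by induction on the total length $\ell(\bfk_1)+\ell(\bfk_2)$ and then to deduce parts $(2)$ and $(3)$ from it, using throughout the identity $\calD_M^{\mu}(\bfk;\bfm;\tbfm)=\zeta_M^{\mu}(\bfk;\bfm)-\zeta_M^{\mu}(\bfk;\tbfm)$ to trade (truncated) quasi-$\mu$-MHZVs for differences of $\mu$-MHZVs. The base case of $(1)$, in which one of the two bi-indices is empty, is immediate since $\zeta_M^{\mu}(\emptyset)=1$.

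For the inductive step I write $\bfk_1=(\bfk_1^-,a)$, $\bfm_1=(\bfm_1^-,p)$ and $\bfk_2=(\bfk_2^-,b)$, $\bfm_2=(\bfm_2^-,q)$, where the superscript ${}^-$ denotes deletion of the last component, and set $\alpha_N=\mu/(N\mu+p)^a$ and $\beta_N=\mu/(N\mu+q)^b$. Applying Lemma~\ref{somc} in the incremental form $\zeta_M^{\mu}(\bfk_1;\bfm_1)=\zeta_{M-1}^{\mu}(\bfk_1;\bfm_1)+\alpha_M\zeta_M^{\mu}(\bfk_1^-;\bfm_1^-)$ (and similarly for the second factor), a direct computation gives, with $P_M:=\zeta_M^{\mu}(\bfk_1;\bfm_1)\cdot\zeta_M^{\mu}(\bfk_2;\bfm_2)$ and $P_{-1}:=0$, the recursion
\begin{align*}
P_M-P_{M-1}={}&\alpha_M\zeta_M^{\mu}(\bfk_1^-;\bfm_1^-)\zeta_M^{\mu}(\bfk_2;\bfm_2)+\beta_M\zeta_M^{\mu}(\bfk_1;\bfm_1)\zeta_M^{\mu}(\bfk_2^-;\bfm_2^-)\\
&-\alpha_M\beta_M\zeta_M^{\mu}(\bfk_1^-;\bfm_1^-)\zeta_M^{\mu}(\bfk_2^-;\bfm_2^-),
\end{align*}
valid for every $M\geqslant0$. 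Crucially, all three products on the right are evaluated at the common truncation $M$ and have strictly smaller total length, so the induction hypothesis expresses each as a $\Q[\mu]$-combination of terms $\zeta_M^{\mu}$ and $\calD_M^{\mu}$. Summing the recursion over $N=0,\dots,M$ telescopes the left-hand side to $P_M$, while on the right each contribution $\sum_{N=0}^M\alpha_N\zeta_N^{\mu}(\bfk;\bfm)$ (and, by linearity, each such sum against a $\calD_N^{\mu}$, which is a difference of two of these) is re-summed by Lemma~\ref{somc} into $\zeta_M^{\mu}(\bfk,a;\bfm,p)$, i.e.\ the top letter of the first factor is adjoined at the end, and symmetrically for the $\beta_N$-terms.

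The collision contribution $\sum_{N}\alpha_N\beta_N(\cdots)$ carries the factor $1/((N\mu+p)^a(N\mu+q)^b)$. If $p=q$ this is $1/(N\mu+p)^{a+b}$ and re-summation adjoins a single letter of weight $a+b$. If $p\ne q$ I invoke the partial-fraction decomposition of Lemma~\ref{lem:parFrac}: its two single-pole sums adjoin letters $(a-i;p)$ and $(b-i;q)$, all of weight at least $2$, hence feeding the $\zeta_M^{\mu}$-family, while its diagonal term $1/(N\mu+p)-1/(N\mu+q)$, once re-summed by Lemma~\ref{somc}, adjoins a final pair of letters whose $m$-values $p,q$ differ and so produces exactly a quasi-$\mu$-MHZV $\calD_M^{\mu}$. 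Thus the recursion closes inside the $\Q[\mu]$-span of the $\zeta_M^{\mu}$ and $\calD_M^{\mu}$; and since every step either adjoins letters of total weight $\leqslant a+b$ and length $\leqslant2$ or passes to a strictly smaller collection, the bounds $|\bfk|\leqslant|\bfk_1|+|\bfk_2|$ and $\ell(\bfk)\leqslant\ell(\bfk_1)+\ell(\bfk_2)$ persist. For the admissibility assertion in $(1)$, note that a non-admissible $\zeta_M^{\mu}$-term can arise only when the adjoined final letter has weight $1$; when $\bfk_1,\bfk_2$ are admissible we have $a,b>1$, so the $\alpha_N$- and $\beta_N$-adjunctions are admissible and the only weight-$1$ output, the diagonal term, is routed into the $\calD_M^{\mu}$-family, whence $\st_{1;1}^{\mu}$ vanishes on non-admissible $\bfk$.

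Parts $(2)$ and $(3)$ then follow by expanding each factor $\calD_M^{\mu}$ as a difference of two $\zeta_M^{\mu}$, applying part $(1)$ to each resulting product, and recombining. The only terms that could violate the admissibility statements are those produced by adjoining the trailing weight-$1$ letter coming from a $\calD$-factor; but the two bi-indices inside such a factor differ solely in this last $m$-value, so these contributions occur in pairs that reassemble into $\calD_M^{\mu}$-terms rather than surviving as $\zeta_M^{\mu}$, giving that $\st_{2;1}^{\mu}$ and $\st_{3;1}^{\mu}$ are supported only on admissible $\bfk$. I expect this pairing to be the main obstacle: one must check carefully that, after the expansion into differences and the appeal to $(1)$, every weight-$1$ $\zeta_M^{\mu}$-contribution is indeed matched (and, in $(3)$, that when both last exponents equal $1$ the collision term of Lemma~\ref{lem:parFrac} degenerates to a pure diagonal, producing no $\zeta_M^{\mu}$ at all). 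The remaining verifications — that all coefficients lie in $\Q[\mu]$ and that the weight and length bounds are inherited — are routine consequences of the induction together with Lemmas~\ref{somc} and~\ref{lem:parFrac}.
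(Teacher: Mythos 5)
Your part (1) is essentially the paper's proof in a light disguise: the telescoping recursion for $P_M$ is just an Abel-summation derivation of the very splitting \eqref{equ:stufflePf1}$+$\eqref{equ:stufflePf2}$-$\eqref{equ:stufflePf3} that the paper obtains by inclusion--exclusion on the summation ranges, and everything afterwards (induction on total length, re-summation via Lemma~\ref{somc}, Lemma~\ref{lem:parFrac} for the collision term, the weight-one diagonal routed into the $\calD^{\mu}_M$-family) coincides with the paper's argument, including the reasoning for the vanishing clause. Where you genuinely diverge is in parts (2) and (3): the paper proves (1) and (2) by a \emph{simultaneous} induction --- unavoidable in its scheme, because expanding $\zeta_M^{\mu}(\bfk_1;\bfm_1)\cdot\calD_M^{\mu}(\bfk_2;\bfm_2;\tbfm_2)$ along the last letter of $\bfk_1$ invokes the induction hypothesis for (2) itself --- and then derives (3) from a further induction resting on (1) and (2); you instead propose to deduce (2) and (3) from (1) alone by writing each $\calD^{\mu}_M$ as a difference of two $\zeta^{\mu}_M$'s. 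This is more economical, but be aware it cannot use (1) as a black box: the vanishing clause of (1) requires \emph{both} factors admissible, and a special-triple factor never is, so the bare statement of (1) does not rule out unmatched non-admissible $\zeta^{\mu}_M$-terms surviving in the difference. The pairing you flag as ``the main obstacle'' is precisely the missing ingredient, and it is really a refinement of (1) that only its proof supplies: in the expansion, every non-admissible $\zeta^{\mu}_M$-term arises by appending the trailing weight-one letter of a non-admissible factor, with prefix and coefficient independent of that factor's last $m$-entry (the only other weight-one output, the diagonal of Lemma~\ref{lem:parFrac}, is already of $\calD$-type, and degenerates to the pure difference \eqref{equ:basecase} when both trailing exponents equal $1$). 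Hence the $m_r$-versus-$\tm_r$ difference assembles those terms into special triples. If you strengthen the inductive statement of (1) to record this structural fact, your deduction of (2) and (3) --- vanishing clauses included --- goes through and is arguably cleaner than the paper's two-stage inductive scheme; as written, however, that verification is promised rather than carried out, so you should make the strengthened induction explicit.
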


\begin{proof} We will prove (1) and (2) simultaneously by induction on $\ell(\bfk_1)+\ell(\bfk_2)$.
When $\ell(\bfk_1)+\ell(\bfk_2)=1$ in (1) or (2) then the claims are trivially true since
$\zeta_M^{\mu}(\emptyset)=1$ by definition.
Suppose (1) and (2) hold if $\ell(\bfk_1)+\ell(\bfk_2)<l$ and assume now $\ell(\bfk_1)+\ell(\bfk_2)=l$.

(1). For convenience, we denote by $\bfv'$ the index obtained from $\bfv$ by removing its last component. Set
\begin{align*}
 (\bfk_1;\bfm_1)&\, =(k_1,\cdots,k_r;m_1,\cdots,m_r), \quad (\bfk_2;\bfm_2) =(k'_1,\cdots,k'_s;m'_1,\cdots,m'_s).
\end{align*}
Then
\begin{align}
&\zeta_M^{\mu}(\bfk_1;\bfm_1)\cdot\zeta_M^{\mu}(\bfk_2;\bfm_2) \notag\\
=&\sum_{0\leqslant n_r\leqslant M}\frac{\mu}{(n_r\mu+m_r)^{k_r}}\zeta^{\mu}_{n_r}(\bfk_1';\bfm_1')\cdot\zeta^{\mu}_{n_r}(\bfk_2;\bfm_2) \label{equ:stufflePf1}\\
 +&\sum_{0\leqslant n_s'\leqslant M}\frac{\mu}{(n_s'\mu+m_s')^{k_s'}}\zeta^{\mu}_{n_s'}(\bfk_1;\bfm_1)\cdot\zeta^{\mu}_{n_s'}(\bfk_2';\bfm_2')\label{equ:stufflePf2}\\
-& \mu\sum_{0\leqslant n\leqslant M}\frac{\mu}{(n\mu+m_r)^{k_r}(n\mu+m_s')^{k_s'}}\zeta^{\mu}_{n}(\bfk_1';\bfm_1')\cdot\zeta^{\mu}_{n}(\bfk_2';\bfm_2')\label{equ:stufflePf3}.
\end{align}
By induction, we can rewrite \eqref{equ:stufflePf1} as
\begin{align*}
\eqref{equ:stufflePf1}=&\, \sum_{(\bfk;\bfm)} \st_{1;1}^{\mu}\left[\binom{\bfk_1'}{\bfm_1'},\binom{\bfk_2}{\bfm_2};\binom{\bfk}{\bfm}\right]\sum_{0\leqslant n_r\leqslant M}\frac{\mu}{(n_r\mu+m_r)^{k_r}}\zeta^{\mu}_{n_r}(\bfk;\bfm) \\
 +&\,\sum_{(\bfk;\bfm;\tbfm)} \st_{1;2}^{\mu}\left[\binom{\bfk_1'}{\bfm_1}, \binom{\bfk_2}{\bfm_2};
 \binom{\bfk}{\bfm,\tbfm} \right] \sum_{0\leqslant n_r\leqslant M} \frac{\mu}{(n_r\mu+m_r)^{k_r}}\calD^{\mu}_{n_r}(\bfk;\bfm;\tbfm)\\
=&\,\sum_{(\bfk;\bfm)} \st_{1;1}^{\mu}\left[\binom{\bfk_1'}{\bfm_1'},\binom{\bfk_2}{\bfm_2};\binom{\bfk}{\bfm}\right] \zeta^{\mu}_M(\bfk,k_r;\bfm,m_r) \\
 +&\, \sum_{(\bfk;\bfm;\tbfm)} \st_{1;2}^{\mu}\left[\binom{\bfk_1'}{\bfm_1}, \binom{\bfk_2}{\bfm_2};
 \binom{\bfk}{\bfm,\tbfm} \right]
\Big(\zeta^{\mu}_M(\bfk,k_r;\bfm,m_r)-\zeta^{\mu}_M(\bfk,k_r;\bfm',m_r)\Big).
\end{align*}
Therefore, \eqref{equ:stufflePf1} is expressed in the form as stated in the theorem. In particular, all terms are
admissible if $\bfk_1$ is admissible. The sum \eqref{equ:stufflePf2} can be dealt with similarly and all terms are
admissible if $\bfk_2$ is admissible.

For \eqref{equ:stufflePf3}, by Lemma~\ref{lem:parFrac}
\begin{equation}\label{equ:ProductTwoPowers}
\frac{\mu}{(n\mu+m_r)^{k_r}(n\mu+m_s')^{k_s'}} =
\left\{
 \begin{array}{ll}
\displaystyle A+\sum_{i=2}^{k_r}\frac{a_i\mu}{(n\mu+m_r)^i}+\sum_{j=2}^{k_s'}\frac{b_j\mu}{(n\mu+m_s')^j} , & \hbox{if $m_r\ne m_s'$;} \\
\displaystyle \frac{\mu}{(n\mu+m)^{k_r+k_s'}} , & \hbox{if $m_r=m_s'$,}
 \end{array}
\right.
\end{equation}
where
$$A=a_1\left(\frac{\mu}{n\mu+m_r}-\frac{\mu}{n\mu+m_s'}\right).$$
Among the four possible types of terms produced in \eqref{equ:ProductTwoPowers}, we only need to consider
$A$ since the others can all be handled by the same method we used to rewrite \eqref{equ:stufflePf1}.
Thus, we need to consider
\begin{align*}
\sum_{0\leqslant n\leqslant M}\left(\frac{\mu}{n\mu+m_r}-\frac{\mu}{n\mu+m_s'}\right) \zeta^{\mu}_{n}(\bfk_1';\bfm_1')\cdot\zeta^{\mu}_{n}(\bfk_2';\bfm_2').
\end{align*}
By induction assumption, this can be written as
\begin{align*}
&\sum_{(\bfk;\bfm)} \st_{1;1}^{\mu}\left[\binom{\bfk_1'}{\bfm_1'},\binom{\bfk_2'}{\bfm_2'};\binom{\bfk}{\bfm}\right]
\sum_{0\leqslant n\leqslant M}\left(\frac{\mu}{n\mu+m_r}-\frac{\mu}{n\mu+m_s'}\right) \zeta^{\mu}_n(\bfk;\bfm) \\
+&\, \sum_{(\bfk;\bfm;\tbfm)} \st_{1;2}^{\mu}\left[\binom{\bfk_1'}{\bfm_1'}, \binom{\bfk_2'}{\bfm_2'};
 \binom{\bfk}{\bfm,\tbfm} \right]
\sum_{0\leqslant n\leqslant M}\left(\frac{\mu}{n\mu+m_r}-\frac{\mu}{n\mu+m_s'}\right)
\calD^{\mu}_n(\bfk;\bfm;\tbfm) \\
=&\, \sum_{(\bfk;\bfm)} \st_{1;1}^{\mu}\left[\binom{\bfk_1'}{\bfm_1'},\binom{\bfk_2'}{\bfm_2'};\binom{\bfk}{\bfm}\right]
\calD^{\mu}_M(\bfk,1;\bfm,m_r;\bfm,m_s')\Big) \\
+&\, \sum_{(\bfk;\bfm;\tbfm)} \st_{1;2}^{\mu}\left[\binom{\bfk_1'}{\bfm_1'}, \binom{\bfk_2'}{\bfm_2'};
 \binom{\bfk}{\bfm,\tbfm} \right]\\
&\, \hskip5cm \times\Big(\calD^{\mu}_M(\bfk,1;\bfm,m_r;\bfm,m_s')+\calD^{\mu}_M(\bfk,1;\bfm',m_s';\bfm',m_r) \Big)
\end{align*}
which has the desired form. This completes the proof of (1) and (2).

Now we prove (3) by induction on $\ell(\bfk_1)+\ell(\bfk_2)$ again.
When $\ell(\bfk_1)+\ell(\bfk_2)=2$ then we must have $\ell(\bfk_1)=\ell(\bfk_2)=1$. Thus $\bfk_1=\bfk_2=(1)$ and we get
\begin{align*}
&\calD_M^{\mu}(\bfk_1;\bfm_1;\tbfm_1) \cdot \calD_M^{\mu}(\bfk_2;\bfm_2;\tbfm_2) \notag\\
=&\sum_{0\leqslant n_1,n_2\leqslant M} \left(\frac{\mu}{n_1\mu+m_1}-\frac{\mu}{n_1\mu+\tm_1}\right)\left(\frac{\mu}{n_2\mu+m_2}-\frac{\mu}{n_2\mu+\tm_2}\right) \notag\\
=&\, \sum_{1 \leftrightarrow 2} \Big(\calD_M^{\mu}(1,1;m_1,m_2;m_1,\tm_2)
-\calD_M^{\mu}(1,1;\tm_1,m_2;\tm_1,\tm_2)\Big) \\
& \,
-\sum_{0\leqslant n\leqslant M} \left(\frac{\mu}{n\mu+m_1}-\frac{\mu}{n\mu+\tm_1}\right)
 \left(\frac{\mu}{n\mu+m_2}-\frac{\mu}{n\mu+\tm_2}\right)\\
=&\, \sum_{1 \leftrightarrow 2} \Big(\calD_M^{\mu}(1,1;m_1,m_2;m_1,\tm_2)
+\calD_M^{\mu}(1,1;\tm_1,\tm_2;\tm_1,m_2)\Big) \\
& \,
-\frac{\mu\cdot \delta_{m_1\ne m_2}}{m_2-m_1} \calD_M^{\mu}(1;m_1;m_2)
+\frac{\mu\cdot \delta_{m_1\ne \tm_2}}{\tm_2-m_1} \calD_M^{\mu}(1;m_1;\tm_2)\\
& \, +\frac{\mu\cdot \delta_{\tm_1\ne m_2}}{m_2-\tm_1} \calD_M^{\mu}(1;\tm_1;m_2)
-\frac{\mu\cdot \delta_{\tm_1\ne \tm_2}}{\tm_2-\tm_1} \calD_M^{\mu}(1;\tm_1;\tm_2),
\end{align*}
where $\delta_{a\ne b}=1$ if $a\ne b$ and $\delta_{a\ne b}=0$ if $a=b$. Here, we have used the fact that
if $a\ne b$ then
\begin{equation} \label{equ:basecase}
 \frac{\mu}{(n\mu+a)(n\mu+b)}=\frac{1}{b-a}\left(\frac{\mu}{n\mu+a}-\frac{\mu}{n\mu+b}\right).
\end{equation}
We see that \eqref{equ:MSpecialPstf3} holds in this case. In fact, the right-hand side involve only special triples.

Suppose now (3) holds if $\ell(\bfk_1)+\ell(\bfk_2)<l$ and assume now $\ell(\bfk_1)+\ell(\bfk_2)=l$.
Write $\tbfm_1=(\bfm_1',\tm_r)$ and $\tbfm_2=(\bfm_2',\tm_s')$. Then
\begin{align*}
\calD_M^{\mu}(\bfk_1;\bfm_1;\tbfm_1)\cdot\calD_M^{\mu}(\bfk_2;\bfm_2;\tbfm_2)={\rm I+II}-\mu \cdot {\rm III}
\end{align*}
where
\begin{align*}
{\rm I}=&\, \sum_{0\leqslant n_r\leqslant M}\left(\frac{\mu}{n_r\mu+m_r}-\frac{\mu}{n_r\mu+\tm_r}\right)
\zeta^{\mu}_{n_r}(\bfk_1';\bfm_1')\cdot \calD_{n_r}^{\mu}(\bfk_2;\bfm_2;\tbfm_2),\\
{\rm II}=&\, \sum_{0\leqslant n_s'\leqslant M}\left(\frac{\mu}{n_s'\mu+m_s'}-\frac{\mu}{n_s'\mu+\tm_s'}\right) \calD_{n_s'}^{\mu}(\bfk_1;\bfm_1;\tbfm_1) \cdot\zeta^{\mu}_{n_s'}(\bfk_2';\bfm_2'),\\
{\rm III}=&\, \sum_{0\leqslant n\leqslant M}\left(\frac{\mu}{n\mu+m_r}-\frac{\mu}{n\mu+\tm_r}\right)\left(\frac{1}{n\mu+m_s'}-\frac{1}{n\mu+\tm_s'}\right)
 \zeta^{\mu}_{n}(\bfk_1';\bfm_1')\cdot\zeta^{\mu}_{n}(\bfk_2';\bfm_2').
\end{align*}
For the first two sums, it is easy to see by (1) and (2) that they reduce to the case of $\bfk_1=(1)$
when we have
\begin{align*}
{\rm I}=&\, \sum_{0\leqslant n_r\leqslant M}\left(\frac{\mu}{n_r\mu+m_r}-\frac{\mu}{n_r\mu+\tm_r}\right)
\cdot \calD_{n_r}^{\mu}(\bfk_2;\bfm_2;\tbfm_2)\\
=&\, \calD_M^{\mu}(\bfk_2,1;\bfm_2,m_r;\bfm_2,\tm_r)+\calD_M^{\mu}(\bfk_2,1;\tbfm_2,\tm_r;\tbfm_2,m_r)
\end{align*}
and
\begin{align*}
{\rm II}=&\, \sum_{0\leqslant n_s'\leqslant M}\left(\frac{\mu}{n_s'\mu+m_s'}-\frac{\mu}{n_s'\mu+\tm_s'}\right) \left(\frac{\mu}{n_s'\mu+m_1}-\frac{\mu}{n_s'\mu+\tm_1}\right) \cdot\zeta^{\mu}_{n_s'}(\bfk_2';\bfm_2') .
\end{align*}
By \eqref{equ:basecase} we only need to consider the terms such as
\begin{align*}
 \sum_{0\leqslant n_s'\leqslant M}\left(\frac{\mu}{n_s'\mu+m_s'}\cdot\frac{\mu}{n_s'\mu+\tm_1}\right) \cdot\zeta^{\mu}_{n_s'}(\bfk_2';\bfm_2').
\end{align*}
If $m_s'=\tm_1$ then this is equal to $\mu\zeta^{\mu}_M(\bfk_2',2;\bfm_2)$.
If $m_s'\ne \tm_1$ then \eqref{equ:basecase} produces two terms each of which has the total length $1+\ell(\bfk_2')=\ell(\bfk_2)<\ell(\bfk_2)+1$ so that by the induction assumption
it can be expressed in the form of \eqref{equ:MSpecialPstf3}. This settles II.

For III, first we can use (1) to expand the product $\zeta^{\mu}_{n}(\bfk_1';\bfm_1')\cdot\zeta^{\mu}_{n}(\bfk_2';\bfm_2')$
so that each term, say $\alpha$, has its length bounded by $\ell(\bfk_1')+\ell(\bfk_2')=l-2$.
Then we expand the product
$$\left(\frac{\mu}{n_r\mu+m_r}-\frac{\mu}{n_r\mu+\tm_r}\right)\left(\frac{1}{n\mu+m_s'}-\frac{1}{n\mu+\tm_s'}\right)$$
by \eqref{equ:basecase} so that each term, say $\beta$, has length 1.
Then it is readily seen that III can be written in the desired form by induction assumption
since the total length is decreased by at least 1 for each product $\alpha\beta$.

We have now completed the proof of the theorem.
\end{proof}

\begin{eg}\label{firn}
If we take $(\bfk_1;\bfm_1)=(2;m_1)$ and $(\bfk_2;\bfm_2)=(2;m_2)$, then we have
\begin{align*}
&\zeta_M^{\mu}(2;m_1)\cdot\zeta_M^{\mu}(2;m_2)\\&=\left(\sum_{0\leqslant n_1\leqslant M}\frac{\mu}{(n_1\mu+m_1)^2}\right)\cdot\left(\sum_{0\leqslant n_2\leqslant M}\frac{\mu}{(n_2\mu+m_2)^2}\right)\\
&=\sum_{0\leqslant n_1\leqslant n_2\leqslant M}\frac{\mu^2}{(n_1\mu+m_1)^2(n_2\mu+m_2)^2}+\sum_{0\leqslant n_2\leqslant n_1\leqslant M}\frac{\mu^2}{(n_1\mu+m_1)^2(n_2\mu+m_2)^2}\\
&\quad-\sum_{0\leqslant n\leqslant M}\frac{\mu^2}{(n\mu+m_1)^2(n\mu+m_2)^2}\\
&=\zeta_M^{\mu}(2,2;m_1,m_2)+\zeta_M^{\mu}(2,2;m_2,m_1)-\mu\sum_{0\leqslant n\leqslant M}\frac{\mu}{(n\mu+m_1)^2(n\mu+m_2)^2}.
\end{align*}
Notice that if $m_1\neq m_2$ then
\begin{align*}
\frac{1}{(n\mu+m_1)^2(n\mu+m_2)^2}&\, =\frac{1}{(m_1-m_2)^2}\cdot\frac{1}{(n\mu+m_1)^2}+\frac{1}{(m_1-m_2)^2}\cdot\frac{1}{(n\mu+m_2)^2}\\
&\quad+\frac{2}{(m_1-m_2)^3}\cdot\frac{1}{n\mu+m_1}-\frac{2}{(m_1-m_2)^3}\cdot\frac{1}{n\mu+m_2}.
\end{align*}
Hence,
\begin{align*}
\sum_{0\leqslant n\leqslant M}\frac{\mu}{(n\mu+m_1)^2(n\mu+m_2)^2}&\, =\frac{1}{(m_1-m_2)^2}(\zeta_M^{\mu}(2;m_1)+\zeta_M^{\mu}(2;m_2))\\
&\quad+\frac{2}{(m_1-m_2)^3}\calD_M^{\mu}(1;m_1;m_2).
\end{align*}
Finally, if $m_1=m_2=m$, then
$$\zeta_M^{\mu}(2;m_1)\cdot\zeta_M^{\mu}(2;m_2)=\zeta_M^{\mu}(2,2;m_1,m_2)+\zeta_M^{\mu}(2,2;m_2,m_1)-\mu\zeta^{\mu}_M(4;m),$$
and if $m_1\neq m_2$, then
\begin{multline*}
\zeta_M^{\mu}(2;m_1)\cdot\zeta_M^{\mu}(2;m_2) =\zeta_M^{\mu}(2,2;m_1,m_2)+\zeta_M^{\mu}(2,2;m_2,m_1)\\
-\frac{\mu}{(m_1-m_2)^2}(\zeta_M^{\mu}(2;m_1)+\zeta_M^{\mu}(2;m_2))
 -\frac{2\mu}{(m_1-m_2)^3}\calD_M^{\mu}(1;m_1;m_2).
\end{multline*}
\end{eg}

\begin{thm}\label{qqox}
$(\calZ^{\mu},\cdot)$ is a $\Q[\mu]$-algebra, that is, we have
\begin{align*}
\zeta^{\mu}(\bfk_1;\bfm_1)\cdot\zeta^{\mu}(\bfk_2;\bfm_2)&\in\calZ^{\mu}
\end{align*}
for all admissible $(\bfk_1;\bfm_1)$ and $(\bfk_2;\bfm_2)$,
\begin{align*}
\zeta^{\mu}(\bfk_1;\bfm_1) \cdot\calD^{\mu}(\bfk_2;\bfm_2;\tbfm_2)&\in\calZ^{\mu}
\end{align*}
for all admissible $(\bfk_1;\bfm_1)$ and special triples $(\bfk_2;\bfm_2;\tbfm_2)$, and
\begin{align*}
\calD^{\mu}(\bfk_1;\bfm_1;\tbfm_1)\cdot \calD^{\mu}(\bfk_2;\bfm_2;\tbfm_2)&\in\calZ^{\mu}
\end{align*}
for all special triples $(\bfk_1;\bfm_1;\tbfm_1)$ and $(\bfk_2;\bfm_2;\tbfm_2)$.
\end{thm}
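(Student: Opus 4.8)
The plan is to deduce these convergent identities from the truncated product formulas of Theorem~\ref{thm:ddog} by passing to the limit $M\to+\infty$. The three assertions correspond exactly to the three parts of that theorem, so I would treat them in the same order. In each case the right-hand side is a \emph{finite} $\Q[\mu]$-linear combination of truncated (quasi-)values whose coefficients are polynomials in $\mu$ independent of $M$; the whole argument then reduces to interchanging this finite sum with the limit, which is legitimate once every term on each side is known to converge.

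For the first case I would start from \eqref{equ:Mstfl11}--\eqref{equ:Mstfl12}, which expresses $\zeta_M^{\mu}(\bfk_1;\bfm_1)\cdot\zeta_M^{\mu}(\bfk_2;\bfm_2)$ as a finite combination of terms $\zeta_M^{\mu}(\bfk;\bfm)$ and $\calD_M^{\mu}(\bfk;\bfm;\tbfm)$. The decisive input is the last assertion of Theorem~\ref{thm:ddog}(1): since $\bfk_1$ and $\bfk_2$ are both admissible, $\st_{1;1}^{\mu}\big(-;\binom{\bfk}{\bfm}\big)=0$ for every non-admissible $\bfk$, so the surviving $\zeta_M^{\mu}(\bfk;\bfm)$ are all admissible while the quasi-terms $\calD_M^{\mu}(\bfk;\bfm;\tbfm)$ are of special-triple type. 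Letting $M\to+\infty$, the left-hand side tends to $\zeta^{\mu}(\bfk_1;\bfm_1)\cdot\zeta^{\mu}(\bfk_2;\bfm_2)$ because each factor converges, and each right-hand term tends to $\zeta^{\mu}(\bfk;\bfm)$ or $\calD^{\mu}(\bfk;\bfm;\tbfm)$ respectively. As the coefficients lie in $\Q[\mu]$, the limit is a $\Q[\mu]$-linear combination of $1$, admissible $\mu$-MHZVs, and quasi-$\mu$-MHZVs, hence belongs to $\calZ^{\mu}$. The remaining two cases are identical in spirit: for the mixed product I would invoke the vanishing of $\st_{2;1}^{\mu}$ on non-admissible $\bfk$ (valid because $\bfk_1$ is admissible), and for the product of two quasi-terms I would use that in \eqref{equ:MSpecialPstf3} the first sum already ranges over admissible $\bfk$ only.

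The single point requiring care—and hence the main obstacle—is excluding non-admissible $\zeta_M^{\mu}(\bfk;\bfm)$ from the right-hand sides, since such truncated sums diverge as $M\to+\infty$ and would block the term-by-term passage to the limit. This is precisely what the three vanishing assertions of Theorem~\ref{thm:ddog} supply, so once they are invoked the remainder is routine: the admissible $\mu$-MHZVs converge by definition, the quasi-$\mu$-MHZVs always converge, and the finitely many $M$-independent coefficients in $\Q[\mu]$ may be pulled outside the limit. Thus each product is exhibited as an element of $\calZ^{\mu}$, completing all three cases.
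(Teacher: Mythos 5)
Your proposal is correct and is essentially the paper's own argument: the paper proves Theorem~\ref{qqox} in one line by ``taking $M\to\infty$ in Theorem~\ref{thm:ddog}'', and your write-up simply makes explicit the reasoning that justifies this, namely the vanishing of $\st_{1;1}^{\mu}$, $\st_{2;1}^{\mu}$ (and the admissibility restriction in part (3)) on non-admissible indices, which guarantees every term on the right-hand side converges so the limit may be taken term by term.
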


\begin{proof}
This follows immediately by taking $M\to\infty$ in Theorem~\ref{thm:ddog}.
\end{proof}

\begin{eg}\label{ofns}
If we take $(\bfk_1;\bfm_1)=(2;m_1)$ and $(\bfk_2;\bfm_2)=(2;m_2)$, by Example \ref{firn}, we have

1. If $m_1=m=m_2$, then
$$\zeta^{\mu}(2;m_1)\cdot\zeta^{\mu}(2;m_2)=\zeta^{\mu}(2,2;m_1,m_2)+\zeta^{\mu}(2,2;m_2,m_1)-\mu\zeta^{\mu}(4;m).$$

2. If $m_1\neq m_2$, then
\begin{multline*}
\zeta^{\mu}(2;m_1)\cdot\zeta^{\mu}(2;m_2)=\zeta^{\mu}(2,2;m_1,m_2)+\zeta^{\mu}(2,2;m_2,m_1)\\
-\frac{\mu}{(m_1-m_2)^2}(\zeta^{\mu}(2;m_1)+\zeta^{\mu}(2;m_2))
-\frac{2\mu}{(m_1-m_2)^3}\calD^{\mu}(1;m_1;m_2).
\end{multline*}
\end{eg}

\subsection{Integral representation and shuffle product on $\calZ^{\mu}$}\label{sec:intRep}

For the classical MZVs, Kontsevich and Drinfel'd discovered the following integral representation.

\begin{thm}
Let $\bfk=(k_1,\cdots,k_r)$ be an admissible bi-index ($k_r>1$). Then we have
$$\zeta(\bfk)=\int_{\Delta^{|\bfk|}}\omega_{\bfk}
=\int_0^1 \left(\frac{dt}{t}\right)^{k_1-1}\frac{dt}{1-t} \cdots \left(\frac{dt}{t}\right)^{k_r-1}\frac{dt}{1-t}.$$
\end{thm}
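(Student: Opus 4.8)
The statement is the classical Kontsevich–Drinfel'd formula, so the plan is to prove a one-variable refinement first and then pass to the boundary. First I would introduce the iterated integral with a moving endpoint,
$$
L_{\bfk}(z):=\int_0^z\left(\frac{dt}{t}\right)^{k_1-1}\frac{dt}{1-t}\cdots\left(\frac{dt}{t}\right)^{k_r-1}\frac{dt}{1-t},\qquad 0\le z<1,
$$
interpreted as the iterated integral over the ordered simplex with the largest variable bounded by $z$, so that $L_{\bfk}(1)=\int_{\Delta^{|\bfk|}}\omega_{\bfk}$ is exactly the quantity to be evaluated. On the series side I would set $\Li_{\bfk}(z):=\sum_{0<n_1<\cdots<n_r}z^{n_r}/(n_1^{k_1}\cdots n_r^{k_r})$, which converges for $|z|<1$ and tends to $\zeta(\bfk)$ as $z\to1^-$ when $\bfk$ is admissible. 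The orientation of the simplex and the precise pairing of differential forms with summation indices are those of \cite{bj,ZhaoBook}.

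The heart of the argument is that $L_{\bfk}$ and $\Li_{\bfk}$ satisfy the same first-order differential system in $z$ with the same initial value. Differentiating the series term by term gives
$$
\frac{d}{dz}\Li_{k_1,\ldots,k_r}(z)=\begin{cases}\dfrac{1}{z}\,\Li_{k_1,\ldots,k_r-1}(z),& k_r\ge 2,\\[2mm]\dfrac{1}{1-z}\,\Li_{k_1,\ldots,k_{r-1}}(z),& k_r=1,\end{cases}
$$
where the second case is obtained from $\frac{1}{1-z}=\sum_{j\ge0}z^{j}$ by a Cauchy product and a re-indexing that reintroduces the freed innermost summation index; both right-hand sides vanish at $z=0$, and the recursion bottoms out at $\Li_{\emptyset}\equiv1$.

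By the fundamental theorem of calculus the iterated integral $L_{\bfk}(z)$ satisfies exactly the same recursion: differentiating in the endpoint $z$ strips off the differential form attached to the largest integration variable, producing the factor $1/z$ or $1/(1-z)$ and lowering the weight by one, with $L_{\bfk}(0)=0$. Hence, by induction on the weight $|\bfk|$ and uniqueness of solutions of a first-order linear system with prescribed initial value, $L_{\bfk}(z)=\Li_{\bfk}(z)$ for all $0\le z<1$. Finally I would let $z\to1^-$: since $\bfk$ is admissible ($k_r>1$) the defining series converges absolutely, so Abel's theorem yields $\lim_{z\to1^-}\Li_{\bfk}(z)=\zeta(\bfk)$, while the same condition makes $\omega_{\bfk}$ integrable over the closed simplex, giving $\lim_{z\to1^-}L_{\bfk}(z)=\int_{\Delta^{|\bfk|}}\omega_{\bfk}$. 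Combining the three limits proves the identity.

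I expect the only genuine obstacle to be the boundary analysis at $z=1$, where one must justify exchanging the limit with the infinite summation and with the iterated integration; this is precisely what admissibility $k_r>1$ controls, either through Abel summation or through dominated convergence against the convergent majorant $\sum 1/(n_1^{k_1}\cdots n_r^{k_r})$. Everything else is routine once the orientation of the simplex is fixed so that the form peeled off at the largest integration variable matches the index lowered on the series side.
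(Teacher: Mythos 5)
Your proof is correct. Note that for this particular statement the paper gives no argument of its own---its ``proof'' is the citation to \cite{bj} and \cite{ZhaoBook}---but your argument is in essence the same one the paper carries out for the $\mu$-generalization: Lemma~\ref{lem:Li-Iteration} establishes the weight-lowering recursion in integrated form (by term-by-term integration of the series), Theorem~\ref{pzci} runs the induction on the weight $|\bfk|$, and Corollary~\ref{cor:muMZVitIntegral} passes to the limit $t\to 1^-$. Your formulation of the recursion via differentiation in the endpoint is equivalent to the paper's formulation via integration, by the fundamental theorem of calculus together with the common initial value $L_{\bfk}(0)=\Li_{\bfk}(0)=0$; likewise your boundary analysis (Abel's theorem, or monotone convergence of the positive terms, both controlled by $k_r>1$) is the same as the paper's limit argument. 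The one point that genuinely requires care is the orientation convention: on the simplex $0<t_{|\bfk|}<\cdots<t_1<1$ the form stripped off at the largest variable must be the one attached to the exponent $k_r$ that your series recursion lowers, and you correctly flag this and resolve it by fixing the conventions of \cite{bj,ZhaoBook} rather than leaving the pairing ambiguous.
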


\begin{proof} See \cite[Chapter 1]{bj} and \cite[Chapter 2]{ZhaoBook}.
\end{proof}

For the $\mu$-MHZV $\zeta^{\mu}(\bfk;\bfm)$, we have a similar result. To state it, we need some additional definitions
including the ``$\mu$-multiple polylogarithm function''.

\begin{defn}
Let $r,s\in\Z_{\geqslant0}$. A permutation of the set
$\{1,2,\cdots,r+s\}$ is called a shuffle of type $(r, s)$ if the following two conditions
are satisfied:
$$\sigma(1)<\sigma(2)<\cdots<\sigma(r),\quad\text{and}\quad\sigma(r+1)<\sigma(r+2)<\cdots<\sigma(r+s).$$
We denote the set of all shuffles of type $(r,s)$ by $\shuffle(r, s)$. That is
$$\shuffle(r,s):=\left\{\sigma\in S_{r+s}\Bigg|\
\begin{aligned}
&\sigma(1)<\sigma(2)<\cdots<\sigma(r),\\
&\sigma(r+1)<\sigma(r+2)<\cdots<\sigma(r+s)
\end{aligned}\right\}.
$$
\end{defn}
Notice that the subset $\shuffle(r,s)$ is not a subgroup of the permutation group $S_{r+s}$.

\begin{eg}
If $(r,s)=(2,2)$, then
$$\shuffle(2,2)=\{(1),(23),(243),(123),(1243),(13)(24)\}\subset S_4.$$
\end{eg}

\begin{defn}
Let $(\bfk;\bfm)=(k_1,\cdots,k_r;m_1,\cdots,m_r)$ be a positive bi-index. We define
$$\Li^{\mu}(\bfk;\bfm;t):=\sum_{0\leqslant n_1\leqslant\cdots\leqslant n_r}
 \frac{\mu^rt^{n_r\mu}}{(n_1\mu+m_{1})^{k_{1}}\cdots(n_r\mu+m_r)^{k_r}},\qquad 0\leqslant t<1,$$
and call it the \emph{$\mu$-multiple polylogarithm function}.
\end{defn}
It is easy to see that the ordinary multiple polylogarithm of single variable \cite[p.\ 78, (3.20)]{ZhaoBook}
\begin{equation}\label{equ:MPLrel}
\Li_\bfk(t):=\sum_{0<n_1<\dots<n_r}\frac{t^{n_r}}{n_1^{k_1}\cdots n_r^{k_r} }=\Li^{\mu=1}(\bfk;1,2,\cdots,r;t).
\end{equation}

Obviously, if $(\bfk;\bfm)$ is an admissible bi-index, then
$$\lim_{t\to1^{-}}\Li^{\mu}(\bfk;\bfm;t)=\zeta^{\mu}(\bfk;\bfm).$$

An important property of the classical multiple polylogarithms is that they satisfy many functional equations.
But more useful to us is that they possess an iterative structure (from which its name comes), which we now extend. Recall that $\bfk'$
is obtained from $\bfk$ by removing its last component.

\begin{lem}\label{lem:Li-Iteration}
Let $(\bfk;\bfm)=(k_1,\cdots,k_r;m_1,\cdots,m_r)$ be a positive bi-index. For all $0\leqslant t<1$, we have
\begin{align*}
t^{m_r}\Li^{\mu}(\bfk',k_r+1;\bfm;t)&=\int_0^t\Li^{\mu}(\bfk;\bfm;t_1)t_1^{m_r}\frac{dt_1}{t_1},\\
t^{m_{r+1}}\Li^{\mu}(\bfk,1;\bfm,m_{r+1};t)&=\int_0^t\Li^{\mu}(\bfk;\bfm;t_1)t_1^{m_{r+1}}\frac{\mu dt_1}{t_1(1-t_1^{\mu})}.
\end{align*}
\end{lem}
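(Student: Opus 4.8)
The plan is to prove both identities directly from the series definition of $\Li^{\mu}$ by integrating the power series term by term, exploiting the fact that the outermost summation index $n_r$ controls the exponent of $t$ in each summand. Both formulas are ``raising'' operations: the first increases the last component $k_r$ by one, while the second appends a new pair $(1,m_{r+1})$ to the bi-index, and each is realized by a single integration against an appropriate one-form.

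For the first identity, I would start from the definition
\begin{align*}
\Li^{\mu}(\bfk;\bfm;t_1)t_1^{m_r}\frac{dt_1}{t_1}
=\sum_{0\leqslant n_1\leqslant\cdots\leqslant n_r}\frac{\mu^r\, t_1^{n_r\mu+m_r-1}}{(n_1\mu+m_1)^{k_1}\cdots(n_r\mu+m_r)^{k_r}}\,dt_1,
\end{align*}
and integrate from $0$ to $t$. Since $n_r\mu+m_r>0$ for all admissible summation terms, each monomial integrates to $t^{n_r\mu+m_r}/(n_r\mu+m_r)$, which precisely raises the exponent $k_r$ to $k_r+1$ in the denominator factor $(n_r\mu+m_r)^{k_r}$. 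Collecting the result yields $t^{m_r}\Li^{\mu}(\bfk',k_r+1;\bfm;t)$, as claimed. The only point requiring care is justifying term-by-term integration, which follows from uniform convergence of the series on compact subsets of $[0,1)$ for $\Ree(\mu)>0$.

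For the second identity the idea is the same but the one-form is $t_1^{m_{r+1}}\frac{\mu\,dt_1}{t_1(1-t_1^{\mu})}$, so I would first expand the geometric-type factor $\frac{\mu}{t_1(1-t_1^{\mu})}$ as $\mu\sum_{j\geqslant0}t_1^{j\mu-1}$ and multiply through by $\Li^{\mu}(\bfk;\bfm;t_1)t_1^{m_{r+1}}$. The effect of integrating the product is to introduce a fresh summation index, say $n_{r+1}$, ranging over $n_r\leqslant n_{r+1}$ (coming from the shift $n_{r+1}=n_r+j$ with $j\geqslant 0$), together with a new denominator factor $(n_{r+1}\mu+m_{r+1})$ to the first power, which is exactly what appending $(1,m_{r+1})$ produces. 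The bookkeeping here is the main obstacle: I need to track how the exponent $t_1^{n_r\mu+m_{r+1}-1}\cdot t_1^{j\mu}$ integrates to $t^{(n_r+j)\mu+m_{r+1}}/((n_r+j)\mu+m_{r+1})$, then re-index with $n_{r+1}=n_r+j$ so that the inequality $0\leqslant n_1\leqslant\cdots\leqslant n_r\leqslant n_{r+1}$ emerges correctly and the factor $t^{m_{r+1}}$ factors out cleanly. Once the re-indexing is set up, matching the result against the definition of $\Li^{\mu}(\bfk,1;\bfm,m_{r+1};t)$ is routine. I would verify convergence of the double sum (over the original indices and $j$) on $[0,t]$ to legitimize interchanging summation and integration, which again holds since $\Ree(\mu)>0$ keeps all exponents positive and the geometric factor is summable for $t_1<1$.
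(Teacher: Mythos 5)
Your proposal is correct and follows essentially the same route as the paper's proof: both identities are obtained by term-by-term integration of the defining series, with the second one using the expansion $\frac{\mu}{1-t_1^{\mu}}=\mu\sum_{j\geqslant 0}t_1^{j\mu}$ and the re-indexing $n_{r+1}=n_r+j$ to produce the new summation variable, exactly as in the paper (where the shift parameter is called $m$). The only difference is cosmetic: you make the justification of interchanging sum and integral explicit, which the paper leaves implicit.
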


\begin{proof}
We have
\begin{align*}
\int_0^t\Li^{\mu}(\bfk;\bfm;t_1)t_1^{m_r}\frac{dt_1}{t_1}
&=\int_0^t\left(\sum_{0\leqslant n_1\leqslant\cdots\leqslant n_r}\frac{\mu^r t_1^{n_r\mu}}{(n_{1}\mu+m_{1})^{k_{1}}\cdots(n_r\mu+m_r)^{k_r}}\right)t_1^{m_r}\frac{dt_1}{t_1}\\
&=\sum_{0\leqslant n_1\leqslant\cdots\leqslant n_r}\int_0^t\frac{\mu^r t_1^{n_r\mu+m_r-1}}{(n_{1}\mu+m_{1})^{k_{1}}\cdots(n_r\mu+m_r)^{k_r}}dt_1\\
&=\sum_{0\leqslant n_1\leqslant\cdots\leqslant n_r}\frac{\mu^r t_1^{n_r\mu+m_r}}{(n_{1}\mu+m_{1})^{k_{1}}\cdots(n_r\mu+m_r)^{k_r+1}}\\
&=t^{m_r}\Li^{\mu}(\bfk',k_r+1;m_1,\cdots,m_r;t)
\end{align*}
which proves the first identity. For the second identity, we have
\begin{align*}
&\int_0^t\Li^{\mu}(\bfk;\bfm;t_1)t_1^{m_{r+1}}\frac{\mu dt_1}{t_1(1-t_1^{\mu})}\\
&=\int_0^t\left(\sum_{0\leqslant n_1\leqslant\cdots\leqslant n_r}\frac{\mu^{r} t_1^{n_r\mu}}{(n_{1}\mu+m_{1})^{k_{1}}\cdots(n_r\mu+m_r)^{k_r}}\right)t_1^{m_{r+1}}\frac{\mu dt_1}{t_1(1-t_1^{\mu})}\\
&=\int_0^t\left(\sum_{0\leqslant n_1\leqslant\cdots\leqslant n_r}\frac{\mu^{r} t_1^{n_r\mu}}{(n_{1}\mu+m_{1})^{k_{1}}\cdots(n_r\mu+m_r)^{k_r}}\cdot\frac{\mu t_1^{m_{r+1}} }{t_1(1-t_1^{\mu})}\right)dt_1\\
&=\int_0^t\left(\sum_{0\leqslant n_1\leqslant\cdots\leqslant n_r}\frac{\mu^{r+1} t_1^{n_r\mu}}{(n_{1}\mu+m_{1})^{k_{1}}\cdots(n_r\mu+m_r)^{k_r}}\cdot\sum_{m=0}^{\infty}t_1^{m\mu+m_{r+1}-1}\right)dt_1\\
&=\int_0^t\left(\sum_{0\leqslant n_1\leqslant\cdots\leqslant n_r\leqslant n_{r+1}}\frac{\mu^{r+1} t_1^{n_{r+1}\mu+m_{r+1}-1}}{(n_{1}\mu+m_{1})^{k_{1}}\cdots(n_r\mu+m_r)^{k_r}}\right)dt_1\qquad (n_{r+1}=n_r+m)\\
&=\sum_{0\leqslant n_1\leqslant\cdots\leqslant n_r\leqslant n_{r+1}}\int_0^t\frac{\mu^{r+1} t_1^{n_{r+1}\mu+m_{r+1}-1}}{(n_{1}\mu+m_{1})^{k_{1}}\cdots(n_r\mu+m_r)^{k_r}}dt_1\\
&=\sum_{0\leqslant n_1\leqslant\cdots\leqslant n_r\leqslant n_{r+1}}\frac{\mu^{r+1} t^{n_{r+1}\mu+m_{r+1}}}{(n_{1}\mu+m_{1})^{k_{1}}\cdots(n_r\mu+m_r)^{k_r}(n_{r+1}\mu+m_{r+1})}\\
&=t^{m_{r+1}}\Li^{\mu}(\bfk,1;\bfm,m_{r+1};t),
\end{align*}
as desired.
\end{proof}

Another notation is needed to describe
the general integral representation of $\mu$-MHZVs.

\begin{defn}
Given a real number $0<t\leqslant1$, we define
$$\Delta^r(t):=\left\{(t_1,\cdots,t_r)\in\R^r\big|0<t_r<\cdots<t_1<t\right\},\qquad0<t\leqslant 1.$$
\end{defn}

When $t=1$, we will simply write $\Delta^r:=\Delta^r(1)$. Furthermore, consider the 1-forms on the open interval $(0,1)$
$$\omega_0(t):=\frac{dt}{t},\qquad\omega_{1,\mu}^{(m)}(t):=\frac{\mu t^mdt}{t(1-t^{\mu})},$$
where $m\in\Z$. Let $(\bfk;\bfm)=(k_1,\cdots,k_r;m_1,\cdots,m_r)\in\N^r\times\N^r$ is a positive bi-index.
Put $s_i=k_1+\cdots+k_i (i=1,\cdots,r)$. For convenience, we write $s_0=0,m_0=0$. Let $\omega_{(\bfk;\bfm)}$
be the measure on the interior of the simplex $\Delta^{|\bfk|}$ given by
$$
\omega^{\mu}_{(\bfk;\bfm)}=
\prod_{i=1}^r\omega_{1,\mu}^{(m_i-m_{i-1})}(t_{s_{r+1-i}})\omega_0(t_{s_{r+1-i}-1})\cdots\omega_0(t_{s_{r-i}+1}).
$$
For example, one has:
\begin{align*}
\omega^{\mu}_{(2;m_1)}&=\frac{\mu t_2^{m_1}dt_2}{t_2(1-t_2^{\mu})}\ \frac{dt_1}{t_1}\\
&=\omega_{1,\mu}^{(m_1)}(t_2)\omega_0(t_1),\\
\omega^{\mu}_{(2,2;m_1,m_2)}&=\frac{\mu t_4^{m_1}dt_4}{t_4(1-t_4^{\mu})}\ \frac{dt_3}{t_3}\ \frac{\mu t_2^{m_2-m_1}dt_2}{t_2(1-t_2^{\mu})}\ \frac{dt_1}{t_1}\\
&=\omega_{1,\mu}^{(m_1)}(t_4)\omega_0(t_3)\omega_{1,\mu}^{(m_2-m_1)}(t_2)\omega_0(t_1),\\
\omega^{\mu}_{(1,3;m_1,m_2)}&=\frac{\mu t_4^{m_1}dt_4}{t_4(1-t_4^{\mu})}\ \frac{\mu t_3^{m_2-m_1}dt_3}{t_3(1-t_3^{\mu})}\ \frac{dt_2}{t_2}\ \frac{dt_1}{t_1}\\
&=\omega_{1,\mu}^{(m_1)}(t_4)\omega_{1,\mu}^{(m_2-m_1)}(t_3)\omega_0(t_2)\omega_0(t_1),\\
\omega^{\mu}_{(2,1,2;m_1,m_2,m_3)}&=\frac{\mu t_5^{m_1}dt_5}{t_5(1-t_5^{\mu})}\ \frac{dt_4}{t_4}\ \frac{\mu t_3^{m_2-m_1}dt_3}{t_3(1-t_3^{\mu})}\ \frac{\mu t_2^{m_3-m_2}dt_2}{t_2(1-t_2^{\mu})}\ \frac{dt_1}{t_1}\\
&=\omega_{1,\mu}^{(m_1)}(t_5)\omega_0(t_4)\omega_{1,\mu}^{(m_2-m_1)}(t_3)\omega_{1,\mu}^{(m_3-m_2)}(t_2)\omega_0(t_1).
\end{align*}

Now the integral representation is a particular case of the next result.

\begin{thm}\label{pzci}
If $(\bfk;\bfm)=(k_1,\cdots,k_r;m_1,\cdots,m_r)$ is a positive bi-index and $0<t<1$, then the following identity holds:
$$t^{m_r}\Li^{\mu}(\bfk;\bfm;t)=\int_{\Delta^{|\bfk|}(t)}\omega^{\mu}_{(\bfk;\bfm)}.$$
\end{thm}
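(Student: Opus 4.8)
\emph{Proof plan.} The plan is to induct on the weight $N=|\bfk|$, using the two iteration formulas of Lemma~\ref{lem:Li-Iteration} as the engine. It is convenient to set $F(\bfk;\bfm;t):=t^{m_r}\Li^{\mu}(\bfk;\bfm;t)$, so that the asserted identity becomes $F(\bfk;\bfm;t)=\int_{\Delta^{|\bfk|}(t)}\omega^{\mu}_{(\bfk;\bfm)}$. The first step is to rewrite Lemma~\ref{lem:Li-Iteration} in terms of $F$: since $\Li^{\mu}(\bfk;\bfm;t_1)=t_1^{-m_r}F(\bfk;\bfm;t_1)$, the base-point powers $t_1^{\pm m_r}$ cancel and the two identities collapse into the clean recursions
\[
F(\bfk',k_r+1;\bfm;t)=\int_0^t F(\bfk;\bfm;t_1)\,\omega_0(t_1),\qquad
F(\bfk,1;\bfm,m_{r+1};t)=\int_0^t F(\bfk;\bfm;t_1)\,\omega_{1,\mu}^{(m_{r+1}-m_r)}(t_1).
\]
Thus appending a factor $\omega_0$ raises the last exponent $k_r$ by one, while appending $\omega_{1,\mu}^{(m_{r+1}-m_r)}$ opens a new block with data $(1;m_{r+1})$; in both cases the new one-form is attached at the outermost integration variable $t_1$.

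For the base case I would take $N=0$, where both sides equal $1$; the first nonempty instance $N=1$, $\bfk=(1)$, is exactly the termwise geometric-series evaluation carried out just before Lemma~\ref{lem:Li-Iteration}. For the inductive step I would peel off the outermost one-form of $\omega^{\mu}_{(\bfk;\bfm)}$, namely the factor sitting at the largest variable $t_1$. Reading the block decomposition from the innermost variable outward, this outermost factor is $\omega_0$ exactly when $k_r>1$ and is $\omega_{1,\mu}^{(m_r-m_{r-1})}$ exactly when $k_r=1$. In the former case I would invoke the $\omega_0$-recursion with reduced index $(k_1,\dots,k_{r-1},k_r-1;\bfm)$, and in the latter the $\omega_{1,\mu}$-recursion with reduced index $(k_1,\dots,k_{r-1};m_1,\dots,m_{r-1})$. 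Either way the inner factor is an $F$ of weight $N-1$, to which the inductive hypothesis applies, and the outer integral $\int_0^t(\,\cdot\,)\,\omega(t_1)$ reassembles the full simplex integral after the inner variables are relabelled $t_j\mapsto t_{j+1}$.

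The one point that will need care is the combinatorial bookkeeping linking the recursion to the explicit shape of $\omega^{\mu}_{(\bfk;\bfm)}$: I must check that deleting the outermost factor and shifting $t_j\mapsto t_{j-1}$ returns precisely $\omega^{\mu}$ of the reduced index, i.e. that the partial sums $s_i=k_1+\cdots+k_i$ and the consecutive differences $m_i-m_{i-1}$ behave correctly under ``remove one $\omega_0$ from the last block'' and ``remove the last block entirely.'' This is the main obstacle, though it is purely formal; the cleanest route is to observe that each of the $r$ blocks occupies a contiguous run of variables and that only the last block is touched by either operation. The surviving analytic steps---interchanging $\sum$ and $\int_0^t$, legitimate by absolute convergence for $0\le t<1$, and integrating $t^{a-1}$ termwise---are routine and already appear in the proof of Lemma~\ref{lem:Li-Iteration}.
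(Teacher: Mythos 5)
Your plan is correct and is essentially the paper's own proof: induction on the weight $|\bfk|$, with the inductive step split into the cases $k_r>1$ and $k_r=1$, applying respectively the first and second identities of Lemma~\ref{lem:Li-Iteration} to the reduced indices $(k_1,\cdots,k_r-1;\bfm)$ and $(k_1,\cdots,k_{r-1};m_1,\cdots,m_{r-1})$, with the base case given by the weight-one (or empty) index. The bookkeeping point you flag---that stripping the outermost $1$-form of $\omega^{\mu}_{(\bfk;\bfm)}$ and relabelling variables leaves exactly $\omega^{\mu}$ of the reduced index---is precisely what the paper uses, silently, in the final equality of each case.
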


\begin{proof} The proof is by induction on the weight $|\bfk|$. If $|\bfk|=1$, then $(\bfk;\bfm)=(1;m_1)$ and the statement is just the second identity in Lemma~\ref{lem:Li-Iteration}. The inductive step follows from
the iterative procedure given by Lemma~\ref{lem:Li-Iteration}. Indeed, let $(\bfk;\bfm)=(k_1,\cdots,k_r;m_1,\cdots,m_r)$ be a
positive bi-index and assume that the result is true for all bi-indices of lower
weights. If $k_r>1$, then by writing
$$(\bfk_{-};\bfm)=(k_1,\cdots,k_r-1;m_1,\cdots,m_r)$$
and applying the first identity in Lemma~\ref{lem:Li-Iteration}, we have, by the
induction assumption,
$$t^{m_r}\Li^{\mu}(\bfk;\bfm;t)=\int_0^tt_1^{m_r}\Li^{\mu}(\bfk_{-};\bfm;t_1)\frac{dt_1}{t_1}
=\int_0^t\int_{\Delta^{|\bfk_{-}|}(t_1)}\omega^{\mu}_{(\bfk_{-};\bfm)}\frac{dt_1}{t_1}
=\int_{\Delta^{|\bfk|}(t)}\omega^{\mu}_{(\bfk;\bfm)}.$$
If $k_r=1$ then by the second identity in Lemma~\ref{lem:Li-Iteration} and the
induction assumption
\begin{align*}
t^{m_r}\Li^{\mu}(\bfk;\bfm;t)&=\int_0^tt_1^{m_{r-1}}\Li^{\mu}(\bfk_{-};\bfm';t_1)\frac{\mu t_1^{m_r-m_{r-1}}dt_1}{t_1(1-t_1^{\mu})}\\
&=\int_0^t\left(\int_{\Delta^{|\bfk_{-}|}(t_1)}\omega^{\mu}_{(\bfk';\bfm')}\right)\frac{\mu t_1^{m_r-m_{r-1}}dt_1}{t_1(1-t_1^{\mu})}\\
&=\int_{\Delta^{|\bfk|}(t)}\omega^{\mu}_{(\bfk;\bfm)}.
\end{align*}
We have now finished the proof of the theorem.
\end{proof}

\begin{cor}\label{cor:muMZVitIntegral}
If $(\bfk;\bfm)$ is an admissible bi-index, then the following identity holds:
$$\zeta^{\mu}(\bfk;\bfm)=\int_{\Delta^{|\bfk|}}\omega^{\mu}_{(\bfk;\bfm)}.$$
For any special triple $(\bfk;\bfm;\tbfm)$, we have
\begin{equation*}
\calD^{\mu}(\bfk;\bfm;\tbfm)=\int_{\Delta^{|\bfk|}}(\omega^{\mu}_{(\bfk;\bfm)}-\omega^{\mu}_{(\bfk;\tbfm)}).
\end{equation*}
\end{cor}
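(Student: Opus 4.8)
The plan is to obtain both formulas from Theorem~\ref{pzci} by letting $t\to 1^-$, the only real issue being the interchange of this limit with the integration over the simplex. Recall that in $\omega^{\mu}_{(\bfk;\bfm)}$ the factor attached to the variable $t_1$ nearest to $t$ is $\omega_0=dt_1/t_1$ when the last entry of $\bfk$ exceeds $1$, and is of type $\omega_{1,\mu}^{(m)}(t_1)\sim dt_1/(1-t_1)$ when that entry equals $1$; thus admissibility is precisely the condition that the integrand be regular along the face $t_1=1$.

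For the first identity I assume $(\bfk;\bfm)$ admissible and apply Theorem~\ref{pzci}. On the left, $t^{m_r}\to 1$ and $\Li^{\mu}(\bfk;\bfm;t)\to\zeta^{\mu}(\bfk;\bfm)$ as noted in the text, so it remains to show that $\int_{\Delta^{|\bfk|}(t)}\omega^{\mu}_{(\bfk;\bfm)}\to\int_{\Delta^{|\bfk|}}\omega^{\mu}_{(\bfk;\bfm)}$, i.e.\ that the improper integral over the open simplex converges and equals the limit of the truncated ones. Since $k_r>1$ the top form is $\omega_0$, regular at $t_1=1$; near $t_{|\bfk|}=0$ the innermost form is $\omega_{1,\mu}^{(m_1)}\sim \mu t^{m_1-1}dt$ with $m_1\geqslant 1$, and the remaining interior collisions are integrable exactly as for the classical multiple polylogarithm. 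Hence $\bigl|\omega^{\mu}_{(\bfk;\bfm)}\bigr|$ is integrable over $\Delta^{|\bfk|}$, and dominated convergence (or, for real $\mu>0$, monotone convergence together with analyticity in $\mu$) yields the claimed interchange. This gives $\zeta^{\mu}(\bfk;\bfm)=\int_{\Delta^{|\bfk|}}\omega^{\mu}_{(\bfk;\bfm)}$.

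For the second identity, $(\bfk;\bfm)$ and $(\bfk;\tbfm)$ are both non-admissible ($k_r=1$), so the individual integrals diverge; I would therefore subtract the two instances of Theorem~\ref{pzci} to get
\[
\int_{\Delta^{|\bfk|}(t)}\bigl(\omega^{\mu}_{(\bfk;\bfm)}-\omega^{\mu}_{(\bfk;\tbfm)}\bigr)=t^{m_r}\Li^{\mu}(\bfk;\bfm;t)-t^{\tm_r}\Li^{\mu}(\bfk;\tbfm;t).
\]
Because $\bfm$ and $\tbfm$ agree except in the last slot and $k_r=1$, the two forms differ only in the single factor attached to $t_1$, and their difference $\omega_{1,\mu}^{(m_r-m_{r-1})}(t_1)-\omega_{1,\mu}^{(\tm_r-m_{r-1})}(t_1)$ stays bounded as $t_1\to 1$: the numerator $\mu\bigl(t_1^{m_r-m_{r-1}-1}-t_1^{\tm_r-m_{r-1}-1}\bigr)$ has a simple zero there (as $m_r\ne\tm_r$), cancelling the simple zero of $1-t_1^{\mu}$. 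Consequently $\omega^{\mu}_{(\bfk;\bfm)}-\omega^{\mu}_{(\bfk;\tbfm)}$ is integrable over $\Delta^{|\bfk|}$ (the common inner factors contribute at most a power of $\log\frac{1}{1-t_1}$, still integrable), and the left-hand side converges to $\int_{\Delta^{|\bfk|}}(\omega^{\mu}_{(\bfk;\bfm)}-\omega^{\mu}_{(\bfk;\tbfm)})$.

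It then remains to identify the limit of the right-hand side with $\calD^{\mu}(\bfk;\bfm;\tbfm)$. Writing it as $t^{m_r}\bigl[\Li^{\mu}(\bfk;\bfm;t)-\Li^{\mu}(\bfk;\tbfm;t)\bigr]+(t^{m_r}-t^{\tm_r})\Li^{\mu}(\bfk;\tbfm;t)$, the first bracket converges termwise, via $\frac{1}{n\mu+m_r}-\frac{1}{n\mu+\tm_r}=\frac{\tm_r-m_r}{(n\mu+m_r)(n\mu+\tm_r)}$ and uniform convergence (Abel's theorem), to the absolutely convergent series defining $\calD^{\mu}(\bfk;\bfm;\tbfm)$, while $t^{m_r}\to1$; the second summand tends to $0$ because $t^{m_r}-t^{\tm_r}=O(1-t)$ whereas $\Li^{\mu}(\bfk;\tbfm;t)$ grows at most like a power of $\log\frac{1}{1-t}$ as $t\to1^-$. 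This yields the second identity. The main obstacle throughout is exactly the justification of these limit--integral interchanges: the absolute integrability in the admissible case, and the cancellation of the boundary singularity along $t_1=1$ in the special-triple case. Once these two points are secured, both formulas drop out.
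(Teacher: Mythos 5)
Your proposal is correct and follows essentially the same route as the paper: both deduce the corollary from Theorem~\ref{pzci} by letting $t\to1^-$, the paper simply asserting the Abel-type limits and the limit--integral interchange that you justify in detail (one trivial slip: a positive bi-index only guarantees $m_1>0$, not $m_1\geqslant 1$, but $m_1>0$ is all your integrability argument needs). The extra care you take in the special-triple case --- the cancellation of the singularity along $t_1=1$ and the decomposition identifying the limit with $\calD^{\mu}(\bfk;\bfm;\tbfm)$ --- is precisely the content the paper's two-line proof leaves implicit.
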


\begin{proof} By assumption, if $(\bfk;\bfm)$ is an admissible bi-index then
\begin{equation*}
\zeta^{\mu}(\bfk;\bfm)=\lim_{t\to1^{-}}t^{m_r}\Li^{\mu}(\bfk;\bfm;t)=
\lim_{t\to1^{-}}\int_{\Delta^{|\bfk|}(t)}\omega^{\mu}_{(\bfk;\bfm)}=
\int_{\Delta^{|\bfk|}}\omega^{\mu}_{(\bfk;\bfm)}.
\end{equation*}
If $(\bfk;\bfm;\tbfm)$ is a special triple, then
\begin{align*}
\calD^{\mu}(\bfk;\bfm;\tbfm)&
=\lim_{t\to1^{-}}(t^{m_r}\Li^{\mu}(\bfk;\bfm;t)-t^{\tm_r}\Li^{\mu}(\bfk;\tbfm;t))\\
&=\lim_{t\to1^{-}}\int_{\Delta^{|\bfk|}(t)}(\omega^{\mu}_{(\bfk;\bfm)}-\omega^{\mu}_{(\bfk;\tbfm)}),
\end{align*}
as desired.
\end{proof}

\begin{eg} We have the following iterated integral expressions of $\mu$-MHZVs:
\begin{align*}
\zeta^{\mu}(2;m_1)&=\int\limits_{0<t_2<t_1<1}\frac{\mu t_2^{m_1}dt_2}{t_2(1-t_2^{\mu})}\ \frac{dt_1}{t_1},\\
\zeta^{\mu}(2,2;m_1,m_2)&=\int\limits_{0<t_4<t_3<t_2<t_1<1}\frac{\mu t_4^{m_1}dt_4}{t_4(1-t_4^{\mu})}\ \frac{dt_3}{t_3}\ \frac{\mu t_2^{m_2-m_1}dt_2}{t_2(1-t_2^{\mu})}\ \frac{dt_1}{t_1},\\
\zeta^{\mu}(1,3;m_1,m_2)&=\int\limits_{0<t_4<t_3<t_2<_1<1}\frac{\mu t_4^{m_1}dt_4}{t_4(1-t_4^{\mu})}\ \frac{\mu t_3^{m_2-m_1}dt_3}{t_3(1-t_3^{\mu})}\ \frac{dt_2}{t_2}\ \frac{dt_1}{t_1},\\
\zeta^{\mu}(2,1,2;m_1,m_2,m_3)&=\int\limits_{0<t_5<t_4<t_3<t_2<t_1<1}\frac{\mu t_5^{m_1}dt_5}{t_5(1-t_5^{\mu})}\ \frac{dt_4}{t_4}\ \frac{\mu t_3^{m_2-m_1}dt_3}{t_3(1-t_3^{\mu})}\ \frac{\mu t_2^{m_3-m_2}dt_2}{t_2(1-t_2^{\mu})}\ \frac{dt_1}{t_1},\\
\calD^{\mu}(2,1;m,m_1;m,\tm_1)&=\int\limits_{0<t_3<t_2<t_1<1}\frac{\mu t_3^{m}dt_3}{t_3(1-t_3^{\mu})}\ \frac{dt_2}{t_2}\left(\frac{\mu t_1^{m_1-m}dt_1}{t_1(1-t_1^{\mu})}-\frac{\mu t_1^{\tm_1-m}dt_1}{t_1(1-t_1^{\mu})}\right).
\end{align*}
\end{eg}

\subsection{Bi-indices and binary sequences}
To exploit the preceding results to derive relations among polylogarithms, and in particular among $\mu$-MHZVs,
we need a new notation. This will enable us to go from bi-indices to some binary sequences
and vice versa.

\begin{defn}
To each positive bi-index $(\bfk;\bfm)=(k_1,\cdots,k_r;m_1,\cdots,m_r)$ we attach the
positive binary sequence
$$\bs(\bfk;\bfm)=(1,0_{k_1-1},1,0_{k_2-1},\cdots,1,0_{k_r-1};
 m_1,0_{k_1-1},m_2-m_1,0_{k_2-1},\cdots,m_r-m_{r-1},0_{k_r-1})$$
where $0_{k}$ means that the entry zero is repeated $k$ times.
\end{defn}
It clearly has an inverse map
$$\text{bs}^{-1}(1,0_{k_1},1,0_{k_2},\cdots,1,0_{k_r};m_1,0_{k_1},m_2,0_{k_2},\cdots,m_r,0_{k_r})
=(k_1',\cdots,k_r';m_1',\cdots,m_r')$$
where $k_i'=k_i+1,m_i'=m_1+\cdots+m_i,i=1,\cdots,r$.

\begin{eg}
\begin{align*}
\bs(2;m_1)&=(1,0;m_1,0),\\
\bs(2,2;m_1,m_2)&=(1,0,1,0;m_1,0,m_2-m_1,0),\\
\bs(1,3;m_1,m_2)&=(1,1,0,0;m_1,m_2-m_1,0,0),\\
\bs(2,1,2;m_1,m_2,m_3)&=(1,0,1,1,0;m_1,0,m_2-m_1,m_3-m_2,0).
\end{align*}
\end{eg}

The following shuffle product formula for iterated integral was first discovered by K.T.\ Chen \cite[(1.5.1)]{KTChen1971}.

\begin{lem}\label{oogj}
For any $r,s\in\Z_{\geqslant0}$, $0<t<1$, and differential 1-forms $\phi_i$ $(i=1,\cdots,r+s)$ on $(0,1)$, we have
\begin{align*}
&\int_{\Delta^r(t)}\phi_1(t_1)\cdots\phi_r(t_r)\cdot\int_{\Delta^s(t)}\phi_{r+1}(t_{r+1})\cdots\phi_{r+s}(t_{r+s})
=\sum_{\sigma\in\shuffle(r,s)}\int_{\Delta^{r+s}(t)}\phi_{\sigma^{-1}}(t_1)\cdots\phi_{\sigma^{-1}(r+s)}(t_{r+s})
\end{align*}
\end{lem}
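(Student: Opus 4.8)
The plan is to prove Chen's shuffle formula by induction on the total length $r+s$, exploiting the fact that an iterated integral over a simplex can be peeled off one form at a time from the \emph{innermost} variable (the one closest to $0$). The base cases $r=0$ or $s=0$ are trivial: when one factor is the empty integral (equal to $1$), the claim reduces to the identity $\int_{\Delta^s(t)}\phi_{1}(t_1)\cdots\phi_{s}(t_s)$, and $\shuffle(0,s)$ consists of the single identity shuffle, so both sides agree.

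\emph{Key inductive step.} Assume the formula holds for all pairs of lengths with smaller total length. The idea is to differentiate both sides with respect to the upper limit $t$, or equivalently to write the product of the two iterated integrals as an integral over the product region $\Delta^r(t)\times\Delta^s(t)$ and decompose that region. First I would introduce the running-variable versions
\begin{align*}
F(t)&=\int_{\Delta^r(t)}\phi_1(t_1)\cdots\phi_r(t_r),\\
G(t)&=\int_{\Delta^s(t)}\phi_{r+1}(t_{r+1})\cdots\phi_{r+s}(t_{r+s}),
\end{align*}
so that each satisfies a first-order relation: if $\phi_1(t_1)=f(t_1)\,dt_1$ then $\frac{d}{dt}F(t)=f(t)\cdot\int_{\Delta^{r-1}(t)}\phi_2\cdots\phi_r$, and similarly for $G$. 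Then by the product rule
$$
\frac{d}{dt}\bigl(F(t)G(t)\bigr)=\Bigl(\tfrac{d}{dt}F(t)\Bigr)G(t)+F(t)\Bigl(\tfrac{d}{dt}G(t)\Bigr).
$$
The first term has $\phi_1$ factored out with upper limit $t$ and an integral of length $(r-1)+s$ underneath; the second has $\phi_{r+1}$ factored out with an integral of length $r+(s-1)$ underneath. Applying the induction hypothesis to each underlying product expresses $\frac{d}{dt}(FG)$ as a sum over $\shuffle(r-1,s)\cup\shuffle(r,s-1)$ of length-$(r+s-1)$ integrands, each multiplied by the appropriate leading form. Integrating back from $0$ to $t$ reassembles these into length-$(r+s)$ iterated integrals.

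\emph{The combinatorial heart of the matter} is to check that the two families of shuffles produced—those that place $\phi_1$ outermost and those that place $\phi_{r+1}$ outermost—combine \emph{exactly} into $\shuffle(r,s)$ with no overlaps and no omissions. This is the standard recursive decomposition $\shuffle(r,s)=\bigl(1\,|\,\shuffle(r-1,s)\bigr)\sqcup\bigl((r+1)\,|\,\shuffle(r,s-1)\bigr)$, namely every shuffle of $\{1,\dots,r+s\}$ has either the block-$1$ letter or the block-$2$ letter as its first output; the $\sigma^{-1}$ indexing in the statement tracks precisely which original form lands in the outermost slot $t_{r+s}$ after integration. I expect this bookkeeping—reconciling the $\sigma^{-1}$ convention in the displayed formula with the leading-form extraction in the induction—to be the main obstacle, since one must verify that the index shifts from the two sub-shuffles align with the positions dictated by $\shuffle(r,s)$. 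Everything else is routine: the analytic step is just the fundamental theorem of calculus and the observation that $F(0)G(0)=0$ when $r+s\geqslant1$, which fixes the constant of integration and closes the induction.
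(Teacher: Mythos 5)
There is no proof in the paper to compare against: Lemma~\ref{oogj} is stated with an attribution to K.-T.\ Chen \cite{KTChen1971} and no argument is given. Your proposal therefore supplies what the paper omits, and what you describe is the standard proof: differentiate each factor in the upper limit $t$, apply the product rule, invoke the induction hypothesis on the two products of total length $r+s-1$, and integrate back by the fundamental theorem of calculus using $F(0)G(0)=0$. The combinatorial step you isolate is also the right one: since $\sigma$ is increasing on each block, $\sigma^{-1}(1)\in\{1,r+1\}$, so the shuffles split into those with $\sigma(1)=1$ and those with $\sigma(r+1)=1$, in bijection with $\shuffle(r-1,s)$ and $\shuffle(r,s-1)$ respectively, matching the two terms $F'G$ and $FG'$. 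For the record, an induction-free alternative closer to Chen's original treatment is to observe that, up to a set of measure zero, the product domain $\Delta^r(t)\times\Delta^s(t)$ is the disjoint union of the $\binom{r+s}{r}$ simplices obtained by interleaving the two chains of variables, one simplex per shuffle; integrating the product form over this decomposition gives the identity in one stroke.

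Two corrections you would need in a full write-up. First, your orientation language is backwards for this paper's convention: here $\Delta^{n}(t)=\{0<t_n<\cdots<t_1<t\}$, so the variable nearest $t$ is $t_1$, and differentiation in $t$ peels off $\phi_{\sigma^{-1}(1)}$ at the slot $t_1$; the slot $t_{r+s}$ is the one nearest $0$, not the outermost one as you state. Your displayed formula $\frac{d}{dt}F(t)=f(t)\int_{\Delta^{r-1}(t)}\phi_2\cdots\phi_r$ is consistent with the correct orientation, so the slip is only in the prose, but since this is exactly the bookkeeping you identify as the main obstacle, it has to be straightened out. Second, because the $\phi_i$ are arbitrary $1$-forms on $(0,1)$ (and in the paper's application they are singular at the endpoints), you should add the standing hypothesis that all iterated integrals occurring converge absolutely; this is what justifies differentiating under the integral sign and the vanishing of $F(u)G(u)$ as $u\to0^+$ that closes the induction.
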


\begin{defn}
Let $(\boldsymbol{s}_1;\boldsymbol{\tau}_1)=(s_1,\cdots,s_r;\tau_1,\cdots,\tau_r)$ and $(\boldsymbol{s}_2;\boldsymbol{\tau}_2)=(s_{r+1},\cdots,s_{r+s};$$\tau_{r+1},\cdots,\tau_{r+s})$ be two binary sequences. Define
\begin{multline*}
\shuffle((\boldsymbol{s}_1;\boldsymbol{\tau}_1),(\boldsymbol{s}_2;\boldsymbol{\tau}_2);(\boldsymbol{s};\boldsymbol{\tau}))\\
:={\rm Card}\Big(\big\{\sigma\in\shuffle(r,s)\big|(\boldsymbol{s};\boldsymbol{\tau})=
(s_{\sigma^{-1}(1)},\cdots,s_{\sigma^{-1}(r+s)};\tau_{\sigma^{-1}(1)},\cdots,\tau_{\sigma^{-1}(r+s)})\big\}\Big),
\end{multline*}
where Card($S$) is the cardinality of the set $S$.
\end{defn}

\begin{thm}\label{ignn}
Let $(\bfk_1;\bfm_1)$ and $(\bfk_2;\bfm_2)$ be positive bi-indices. Then for all $0<t<1$ we have
\begin{align*}
\Li^{\mu}(\bfk_1;\bfm_1;t)\cdot\Li^{\mu}(\bfk_2;\bfm_2;t)=
\sum_{(\bfk;\bfm)}\shuffle(\bs(\bfk_1;\bfm_1),\bs(\bfk_2;\bfm_2);\bs(\bfk;\bfm))\Li^{\mu}(\bfk;\bfm;t).
\end{align*}
\end{thm}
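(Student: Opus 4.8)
The plan is to deduce the identity from the iterated-integral representation of $\Li^\mu$ in Theorem~\ref{pzci} together with Chen's shuffle formula (Lemma~\ref{oogj}), transporting the combinatorics through the encoding map $\bs$. The decisive point of working at $0<t<1$ rather than at $t=1$ is that every $\Li^\mu(\bfk;\bfm;t)$ converges, so no regularization is needed and the argument is purely a manipulation of convergent iterated integrals.

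First I would fix $0<t<1$, write $m_r^{(1)},m_s^{(2)}$ for the last $\bfm$-components of the two bi-indices, and invoke Theorem~\ref{pzci} in the form
$$t^{m_r^{(1)}}\Li^\mu(\bfk_1;\bfm_1;t)=\int_{\Delta^{|\bfk_1|}(t)}\omega^\mu_{(\bfk_1;\bfm_1)},\qquad t^{m_s^{(2)}}\Li^\mu(\bfk_2;\bfm_2;t)=\int_{\Delta^{|\bfk_2|}(t)}\omega^\mu_{(\bfk_2;\bfm_2)}.$$
Each measure is an ordered product of the two basic $1$-forms $\omega_0$ and $\omega_{1,\mu}^{(c)}$, one per integration variable, and this assignment is exactly the datum encoded (up to the order-reversal discussed below) by the binary sequence $\bs$. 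Multiplying the two displayed integrals and applying Chen's Lemma~\ref{oogj} expands $t^{m_r^{(1)}+m_s^{(2)}}\Li^\mu(\bfk_1;\bfm_1;t)\,\Li^\mu(\bfk_2;\bfm_2;t)$ as a sum over $\sigma\in\shuffle(|\bfk_1|,|\bfk_2|)$ of iterated integrals over $\Delta^{|\bfk_1|+|\bfk_2|}(t)$, whose integrands are the corresponding interleavings of the two form-words.

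Next I would read each shuffled integral back as a $\mu$-polylogarithm by running Theorem~\ref{pzci} backwards: any ordered product of $\omega_0$'s and $\omega_{1,\mu}^{(c)}$'s integrated over $\Delta^N(t)$ equals $t^{C}\Li^\mu(\bfk;\bfm;t)$, where $(\bfk;\bfm)=\bs^{-1}$ of the associated binary sequence and $C=m_r$ is its last $\bfm$-component. (Each interleaving of the two words ends, after reversal, in a $1$, so $\bs^{-1}$ does apply.) Two bookkeeping observations make this clean. Since each $\omega_{1,\mu}^{(c)}$ carries its own exponent $c$ unchanged through the shuffle, and since $m_r=\sum_i(m_i-m_{i-1})$ is precisely the sum of all such exponents, \emph{every} term produced by the shuffle carries the same factor $t^{m_r^{(1)}+m_s^{(2)}}$; this common power cancels against the one on the left, leaving an identity purely among the $\Li^\mu(\bfk;\bfm;t)$. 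The cumulative-sum rule $m_i'=m_1+\cdots+m_i$ built into $\bs^{-1}$ then automatically reassembles the scattered exponents into the correct $\bfm$-components.

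Collecting terms, the coefficient of a given $\Li^\mu(\bfk;\bfm;t)$ is the number of $\sigma$ producing the form-word attached to $(\bfk;\bfm)$, and this is the asserted count. The one step demanding real care, and the point I expect to be the main obstacle, is that the form-word read off an iterated integral in Chen's convention (variable $t_1$ outermost) is the \emph{reverse} of $\bs(\bfk;\bfm)$, which lists forms starting from the innermost variable $t_{|\bfk|}$; so what Lemma~\ref{oogj} literally counts is $\shuffle(\mathrm{rev}\,\bs(\bfk_1;\bfm_1),\mathrm{rev}\,\bs(\bfk_2;\bfm_2);\mathrm{rev}\,\bs(\bfk;\bfm))$. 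I would close the gap by exhibiting the involution on $\shuffle(r,s)$ obtained from reversing each block and the whole word, which is a bijection giving $\shuffle(\mathrm{rev}\,\bs_1,\mathrm{rev}\,\bs_2;\mathrm{rev}\,\bs)=\shuffle(\bs_1,\bs_2;\bs)$, and then sanity-check the whole scheme on a small case such as $(\bfk_1;\bfm_1)=(2;a)$, $(\bfk_2;\bfm_2)=(1;b)$, where both sides yield $\Li^\mu(2,1;a,a+b;t)+\Li^\mu(1,2;a,a+b;t)+\Li^\mu(1,2;b,a+b;t)$. Everything else is routine.
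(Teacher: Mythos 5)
Your proposal is correct and is essentially the paper's own argument: the paper proves Theorem~\ref{ignn} by simply citing Chen's shuffle formula (Lemma~\ref{oogj}) applied to the integral representation of Theorem~\ref{pzci}, which is exactly your route. The details you supply — cancellation of the common prefactor $t^{m_r^{(1)}+m_s^{(2)}}$, the observation that every shuffled form-word is a valid $\bs$-sequence, and the reversal involution reconciling Chen's ordering convention with the $\bs$ encoding — are bookkeeping steps the paper leaves implicit, and you handle them correctly.
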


\begin{proof} This follows from Lemma \ref{oogj}.
\end{proof}

\begin{cor}\label{utgv}
The integral representations of (quasi-)$\mu$-MHZVs in Corollary \ref{cor:muMZVitIntegral}
provides another product on $\calZ^{\mu}$. In particular, we have
\begin{align*}
\zeta^{\mu}(\bfk_1;\bfm_1)\cdot\zeta^{\mu}(\bfk_2;\bfm_2)=
\sum_{(\bfk;\bfm)}\shuffle(\bs(\bfk_1;\bfm_1),\bs(\bfk_2;\bfm_2);\bs(\bfk;\bfm))\zeta^{\mu}(\bfk;\bfm),
\end{align*}
where $\bfk$ must be admissible if $\bfk_1$ and $\bfk_2$ are both admissible. And we have
similar results for product of quasi-$\mu$-MHZVs.
\end{cor}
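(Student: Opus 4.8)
The plan is to take the limit $t\to 1^-$ in the functional identity of Theorem~\ref{ignn} and invoke the integral representation already established in Corollary~\ref{cor:muMZVitIntegral}. First I would observe that the product formula
$$
\Li^{\mu}(\bfk_1;\bfm_1;t)\cdot\Li^{\mu}(\bfk_2;\bfm_2;t)=
\sum_{(\bfk;\bfm)}\shuffle(\bs(\bfk_1;\bfm_1),\bs(\bfk_2;\bfm_2);\bs(\bfk;\bfm))\,\Li^{\mu}(\bfk;\bfm;t)
$$
holds for every $0<t<1$ by Theorem~\ref{ignn}, which itself is just Chen's shuffle identity (Lemma~\ref{oogj}) applied to the iterated-integral expression of Theorem~\ref{pzci}. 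When $(\bfk_1;\bfm_1)$ and $(\bfk_2;\bfm_2)$ are both admissible, the left-hand side converges to $\zeta^{\mu}(\bfk_1;\bfm_1)\cdot\zeta^{\mu}(\bfk_2;\bfm_2)$ as $t\to1^-$, since each factor converges individually to the corresponding $\mu$-MHZV. So the whole task reduces to controlling the right-hand side in the same limit.

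The key step is to verify that only admissible bi-indices $(\bfk;\bfm)$ can appear on the right with a nonzero shuffle coefficient once $\bfk_1,\bfk_2$ are both admissible; this is the analog of the classical fact that the shuffle of two convergent words stays in the convergent (degree-$0$) subalgebra. Concretely I would argue combinatorially via the binary-sequence encoding $\bs$: a bi-index $(\bfk;\bfm)$ is admissible precisely when its binary sequence $\bs(\bfk;\bfm)$ \emph{ends in $0$} (equivalently, $k_r>1$), and a non-admissible positive bi-index ends in $1$. Since $\bs(\bfk;\bfm)$ is obtained by interleaving $\bs(\bfk_1;\bfm_1)$ and $\bs(\bfk_2;\bfm_2)$ through some $\sigma\in\shuffle(r,s)$, its last entry must be the last entry of one of the two input sequences. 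Because both inputs are admissible, both end in $0$, so every shuffle of them also ends in $0$; hence every $(\bfk;\bfm)$ occurring with positive coefficient is itself admissible. This guarantees termwise convergence on the right as $t\to1^-$, each $\Li^{\mu}(\bfk;\bfm;t)\to\zeta^{\mu}(\bfk;\bfm)$.

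With convergence of both sides secured and the sum being finite (the length and weight of $(\bfk;\bfm)$ are bounded by those of the inputs), I can exchange the limit with the finite summation and obtain the stated shuffle product formula for admissible $\mu$-MHZVs. For products involving quasi-$\mu$-MHZVs, I would run the identical argument after writing each quasi-value as the difference $\int_{\Delta^{|\bfk|}}(\omega^{\mu}_{(\bfk;\bfm)}-\omega^{\mu}_{(\bfk;\tbfm)})$ from Corollary~\ref{cor:muMZVitIntegral}: the shuffle identity is bilinear, so it distributes over these differences, and the same ``ends-in-$0$'' bookkeeping shows the resulting terms are either admissible values or again quasi-values of the allowed special-triple shape. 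The main obstacle I anticipate is purely notational rather than analytic: one must check carefully that the last-letter combinatorics behaves correctly for the quasi-$\mu$-MHZV case, where the final differential form is a difference of two $\om^{(m_r-m_{r-1})}_{1,\mu}$-type factors, and confirm that these differences reassemble into admissible values and bona fide special triples with no stray non-admissible leftovers. Once that bookkeeping is done, the corollary follows immediately from the $t\to1^-$ limit of Theorem~\ref{ignn}.
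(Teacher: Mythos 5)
Your proposal is correct and follows essentially the same route as the paper: take $t\to 1^-$ in Theorem~\ref{ignn} and note that the last letter of a shuffled word must be the last letter of one of the two factors, so two admissible inputs can only produce admissible outputs (the paper phrases this via the final 1-form being $\omega_0$ rather than the final binary-sequence entry being $0$, which is the same observation). Your added remarks on termwise convergence of the finite sum and on bilinearity for the quasi-$\mu$-MHZV case are consistent with, and slightly more explicit than, the paper's own proof.
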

\begin{proof}
This follows from Theorem~\ref{ignn} immediately by taking $t\to 1^-$. Notice that
$\bfk$ is non-admissible if and only if the last 1-form in the integral representation of $(\bfk;\bfm)$
has the form $\om_{1,\mu}^{(\cdots)}$. By the shuffle property, this 1-form comes from either of the two
last 1-forms of the integral representations for $\zeta^{\mu}(\bfk_j;\bfm_j)$, $j=1,2$.
\end{proof}

\begin{eg}\label{dfnc}
We have
\begin{align*}
\zeta^{\mu}(2;m_1)\cdot\zeta^{\mu}(2;m_2)&=\zeta^{\mu}(2,2;m_1,m_1+m_2)+\zeta^{\mu}(2,2;m_2,m_1+m_2)\\
&\quad+2\zeta^{\mu}(1,3;m_1,m_1+m_2)+2\zeta^{\mu}(1,3;m_2,m_1+m_2).
\end{align*}
If we take $m_1=m_2=m$, then
$$\zeta^{\mu}(2;m)\cdot\zeta^{\mu}(2;m)=2\zeta^{\mu}(2,2;m,2m)+4\zeta^{\mu}(1,3;m,2m),$$
which generalizes the classical result that
$$\zeta(2)^2=2\zeta(2,2)+4\zeta(1,3).$$
As another example, we have
\begin{align*}
\zeta^{\mu}(2;m)\cdot\calD^{\mu}(1;m_1;m_2)&=\zeta^{\mu}(1,2;m_1,m_1+m)
-\zeta^{\mu}(1,2;m_2,m_2+m)+\zeta^{\mu}(1,2;m,m_1+m)\\
&\quad-\zeta^{\mu}(1,2;m,m_2+m)+\calD^{\mu}(2,1;m,m_1+m;m,m_2+m).
\end{align*}
\end{eg}

\section{Word algebra of $\mu$-multiple Hurwitz zeta values} \label{sec:algSetup}
In the previous
sections, we saw two methods to express a product of $\mu$-MHZVs as a
linear combination of $\mu$-MHZVs. As we saw
in Example~\ref{ofns} and Example~\ref{dfnc}, they may give different $\Q[\mu]$-linear combinations
for the same product of $\mu$-MHZVs, thus leading to linear relations among
them. The $\mu$-stuffle multiplication can be easily written in terms of bi-indices as in
Theorem~\ref{qqox}, while the shuffle multiplication is expressed more conveniently using
binary sequences as in Theorem~\ref{ignn}. We now set up the algebraic framework to elucidate the
combinatorial structure of these two kinds of products. We first transform them to products in
two formal word algebras that encode the $\mu$-stufffle and the shuffle multiplications, respectively.

\subsection{Word algebra for $\mu$-stuffle product}
Let $Y=\{y_{k,m}|k\in\N,m\in\Q_{>0}\}$ be a countable alphabet whose elements are called \emph{letters}.
Let $\Q[\mu]Y$ be the free $\Q[\mu]$-module with $Y$ as a basis. For any positive bi-index
$(\bfk;\bfm)=(k_1,\cdots,k_r;m_1,\cdots,m_r)$ we write $y_{\bfk;\bfm}= y_{k_1,m_1}\cdots y_{k_r,m_r}$.
 Let $\Q[\mu]\langle Y\rangle$ be the non-commutative polynomial algebra on the words over $Y$, that is,
\begin{equation*}
 \Q[\mu]\langle Y\rangle:=\langle y_{\bfk;\bfm}|(\bfk;\bfm): \text{ positive bi-index} \rangle_{\Q[\mu]}
\end{equation*}
is the $\Q[\mu]$-module with the set of \emph{words} in the letters of $Y$ as a basis, along with the concatenation product
$$
(y_{k_1,m_1}\cdots y_{k_r,m_r})\cdot(y_{k_1',m_1'}\cdots y_{k_s',m_s'})=y_{k_1,m_1}\cdots y_{k_r,m_r}y_{k_1',m_1'}\cdots y_{k_s',m_s'}.
$$
We say that a word $w=y_{k_1,m_1}\cdots y_{k_r,m_r}$ has \emph{length} $\ell(w)=r$. We put 1 as the empty word and set $\ell(1)=0$.

\begin{defn}
Let $\Q[\mu]Y$ as above. We define a $\Q[\mu]$-bilinear map
$$-\odot-:\Q[\mu]Y\times\Q[\mu]Y\longrightarrow\Q[\mu]Y$$
as follows:
$$y_{k_1,m_1}\odot y_{k_2,m_2}=
\left\{
 \begin{array}{ll}
 \displaystyle \sum\limits_{i=1}^{k_1}a_i \, y_{i,m_1}+\sum\limits_{j=1}^{k_2}b_jy_{j,m_2}, & \hbox{if $m_1\neq m_2$;} \\
 y_{k_1+k_2,m}, & \hbox{if $m_1=m_2=m$,}
 \end{array}
\right. $$
where the coefficients $a_1,\cdots,a_{k_1},b_1,\cdots,b_{k_2}\in\Q$ are determined by the following decomposition
\begin{equation*}
\frac{1}{(x+m_1)^{k_1}(x+m_2)^{k_2}}=\sum_{i=1}^{k_1}\frac{a_i}{(x+m_1)^i}+\sum_{j=1}^{k_2}\frac{b_j}{(x+m_2)^j},
\end{equation*}
according to Lemma~\ref{lem:parFrac}
\end{defn}

\begin{lem}
The $
\Q[\mu]$-bilinear map $-\odot-:\Q[\mu]Y\times\Q[\mu]Y\longrightarrow\Q[\mu]Y$ is a commutative and associative product.
\end{lem}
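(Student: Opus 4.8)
The plan is to realize $\odot$ as the transport of the ordinary multiplication of rational functions under a natural isomorphism; this will yield both commutativity and associativity at once, without any direct manipulation of partial fractions. Let $F$ denote the field of rational functions in a variable $x$ with coefficients in $\Q(\mu)$, and inside it consider the $\Q[\mu]$-submodule
$$
V := \Big\langle\, f_{k,m} := \tfrac{1}{(x+m)^k} \ \Big|\ k\in\N,\ m\in\Q_{>0} \,\Big\rangle_{\Q[\mu]}.
$$
Let $\Phi\colon \Q[\mu]Y\to V$ be the $\Q[\mu]$-linear map determined by $\Phi(y_{k,m})=f_{k,m}$. The first step is to show that $\Phi$ is an isomorphism of $\Q[\mu]$-modules, which amounts to the $\Q[\mu]$-linear independence of the $f_{k,m}$. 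This follows from a standard principal-part argument: in any finite relation $\sum c_{k,m}f_{k,m}=0$, fixing a pole location $m_0$ and reading off the principal part of the left-hand side at $x=-m_0$ forces every $c_{k,m_0}$ to vanish.

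The second step is to verify that $V$ is closed under the multiplication of $F$, so that $V$ inherits a commutative and associative $\Q[\mu]$-algebra structure from $F$. Indeed $f_{k_1,m}\cdot f_{k_2,m}=f_{k_1+k_2,m}\in V$, while for $m_1\ne m_2$ the product $f_{k_1,m_1}f_{k_2,m_2}$ has poles only at $-m_1,-m_2$ and vanishes at infinity, so partial fractions give
$$
f_{k_1,m_1}f_{k_2,m_2}=\sum_{i=1}^{k_1}a_i f_{i,m_1}+\sum_{j=1}^{k_2}b_j f_{j,m_2}\in V,
$$
with the coefficients $a_i,b_j\in\Q$ provided explicitly by Lemma~\ref{lem:parFrac}.

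The key third step is to observe that $\Phi$ intertwines $\odot$ with the multiplication of $F$, i.e.\ $\Phi(u\odot v)=\Phi(u)\cdot\Phi(v)$ for all $u,v\in\Q[\mu]Y$. By $\Q[\mu]$-bilinearity it suffices to check this on the generators $y_{k_1,m_1},y_{k_2,m_2}$, where it is merely a reformulation of the definition of $\odot$: the case $m_1=m_2$ reads $\Phi(y_{k_1+k_2,m})=f_{k_1,m}f_{k_2,m}$, and the case $m_1\ne m_2$ reproduces exactly the decomposition used above to define the $a_i,b_j$. Since $\Phi$ is a bijection carrying $\odot$ to a commutative and associative product, $\odot$ inherits both properties.

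The only point needing care is the interplay between closure of $V$ under multiplication and the uniqueness of the partial fraction decomposition; both are captured by the single fact that the $f_{k,m}$ form a $\Q[\mu]$-basis of $V$ consisting of principal parts. Once this is in place there is no genuine obstacle; in particular associativity of $\odot$ — which would be tedious to establish directly from a three-fold partial fraction expansion — is obtained for free by transport of structure.
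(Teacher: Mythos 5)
Your proof is correct, and it takes a genuinely different route from the paper. The paper proves associativity by brute force: it splits into the three cases $m_1=m_2=m_3$, $m_1=m_2\neq m_3$, and all $m_i$ distinct, and in each case expands both $(y_{k_1,m_1}\odot y_{k_2,m_2})\odot y_{k_3,m_3}$ and $y_{k_1,m_1}\odot(y_{k_2,m_2}\odot y_{k_3,m_3})$ via iterated applications of the partial fraction decomposition of Lemma~\ref{lem:parFrac}, then matches coefficients. You instead transport the structure: the map $\Phi(y_{k,m})=\tfrac{1}{(x+m)^k}$ is a $\Q[\mu]$-module isomorphism onto the span $V$ of these functions (linear independence via principal parts), $V$ is closed under multiplication of rational functions, and $\Phi$ intertwines $\odot$ with that multiplication precisely because $\odot$ was \emph{defined} through the partial fraction decomposition; commutativity and associativity of $\odot$ then follow from injectivity of $\Phi$ and the corresponding properties in the function field. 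Your argument is shorter and more conceptual, it eliminates the three-fold case analysis entirely, and as a bonus it makes explicit why $\odot$ is well-defined in the first place (uniqueness of the partial fraction coefficients, which the paper takes for granted). What the paper's computation buys is self-containedness at the level of symbols --- everything stays inside $\Q[\mu]Y$ with the coefficient bookkeeping fully displayed --- but nothing in the rest of the paper appears to depend on those intermediate identities, so your approach would serve equally well as a replacement.
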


\begin{proof}
The commutativity is clear so that we only need to prove the associativity.
Further, by commutativity, we only need to consider the following three cases:

(i). If $m_1=m_2=m_3=m$, then we have
\begin{align*}
(y_{k_1,m}\odot y_{k_2,m})\odot y_{k_3,m}&=y_{k_1+k_2,m}\odot y_{k_3,m}\\
&=y_{k_1+k_2+k_3,m}\\
&=y_{k_1,m}\odot y_{k_2+k_3,m}\\
&=y_{k_1,m}\odot(y_{k_2,m}\odot y_{k_3,m}).
\end{align*}

(ii). If $m_1=m_2=m\neq m_3$, then we have two different decomposition
\begin{align*}
\frac{1}{(x+m)^{k_1+k_2}(x+m_3)^{k_3}}=\sum_{i=1}^{k_1+k_2}\frac{a_i}{(x+m)^i}+\sum_{j=1}^{k_3}\frac{b_j}{(x+m_3)^j},
\end{align*}
and
\begin{align*}
&\frac{1}{(x+m)^{k_1+k_2}(x+m_3)^{k_3}}\\
&=\frac{1}{(x+m)^{k_1}}\left(\frac{1}{(x+m)^{k_2}(x+m_3)^{k_3}}\right)\\
&=\frac{1}{(x+m)^{k_1}}\left(\sum_{i=1}^{k_2}\frac{a_{k_1+i}'}{(x+m)^i}+\sum_{j=1}^{k_3}\frac{b_j'}{(x+m_3)^j}\right)\\
&=\sum_{i=1}^{k_2}\frac{a_{k_1+i}'}{(x+m)^{k_1+i}}+\sum_{j=1}^{k_3}\frac{b_j'}{(x+m)^{k_1}(x+m_3)^j}\\
&=\sum_{i=1}^{k_2}\frac{a_{k_1+i}'}{(x+m)^{k_1+i}}+\sum_{j=1}^{k_3}b_j'\left(\sum_{t=1}^{k_1}\frac{c_t}{(x+m)^t}+\sum_{s=1}^{j}\frac{d_s}{(x+m_3)^s}\right)\\
&=\sum_{i=1}^{k_2}\frac{a_{k_1+i}'}{(x+m)^{k_1+i}}+\sum_{t=1}^{k_1}\left(c_t\sum_{j=1}^{k_3}b_j'\right)\frac{1}{(x+m)^t}
 +\sum_{j=1}^{k_3}\left(d_j\sum_{s=j}^{k_3}b_s'\right)\frac{1}{(x+m_3)^j}.
\end{align*}
We conclude that
$$a_i=\begin{cases}
c_i\sum\limits_{j=1}^{k_3}b_j',\qquad& \text{if }1\leqslant i \leqslant k_1;\\
a_{k_1+i}', & \text{if } k_1+1\leqslant i\leqslant k_2,
\end{cases}$$
and $b_j=d_j\sum\limits_{s=j}^{k_3}b_s'$ for all $1\leqslant j\leqslant k_3$. Hence,
\begin{align*}
(y_{k_1,m}\odot y_{k_2,m})\odot y_{k_3,m_3}=y_{k_1+k_2,m}\odot y_{k_3,m_3}=\sum_{i=1}^{k_1+k_2}a_i \, y_{i,m}+\sum_{j=1}^{k_3}b_jy_{j,m_3},
\end{align*}
and
\begin{align*}
y_{k_1,m}\odot\left(y_{k_2,m}\odot y_{k_3,m_3}\right)&=y_{k_1,m}\odot\left(\sum_{i=1}^{k_2}a_{k_2+i}'y_{i,m}+\sum_{j=1}^{k_3}b_j'y_{j,m_3}\right)\\
&=\sum_{i=1}^{k_2}a_{k_2+i}'y_{k_1+i,m}+\sum_{j=1}^{k_3}b_j'y_{k_1,m}\odot y_{j,m_3}\\
&=\sum_{i=1}^{k_2}a_{k_2+i}'y_{k_1+i,m}+\sum_{j=1}^{k_3}b_j'\left(\sum_{t=1}^{k_1}c_ty_{t,m}+\sum_{s=1}^jd_sy_{s,m_3}\right)\\
&=\sum_{i=1}^{k_2}a_{k_1+i}'y_{k_1+i,m}+\sum_{t=1}^{k_1}\left(c_t\sum_{j=1}^{k_3}b_j'\right)y_{t,m}+\sum_{j=1}^{k_3}\left(d_j\sum_{s=j}^{k_3}b_s'\right)y_{j,m_3}\\
&=\sum_{i=k_1+1}^{k_2}a_i \, y_{i,m}+\sum_{i=1}^{k_1}a_i \, y_{i,m}+\sum_{j=1}^{k_3}b_jy_{j,m_3}\\
&=(y_{k_1,m}\odot y_{k_2,m})\odot y_{k_3,m_3}.
\end{align*}

(iii). If $m_i\neq m_j,i\neq j,1\leqslant i,j\leqslant3$, then we can apply the same technique to get
\begin{align*}
\frac{1}{(x+m_1)^{k_1}(x+m_2)^{k_2}(x+m_3)^{k_3}}&=\left(\sum_{i=1}^{k_1}\frac{a_i}{(x+m_1)^i}+\sum_{j=1}^{k_2}\frac{b_j}{(x+m_2)^j}\right)\frac{1}{(x+m_3)^{k_3}}\\
&=\sum_{i=1}^{k_1}a_i\left(\sum_{i_1=1}^i\frac{c_{i_1}}{(x+m_1)^{i_1}}+\sum_{i_2=1}^{k_3}\frac{d_{i_2}}{(x+m_3)^{i_2}}\right)\\
&\quad+\sum_{j=1}^{k_2}b_j\left(\sum_{j_1=1}^j\frac{e_{j_1}}{(x+m_2)^{j_1}}+\sum_{j_2=1}^{k_3}\frac{f_{j_2}}{(x+m_3)^{j_2}}\right).
\end{align*}
Thus,
\begin{align*}
(y_{k_1,m_1}\odot y_{k_2,m_2})\odot y_{k_3,m_3}&=\sum_{i=1}^{k_1}a_i\left(\sum_{i_1=1}^ic_{i_1}y_{i_1,m_1}+\sum_{i_2=1}^{k_2}d_{i_2}y_{i_2,m_3}\right)\\
&\quad+\sum_{j=1}^{k_2}b_j\left(\sum_{j_1=1}^je_{j_1}y_{j_1,m_2}+\sum_{j_2=1}^{k_3}f_{j_2}y_{j_2,m_3}\right).
\end{align*}
On the other hand,,
\begin{align*}
\frac{1}{(x+m_1)^{k_1}(x+m_2)^{k_2}(x+m_3)^{k_3}}&=\frac{1}{(x+m_1)^{k_1}}\left(\sum_{i=1}^{k_2}\frac{a_i'}{(x+m_2)^i}+\sum_{j=1}^{k_3}\frac{b_j'}{(x+m_3)^j}\right)\\
&=\sum_{i=1}^{k_2}a_i'\left(\sum_{i_1=1}^{k_1}\frac{c_{i_1}'}{(x+m_1)^{i_1}}+\sum_{i_2=1}^{i}\frac{d'_{i_2}}{(x+m_2)^{i_2}}\right)\\
&\quad+\sum_{j=1}^{k_3}b_j'\left(\sum_{j_1=1}^{k_1}\frac{e'_{j_1}}{(x+m_1)^{j_1}}+\sum_{j_2=1}^{j}\frac{f'_{j_2}}{(x+m_3)^{j_2}}\right).
\end{align*}
Hence
\begin{align*}
y_{k_1,m_1}\odot(y_{k_2,m_2}\odot y_{k_3,m_3})&=\sum_{i=1}^{k_2}a_i'\left(\sum_{i_1=1}^{k_1}c_{i_1}'y_{i_1,m_1}+\sum_{i_2=1}^{i}d'_{i_2}y_{i_2,m_2}\right)\\
&\quad+\sum_{j=1}^{k_3}b_j'\left(\sum_{j_1=1}^{k_1}e'_{j_1}y_{j_1,m_1}+\sum_{j_2=1}^{j}f'_{j_2}y_{j_2,m_3}\right).
\end{align*}
By comparing the coefficients, we arrive at the following conclusion
$$
(y_{k_1,m_1}\odot y_{k_2,m_2})\odot y_{k_3,m_3}=y_{k_1,m_1}\odot(y_{k_2,m_2}\odot y_{k_3,m_3}).
$$
\end{proof}

\begin{defn}
Let $\Q[\mu]\langle Y\rangle$ be as above. The $\mu$-stuffle product $\must$ on $\Q[\mu]\langle Y\rangle$
is a $\Q[\mu]$-bilinaer map
$$
-\must-:\Q[\mu]\langle Y\rangle\times\Q[\mu]\langle Y\rangle\longrightarrow\Q[\mu]\langle Y\rangle,
$$
defined recursively by
\begin{align*}
1\must w&\, =w\must1=w\qquad \forall w\in\Q[\mu]\langle Y\rangle,\\
ux\must vy &\, =(u\must vy)x+(ux\must v)y-\mu\cdot(u\must v)(x\odot y)
\end{align*}
where $x,y\in Y$ and $u,v$ are words in $\Q[\mu]\langle Y\rangle$.
\end{defn}

\begin{eg}\label{1212} We consider the product $y_{2,m_1}\must y_{2,m_2}$. If $m_1=m=m_2$, then
\begin{align*}
y_{2,m_1}\must y_{2,m_2}&=y_{2,m}\must y_{2,m}\\
&=(1\must y_{2,m})y_{2,m}+(y_{2,m}\must 1)y_{2,m}-\mu\cdot(1\must 1)y_{4,m}\\
&=y_{2,m}^2+y_{2,m}^2-\mu\cdot y_{4,m}\\
&=2y_{2,m}^2-\mu\cdot y_{4,m}.
\end{align*}
If $m_1\neq m_2$, then
\begin{align*}
y_{2,m_1}\must y_{2,m_2}&=(1\must y_{2,m_2})y_{2,m_1}+(y_{2,m_1}\must 1)y_{2,m_2}
-\mu\cdot(1\must 1)\left(y_{2,m_1}\odot y_{2,m_2}\right)\\
&=y_{2,m_2}y_{2,m_1}+y_{2,m_1}y_{2,m_2}\\
&\quad-\mu\left(\frac{2}{(m_1-m_2)^3}(y_{1,m_1}-y_{1,m_2})+\frac{1}{(m_1-m_2)^2}(y_{2,m_1}+y_{2,m_2})\right).
\end{align*}
\end{eg}

\begin{thm}
$(\Q[\mu]\langle Y\rangle,\must)$ is a commutative and associative $\Q[\mu]$-algebra with unit.
\end{thm}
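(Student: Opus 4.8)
The plan is to establish that $(\Q[\mu]\langle Y\rangle,\must)$ is a commutative, associative, unital $\Q[\mu]$-algebra. Commutativity and the unit property are essentially built into the recursive definition: the base case $1\must w=w\must 1=w$ gives the unit directly, and commutativity follows by a straightforward induction on total length, using that the recursion for $ux\must vy$ is manifestly symmetric under swapping $(u,x)\leftrightarrow(v,y)$ together with the commutativity of $\odot$ established in the preceding lemma. So the genuine content is \emph{associativity}, which is the step I expect to be the main obstacle.

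To prove associativity, I would argue that for all words $u,v,w\in\Q[\mu]\langle Y\rangle$,
$$
(u\must v)\must w = u\must(v\must w),
$$
by induction on the sum of lengths $\ell(u)+\ell(v)+\ell(w)$. The base cases where any one of the three words is empty are immediate from the unit property. For the inductive step I would write $u=u'x$, $v=v'y$, $w=w'z$ with $x,y,z\in Y$ (if any factor is empty we are in a base case), expand both sides using the defining recursion, and match the resulting terms. Each expansion produces three kinds of contributions: a ``concatenate $x$'', a ``concatenate $y$'', a ``concatenate $z$'' term, plus the correction terms carrying the factor $-\mu$ and an $\odot$-contraction. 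The outer concatenation terms recombine by the induction hypothesis applied to strictly shorter words. The delicate bookkeeping lies in the $-\mu(\cdot\odot\cdot)$ correction terms, where one must check that the associativity of $\odot$ (proved in the lemma immediately above, case by case in the multiplicities $m_1,m_2,m_3$) exactly reconciles the two sides.

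Concretely, the heart of the matter is a Leibniz-type identity: letting $\mathfrak{s}(u,v)=u\must v$, I would verify that the three-fold expansion of $(u'x\must v'y)\must w'z$ produces, among its correction terms, both $-\mu(u'x\must v')(y\odot z)$-type pieces and iterated contractions $-\mu\big((u'\must v')(x\odot y)\big)\must\cdots$, and that the analogous expansion of $u'x\must(v'y\must w'z)$ yields the same multiset of terms. The only place where the two groupings could genuinely disagree is the ``fully contracted'' term in which all three final letters interact, and here the identity $(x\odot y)\odot z = x\odot(y\odot z)$ from the previous lemma is exactly what is needed. I expect this matching of correction terms to be the most technically involved part; the cleanest route is to organize the induction so that every term on the left is paired with a term on the right having the same ``shape'' (which final letter is concatenated or contracted), reducing each pairing either to the induction hypothesis on shorter words or to the associativity of $\odot$.

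Since these are all routine but lengthy term-by-term verifications of a standard quasi-shuffle type, I would present the commutativity and unit arguments briefly and then carry out the associativity induction in the format above, invoking Theorem~\ref{thm:ddog} and the associativity of $\odot$ at the key step. In fact, an alternative and perhaps more economical route is to observe that $\must$ is precisely the abstract combinatorial shadow of the $\mu$-stuffle multiplication of the truncated values $\zeta^{\mu}_M$ from Theorem~\ref{thm:ddog}: since the actual multiplication of functions $\zeta^{\mu}_M(\bfk_1;\bfm_1)\cdot\zeta^{\mu}_M(\bfk_2;\bfm_2)$ is associative and is faithfully encoded by $\must$ on the corresponding words, associativity of $\must$ could be deduced from associativity of the product in $(\calZ^{\mu},\cdot)$, provided one checks that the encoding is injective enough to transport the relation back. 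I would mention this as a sanity check but give the direct inductive proof as the main argument, since it does not rely on any non-degeneracy of the realization map.
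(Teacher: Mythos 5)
Your proposal is correct and follows essentially the same route as the paper: commutativity and the unit property by a short induction exploiting the symmetry of the recursion and commutativity of $\odot$, and associativity by induction on $\ell(u)+\ell(v)+\ell(w)$, expanding both groupings via the defining recursion and cancelling terms using the induction hypothesis together with the associativity of $\odot$ from the preceding lemma (exactly the identity needed for the fully contracted $\mu^2$-term). Your secondary remark about transporting associativity from $(\calZ^{\mu},\cdot)$ via the realization map is rightly demoted to a sanity check, since that argument would require an injectivity property the paper never establishes.
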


\begin{proof} First, we check the commutativity
\begin{equation}\label{equ:stuffleCommute}
a\must b=b\must a
\end{equation}
by induction on $\ell(a)+\ell(b)$. If either $a$ or $b$ is the empty word (i.e., 1), then \eqref{equ:stuffleCommute} is trivial.
It thus suffices to consider the case $a=ux, b=vy$, where $x,y$ are letters in $Y$ and $u,v$ are words in $\Q[\mu]\langle Y\rangle$.
Then, by definition of the product $\must $ we get
\begin{align*}
a\must b-b\must a =((u\must b)-(b\must u))x+(a\must v-v\must a)y -\mu(u\must v)(x\odot y-y\odot x)=0
\end{align*}
by induction assumption and the commutativity of $\odot$. This completes the proof of \eqref{equ:stuffleCommute}.

Next, we check the associativity
$$(a\must b)\must c=a\must (b\must c)$$
also by induction on $\ell(a)+\ell(b)+\ell(c)$. If one of $a,b,c$ is the empty word (i.e., 1), then it's clear. It thus suffices to consider the case $a=ux, b=vy$ and $c=wz$
with letters $x,y,z\in Y$ and words $u,v,w\in\Q[\mu]\langle Y\rangle$. Then, by definition of the product $\must $ and the induction hypothesis, we get
\begin{align*}
(a\must b)\must c&=[(u\must b)x+(a\must v)y-\mu\cdot(u\must v)(x\odot y)]\must c\\
&=((u\must b)\must c)x+((u\must b)x\must w)z-\mu((u\must b)\must {w})(x\odot z)\\
&\quad+((a\must v)\must c)y+((a\must v)y\must w)z-\mu((a\must v)\must {w})(y\odot z)\\
&\quad-\mu ((u\must v)\must c)(x\odot y)-\mu((u\must v)(x\odot y)\must w)z+\mu^2((u\must v)\must w)((x\odot y)\odot z),
\end{align*}
and
\begin{align*}
a\must (b\must c)&=a\must [(v\must c)y+(b\must w)z-\mu(v\must w)(y\odot z)]\\
&=(u\must (v\must c)y)x+(a\must (v\must c))y-\mu(u\must (v\must c))(x\odot y)\\
&\quad+(u\must (b\must w)z)x+(a\must (b\must w))z-\mu(u\must (b\must w))(x\odot z)\\
&\quad-\mu(u\must (v\must w)(y\odot z))x-\mu(a\must (v\must w))(y\odot z)+\mu^2(u\must (v\must w))(x\odot(y\odot z)),
\end{align*}
Hence, by induction hypothesis
\begin{align*}
&(a\must b)\must c-a\must (b\must c)\\
&=((u\must b)\must c)x+((u\must b)x\must w)z+((a\must v)y\must w)z-\mu((u\must v)(x\odot y)\must w)z\\
&\quad-(u\must (v\must c)y)x-(u\must (b\must w)z)x-(a\must (b\must w))z+\mu(u\must (v\must w)(y\odot z))x\\
&=((u\must b)\must c)x+([(u\must b)x+(a\must v)y-\mu(u\must v)(x\odot y)]\must w)z\\
&\quad-(a\must (b\must w))z-(u\must [(v\must c)y+(b\must w)z-\mu(v\must w)(y\odot z)])x\\
&=((u\must b)\must c)x+((a\must b)\must w)z-(a\must (b\must w))z-(u\must (b\must c))x\\
&=0.
\end{align*}
We have now completed the proof.
\end{proof}

\textbf{Question}. Describe $\text{Aut}((\Q[\mu]\langle Y\rangle,\must))$, $\text{End}((\Q[\mu]\langle Y\rangle,\must))$, and $\text{Der}((\Q[\mu]\langle Y\rangle,\must))$.

\begin{defn}\label{defn:Yadmissible}
A word $y_{\bfk;\bfm}$ is called \emph{admissible} if $(\bfk;\bfm)$ is an admissible bi-index.
By convention, the empty word 1 will also be considered to be admissible. We will denote by $\Q[\mu]\langle Y\rangle_{\rm pre}^0$
the $\Q[\mu]$-submodule of $\Q[\mu]\langle Y\rangle$ generated by admissible words.
\end{defn}

\textbf{Unfortunately, $(\Q[\mu]\langle Y\rangle_{\rm pre}^0,\must)$ does not form a subalgebra of $(\Q[\mu]\langle Y\rangle,\must)$}. For instance, if $m_1\neq m_2$, Example~\ref{1212} showed that
\begin{align*}
y_{2,m_1}\must y_{2,m_2}&=y_{2,m_1}y_{2,m_2}+y_{2,m_2}y_{2,m_1}\\
&\quad-\mu\left(\frac{2}{(m_1-m_2)^3}(y_{1,m_1}-y_{1,m_2})+\frac{1}{(m_1-m_2)^2}(y_{2,m_1}+y_{2,m_2})\right).
\end{align*}
To remedy this situation, we introduce another notation.

\begin{defn}\label{defn:Yquasi-admissible}
Let $(\bfk;\bfm;\tbfm)$ be a special triple where
$\bfk'=(k_1,\cdots,k_{r-1})$, $\bfm=(m_1,\cdots,m_r)$, and $\tbfm=(\bfm',\tm_r)$. Then the element
$$
y_{\bfk;\bfm}-y_{\bfk;\tbfm} :=y_{k_1,m_1}\cdots y_{k_{r-1},m_{r-1}}(y_{1,m_{r}}-y_{1,\tm_r})
$$
is called a \emph{quasi-admissible} word. The submodule generated by all admissible words and all
quasi-admissible words over $\Q[\mu]$ is denoted by $\Q[\mu]\langle Y\rangle^0$.
\end{defn}

\begin{defn}
Fix $\mu\in\CC$ with $\Ree(\mu)>0$. Let $M\in\R_{\geqslant0}$ and $(\bfk;\bfm)=(k_1,\cdots,k_r;m_1,\cdots,m_r)$
be a positive bi-index. Define the $\Q[\mu]$-linear map
\begin{align}\label{equ:zeta*M}
\zeta^{\mu}_{*,M}:(\Q[\mu]\langle Y\rangle,\must)&\longrightarrow\R,\\
y_{\bfk;\bfm}&\longmapsto\zeta^{\mu}_{M}(\bfk;\bfm).     \notag
\end{align}
\end{defn}

\begin{thm}
The map $\zeta^{\mu}_{*,M}:(\Q[\mu]\langle Y\rangle,\must)\longrightarrow\R$ is a $\Q[\mu]$-algebra homomorphism
\end{thm}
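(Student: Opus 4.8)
The plan is to verify that $\zeta^{\mu}_{*,M}$ preserves the unit and the product, since $\Q[\mu]$-linearity is built into its definition. Preservation of the unit is immediate: the empty word $1$ corresponds to the empty bi-index and $\zeta^{\mu}_{*,M}(1)=\zeta^{\mu}_M(\emptyset)=1$ by convention. Thus everything reduces to the multiplicativity statement
$$\zeta^{\mu}_{*,M}(a\must b)=\zeta^{\mu}_{*,M}(a)\cdot\zeta^{\mu}_{*,M}(b),$$
and by $\Q[\mu]$-bilinearity of both sides it suffices to check this on words $a=y_{\bfk_1;\bfm_1}$, $b=y_{\bfk_2;\bfm_2}$.

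Before the induction I would record two identities translating the word operations into operations on truncated sums, making use of the fact that the definition of $\zeta^{\mu}_{*,M}$ makes sense for every truncation level $n$, giving a family of maps $\zeta^{\mu}_{*,n}$. First, Lemma~\ref{somc} gives, for any word $w$ and any letter $y_{k,m}$, the last-letter identity
$$\zeta^{\mu}_{*,M}(w\,y_{k,m})=\sum_{0\leqslant n\leqslant M}\frac{\mu}{(n\mu+m)^k}\,\zeta^{\mu}_{*,n}(w),$$
which I extend $\Q[\mu]$-linearly in $w$. Second, feeding the partial fraction decomposition of Lemma~\ref{lem:parFrac} (the very data defining $\odot$) into this, I obtain the companion identity
$$\zeta^{\mu}_{*,M}\bigl(w\,(y_{k_1,m_1}\odot y_{k_2,m_2})\bigr)=\sum_{0\leqslant n\leqslant M}\frac{\mu}{(n\mu+m_1)^{k_1}(n\mu+m_2)^{k_2}}\,\zeta^{\mu}_{*,n}(w),$$
valid whether or not $m_1=m_2$ (the equal case being the single letter $y_{k_1+k_2,m}$), again extended $\Q[\mu]$-linearly in $w$.

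With these in hand I would prove multiplicativity by induction on $\ell(a)+\ell(b)$. The base case, where one of $a,b$ is $1$, follows from $1\must w=w\must 1=w$. For the inductive step write $a=ux$, $b=vy$ with $x=y_{k_r,m_r}$, $y=y_{k_s',m_s'}$, apply the recursion $ux\must vy=(u\must vy)x+(ux\must v)y-\mu\,(u\must v)(x\odot y)$, and then evaluate $\zeta^{\mu}_{*,M}$ term by term using the two identities above. The induction hypothesis applies to each of the strictly shorter products $u\must b$, $a\must v$, and $u\must v$, and converts the three terms into precisely the sum of \eqref{equ:stufflePf1} and \eqref{equ:stufflePf2} minus \eqref{equ:stufflePf3} from the proof of Theorem~\ref{thm:ddog}; by the opening identity of that proof this equals $\zeta^{\mu}_M(\bfk_1;\bfm_1)\cdot\zeta^{\mu}_M(\bfk_2;\bfm_2)=\zeta^{\mu}_{*,M}(a)\cdot\zeta^{\mu}_{*,M}(b)$.

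The step I expect to require the most care is the companion $\odot$-identity together with the bookkeeping identifying each concatenated word with the correct bi-index (for instance, that appending $y_{i,m_1}$ to the word of $(\bfk';\bfm')$ produces the value $\zeta^{\mu}_{n}(\bfk',i;\bfm',m_1)$). The essential algebraic content --- that the recursive definition of $\must$ mirrors the diagonal splitting of a product of truncated sums --- is already encoded in Theorem~\ref{thm:ddog}, so no genuinely new obstacle arises; the work is to make the correspondence between the word recursion and the three summands completely explicit, and to note that every identity is used at the level of the always-convergent truncated sums $\zeta^{\mu}_M$ for arbitrary positive (not necessarily admissible) bi-indices, which is exactly the regime in which $\zeta^{\mu}_{*,M}$ is defined.
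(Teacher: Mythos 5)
Your proof is correct and takes essentially the same route as the paper: the paper's entire proof is the single line ``This follows from Theorem~\ref{thm:ddog}'', and the proof of that theorem contains exactly the three-term splitting $\eqref{equ:stufflePf1}+\eqref{equ:stufflePf2}-\eqref{equ:stufflePf3}$ which you match, term by term, against the recursion defining $\must$. Your write-up merely makes explicit the induction on $\ell(a)+\ell(b)$ and the two evaluation identities (the last-letter identity from Lemma~\ref{somc} and its $\odot$-companion from Lemma~\ref{lem:parFrac}) that the paper leaves implicit, which is a clarification of the same argument rather than a different one.
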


\begin{proof} This follows from Theorem~\ref{thm:ddog}.
\end{proof}

\begin{prop} \label{prop:Y0subalgebra}
The $\Q[\mu]$-submodule $(\Q[\mu]\langle Y\rangle^0,\must)$ is a subalgebra of $(\Q[\mu]\langle Y\rangle,\must)$.
\end{prop}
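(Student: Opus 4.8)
The plan is to give an intrinsic description of $\Q[\mu]\langle Y\rangle^0$ as the kernel of an explicit $\Q[\mu]$-linear map, and then to check that this kernel is stable under $\must$ using a single application of the defining recursion together with one residue identity read off from Lemma~\ref{lem:parFrac}. First I would record the characterization. Call a letter $y_{1,m}$ (first index $1$) a \emph{tail letter}, and let $\Phi\colon\Q[\mu]\langle Y\rangle\to\Q[\mu]\langle Y\rangle$ be the $\Q[\mu]$-linear map sending a word $u\,y_{1,m}$ ending in a tail letter to $u\,\eta$ (where $\eta$ is one fixed formal symbol that replaces every final tail letter) and sending every word not ending in a tail letter to $0$. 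Then $\Phi(w)=0$ if and only if, for each prefix word $u$, the coefficients of $u\,y_{1,m}$ in $w$ sum to zero over $m\in\Q_{>0}$. Every admissible word lies in $\ker\Phi$ trivially; every quasi-admissible word $u(y_{1,m}-y_{1,\tm})$ lies in $\ker\Phi$ because its two tail words share the prefix $u$; and conversely any $w\in\ker\Phi$ is a $\Q[\mu]$-combination of admissible and quasi-admissible words, obtained by grouping, for each prefix, the balanced tail terms into differences. Hence $\Q[\mu]\langle Y\rangle^0=\ker\Phi$.

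By $\Q[\mu]$-bilinearity of $\must$ it suffices to prove $g_1\must g_2\in\ker\Phi$ for generators $g_1,g_2$, namely admissible or quasi-admissible words; these three combinations mirror the three parts of Theorem~\ref{thm:ddog}. The heart of the matter is the behaviour of $\Phi$ on one stuffle step. Writing two words as $v=Va$ and $w=Wb$ with last letters $a,b$, the defining recursion gives $v\must w=(V\must w)\,a+(v\must W)\,b-\mu\,(V\must W)(a\odot b)$. Since $\Phi$ inspects only the final letter, $\Phi((V\must w)a)$ equals $(V\must w)\eta$ when $a$ is a tail letter and $0$ otherwise, and similarly for $(v\must W)b$. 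For the $\odot$-term, $\Phi((V\must W)(a\odot b))$ equals $(V\must W)\eta$ times the sum of the coefficients of the tail letters occurring in $a\odot b$, and I claim this sum vanishes.

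That vanishing is the key point and the only place the arithmetic of Lemma~\ref{lem:parFrac} enters. Writing $a=y_{k_1,m_1}$ and $b=y_{k_2,m_2}$: if $m_1=m_2$ then $a\odot b=y_{k_1+k_2,m}$ has first index $\geqslant2$ and contributes no tail letter; if $m_1\neq m_2$ then the only first-order terms in the partial-fraction expansion of $1/\big((x+m_1)^{k_1}(x+m_2)^{k_2}\big)$ are, by Lemma~\ref{lem:parFrac}, the single pair $c\,\big(1/(x+m_1)-1/(x+m_2)\big)$ with $c=(-1)^{k_1-1}(k_1+k_2-2)!/\big((k_1-1)!(k_2-1)!(m_2-m_1)^{k_1+k_2-1}\big)$, so the two tail coefficients $a_1=c$ and $b_1=-c$ cancel. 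Thus $\Phi((V\must W)(a\odot b))=0$ in all cases, giving the clean formula $\Phi(v\must w)=\epsilon_a\,(V\must w)\eta+\epsilon_b\,(v\must W)\eta$, where $\epsilon_a=1$ if $a$ is a tail letter and $\epsilon_a=0$ otherwise, and likewise for $\epsilon_b$.

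With this formula the three cases close without any further induction. If $g_1,g_2$ are both admissible, neither $a$ nor $b$ is a tail letter, so $\Phi(g_1\must g_2)=0$. If $g_1=Va$ is admissible and $g_2=W(y_{1,m}-y_{1,\tm})$ is quasi-admissible, then expanding $\must$ linearly over the difference, the surviving contributions are each $(g_1\must W)\eta$ and cancel, because the two tail words of $g_2$ share the prefix $W$; the reversed case is identical. If both $g_1,g_2$ are quasi-admissible, expanding the four products and using that each factor has a common prefix makes all four $\eta$-terms cancel in pairs. In every case $\Phi(g_1\must g_2)=0$, whence $g_1\must g_2\in\Q[\mu]\langle Y\rangle^0$, proving the proposition. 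I expect the main obstacle to be conceptual rather than computational: isolating the right invariant $\Phi$ whose kernel is exactly $\Q[\mu]\langle Y\rangle^0$, and recognizing that the residue balance $a_1+b_1=0$ furnished by Lemma~\ref{lem:parFrac} is precisely what forces every divergent tail created by $\odot$ to emerge in a quasi-admissible (difference) combination.
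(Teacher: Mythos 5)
Your proposal is correct and follows essentially the same route as the paper's proof: both apply the stuffle recursion once to the terminal letters, invoke the same cancellation $b_1=-a_1$ in the partial-fraction decomposition of Lemma~\ref{lem:parFrac} to see that the tail letters produced by $\odot$ always appear in balanced (quasi-admissible) combinations, and then run the same three-case analysis (admissible~$\times$~admissible, admissible~$\times$~quasi, quasi~$\times$~quasi). Your kernel characterization $\Q[\mu]\langle Y\rangle^0=\ker\Phi$ is only a cleaner repackaging of what the paper verifies term by term (and it mirrors the coefficient-sum map $\phi$ in the paper's exact sequence of Proposition~\ref{prop:exactSeq}), so it is a presentational rather than a substantive difference.
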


\begin{proof}
We only need to show that
$$a\must b\in\Q[\mu]\langle Y\rangle^0\qquad\forall a,b\in\Q[\mu]\langle Y\rangle^0$$
we consider the following three cases:

(i). If $a=uy_{k_r,m_r}, b=vy_{k_s',m_s'}, k_r,k_s'>1$, then we have
$$a\must b=(u\must b)y_{k_r,m_r}+(a\must v)y_{k_s',m_s'}-\mu(u\must v)(y_{k_r,m_r}\odot y_{k_s',m_s'})$$
form the definition of $\odot$. Thus
$$y_{k_r,m_r}\odot y_{k_s',m_s'}=
\begin{cases}
a_1y_{1,m_r}+b_1y_{1,m_s'}+\sum\limits_{i=2}^{k_r}a_i \, y_{i,m_r}+\sum\limits_{j=2}^{k_s'}b_jy_{j,m_s'},\qquad& \text{if }m_r\neq m_s';\\
y_{k_r+k_s',m},& \text{if }m_r=m=m_s'.
\end{cases}$$
Noticing that $b_1=-a_1$, we get
$$y_{k_r,m_r}\odot y_{k_s',m_s'}=
\begin{cases}
a_1(y_{1,m_r}-y_{1,m_s'})+\sum\limits_{i=2}^{k_r}a_i \, y_{i,m_r}+\sum\limits_{j=2}^{k_s'}b_jy_{j,m_s'},\qquad& \text{if }m_r\neq m_s';\\
y_{k_r+k_s',m},& \text{if }m_r=m=m_s'.
\end{cases}$$
Hence, $a\must b\in\Q[\mu]\langle Y\rangle^0$.

(ii). If $a=uy_{k_r,m_r},b=v(y_{1,m_s'}-y_{1,m_s''}),k_r>1$, then we have
\begin{align*}
a\must b&=(u\must b)y_{k_r,m_r}+(a\must v)(y_{1,m_s'}-y_{1,m_s''})-\mu(u\must v)(y_{k_r,m_r}\odot(y_{1,m_s'}-y_{1,m_s''}))\\
&=(u\must b)y_{k_r,m_r}+(a\must v)(y_{1,m_s'}-y_{1,m_s''})-\mu(u\must v)(y_{k_r,m_r}\odot y_{1,m_s'}-y_{k_r,m_r}\odot y_{1,m_s''}).
\end{align*}
Thus
$$y_{k_r,m_r}\odot y_{1,m_s'}=\begin{cases}
a_1(y_{1,m_r}-y_{1,m_s'})+\sum\limits_{i=2}^{k_r}a_i \, y_{i,m_r},\qquad& \text{if }m_r\neq m_s';\\
y_{k_r+1,m},& \text{if }m_r=m=m_s',
\end{cases}$$
and
$$y_{k_r,m_r}\odot y_{1,m_s''}=\begin{cases}
a_1'(y_{1,m_r}-y_{1,m_s''})+\sum\limits_{i=2}^{k_r}a_i'y_{i,m_r},\qquad& \text{if }m_r\neq m_s'';\\
y_{k_r+1,m},& \text{if }m_r=m=m_s''.
\end{cases}$$
Hence $a\must b\in\Q[\mu]\langle Y\rangle^0$.

(iii). If $a=u(y_{1,m_r}-y_{1,\tm_r}),b=v(y_{1,m_s''}-y_{1,m_s'''})$, then we have
\begin{multline*}
a\must b=(u\must b)(y_{1,m_r}-y_{1,\tm_r})+(a\must v)(y_{1,m_s''}-y_{1,m_s'''})\\
 -\mu(u\must v)((y_{1,m_r}-y_{1,\tm_r})\odot(y_{1,m_s''}-y_{1,m_s'''})).
\end{multline*}
Notice that
\begin{align*}
(y_{1,m_r}-y_{1,\tm_r})\odot(y_{1,m_s''}-y_{1,m_s'''})=&\, y_{1,m_r}\odot y_{1,m_s''}-y_{1,m_r}\odot y_{1,m_s'''}\\
& -y_{1,\tm_r}\odot y_{1,m_s''}+y_{1,m_s'}\odot y_{1,m_s'''}.
\end{align*}
From the definition of $\odot$, we have
$$y_{1,m}\odot y_{1,m'}=\begin{cases}
\frac{1}{m'-m}(y_{1,m}-y_{1,m'}),\qquad&\text{if } m\neq m';\\
y_{2,m},&\text{if }m=m'.
\end{cases}$$
Hence $a\must b\in\Q[\mu]\langle Y\rangle^0$.

\end{proof}

\subsection{Admissible words for $\mu$-stuffle product}

In this section, we will define the evaluation map $\zeta^{\mu}_{*}:(\Q[\mu]\langle Y\rangle^0,\must)\, \longrightarrow(\calZ^{\mu},\cdot)$. First, we notice that there is a natural
grading on $\Q[\mu]\langle Y\rangle$ by the length
\begin{equation*}
\Q[\mu]\langle Y\rangle=\bigoplus_{r=0}^{\infty}\Q[\mu]\langle Y\rangle_r
\end{equation*}
where $\Q[\mu]\langle Y\rangle_0=\Q 1$ with $1$ being the empty word and for all $r>0$
\begin{equation*}
\Q[\mu]\langle Y\rangle_r:=\big\langle y_{\bfk;\bfm}\big| (\bfk;\bfm): \text{ positive bi-index},\, \ell(\bfk)=r \big\rangle_{\Q[\mu]}.
\end{equation*}
Let $\Q[\mu]\langle Y\rangle^{0,\rm quasi}$ denote the $\Q[\mu]$-subspace generated by all quasi-admissible word, i.e.,
\begin{equation*}
\Q[\mu]\langle Y\rangle^{0,\rm quasi}:=\big\langle y_{\bfk;\bfm}-y_{\bfk;\tbfm}\big| (\bfk;\bfm;\tbfm): \text{ special triple}\big\rangle_{\Q[\mu]}.
\end{equation*}

Recall that $\Q[\mu]\langle Y\rangle_{\rm pre}^0$ (including 1) is
the $\Q[\mu]$-submodule of $\Q[\mu]\langle Y\rangle$ generated by admissible words.
We then have the following direct sum decomposition
\begin{equation*}
\Q[\mu]\langle Y\rangle^0=\Q[\mu]\langle Y\rangle_{\rm pre}^0\bigoplus\Q[\mu]\langle Y\rangle^{0,\rm quasi}.
\end{equation*}

Define the $\Q[\mu]$-linear space generated by all quasi-admissible words with length $r$ by
\begin{equation*}
\Q[\mu]\langle Y\rangle^{0,\rm quasi}_r:=\big\langle y_{\bfk;\bfm}-y_{\bfk;\tbfm}\big| (\bfk;\bfm;\tbfm): \text{ special triple}, \, \ell(\bfk)=r \big\rangle_{\Q[\mu]}.
\end{equation*}
From the direct sum decomposition of $\Q[\mu]\langle Y\rangle$, we have the following direct sum decomposition
\begin{equation*}
\Q[\mu]\langle Y\rangle^{0,\rm quasi}=\bigoplus_{r=1}^{\infty}\Q[\mu]\langle Y\rangle^{0,\rm quasi}_r.
\end{equation*}
Similarly, for any $r\in\N$ and any fixed positive bi-index $(\bfk_0;\bfm_0)$ with $\ell(\bfk_0)=r-1$, we define
\begin{equation*}
\Q[\mu]\langle Y\rangle^{0,\rm quasi}_{r,(\bfk_0;\bfm_0)}:=\big\langle y_{\bfk;\bfm}-y_{\bfk;\tbfm} \big| (\bfk;\bfm;\tbfm): \text{ special triple}, \, (\bfk';\bfm')=(\bfk_0;\bfm_0)  \big\rangle_{\Q[\mu]},
\end{equation*}
where, as before, $\bfv'$ denotes the index obtained from $\bfv$ by removing its last component.

\begin{prop}
For any $r\in\N$ we have
\begin{equation*}
\Q[\mu]\langle Y\rangle^{0,\rm quasi}_{r}=\bigoplus_{(\bfk;\bfm)}\Q[\mu]\langle Y\rangle^{0,\rm quasi}_{r,(\bfk;\bfm)}.
\end{equation*}
\end{prop}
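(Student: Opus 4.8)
The plan is to establish the two things that the displayed identity asserts: that $\Q[\mu]\langle Y\rangle^{0,\rm quasi}_{r}$ equals the sum of the subspaces $\Q[\mu]\langle Y\rangle^{0,\rm quasi}_{r,(\bfk_0;\bfm_0)}$, and that this sum is direct. The spanning part is immediate from the definitions, so the content lies in the directness, which I would deduce from the freeness of $\Q[\mu]\langle Y\rangle$ on the words in $Y$.

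First I would record the \emph{spanning}. Every generator of $\Q[\mu]\langle Y\rangle^{0,\rm quasi}_{r}$ is a quasi-admissible word $y_{\bfk;\bfm}-y_{\bfk;\tbfm}$ attached to some special triple $(\bfk;\bfm;\tbfm)$ of length $r$. Deleting the last component produces a prefix $(\bfk';\bfm')$ which is a well-defined positive bi-index of length $r-1$; crucially this prefix is the same whether computed from $\bfm$ or from $\tbfm$, since by the definition of a special triple these two indices agree except in their last slot. By construction the word then lies in $\Q[\mu]\langle Y\rangle^{0,\rm quasi}_{r,(\bfk';\bfm')}$. Hence $\Q[\mu]\langle Y\rangle^{0,\rm quasi}_{r}=\sum_{(\bfk_0;\bfm_0)}\Q[\mu]\langle Y\rangle^{0,\rm quasi}_{r,(\bfk_0;\bfm_0)}$, the sum ranging over all positive bi-indices $(\bfk_0;\bfm_0)$ with $\ell(\bfk_0)=r-1$.

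For the \emph{directness} I would pass to the monomial basis of $\Q[\mu]\langle Y\rangle$. Expanding a generator of the summand indexed by $(\bfk_0;\bfm_0)$ gives
$$
y_{\bfk_0;\bfm_0}\bigl(y_{1,m_r}-y_{1,\tm_r}\bigr)=y_{\bfk_0;\bfm_0}\,y_{1,m_r}-y_{\bfk_0;\bfm_0}\,y_{1,\tm_r},
$$
so every element of $\Q[\mu]\langle Y\rangle^{0,\rm quasi}_{r,(\bfk_0;\bfm_0)}$ is a $\Q[\mu]$-combination of basis words whose first $r-1$ letters spell out $y_{\bfk_0;\bfm_0}$. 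Writing $W_{(\bfk_0;\bfm_0)}$ for the span of $\{\,y_{\bfk_0;\bfm_0}\,y_{1,m}\mid m\in\Q_{>0}\,\}$, we thus have $\Q[\mu]\langle Y\rangle^{0,\rm quasi}_{r,(\bfk_0;\bfm_0)}\subseteq W_{(\bfk_0;\bfm_0)}$. For two distinct prefixes the associated sets of basis words are disjoint, because the common initial segment of length $r-1$ already distinguishes them; hence the $W_{(\bfk_0;\bfm_0)}$ are spanned by disjoint subsets of the basis and so sit inside $\Q[\mu]\langle Y\rangle$ in direct sum. A fortiori the smaller subspaces $\Q[\mu]\langle Y\rangle^{0,\rm quasi}_{r,(\bfk_0;\bfm_0)}$ are in direct sum, which together with the spanning step gives the claim.

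I do not expect a genuine obstacle here; this is a bookkeeping argument rather than a substantive one. The only points needing care are the observation that the prefix of a special triple is well-defined and coincides for $\bfm$ and $\tbfm$, and the remark that directness of the \emph{outer} sum over prefixes requires nothing about the internal structure of each summand (the individual summands are themselves \emph{not} free on the elements $y_{1,m_r}-y_{1,\tm_r}$, owing to relations such as $(y_{1,a}-y_{1,b})+(y_{1,b}-y_{1,c})=y_{1,a}-y_{1,c}$, but this is irrelevant, since distinct prefixes already land in disjoint coordinate subspaces of the free module).
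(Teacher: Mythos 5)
Your proposal is correct and follows essentially the same route as the paper: spanning is obtained by regrouping quasi-admissible words according to their length-$(r-1)$ prefix, and directness comes from the fact that elements of distinct summands are supported on disjoint sets of monomials in the word basis of $\Q[\mu]\langle Y\rangle$ (the paper phrases this via the intersection criterion, writing an element of one summand simultaneously as a combination of words with prefix $y_{\bfk_0;\bfm_0}$ and as a combination of words with different prefixes, forcing it to vanish). Your explicit remark that the summands themselves are not free on the differences $y_{1,m_r}-y_{1,\tm_r}$ is a nice clarification, but the substance of the argument is the same.
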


\begin{proof}
Step 1: Fix $(\bfk_0;\bfm_0)$. We need to show that
\begin{equation*}
\Q[\mu]\langle Y\rangle^{0,\rm quasi}_{r,(\bfk_0;\bfm_0)}\bigcap\sum_{(\bfk;\bfm)\neq(\bfk_0;\bfm_0)}\Q[\mu]\langle Y\rangle^{0,\rm quasi}_{r,(\bfk;\bfm)}=\{0\}.
\end{equation*}
If $r=1$, then $(\bfk;\bfm)=\phi$, we have
\begin{equation*}
\sum_{(\bfk;\bfm)\neq(\bfk_0;\bfm_0)}\Q[\mu]\langle Y\rangle^{0,\rm quasi}_{r,(\bfk;\bfm)}=\{0\}.
\end{equation*}
If $r>1$, then for any
\begin{equation*}
w\in\Q[\mu]\langle Y\rangle^{0,\rm quasi}_{r,(\bfk_0;\bfm_0)}\bigcap\sum_{(\bfk;\bfm)\neq(\bfk_0;\bfm_0)}\Q[\mu]\langle Y\rangle^{0,\rm quasi}_{r,(\bfk;\bfm)},
\end{equation*}
we can write
\begin{equation*}
w=y_{\bfk_0;\bfm_0}\left(\sum_{i} a_i \, y_{1,m_i}\right).
\end{equation*}
We can also write
\begin{equation*}
w=\sum_{i\neq 0} a_i \, y_{\bfk_i;\bfm_i}y_{1,m_i}.
\end{equation*}
Noticing that $(\bfk_0;\bfm_0)\neq(\bfk_i;\bfm_i),\forall i\neq0$, we get $w=0$.

Step 2: We need to show that
\begin{equation*}
\Q[\mu]\langle Y\rangle^{0,\rm quasi}_{r}=\sum_{(\bfk;\bfm)}\Q[\mu]\langle Y\rangle^{0,\rm quasi}_{r,(\bfk;\bfm)}.
\end{equation*}
If $r=1$, then $(\bfk;\bfm)=\phi$, there's nothing need to prove. If $r>1$, for any element
\begin{equation*}
w\in \Q[\mu]\langle Y\rangle^{0,\rm quasi}_{r}
\end{equation*}
we can write $w$ as
\begin{equation*}
w=\sum_i \, a_i \, y_{k_{1}^{(i)},m_{1}^{(i)}}\cdots y_{k_{r-1}^{(i)},m_{r-1}^{(i)}}(y_{1,m_i}-y_{1,\tm_i}).
\end{equation*}
Re-grouping, we see that
\begin{equation*}
w=\sum_j y_{\bfk_j;\bfm_j}\left(\sum_i a_i^{(j)}(y_{1,m_i}-y_{1,\tm_i})\right).
\end{equation*}
For all $j$ we have
\begin{equation*}
y_{\bfk_j;\bfm_j}\left(\sum_i a_i^{(j)}(y_{1,m_i}-y_{1,\tm_i})\right)\in\Q[\mu]\langle Y\rangle^{0,\rm quasi}_{r,(\bfk_j;\bfm_j)}
\end{equation*}
and therefore the proposition follows immediately.
\end{proof}

For any $r\in\N$ and positive bi-index $(\bfk_0;\bfm_0)$, we set
\begin{equation*}
\Q[\mu]\langle Y\rangle_{r,(\bfk_0;\bfm_0)}:=\big\langle  y_{\bfk;\bfm} \big| (\bfk;\bfm): \text{ positive bi-index}, \, (\bfk';\bfm')=(\bfk_0;\bfm_0) \big\rangle_{\Q[\mu]}.
\end{equation*}

\begin{prop}\label{prop:exactSeq}
We have the following exact sequence
$$0\longrightarrow\Q[\mu]\langle Y\rangle^{0,\rm quasi}_{r,(\bfk;\bfm)}\xrightarrow{\ \psi\ }\Q[\mu]\langle Y\rangle_{r,(\bfk;\bfm)}\xrightarrow{\ \phi\ }\Q[\mu]\longrightarrow 0$$
where $\psi$ is the embedding map, and
\begin{align*}
\phi:\Q[\mu]\langle Y\rangle_{r,(\bfk;\bfm)}&\longrightarrow\Q[\mu]\\
\sum_{i=1}^n a_i \, y_{\bfk,1;\bfm,m_i} &\longmapsto\sum_{i=1}^n a_i.
\end{align*}
\end{prop}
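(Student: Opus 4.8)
The plan is to verify exactness at each of the three spots, the first two being formal and the third carrying the only real content. Since $\psi$ is the inclusion of the submodule $\Q[\mu]\langle Y\rangle^{0,\rm quasi}_{r,(\bfk;\bfm)}$ into $\Q[\mu]\langle Y\rangle_{r,(\bfk;\bfm)}$, it is injective, giving exactness at the left-hand term. For the right-hand term I would first check that $\phi$ is well defined: the distinct words $y_{\bfk,1;\bfm,m}$ $(m\in\Q_{>0})$ belong to the $\Q[\mu]$-basis of words in $\Q[\mu]\langle Y\rangle$, so the quantity $\sum_i a_i$ depends only on the element $\sum_i a_i\,y_{\bfk,1;\bfm,m_i}$ and not on the way it is written. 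Surjectivity of $\phi$ is then immediate, since $\phi(c\,y_{\bfk,1;\bfm,m})=c$ for every $c\in\Q[\mu]$ and any fixed $m$.

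The heart of the argument is exactness at the middle term, that is, $\operatorname{im}\psi=\ker\phi$. For $\operatorname{im}\psi\subseteq\ker\phi$, I note that by Definition~\ref{defn:Yquasi-admissible} and the shape of a special triple every generator of $\Q[\mu]\langle Y\rangle^{0,\rm quasi}_{r,(\bfk;\bfm)}$ has the form $y_{\bfk,1;\bfm,m'}-y_{\bfk,1;\bfm,m''}$ with $m'\ne m''$, and such a generator is sent by $\phi$ to $1-1=0$; by $\Q[\mu]$-linearity the entire image lies in $\ker\phi$.

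For the reverse inclusion $\ker\phi\subseteq\operatorname{im}\psi$, I would take an arbitrary $w=\sum_{i=1}^n a_i\,y_{\bfk,1;\bfm,m_i}$ with the $m_i$ pairwise distinct and $\phi(w)=\sum_{i=1}^n a_i=0$. The single substantive step is a telescoping rewrite: since $\sum_i a_i=0$ we have $\big(\sum_i a_i\big)y_{\bfk,1;\bfm,m_1}=0$, whence
$$w=\sum_{i=1}^n a_i\,y_{\bfk,1;\bfm,m_i}=\sum_{i=1}^n a_i\left(y_{\bfk,1;\bfm,m_i}-y_{\bfk,1;\bfm,m_1}\right).$$
Each summand vanishes for $i=1$ and is a generator of $\Q[\mu]\langle Y\rangle^{0,\rm quasi}_{r,(\bfk;\bfm)}$ for $i\ne1$, so $w\in\operatorname{im}\psi$, and exactness in the middle follows.

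I expect no genuine obstacle: the statement amounts to a short exact sequence of $\Q[\mu]$-modules whose verification is elementary. The only points needing care are the well-definedness of $\phi$ (resting on the linear independence of distinct words) and the observation that the relevant words are precisely the non-admissible one-letter extensions $y_{\bfk,1;\bfm,m}$ of $(\bfk;\bfm)$ appearing both in $\phi$ and in the quasi-admissible submodule. The degenerate case $r=1$, in which $(\bfk;\bfm)$ is the empty bi-index so that the words are simply $y_{1,m}$, is handled verbatim by the same computation.
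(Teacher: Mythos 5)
Your proposal is correct and follows essentially the same route as the paper: the paper also reduces everything to the telescoping rewrite $w=\sum_i a_i\big(y_{\bfk,1;\bfm,m_i}-y_{\bfk,1;\bfm,m_0}\big)$ for an element with $\sum_i a_i=0$, merely using the fixed reference word $y_{\bfk,1;\bfm,1}$ where you use $y_{\bfk,1;\bfm,m_1}$, a difference of no consequence. Your additional checks (injectivity of $\psi$, well-definedness and surjectivity of $\phi$) are points the paper treats as obvious, so nothing substantive separates the two arguments.
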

\begin{proof}
It is obvious that $\phi\circ\psi=0$. Suppose that $w=\sum\limits_{i=1}^n a_i \, y_{\bfk,1;\bfm,m_i}\in \Q[\mu]\langle Y\rangle_{r,(\bfk;\bfm)}$ such that $\sum\limits_{i=1}^n a_i=0$. Then we have
\begin{equation*}
w=\sum_{i=1}^n a_i \, y_{\bfk,1;\bfm,m_i}=\sum_{i=1}^n a_i \, \Big(y_{\bfk,1;\bfm,m_i}-y_{\bfk,1;\bfm,1}\Big)\in
\Q[\mu]\langle Y\rangle^{0,\rm quasi}_{r,(\bfk;\bfm)}.
\end{equation*}
Hence, the sequence in the proposition is exact.
\end{proof}

\begin{thm}\label{thm:zetamuAstWellDefined}
The following map is well-defined:
\begin{align*}
\zeta^{\mu}_{*}:\Q[\mu]\langle Y\rangle^{0} &\longrightarrow(\mathcal{Z}^{\mu},\cdot)\\
w &\longmapsto\lim\limits_{M\to+\infty}\zeta^{\mu}_{*,M}(w)
\end{align*}
where by \eqref{equ:zeta*M}, $\zeta^{\mu}_{*,M}(y_{\bfk;\bfm})=\zeta^{\mu}_{M}(\bfk;\bfm).$
\end{thm}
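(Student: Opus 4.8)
The plan is to reduce the existence of the limit to a statement about the two types of generators of $\Q[\mu]\langle Y\rangle^0$ and then invoke the convergence facts already recorded for admissible bi-indices and for special triples. Observe first that there is no ambiguity in the value $\zeta^{\mu}_{*,M}(w)$ for a fixed $M$: the map $\zeta^{\mu}_{*,M}$ was shown to be a $\Q[\mu]$-algebra homomorphism on all of $\Q[\mu]\langle Y\rangle$, so it is in particular a well-defined $\Q[\mu]$-linear map. Hence the only genuine issue is that $\lim_{M\to+\infty}\zeta^{\mu}_{*,M}(w)$ exist and lie in $\calZ^{\mu}$; once this is established, $\zeta^{\mu}_*$ is well-defined.

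First I would use the direct sum decomposition $\Q[\mu]\langle Y\rangle^0=\Q[\mu]\langle Y\rangle_{\rm pre}^0\oplus\Q[\mu]\langle Y\rangle^{0,\rm quasi}$ recorded above to write an arbitrary $w\in\Q[\mu]\langle Y\rangle^0$ as a finite $\Q[\mu]$-linear combination of admissible words $y_{\bfk;\bfm}$ and quasi-admissible words $y_{\bfk;\bfm}-y_{\bfk;\tbfm}$. Since $\zeta^{\mu}_{*,M}$ is $\Q[\mu]$-linear, and the limit of a finite sum of sequences equals the sum of the limits whenever each individual limit exists, it suffices to verify the claim separately on a single admissible word and on a single quasi-admissible word.

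For an admissible word $y_{\bfk;\bfm}$, the bi-index $(\bfk;\bfm)$ is admissible, so $\lim_{M\to+\infty}\zeta^{\mu}_{*,M}(y_{\bfk;\bfm})=\lim_{M\to+\infty}\zeta^{\mu}_M(\bfk;\bfm)=\zeta^{\mu}(\bfk;\bfm)$ exists and belongs to $\calZ^{\mu}$ by the very definition of $\calZ^{\mu}$. For a quasi-admissible word coming from a special triple $(\bfk;\bfm;\tbfm)$, linearity gives $\zeta^{\mu}_{*,M}(y_{\bfk;\bfm}-y_{\bfk;\tbfm})=\zeta^{\mu}_M(\bfk;\bfm)-\zeta^{\mu}_M(\bfk;\tbfm)=\zeta^{\mu}_M(\bfk;\bfm;\tbfm)$, whose limit is $\calD^{\mu}(\bfk;\bfm;\tbfm)\in\calZ^{\mu}$, again by definition. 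Combining the two cases by $\Q[\mu]$-linearity, and using that $\calZ^{\mu}$ is a $\Q[\mu]$-module, yields that $\lim_{M\to+\infty}\zeta^{\mu}_{*,M}(w)$ exists and lies in $\calZ^{\mu}$ for every $w$.

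The point that makes the statement nontrivial, rather than the main obstacle to the argument, is that the individual words occurring in $w$ need not be admissible, so that applying $\lim_{M\to+\infty}\zeta^{\mu}_{*,M}$ to such a word in isolation would diverge. Convergence is recovered only because $\Q[\mu]\langle Y\rangle^0$ is spanned by admissible words together with the quasi-admissible differences $y_{\bfk;\bfm}-y_{\bfk;\tbfm}$, in which the divergent tails cancel exactly; this cancellation is precisely what the definition of $\calD^{\mu}(\bfk;\bfm;\tbfm)$ encodes. Thus the crux is simply to guarantee, via the decomposition above, that every element of $\Q[\mu]\langle Y\rangle^0$ is such a combination, after which the two generator computations finish the proof.
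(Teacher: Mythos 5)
Your proof is correct, and it takes a noticeably more direct route through the quasi-admissible part than the paper does. You rightly isolate the real content of the statement: since $\zeta^{\mu}_{*,M}$ is an unambiguous $\Q[\mu]$-linear map on all of $\Q[\mu]\langle Y\rangle$ for each fixed $M$, well-definedness of $\zeta^{\mu}_{*}$ amounts only to existence of the limit and membership in $\calZ^{\mu}$, and by linearity of finite limits this reduces to the two kinds of spanning words. For the quasi part you then argue word-by-word: $\zeta^{\mu}_{*,M}(y_{\bfk;\bfm}-y_{\bfk;\tbfm})=\zeta^{\mu}_M(\bfk;\bfm)-\zeta^{\mu}_M(\bfk;\tbfm)=\zeta^{\mu}_M(\bfk;\bfm;\tbfm)$ is exactly the partial sum of the convergent series defining $\calD^{\mu}(\bfk;\bfm;\tbfm)$, so each generator converges, and a finite $\Q[\mu]$-linear combination of convergent sequences converges. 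The paper instead routes this step through Proposition~\ref{prop:exactSeq}: it uses the exact sequence to put a general element of each piece $\Q[\mu]\langle Y\rangle^{0,\rm quasi}_{r,(\bfk;\bfm)}$ into the canonical form $\sum_{i=1}^n a_i\, y_{\bfk,1;\bfm,m_{r,i}}$ with $\sum_{i=1}^n a_i=0$, and then re-derives convergence from scratch via the cancellation estimate $\sum_{i=1}^n a_i/(n_r\mu+m_{r,i})=\sum_{i=1}^n a_i\bigl(\tfrac{1}{n_r\mu+m_{r,i}}-\tfrac{1}{n_r\mu+1}\bigr)=O(1/n_r^2)$, before reassembling everything with the double direct-sum decomposition. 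Your route buys economy: it bypasses the exact-sequence machinery and the explicit estimate by delegating the analytic content to the already-recorded convergence of quasi-$\mu$-MHZVs (asserted when $\calD^{\mu}$ is introduced and restated as $\calD^{\mu}(\bfk;\bfm;\tbfm)=\lim_{M\to+\infty}\zeta^{\mu}_M(\bfk;\bfm;\tbfm)$ in Section~\ref{sec:stANDsh}), a legitimate citation within the paper's logical order. What the paper's longer route buys is self-containedness — the $O(1/n_r^2)$ cancellation is made explicit rather than deferred — and it exhibits the canonical form of quasi elements that Proposition~\ref{prop:exactSeq} was built to provide; but as a verification of the theorem as stated, your argument is complete.
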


\begin{proof}
Clearly, $\zeta^{\mu}_{*}(\bfk;\bfm)$ is well-defined if $(\bfk;\bfm)$ is admissible.
We only need to prove the map
$$
\zeta^{\mu}_{*}:\Q[\mu]\langle Y\rangle^{0,\rm quasi}\longrightarrow(\mathcal{Z}^{\mu},\cdot)
$$
is well-defined. Indeed, by Proposition \ref{prop:exactSeq},
every element $w\in\Q[\mu]\langle Y\rangle^{0,\rm quasi}_{r,(\bfk;\bfm)}$
must have the following form
$$
w=\sum_{i=1}^n a_i \, y_{k_1,m_1}\cdots y_{k_{r-1},m_{r-1}}y_{1,m_{r,i}} \quad\text{with}\quad\sum_{i=1}^n a_i=0.
$$
By definition,
\begin{align*}
\zeta^{\mu}_{*}(w)=\lim\limits_{M\to+\infty}\zeta^{\mu}_{*,M}(w)
\end{align*}
where by \eqref{equ:zeta*M}
$$
\zeta^{\mu}_{*,M}(w)=\sum_{0\leqslant n_1\leqslant\cdots\leqslant n_r\leqslant M}\frac{1}{(n_1\mu+m_1)^{k_1}\cdots(n_{r-1}\mu+m_{r-1})^{k_{r-1}}}\left(\sum_{i=1}^n\frac{a_i}{n_r\mu+m_{r,i}}\right).
$$
Observe that
\begin{align*}
\sum_{i=1}^n\frac{a_i}{n_r\mu+m_{r,i}}
=&\, \sum_{i=1}^n a_i \left(\frac{1}{n_r\mu+m_{r,i}}-\frac{1}{n_r\mu+1}\right) \\
=&\, \sum_{i=1}^n a_i \left(\frac{1-m_{r,i}}{(n_r\mu+m_{r,i})(n_r\mu+1)}\right)
=O\left(\frac{1}{n_r^2}\right).
\end{align*}
We see that
$$\lim\limits_{M\to+\infty}\zeta^{\mu}_{*,M}(w)$$
is well-defined. Finally, the well-definedness of $\zeta^{\mu}_{*}$ on $\Q[\mu]\langle Y\rangle^{0,\rm quasi}$
follows from the direct sum decompositions
$$\Q[\mu]\langle Y\rangle^{0,\rm quasi}=
\bigoplus_{r=1}^{\infty} \bigoplus_{(\bfk;\bfm): \ell(\bfk)=r-1}\Q[\mu]\langle Y\rangle^{0,\rm quasi}_{r,(\bfk;\bfm)}.$$
We have thus finished the proof of the theorem.
\end{proof}

\subsection{Word algebra for shuffle product}
Next, we will look at another word algebra reflecting the essential properties of the shuffle product on $\calZ^{\mu}$.

\begin{defn}
Let $X^{\mu}=\{x,y^{\mu}_m|m\in\Q\}$ be a countable alphabet.
Let $X^{\mu,*}$ be the set of words over $X^{\mu}$, including the empty word 1. Let $\Q[\mu]\langle X^{\mu}\rangle$
be the non-commutative polynomial $\Q[\mu]$-algebra over $X^{\mu,*}$. The shuffle product $\shuffle$
on $\Q[\mu]\langle X^{\mu}\rangle$ is defined recursively by
\begin{alignat*}{4}
1\shuffle w= &\, w\shuffle1=w\qquad &&\forall w\in X^{\mu,*},\\
w_1u\shuffle w_2v= &\, (w_1\shuffle w_2v)u+(w_1u\shuffle w_2)v\qquad && \forall u,v\in X^{\mu}, w_1,w_2\in X^{\mu,*}.
\end{alignat*}
We then extend $\shuffle$ $\Q[\mu]$-linearly to $\Q[\mu]\langle X^{\mu}\rangle$. For each word $w$ in $X^{\mu,*}$
we denote by $\ell(w)$ the number of $y^{\mu}_i$'s appearing in it and call it the \emph{length} of $w$.
\end{defn}

\begin{prop}
$(\Q[\mu]\langle X^{\mu}\rangle,\shuffle)$ is a commutative and associative $\Q[\mu]$-algebra with unit.
\end{prop}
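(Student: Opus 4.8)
The plan is to verify each algebra axiom directly from the recursive definition, arguing by induction on the total number of letters appearing in the words involved (where, for this proof, I count \emph{every} letter, i.e.\ both the $x$'s and the $y^{\mu}_m$'s, and not only the $y^{\mu}_m$'s as in the paper's length $\ell$). First I would check that $\shuffle$ is well defined: the defining recursion expresses $w_1u\shuffle w_2v$ in terms of shuffle products of strictly smaller total length, so the recursion terminates and determines $\shuffle$ unambiguously on all pairs of words; extending $\Q[\mu]$-bilinearly then gives a well-defined product on $\Q[\mu]\langle X^{\mu}\rangle$. The unit axiom $1\shuffle w=w\shuffle 1=w$ holds by the first line of the definition, so $1$ is a two-sided identity.

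Next I would prove commutativity $a\shuffle b=b\shuffle a$ by induction on the total number of letters of $a$ and $b$. The base case, in which at least one of $a,b$ equals $1$, is immediate from the unit axiom. For the inductive step write $a=w_1u$ and $b=w_2v$ with $u,v\in X^{\mu}$. Applying the recursion to both sides gives
\begin{align*}
a\shuffle b&=(w_1\shuffle w_2v)u+(w_1u\shuffle w_2)v,\\
b\shuffle a&=(w_2\shuffle w_1u)v+(w_2v\shuffle w_1)u.
\end{align*}
Each inner shuffle involves fewer letters in total, so by the induction hypothesis $w_1\shuffle w_2v=w_2v\shuffle w_1$ and $w_1u\shuffle w_2=w_2\shuffle w_1u$; substituting these identifies the $u$-terms and the $v$-terms on the two sides and proves $a\shuffle b=b\shuffle a$.

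Finally, associativity $(a\shuffle b)\shuffle c=a\shuffle(b\shuffle c)$ is the main point, and I would again argue by induction on the total number of letters. When one of $a,b,c$ is $1$ the claim is trivial, so take $a=w_1u$, $b=w_2v$, $c=w_3w$ with $u,v,w\in X^{\mu}$. The strategy is to expand both triple products completely by applying the recursion twice on each side, and then to group the resulting terms according to which of the three trailing letters $u,v,w$ is appended last. Each side then becomes a sum of three pieces, and in every piece the remaining factor is a shuffle product of words of strictly smaller total length; applying the induction hypothesis to each such factor shows that the matching pieces on the two sides coincide. The hard part is purely the bookkeeping in this three-fold expansion: one must keep careful track of which subword carries each trailing letter so that the three groups line up correctly. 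Once the terms are organized by their last letter, the induction hypothesis closes the argument and establishes associativity, completing the proof that $(\Q[\mu]\langle X^{\mu}\rangle,\shuffle)$ is a commutative and associative $\Q[\mu]$-algebra with unit.
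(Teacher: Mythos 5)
Your proposal is correct and takes essentially the same approach as the paper, whose proof consists only of the remark that the result ``can be checked directly from the definition of the product $\shuffle$'': your induction on the total letter count, with both sides expanded by the recursion and the terms grouped by trailing letter before applying the induction hypothesis, is precisely the standard way to carry out that check, and each step (including the recombination via bilinearity needed to line up the three groups in the associativity argument) goes through as you describe.
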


\begin{proof} One can check this result directly from the definition of the product $\shuffle$.
\end{proof}

\begin{defn}\label{defn:Xadmissible}
A word $w=y^{\mu}_{l_1}x^{t_1}y^{\mu}_{l_2}x^{t_2}\cdots y^{\mu}_{l_r}x^{t_r}$ is said to be \emph{positive} if $l_1>0,l_1+l_2>0,\cdots,l_1+\cdots+l_r>0$. It is said to be \emph{admissible} if it is positive
and, in addition, $t_r>1$. By convention, the word 1 will also be considered to be admissible.
We will denote by $\Q[\mu]\langle X^{\mu}\rangle_{\rm pre}^0$ the subspace of $\Q[\mu]\langle X^{\mu}\rangle$
generated by admissible words, and $\Q[\mu]\langle X^{\mu}\rangle^1$
the $\Q[\mu]$-submodule of $\Q[\mu]\langle X^{\mu}\rangle$ generated by positive words, i.e.
\begin{align*}
\Q[\mu]\langle X^{\mu}\rangle_{\rm pre}^0&:=\langle y_{l_1}x^{t_1}y_{l_2}x^{t_2}\cdots y_{l_r}x^{t_r}|l_1>0,l_1+l_2>0,\cdots,l_1+\cdots+l_r>0,t_r\geqslant1\rangle_{\Q[\mu]}\\
\Q[\mu]\langle X^{\mu}\rangle^1&:=\langle y_{l_1}x^{t_1}y_{l_2}x^{t_2}\cdots y_{l_r}x^{t_r}|l_1>0,l_1+l_2>0,\cdots,l_1+\cdots+l_r>0\rangle_{\Q[\mu]}
\end{align*}
For any positive bi-index $(\bfk;\bfm)=(k_1,\cdots,k_r;m_1,\cdots,m_r)$ we set
\begin{equation}\label{equ:yxMixed}
(y,x)^{\mu}_{\bfk;\bfm}:=y^{\mu}_{m_1}x^{k_1-1}y^{\mu}_{m_2-m_1}x^{k_2-1}\cdots y^{\mu}_{m_r-m_{r-1}}x^{k_r-1}.
\end{equation}

\end{defn}

\begin{prop}
We have the following results:

$(1).$ The $\Q[\mu]$-submodule $(\Q[\mu]\langle X^{\mu}\rangle_{\rm pre}^0,\shuffle)$ and $(\Q[\mu]\langle X^{\mu}\rangle^1,\shuffle)$ are subalgebra of $(\Q[\mu]\langle X^{\mu}\rangle,\shuffle)$, i.e., we have
$$(\Q[\mu]\langle X^{\mu}\rangle_{\rm pre}^0,\shuffle)\subset(\Q[\mu]\langle X^{\mu}\rangle^1,\shuffle)\subset(\Q[\mu]\langle X^{\mu}\rangle,\shuffle).$$

$(2).$ We have a homomorphism of $\Q[\mu]$-algebra
\begin{align*}
\zeta^{\mu}_{\shuffle}:(\Q[\mu]\langle X^{\mu}\rangle_{\rm pre}^0,\shuffle) &\, \longrightarrow(\calZ^{\mu},\cdot), \\
(y,x)^{\mu}_{\bfk;\bfm} &\, \longmapsto\zeta^{\mu}(\bfk;\bfm) .
\end{align*}
Namely, we have
$$\zeta^{\mu}_{\shuffle}(w\shuffle v)=\zeta^{\mu}_{\shuffle}(w)\cdot\zeta^{\mu}_{\shuffle}(v)\qquad\forall w,v\in\Q[\mu]\langle X^{\mu}\rangle_{\rm pre}^0.$$
\end{prop}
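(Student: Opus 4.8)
The plan is to prove the two assertions in turn: first the purely combinatorial closure statement (1), and then deduce the multiplicativity (2) from the integral shuffle formula already recorded in Corollary~\ref{utgv}. For (1), the inclusions are immediate from the definitions: an admissible word is by construction positive, and a positive word is a word, so $\Q[\mu]\langle X^{\mu}\rangle_{\rm pre}^0\subset\Q[\mu]\langle X^{\mu}\rangle^1\subset\Q[\mu]\langle X^{\mu}\rangle$. The crux is closure under $\shuffle$. The feature of the shuffle product I would exploit is that, unlike the $\mu$-stuffle (which uses $\odot$), it merely interleaves the letters of the two words while preserving their relative orders and never merges or alters a letter; thus each term of $u\shuffle v$ contains the letters of $u$ and of $v$ as disjoint subsequences. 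Since the first letter of any interleaving is the first letter of $u$ or of $v$, if both begin with a $y$-letter then so does every term of $u\shuffle v$.

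The heart of the matter is that positivity is preserved. Writing the $y$-subscripts of $u$ and of $v$ in order as $a_1,\dots,a_p$ and $b_1,\dots,b_q$, positivity of $u$ (resp.\ $v$) says exactly that $a_1+\dots+a_\alpha>0$ for every $\alpha\geqslant1$ (resp.\ $b_1+\dots+b_\beta>0$ for $\beta\geqslant1$). In a term of $u\shuffle v$, the sum of the $y$-subscripts in the prefix ending at its $k$-th $y$-letter equals $\sum_{j\leqslant\alpha}a_j+\sum_{j\leqslant\beta}b_j$, where $\alpha,\beta$ (with $\alpha+\beta=k$) count how many $y$-letters of $u$ and of $v$ have been used. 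As $k\geqslant1$, at least one of $\alpha,\beta$ is positive, each of the two partial sums is nonnegative (an empty sum being $0$), and the one with positive index is strictly positive; hence the total is $>0$. Because the positivity condition on a word is precisely the positivity of all such prefix sums, every term of $u\shuffle v$ is positive, so $\Q[\mu]\langle X^{\mu}\rangle^1$ is closed. For $\Q[\mu]\langle X^{\mu}\rangle_{\rm pre}^0$ I would add that the last letter of any term of $u\shuffle v$ is the last letter of $u$ or of $v$; if both end in $x$ (i.e.\ $t_r\geqslant1$) then so does every term, which with positivity gives admissibility. Since $1$ lies in both submodules by convention, both are subalgebras.

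For (2), note first that $(\bfk;\bfm)\mapsto(y,x)^{\mu}_{\bfk;\bfm}$ is a bijection from admissible bi-indices onto the basis of admissible words of $\Q[\mu]\langle X^{\mu}\rangle_{\rm pre}^0$ (its inverse reads off $m_i=l_1+\dots+l_i$, $k_i=t_i+1$), so $\zeta^{\mu}_{\shuffle}$ is well-defined by $\Q[\mu]$-linear extension with $\zeta^{\mu}_{\shuffle}(1)=\zeta^{\mu}(\emptyset)=1$. I would then match the word shuffle with the shuffle of binary sequences underlying Theorem~\ref{ignn}: comparing \eqref{equ:yxMixed} with the definition of $\bs(\bfk;\bfm)$, the word $(y,x)^{\mu}_{\bfk;\bfm}$ is exactly the sequence $\bs(\bfk;\bfm)$ read off under the dictionary $y^{\mu}_{m_i-m_{i-1}}\leftrightarrow(1;m_i-m_{i-1})$ and $x\leftrightarrow(0;0)$. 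Under this identification the coefficient of $(y,x)^{\mu}_{\bfk;\bfm}$ in $(y,x)^{\mu}_{\bfk_1;\bfm_1}\shuffle(y,x)^{\mu}_{\bfk_2;\bfm_2}$ counts precisely the interleavings $\sigma\in\shuffle(r,s)$ producing the target, i.e.\ it is $\shuffle(\bs(\bfk_1;\bfm_1),\bs(\bfk_2;\bfm_2);\bs(\bfk;\bfm))$. Applying $\zeta^{\mu}_{\shuffle}$ to this word identity—legitimate since part~(1) guarantees every term is admissible when $(\bfk_1;\bfm_1)$ and $(\bfk_2;\bfm_2)$ are—and invoking Corollary~\ref{utgv} gives
$$\zeta^{\mu}_{\shuffle}\big((y,x)^{\mu}_{\bfk_1;\bfm_1}\shuffle(y,x)^{\mu}_{\bfk_2;\bfm_2}\big)=\sum_{(\bfk;\bfm)}\shuffle\big(\bs(\bfk_1;\bfm_1),\bs(\bfk_2;\bfm_2);\bs(\bfk;\bfm)\big)\zeta^{\mu}(\bfk;\bfm)=\zeta^{\mu}(\bfk_1;\bfm_1)\cdot\zeta^{\mu}(\bfk_2;\bfm_2),$$
and bilinearity of $\shuffle$ and $\cdot$ extends this to all of $\Q[\mu]\langle X^{\mu}\rangle_{\rm pre}^0$. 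I expect the only genuinely delicate point to be this identification of the two shuffle coefficients: one must verify that the letter-by-letter dictionary between admissible words and binary sequences is an isomorphism of the underlying ordered sets, so that word-interleavings and binary-sequence-interleavings, hence their counts, correspond exactly. Everything else reduces to the definitions or to the additivity-of-prefix-sums bookkeeping used for positivity in (1).
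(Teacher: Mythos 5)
Your proposal is correct and takes essentially the same route as the paper: the paper's own proof consists of exactly the two steps you carry out, namely checking (1) directly from the recursive definition of $\shuffle$ and deducing (2) from Corollary~\ref{utgv} via the dictionary between admissible words and binary sequences. Your write-up merely supplies the prefix-sum bookkeeping and the coefficient-matching details that the paper leaves implicit.
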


\begin{proof} The first statement can be checked directly from the definition of the
product $\shuffle$. The second statement follows from Corollary \ref{utgv}.
\end{proof}

\textbf{Unfortunately, the mapping $\zeta^{\mu}_{\shuffle}:(\Q[\mu]\langle X^{\mu}\rangle_{\rm pre}^0,\shuffle)\longrightarrow(\calZ^{\mu},\cdot)$ is not a surjection}. To remedy this situation, we introduce another notation.

\begin{defn}\label{defn:Xquasi-admissible}
Let $(\bfk;\bfm;\tbfm)$ be a special triple where
$\bfk=(k_1,\cdots,k_{r-1},1)$, $\bfm=(m_1,\cdots,m_{r-1},m_r)$, and $\tbfm=(\bfm',\tm_r).$
Then the element
$$(y,x)^{\mu}_{\bfk;\bfm}-(y,x)^{\mu}_{\bfk;\tbfm}:=
 y^{\mu}_{m_1}x^{k_1-1}\cdots y^{\mu}_{m_{r-1}-m_{r-2}}x^{k_{r-1}-1}(y^{\mu}_{m_{r}-m_{r-1}}-y^{\mu}_{\tm_r-m_{r-1}})$$
is called a \emph{quasi-admissible} word in $\Q[\mu]\langle X^{\mu}\rangle$. We denote by $\Q[\mu]\langle X^{\mu}\rangle^0$
the submodule generated by all admissible words and all quasi-admissible words over $\Q[\mu]$.
\end{defn}

\begin{prop}\label{prop:X0subalgebra}
Notation is as before.

$(1).$ The submodule $(\Q[\mu]\langle X^{\mu}\rangle^0 ,\shuffle)$ is a subalgebra of $(\Q[\mu]\langle X^{\mu}\rangle^1,\shuffle)$ and
$$
(\Q[\mu]\langle X^{\mu}\rangle_{\rm pre}^0,\shuffle)\subset(\Q[\mu]\langle X^{\mu}\rangle^0 ,\shuffle)\subset(\Q[\mu]\langle X^{\mu}\rangle^1,\shuffle).
$$

$(2).$ We have a homomorphism of $\Q[\mu]$-algebras
\begin{alignat*}{4}
\zeta^{\mu}_{\shuffle}:(\Q[\mu]\langle X^{\mu}\rangle^0 ,\shuffle)&\, \longrightarrow(\calZ^{\mu},\cdot), && \\
(y,x)^{\mu}_{\bfk;\bfm} &\, \longmapsto\zeta^{\mu}(\bfk;\bfm)\quad && \forall \text{admissible }\bfk, \\
(y,x)^{\mu}_{\bfk;\bfm}-(y,x)^{\mu}_{\bfk;\tbfm} &\, \longmapsto \calD^{\mu}(\bfk;\bfm;\tbfm)
 \quad&& \forall \text{special triple }(\bfk;\bfm;\tbfm).
\end{alignat*}
Namely,
$$\zeta^{\mu}_{\shuffle}(w\shuffle v)=\zeta^{\mu}_{\shuffle}(w)\cdot\zeta^{\mu}_{\shuffle}(v)\qquad
\forall w,v\in\Q[\mu]\langle X^{\mu}\rangle^0. $$
\end{prop}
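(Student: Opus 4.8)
The plan is to treat the two assertions separately: prove the subalgebra property in $(1)$ by the same last-letter case analysis used for the $\mu$-stuffle algebra in Proposition~\ref{prop:Y0subalgebra}, and then deduce the homomorphism property in $(2)$ from the integral representation (Corollary~\ref{cor:muMZVitIntegral}) together with Chen's shuffle formula (Lemma~\ref{oogj}). The two inclusions in $(1)$ are immediate: every admissible word is positive, and every quasi-admissible word $(y,x)^{\mu}_{\bfk;\bfm}-(y,x)^{\mu}_{\bfk;\tbfm}$ is a difference of two positive words, whence $\Q[\mu]\langle X^{\mu}\rangle_{\rm pre}^0\subset\Q[\mu]\langle X^{\mu}\rangle^0\subset\Q[\mu]\langle X^{\mu}\rangle^1$. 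Since we already know $(\Q[\mu]\langle X^{\mu}\rangle^1,\shuffle)$ is a subalgebra, for $a,b\in\Q[\mu]\langle X^{\mu}\rangle^0$ the product $a\shuffle b$ is automatically a combination of positive words; the only thing to prove is that the words ending in some $y^{\mu}_\ast$ occur solely inside quasi-admissible differences.

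For the closure I would induct on $\ell(a)+\ell(b)$ and split into three cases according to the final letters, mirroring Proposition~\ref{prop:Y0subalgebra}. Two bookkeeping facts drive everything: appending a single $x$ to any element of $\Q[\mu]\langle X^{\mu}\rangle^0$ again lies in $\Q[\mu]\langle X^{\mu}\rangle^0$ (an admissible word stays admissible, and a quasi-admissible difference becomes a difference of two admissible words), while appending a factor $(y^{\mu}_\alpha-y^{\mu}_\beta)$ with $\alpha\neq\beta$ to a positive word $p$ produces the quasi-admissible generator $py^{\mu}_\alpha-py^{\mu}_\beta$. In Case (i), $a=ux$ and $b=vx$ are admissible and the recursion $ux\shuffle vx=(u\shuffle vx)x+(ux\shuffle v)x$ shows $a\shuffle b$ ends in $x$, hence is a sum of admissible words. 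In Case (ii), with $a=ux$ admissible and $b=v(y^{\mu}_\alpha-y^{\mu}_\beta)$ quasi-admissible, expanding $b=vy^{\mu}_\alpha-vy^{\mu}_\beta$ and regrouping by the last letter gives
\[
a\shuffle b=(u\shuffle b)\,x+(ux\shuffle v)(y^{\mu}_\alpha-y^{\mu}_\beta),
\]
where the first summand lies in $\Q[\mu]\langle X^{\mu}\rangle^0$ by induction and the $x$-appending fact, and the second is a sum of quasi-admissible generators (positivity of $py^{\mu}_\alpha,py^{\mu}_\beta$ follows since the only new partial sum is the total $y$-weight, which adds the positive last $m$-values of $a$ and of $vy^{\mu}_\alpha$).

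Case (iii), where both $a=u(y^{\mu}_\alpha-y^{\mu}_\beta)$ and $b=v(y^{\mu}_\gamma-y^{\mu}_\delta)$ are quasi-admissible, is the crux and the step I expect to be the main obstacle. Here I would expand the fourfold product $(a'-a'')\shuffle(b'-b'')$ with $a'=uy^{\mu}_\alpha,\ a''=uy^{\mu}_\beta,\ b'=vy^{\mu}_\gamma,\ b''=vy^{\mu}_\delta$, apply the shuffle recursion to each of the four terms, and collect the eight resulting words according to whether they end in $y^{\mu}_\alpha,y^{\mu}_\beta,y^{\mu}_\gamma,$ or $y^{\mu}_\delta$. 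The signs conspire so that the $\alpha$- and $\beta$-terms telescope into $(u\shuffle b)(y^{\mu}_\alpha-y^{\mu}_\beta)$ and the $\gamma$- and $\delta$-terms into $(a\shuffle v)(y^{\mu}_\gamma-y^{\mu}_\delta)$, giving
\[
a\shuffle b=(u\shuffle b)(y^{\mu}_\alpha-y^{\mu}_\beta)+(a\shuffle v)(y^{\mu}_\gamma-y^{\mu}_\delta).
\]
By induction $u\shuffle b$ and $a\shuffle v$ lie in $\Q[\mu]\langle X^{\mu}\rangle^0$, and appending $(y^{\mu}_\alpha-y^{\mu}_\beta)$ resp.\ $(y^{\mu}_\gamma-y^{\mu}_\delta)$ keeps us there, since each word $w$ produced is positive (using that $uy^{\mu}_\alpha,uy^{\mu}_\beta$ are positive and the $b$-branches have positive total weight) and $\alpha\neq\beta$, $\gamma\neq\delta$. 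The careful matching of signs in this telescoping, and the verification that no orphan $y$-ending word survives outside a controlled difference, is the delicate part of the whole argument.

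For $(2)$ I would first check well-definedness. By Theorem~\ref{pzci} the word $(y,x)^{\mu}_{\bfk;\bfm}$ corresponds to the form $\omega^{\mu}_{(\bfk;\bfm)}$, and $\zeta^{\mu}_{\shuffle}$ is the $\Q[\mu]$-linear extension of $w\mapsto\lim_{t\to1^-}\int_{\Delta^{|\bfk|}(t)}\omega^{\mu}_{(\bfk;\bfm)}$; Corollary~\ref{cor:muMZVitIntegral} identifies this limit with $\zeta^{\mu}(\bfk;\bfm)$ on admissible words and, for a quasi-admissible difference, with $\int_{\Delta^{|\bfk|}}(\omega^{\mu}_{(\bfk;\bfm)}-\omega^{\mu}_{(\bfk;\tbfm)})=\calD^{\mu}(\bfk;\bfm;\tbfm)$. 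Because the assignment word $\mapsto$ form is $\Q[\mu]$-linear and these combined integrals converge on all of $\Q[\mu]\langle X^{\mu}\rangle^0$, the map is well-defined and lands in $\calZ^{\mu}$. Finally, the homomorphism property follows from Chen's formula: for every $t\in(0,1)$ the product of the two integrals over $\Delta(t)$ equals the shuffle sum over $\Delta(t)$, exactly as in Theorem~\ref{ignn} and Corollary~\ref{utgv}; since $w,v,w\shuffle v\in\Q[\mu]\langle X^{\mu}\rangle^0$ by Part $(1)$, all three limits as $t\to1^-$ exist, and letting $t\to1^-$ yields $\zeta^{\mu}_{\shuffle}(w\shuffle v)=\zeta^{\mu}_{\shuffle}(w)\cdot\zeta^{\mu}_{\shuffle}(v)$. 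This last step is routine given Part $(1)$ and Chen's lemma; the genuine work is the closure argument, and within it Case (iii).
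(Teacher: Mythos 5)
Your proposal is correct, and it follows the route the paper itself takes, which is stated there only in outline: part (1) is ``checked directly from the definition of $\shuffle$'' (your three-case induction on final letters, modeled on Proposition~\ref{prop:Y0subalgebra}, is exactly that check, and your two telescoping identities
$a\shuffle b=(u\shuffle b)x+(ux\shuffle v)(y^{\mu}_\alpha-y^{\mu}_\beta)$ and
$a\shuffle b=(u\shuffle b)(y^{\mu}_\alpha-y^{\mu}_\beta)+(a\shuffle v)(y^{\mu}_\gamma-y^{\mu}_\delta)$ are verified correctly -- note the shuffle case is in fact easier than the $\mu$-stuffle case since there is no $\odot$-term to control), while part (2) is deduced from Corollary~\ref{utgv}, i.e.\ from Chen's formula plus passage to the limit $t\to1^-$, just as you do. The one place you genuinely diverge is well-definedness of $\zeta^{\mu}_{\shuffle}$: the paper refers the reader to an imitation of the proof of Theorem~\ref{thm:zetamuAstWellDefined}, which goes through the direct-sum decomposition and exact-sequence bookkeeping for the quasi-admissible pieces, whereas you observe that $\zeta^{\mu}_{\shuffle,t}$ is an honest linear map on all of $\Q[\mu]\langle X^{\mu}\rangle^1$ (positive words form a basis), so any linear relation among the spanning generators is preserved for every $t$, and convergence on each generator (Corollary~\ref{cor:muMZVitIntegral}) then makes the limit map automatically independent of the chosen representation. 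This is a valid and somewhat cleaner argument; what the paper's decomposition buys instead is an explicit normal form for elements of the quasi-admissible subspace, which is reused elsewhere (e.g.\ in the regularization theorems), but it is not needed for this proposition.
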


\begin{proof} The first statement can be checked directly from the definition of the
product $\shuffle$. The second statement follows from Corollary~\ref{utgv}.
\end{proof} The well-definedness
of $\zeta^{\mu}_{\shuffle}$ can be proved in the same ways as in the proof of Theorem~\ref{thm:zetamuAstWellDefined}.

In the previous sections, we have seen that there is a bijection between positive bi-indices and binary sequences
which should lead to a natural map between $\Q[\mu]\langle Y\rangle$ and $\Q[\mu]\langle X^{\mu}\rangle^1$.
Inspired by this fact, we define a $\Q[\mu]$-linear map between them.

\begin{defn}
There is a natural $\Q[\mu]$-linear bijection between $\Q[\mu]\langle Y\rangle$ and $\Q[\mu]\langle X^{\mu}\rangle^1$:
\begin{align}\label{equ:bijection-varphi}
\varphi_{\mu}:\Q[\mu]\langle Y\rangle&\longrightarrow\Q[\mu]\langle X^{\mu}\rangle^1,\\
y_{\bfk;\bfm}=y_{k_1,m_1}\cdots y_{k_r,m_r}&\longmapsto
(y,x)^\mu_{\bfk;\bfm}=y^{\mu}_{m_1}x^{k_1-1}y^{\mu}_{m_2-m_1}x^{k_2-1}\cdots y^{\mu}_{m_r-m_{r-1}}x^{k_r-1}.
\end{align}
\end{defn}
Notice that, this map does not transform the $\mu$-stuffle product on $\Q[\mu]\langle Y\rangle$ into the shuffle
product on $\Q[\mu]\langle X^{\mu}\rangle^1$. To see $\varphi_{\mu}$ is a bijection we only need to check that
it has an inverse given by
\begin{align*}
\varphi_{\mu}^{-1}:\Q[\mu]\langle X^{\mu}\rangle^1&\longrightarrow\Q[\mu]\langle Y\rangle,\\
y^{\mu}_{l_1}x^{t_1}y^{\mu}_{l_2}x^{t_2}\cdots y^{\mu}_{l_r}x^{t_r}&\longmapsto y_{l_1,t_1+1}y_{l_1+l_2,t_2+1}\cdots y_{l_1+\cdots+l_r,t_r+1}.
\end{align*}
Furthermore, if we restrict the map $\varphi_{\mu}$ to $\Q[\mu]\langle Y\rangle^0$, then it
induces a bijection between $\Q[\mu]\langle Y\rangle^0$ and $\Q[\mu]\langle X^{\mu}\rangle^0 $.

\begin{prop}
We have the following commutative diagram

\begin{center}
\begin{tikzpicture}[scale=0.9]
\node (A) at (-3.8,2) {$\Q[\mu]\langle Y\rangle_{\rm pre}^0$};
\node (B) at (0,2) {$(\Q[\mu]\langle Y\rangle^0,\must)$};
\node (C) at (3.6,2) {$(\Q[\mu]\langle Y\rangle,\must)$};
\node (D) at (-3.8,0) {$(\Q[\mu]\langle X^{\mu}\rangle_{\rm pre}^0,\shuffle)$};
\node (E) at (0,0) {$(\Q[\mu]\langle X^{\mu}\rangle^0 ,\shuffle)$};
\node (F) at (3.6,0) {$(\Q[\mu]\langle X^{\mu}\rangle^1,\shuffle)$};
\draw[->] (A) to node[pos=.5,left] {$\varphi_{\mu}$}(D);
\draw[->] (B) to node[pos=.5,left] {$\varphi_{\mu}$}(E);
\draw[->] (C) to node[pos=.5,left] {$\varphi_{\mu}$}(F);
\node at (-1.8,1.9) {$\hookrightarrow$};
\node at (1.8,1.9) {$\hookrightarrow$};
\node at (-1.8,0) {$\hookrightarrow$};
\node at (1.8,0) {$\hookrightarrow$};
\end{tikzpicture}
\end{center}
\end{prop}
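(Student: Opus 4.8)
The plan is to observe that, since all three vertical arrows are restrictions of the single $\Q[\mu]$-linear bijection $\varphi_{\mu}$ of \eqref{equ:bijection-varphi} and all horizontal arrows are the canonical inclusions, each square commutes automatically as soon as the image of its upper-left corner under $\varphi_{\mu}$ lands in its lower-left corner. Thus the whole statement reduces to verifying the two containments
$$\varphi_{\mu}\big(\Q[\mu]\langle Y\rangle_{\rm pre}^0\big)\subseteq\Q[\mu]\langle X^{\mu}\rangle_{\rm pre}^0,\qquad \varphi_{\mu}\big(\Q[\mu]\langle Y\rangle^0\big)\subseteq\Q[\mu]\langle X^{\mu}\rangle^0,$$
after which the right-hand square commutes for free, because $\varphi_{\mu}$ is already defined as a map into $\Q[\mu]\langle X^{\mu}\rangle^1$ and the middle containment places the middle vertical map inside $\Q[\mu]\langle X^{\mu}\rangle^0$.

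First I would record the telescoping identity underlying $\varphi_{\mu}$. Writing $\varphi_{\mu}(y_{\bfk;\bfm})=y^{\mu}_{l_1}x^{t_1}\cdots y^{\mu}_{l_r}x^{t_r}$ with $l_i=m_i-m_{i-1}$ (and $m_0=0$) and $t_i=k_i-1$, the partial sums recover the original shift parameters, $l_1+\cdots+l_i=m_i$ for every $i$. Hence the positivity condition $l_1>0,\,l_1+l_2>0,\dots,\,l_1+\cdots+l_r>0$ defining a positive word over $X^{\mu}$ is literally the positivity $m_1,\dots,m_r>0$ of the bi-index $(\bfk;\bfm)$, which confirms that $\varphi_{\mu}$ sends positive words to positive words (as is already built into its codomain $\Q[\mu]\langle X^{\mu}\rangle^1$). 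For the left square, recall that $y_{\bfk;\bfm}$ is admissible precisely when it is positive and $k_r>1$; since $t_r=k_r-1$, this is exactly the requirement that the image word be positive with $t_r\geqslant1$, i.e.\ admissible in the sense defining $\Q[\mu]\langle X^{\mu}\rangle_{\rm pre}^0$. Extending $\Q[\mu]$-linearly over the admissible generators gives the first containment.

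The middle square is the only place that needs an actual computation. By Definition~\ref{defn:Yquasi-admissible} a generator of the quasi-admissible part of $\Q[\mu]\langle Y\rangle^0$ has the form $y_{\bfk;\bfm}-y_{\bfk;\tbfm}=y_{k_1,m_1}\cdots y_{k_{r-1},m_{r-1}}(y_{1,m_r}-y_{1,\tm_r})$ for a special triple $(\bfk;\bfm;\tbfm)$, so $k_r=1$ and the trailing factor $x^{k_r-1}=x^0$ disappears under $\varphi_{\mu}$. Applying $\varphi_{\mu}$ to each term and subtracting, the common prefix factors out and leaves
$$\varphi_{\mu}(y_{\bfk;\bfm}-y_{\bfk;\tbfm})=y^{\mu}_{m_1}x^{k_1-1}\cdots y^{\mu}_{m_{r-1}-m_{r-2}}x^{k_{r-1}-1}\big(y^{\mu}_{m_r-m_{r-1}}-y^{\mu}_{\tm_r-m_{r-1}}\big),$$
which is precisely the quasi-admissible word $(y,x)^{\mu}_{\bfk;\bfm}-(y,x)^{\mu}_{\bfk;\tbfm}$ of Definition~\ref{defn:Xquasi-admissible}. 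Combined with the admissible case handled above and $\Q[\mu]$-linearity, this establishes $\varphi_{\mu}(\Q[\mu]\langle Y\rangle^0)\subseteq\Q[\mu]\langle X^{\mu}\rangle^0$ and hence the middle (and, as noted, the right) square.

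I do not expect a genuine obstacle here: once the two containments are in place, commutativity of each square is the tautology that restricting $\varphi_{\mu}$ and then including agrees with including and then applying $\varphi_{\mu}$. The only point demanding care is matching the admissibility conventions across the two alphabets under the index shift $t_r=k_r-1$ and $l_i=m_i-m_{i-1}$ — in particular that $k_r>1$ on the $Y$-side corresponds to $t_r\geqslant1$ (the condition used to define $\Q[\mu]\langle X^{\mu}\rangle_{\rm pre}^0$) on the $X^{\mu}$-side, and that the partial-sum recovery $m_i=l_1+\cdots+l_i$ is exactly what makes $\varphi_{\mu}^{-1}$ well defined.
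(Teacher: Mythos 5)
Your proof is correct and follows essentially the same route as the paper, whose proof is simply the remark that the claim can be checked from the definition of $\varphi_{\mu}$; you have carried out exactly that check (the index correspondence $l_i=m_i-m_{i-1}$, $t_i=k_i-1$, positivity via partial sums, admissibility via $k_r>1\Leftrightarrow t_r\geqslant 1$, and the image of quasi-admissible words being quasi-admissible), after which commutativity of each square is automatic because every map is a restriction or inclusion.
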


\begin{proof}
 One can check this result by the definition of $\varphi_{\mu}$.
\end{proof}

\begin{thm}
Let $\zeta^{\mu}_{*}:(\Q[\mu]\langle Y\rangle^0,\must)\longrightarrow(\calZ^{\mu},\cdot)$ and $\zeta^{\mu}_{\shuffle}:(\Q[\mu]\langle X^{\mu}\rangle^0 ,\shuffle)\longrightarrow(\calZ^{\mu},\cdot)$ be as before. Then we have
$$
\zeta^{\mu}_{*}(w)=\zeta^{\mu}_{\shuffle}(\varphi_{\mu}(w))\qquad\forall w\in\Q[\mu]\langle Y\rangle^0.
$$
Namely, the following diagram commutes:
\begin{center}
\begin{tikzpicture}[scale=0.9]
\node (A) at (-2,2) {$(\Q[\mu]\langle Y\rangle^0,\must)$};
\node (B) at (2,2) {$(\Q[\mu]\langle X^{\mu}\rangle^0 ,\shuffle)$};
\node (C) at (0,0) {$(\calZ^{\mu},\cdot)[T]$};
\draw[->] (A) to node[pos=.2,below] {$\zeta_{*}^{\mu}$}(C);
\draw[->] (B) to node[pos=.2,below] {$\zeta_{\shuffle}^{\mu}$}(C);
\draw[->] (A) to node[pos=.5,above] {${\varphi_{\mu}}$} (B);
\end{tikzpicture}
\end{center}
\end{thm}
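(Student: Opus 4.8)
The plan is to reduce the identity to a check on generators. Both $\zeta^{\mu}_{*}$ and the composite $\zeta^{\mu}_{\shuffle}\circ\varphi_{\mu}$ are $\Q[\mu]$-linear maps out of $\Q[\mu]\langle Y\rangle^0$, and by the remark following \eqref{equ:bijection-varphi} the map $\varphi_{\mu}$ restricts to a bijection $\Q[\mu]\langle Y\rangle^0\to\Q[\mu]\langle X^{\mu}\rangle^0$, so the composite is indeed defined on the whole domain. It therefore suffices to verify the identity on a $\Q[\mu]$-spanning set. Using the direct sum decomposition $\Q[\mu]\langle Y\rangle^0=\Q[\mu]\langle Y\rangle_{\rm pre}^0\oplus\Q[\mu]\langle Y\rangle^{0,\rm quasi}$, such a spanning set is furnished by the admissible words $y_{\bfk;\bfm}$ together with the quasi-admissible words $y_{\bfk;\bfm}-y_{\bfk;\tbfm}$ of Definition~\ref{defn:Yquasi-admissible}.

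First I would treat an admissible word $y_{\bfk;\bfm}$. On one side, Theorem~\ref{thm:zetamuAstWellDefined} gives $\zeta^{\mu}_{*}(y_{\bfk;\bfm})=\lim_{M\to\infty}\zeta^{\mu}_M(\bfk;\bfm)=\zeta^{\mu}(\bfk;\bfm)$. On the other, $\varphi_{\mu}(y_{\bfk;\bfm})=(y,x)^{\mu}_{\bfk;\bfm}$ by \eqref{equ:bijection-varphi}, which is an admissible word in $\Q[\mu]\langle X^{\mu}\rangle^0$, so Proposition~\ref{prop:X0subalgebra}(2) yields $\zeta^{\mu}_{\shuffle}\big((y,x)^{\mu}_{\bfk;\bfm}\big)=\zeta^{\mu}(\bfk;\bfm)$. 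The two sides coincide.

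Next I would treat a quasi-admissible word $w=y_{\bfk;\bfm}-y_{\bfk;\tbfm}$ attached to a special triple $(\bfk;\bfm;\tbfm)$, where $k_r=1$ so that $x^{k_r-1}=1$. Then \eqref{equ:bijection-varphi} and $\Q[\mu]$-linearity give $\varphi_{\mu}(w)=(y,x)^{\mu}_{\bfk;\bfm}-(y,x)^{\mu}_{\bfk;\tbfm}$, which is precisely the quasi-admissible word of Definition~\ref{defn:Xquasi-admissible}. Proposition~\ref{prop:X0subalgebra}(2) evaluates the right-hand side to $\calD^{\mu}(\bfk;\bfm;\tbfm)$, while Theorem~\ref{thm:zetamuAstWellDefined} gives $\zeta^{\mu}_{*}(w)=\lim_{M\to\infty}\big(\zeta^{\mu}_M(\bfk;\bfm)-\zeta^{\mu}_M(\bfk;\tbfm)\big)=\calD^{\mu}(\bfk;\bfm;\tbfm)$ as well. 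Hence the identity holds on generators, and extending $\Q[\mu]$-linearly over the spanning set completes the argument.

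There is no genuinely hard analytic step here: both evaluation maps are \emph{defined} by sending an encoded bi-index to the same (quasi-)$\mu$-MHZV, and $\varphi_{\mu}$ is exactly the dictionary translating the $Y$-encoding into the $X^{\mu}$-encoding. The only point requiring care is the bookkeeping that $\varphi_{\mu}$ carries the quasi-admissible generators of $\Q[\mu]\langle Y\rangle^0$ onto those of $\Q[\mu]\langle X^{\mu}\rangle^0$, so that $\zeta^{\mu}_{\shuffle}\circ\varphi_{\mu}$ is defined and matches $\calD^{\mu}$ term by term; this is where one uses that only the last coordinate of $\bfm$ and $\tbfm$ differs in a special triple, which makes the difference of the two monomials land in the span of quasi-admissible words rather than leaving an admissible remainder.
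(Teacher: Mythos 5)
Your proof is correct and follows essentially the same route as the paper, whose own proof is just the one-line remark that the identity ``follows directly from the definition of $\zeta_{*}^{\mu}$ and $\zeta_{\shuffle}^{\mu}$''; you have simply made explicit the generator-by-generator check (admissible words to $\zeta^{\mu}(\bfk;\bfm)$, quasi-admissible words to $\calD^{\mu}(\bfk;\bfm;\tbfm)$) that this remark implicitly relies on, including the key bookkeeping point that $\varphi_{\mu}$ carries quasi-admissible $Y$-words onto quasi-admissible $X^{\mu}$-words because a special triple differs only in the last coordinate.
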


\begin{proof} This follows directly from the definition of $\zeta_{*}^{\mu}$ and $\zeta_{\shuffle}^{\mu}$.
\end{proof}

\section{Regularization homomorphisms and comparison theorems} \label{sec:RegHomo}
In this section, we discuss how to extend $\mu$-MHZVs onto non-admissible words and use this regularization to derive
relations among them. We will then prove the comparison theorem among the two different products, which is an
analogy of the Ihara-Kaneko-Zagier comparison theorem. As applications, we will find some interesting identities
among $\mu$-MHZVs.

\subsection{Regularization homomorphisms for $\mu$-stuffle product}
We now consider the regularization of $\zeta^{\mu}_{*}$ on non-admissible words. First we observe that
there is a bijection between the set of positive bi-indices and the set of words over $Y$:
$$(\bfk;\bfm) \leftrightarrow y_{\bfk;\bfm}.$$
We will use this bijection to identify both sets. The next lemma will be useful when we estimate
partial sums of $\mu$-MHZVs.

\begin{lem}\label{epvh}
Put $n_0=0$. Let
\begin{equation*}
U_{r,i}(M):=\left\{(n_1,\cdots,n_r)\in\Z^r\big|0\leqslant n_1\leqslant\cdots\leqslant n_i\leqslant M\leqslant n_{i+1}\leqslant\cdots\leqslant n_r\right\}.
\end{equation*}
Let $(\bfk;\bfm)=(k_1,\cdots,k_r;m_1,\cdots,m_r)$ be a positive bi-index. Then
\begin{equation}\label{equ:Urr}
\sum_{U_{r,r}(M)}\frac{\mu^r}{(n_1\mu+m_1)^{k_1}\cdots(n_r\mu+m_r)^{k_r}}=O(\ln^r(M))\quad\text{as}\quad M\to+\infty.
\end{equation}
If $(\bfk;\bfm)=(k_1,\cdots,k_r;m_1,\cdots,m_r)$ is admissible and $0\leqslant i\leqslant r-1$, then
\begin{equation}\label{equ:Uri}
\sum_{U_{r,i}(M)}\frac{\mu^r}{(n_1\mu+m_1)^{k_1}\cdots(n_r\mu+m_r)^{k_r}}=
 O\left(\frac{\ln^i(M)}{M}\right)\quad\text{as}\quad M\to+\infty.
\end{equation}
In both \eqref{equ:Urr} and \eqref{equ:Urr} the big-$O$ constants only depend on $(\bfk;\bfm)$ and $\mu$.
\end{lem}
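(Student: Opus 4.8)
The plan is to reduce both estimates to elementary bounds on nested harmonic-type sums, after discarding the oscillatory part by passing to absolute values. First I would record a pointwise bound on each factor: since $\Ree(\mu)>0$ and each $m_j>0$, for every integer $n_j\geqslant 0$ we have $|n_j\mu+m_j|\geqslant\Ree(n_j\mu+m_j)=n_j\Ree(\mu)+m_j\geqslant c_j(n_j+1)$ for a constant $c_j>0$ depending only on $\mu$ and $m_j$. Hence $\left|\mu/(n_j\mu+m_j)^{k_j}\right|\leqslant C_j(n_j+1)^{-k_j}\leqslant C_j(n_j+1)^{-1}$, using $k_j\geqslant 1$, with $C_j$ depending only on $\mu,m_j,k_j$. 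This replaces every sum in the statement by a sum of nonnegative terms $\prod_j(n_j+1)^{-k_j}$ up to the overall constant $C=\prod_jC_j$, which depends only on $(\bfk;\bfm)$ and $\mu$, as required.

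For \eqref{equ:Urr} I would enlarge the ordered simplex $0\leqslant n_1\leqslant\cdots\leqslant n_r\leqslant M$ to the full box $\{0\leqslant n_j\leqslant M\}$; since all terms are nonnegative this can only increase the sum, and the box sum equals $\prod_{j=1}^r\left(\sum_{0\leqslant n\leqslant M}(n+1)^{-k_j}\right)\leqslant\left(\sum_{0\leqslant n\leqslant M}(n+1)^{-1}\right)^r=O(\ln^r M)$, giving the first estimate immediately.

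The decisive observation for \eqref{equ:Uri} is that $U_{r,i}(M)$ is literally a Cartesian product: the chain $n_i\leqslant M\leqslant n_{i+1}$ decouples the \emph{small} block $0\leqslant n_1\leqslant\cdots\leqslant n_i\leqslant M$ from the \emph{large} block $M\leqslant n_{i+1}\leqslant\cdots\leqslant n_r$, with no residual constraint linking the two. Therefore the majorizing sum splits as (small block)$\times$(large block). The small block is bounded by $O(\ln^i M)$ exactly as in \eqref{equ:Urr} applied to $i$ variables (when $i=0$ it is the empty product $1$), so it remains only to show the large block is $O(1/M)$.

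For the large block I would induct on the number of variables $s=r-i$, summing from the innermost index outward, with induction hypothesis: any nested sum over $M\leqslant p_1\leqslant\cdots\leqslant p_{s'}$ whose terminal exponent is $\geqslant 2$ and whose remaining exponents are $\geqslant 1$ is $O(1/M)$. The base case $s'=1$ is the tail $\sum_{n_r\geqslant M}(n_r+1)^{-k_r}=O\!\left(M^{-(k_r-1)}\right)=O(1/M)$, and this is exactly where admissibility $k_r>1$ (so $k_r-1\geqslant 1$) is used. For the step, carrying out the innermost sum gives $\sum_{n_r\geqslant n_{r-1}}(n_r+1)^{-k_r}=O\!\left((n_{r-1}+1)^{-(k_r-1)}\right)=O\!\left((n_{r-1}+1)^{-1}\right)$, which merges with the factor $(n_{r-1}+1)^{-k_{r-1}}$ to raise the terminal exponent to $k_{r-1}+1\geqslant 2$; the hypothesis then yields $O(1/M)$. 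Multiplying the two blocks gives $O(\ln^i M/M)$. The only point requiring genuine care, and hence the main obstacle, is precisely this large-block estimate: one must verify that after each innermost summation the terminal exponent remains $\geqslant 2$, so that a single gain of $1/M$ (and no extra logarithms) propagates cleanly through all $s$ nested sums; everything else is bookkeeping with nonnegative terms.
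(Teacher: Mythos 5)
Your proof is correct and follows essentially the same route as the paper's: both majorize everything by nonnegative terms, exploit the Cartesian-product structure of $U_{r,i}(M)$ to split off the block $\{M\leqslant n_{i+1}\leqslant\cdots\leqslant n_r\}$, bound the small block by the logarithmic estimate, and prove the tail bound $O(1/M)$ by induction with a telescoping comparison, with admissibility $k_r>1$ used exactly at the terminal exponent. The only cosmetic differences are that you obtain \eqref{equ:Urr} in one step by relaxing the simplex to a full box (the paper runs an induction on $r$ with a separate $n_1=0$ case), and you treat complex $\mu$ directly via $|n\mu+m|\geqslant n\Ree(\mu)+m$ instead of the paper's reduction to real $\mu$ and integer $m_j$.
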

\begin{proof}
Without loss of generality we may assume $\mu>0$ and by scaling we can even assume $m_j$'s are positive integers.
We prove \eqref{equ:Urr} by induction on $r$. When $r=1$ this is obvious. For $r>1$, we have
\begin{align*}
&\sum_{U_{r,r}(M)}\frac{\mu^r}{(n_1\mu+m_1)^{k_1}\cdots(n_r\mu+m_r)^{k_r}}\\
&\leqslant\sum_{U_{r,r}(M)}\frac{\mu^r}{(n_1\mu+1)\cdots(n_r\mu+1)} \\
&\leqslant \sum_{0\leqslant n_2\leqslant \cdots \leqslant n_r\leqslant M}\frac{\mu^r}{(n_2\mu+1)\cdots(n_r\mu+1)}
+\sum_{1\leqslant n_1\leqslant \cdots\leqslant n_r\leqslant M} \frac{1}{n_1\cdots n_r} \\
&\leqslant O(\ln^{r-1}(M)) +\sum_{n_1,\cdots,n_r=1}^{\lfloor M\rfloor}\frac{1}{n_1\cdots n_r}\quad(\text{by induction}) \\
&=O(\ln^r(M)).
\end{align*}

To prove \eqref{equ:Uri} we notice that
\begin{align*}
 &\sum_{U_{r,i}(M)}\frac{\mu^r}{(n_1\mu+m_1)^{k_1}\cdots(n_r\mu+m_r)^{k_r}}\\
 &\leqslant\sum_{U_{i,i}(M)}\frac{\mu^i}{(n_1\mu+1)\cdots(n_i\mu+1)}
\sum_{M\leqslant n_{i+1} \leqslant \cdots\leqslant n_r} \frac{\mu^{r-i}}{(n_{i+1}\mu+1)\cdots(n_{r-1}\mu+1)(n_r\mu+1)^2}.
\end{align*}
By \eqref{equ:Urr} we only need to show that for all $r\in\N$, as $M\to \infty$
\begin{align}\label{equ:1..12Induction}
 \sum_{M\leqslant n_1 \leqslant \cdots \leqslant n_r} \frac{1}{n_1\cdots n_{r-1} n_r^2}=O(1/M).
\end{align}
This is clear if $r=1$. For general $r>1$, we have, for all $M>2$
\begin{align*}
\sum_{n_{r-1}\leqslant n_r} \frac{1}{n_r^2} <\sum_{n_{r-1}\leqslant n_r} \frac{2}{n_r(n_r+1)}=\frac{2}{n_{r-1}}
\end{align*}
by telescoping series. Thus \eqref{equ:1..12Induction} follows immediately by induction.
\end{proof}

\begin{lem}
Let $M$ be a positive real number. Then

$(1).$ If $(\bfk;\bfm)=(k_1,\cdots,k_r;m_1,\cdots,m_r)$ is an admissible bi-index (i.e., $k_r>1$), then
$$\zeta_M^{\mu}(\bfk;\bfm)=\zeta^{\mu}(\bfk;\bfm)+O\left(\frac{\ln^{r-1}(M)}{M}\right)\qquad M\to+\infty.$$

$(2).$ If $(\bfk;\bfm;\tbfm)$ is a special triple then
\begin{equation*}
\calD^{\mu}_M(\bfk;\bfm;\tbfm)=
\calD^{\mu}(\bfk;\bfm;\tbfm)+O\left(\frac{\ln^{r-1}(M)}{M}\right).
\end{equation*}

In (1) (resp.\ (2)) the big-$O$ constant only depends on $(\bfk;\bfm)$ (resp.\ $(\bfk;\bfm;\tbfm)$) and $\mu$.
\end{lem}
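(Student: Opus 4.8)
The plan is to realize both statements as tail estimates and to feed them into Lemma~\ref{epvh}. For part $(1)$, I would first write the difference between the convergent value and the truncated sum as a single tail:
$$
\zeta^{\mu}(\bfk;\bfm)-\zeta_M^{\mu}(\bfk;\bfm)
=\sum_{\substack{0\leqslant n_1\leqslant\cdots\leqslant n_r\\ n_r>M}}
\frac{\mu^r}{(n_1\mu+m_1)^{k_1}\cdots(n_r\mu+m_r)^{k_r}}.
$$
The key combinatorial move is to partition this tail according to the largest index $i$ with $n_i\leqslant M$: since $n_r>M$ forces $i\leqslant r-1$, every term lies in exactly one of the regions $\{n_i\leqslant M<n_{i+1}\}$ for $0\leqslant i\leqslant r-1$, and each such region is contained in the set $U_{r,i}(M)$ of Lemma~\ref{epvh}. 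Hence the tail is bounded in absolute value by the sum over $0\leqslant i\leqslant r-1$ of the quantities estimated in \eqref{equ:Uri}.

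Applying the estimate \eqref{equ:Uri} (which is exactly where the admissibility $k_r>1$ enters) to each piece gives a bound of the shape $\sum_{i=0}^{r-1}O\!\left(\ln^i(M)/M\right)$; since $\ln^i(M)\leqslant\ln^{r-1}(M)$ for all large $M$, the dominant term $i=r-1$ yields the claimed bound $O\!\left(\ln^{r-1}(M)/M\right)$, with the constant depending only on $(\bfk;\bfm)$ and $\mu$, inheriting this from Lemma~\ref{epvh}. This proves $(1)$.

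For part $(2)$, I would reduce to the admissible case just handled. Writing $\calD^{\mu}(\bfk;\bfm;\tbfm)-\calD^{\mu}_M(\bfk;\bfm;\tbfm)$ as the tail over $n_r>M$ of
$$
(\tm_r-m_r)\,\frac{\mu^r}{(n_1\mu+m_1)^{k_1}\cdots(n_{r-1}\mu+m_{r-1})^{k_{r-1}}(n_r\mu+m_r)(n_r\mu+\tm_r)},
$$
I observe that the final factor supplies quadratic decay in $n_r$: after the reductions $\mu>0$ and $m_j\in\N$ used in the proof of Lemma~\ref{epvh}, one has $(n_r\mu+m_r)(n_r\mu+\tm_r)\geqslant c^2(n_r\mu+1)^2$ with $c=\min(1,m_r,\tm_r)>0$. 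Thus the quasi-summand is dominated, up to the constant $|\tm_r-m_r|/c^2$, by the summand of the \emph{admissible} bi-index $(k_1,\cdots,k_{r-1},2;m_1,\cdots,m_{r-1},1)$, whose tail over $n_r>M$ is $O\!\left(\ln^{r-1}(M)/M\right)$ by the argument of part $(1)$. This establishes $(2)$.

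The step I expect to need the most care is precisely this comparison in $(2)$: Lemma~\ref{epvh} is phrased for genuine $\mu$-MHZV summands, whereas here the denominator carries the product $(n_r\mu+m_r)(n_r\mu+\tm_r)$ rather than a pure power, so the point to verify is that this product gives the same $n_r^{-2}$ decay as a weight-$2$ last entry, thereby meeting the admissibility hypothesis of \eqref{equ:Uri} through the comparison bi-index. Once that domination is in place, both parts run through the identical $U_{r,i}(M)$ decomposition. A minor bookkeeping point is that the sets $U_{r,i}(M)$ are defined with the non-strict inequalities $n_i\leqslant M\leqslant n_{i+1}$, so the boundary $n_{i+1}=M$ may be double-counted; since we only need an upper bound for a big-$O$ estimate, this overlap is harmless.
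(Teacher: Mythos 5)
Your proof is correct and takes essentially the same route as the paper: both arguments bound the tail by sums over the regions $U_{r,i}(M)$, $0\leqslant i\leqslant r-1$, apply the estimate \eqref{equ:Uri} of Lemma~\ref{epvh}, and handle part $(2)$ by dominating $(n_r\mu+m_r)(n_r\mu+\tm_r)$ from below by a constant times $(n_r\mu+1)^2$ so that the admissible-case estimate applies. The only cosmetic difference is that the paper performs this domination directly on the summand after the normalization $m_j\in\N$, whereas you phrase it through the explicit comparison bi-index $(k_1,\cdots,k_{r-1},2;m_1,\cdots,m_{r-1},1)$.
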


\begin{proof}
Without loss of generality we may assume $\mu>0$ and $m_j$'s are positive integers.

(1). If $(\bfk;\bfm)=(k_1,\cdots,k_r;m_1,\cdots,m_r)$ is an admissible bi-index, then
\begin{align*}
|\zeta_M^{\mu}(\bfk;\bfm)-\zeta^{\mu}(\bfk;\bfm)|&\leqslant\sum_{i=0}^{r-1}\sum_{U_{r,i}(M)}\frac{\mu^r}{(n_1\mu+m_1)^{k_1}\cdots(n_r\mu+m_r)^{k_r}}\\
&\leqslant\sum_{i=0}^{r-1}\sum_{U_{r,i}(M)}\frac{\mu^r}{(n_1\mu+1)\cdots(n_{r-1}\mu+1)(n_r\mu+1)^2}\\
&\overset{\eqref{equ:Uri}}{=}\sum_{i=0}^{r-1}O\left(\frac{\ln^i(M)}{M}\right)
=O\left(\frac{\ln^{r-1}(M)}{M}\right).
\end{align*}

(2). Notice that
\begin{align*}
&\calD^{\mu}_M(\bfk;\bfm;\tbfm)=\zeta^{\mu}_M(\bfk;\bfm)-\zeta^{\mu}_M(\bfk;\tbfm)\\
&=(\tm_r-m_r)\sum_{0\leqslant n_1\leqslant\cdots\leqslant n_r\leqslant M}\frac{\mu^r}{(n_1\mu+m_1)^{k_1}\cdots(n_{r-1}\mu+m_{r-1})^{k_{r-1}}(n_r\mu+m_r)(n_r\mu+\tm_r)}.
\end{align*}
Hence,
\begin{align*}
&|\calD^{\mu}_M(\bfk;\bfm;\tbfm)-\calD^{\mu}(\bfk;\bfm;\tbfm)|\\
&\leqslant\sum_{i=0}^{r-1}|\tm_r-m_r|\sum_{U_{r,i}(M)}\frac{\mu^r}{(n_1\mu+m_1)^{k_1}\cdots(n_{r-1}\mu+m_{r-1})^{k_{r-1}}(n_r\mu+m_r)(n_r\mu+\tm_r)}\\
&\leqslant\sum_{i=0}^{r-1}|\tm_r-m_r|\sum_{U_{r,i}(M)}\frac{\mu^r}{(n_1\mu+1)\cdots(n_{r-1}\mu+1)(n_r\mu+1)^2}\\
&\overset{\eqref{equ:Uri}}{=}\sum_{i=0}^{r-1}|\tm_r-m_r|O\left(\frac{\ln^i(M)}{M}\right)
=O\left(\frac{\ln^{r-1}(M)}{M}\right).
\end{align*}

\end{proof}

Next, we will prove a crucial theorem which allows us to consider non-admissible words and then non-admissible $\mu$-MHZVs.

\begin{thm}\label{thm:stRegPsi}
The map
\begin{align*}
\psi_{*,m}:(\Q[\mu]\langle Y\rangle^0,\must)[T]&\longrightarrow(\Q[\mu]\langle Y\rangle,\must),\\
T&\longmapsto y_{1,m}
\end{align*}
is an isomorphism of $(\Q[\mu]\langle Y\rangle^0,\must)$-algebra.
\end{thm}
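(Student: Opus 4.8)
The plan is to prove that $\Q[\mu]\langle Y\rangle$ is a polynomial algebra in the single variable $y_{1,m}$ over its subalgebra $\Q[\mu]\langle Y\rangle^0$ with respect to $\must$; this is the $\mu$-analogue of the classical identity $(\mathfrak H^1,*)=(\mathfrak H^0,*)[y_1]$ underlying the Ihara--Kaneko--Zagier regularization. Since $(\Q[\mu]\langle Y\rangle,\must)$ is a commutative associative unital $\Q[\mu]$-algebra and $(\Q[\mu]\langle Y\rangle^0,\must)$ is a subalgebra by Prop.~\ref{prop:Y0subalgebra}, the universal property of the polynomial ring already furnishes a unique homomorphism of $(\Q[\mu]\langle Y\rangle^0,\must)$-algebras $\psi_{*,m}$ with $\psi_{*,m}(T)=y_{1,m}$, so the entire content of the theorem is that $\psi_{*,m}$ is bijective. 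Throughout I write $V:=\Q[\mu]\langle Y\rangle$ and $V^0:=\Q[\mu]\langle Y\rangle^0$, and I distinguish the \emph{concatenation} power $y_{1,m}^{\,n}$ (the word with $y_{1,m}$ repeated $n$ times) from the $n$-fold $\must$-power $y_{1,m}^{\must n}$.

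First I would record a purely concatenation-theoretic decomposition. Every word either ends in a letter $y_{k,m'}$ with $k\ge 2$ (hence is admissible) or ends in some $y_{1,m'}$. Splitting $w'y_{1,m'}=w'(y_{1,m'}-y_{1,m})+w'y_{1,m}$, observing that $w'(y_{1,m'}-y_{1,m})$ is quasi-admissible, and using the exact sequence of Prop.~\ref{prop:exactSeq}, one obtains the direct sum $V=V^0\oplus V\,y_{1,m}$ (right concatenation by $y_{1,m}$). Iterating yields
$$
V=\bigoplus_{n=0}^{\infty}V^0\,y_{1,m}^{\,n}\qquad(\text{right concatenation}),
$$
so that $\{\,w\,y_{1,m}^{\,n}\mid w\text{ in a }\Q[\mu]\text{-basis of }V^0,\ n\ge 0\,\}$ is a $\Q[\mu]$-basis of $V$. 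This defines an exhaustive, bounded-below filtration $F_nV:=\bigoplus_{k\le n}V^0\,y_{1,m}^{\,k}$ with $F_0V=V^0$ and $\operatorname{gr}_nV\cong V^0$.

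The heart of the argument is a triangularity statement relative to this filtration: for all $w\in V^0$,
$$
w\must y_{1,m}^{\must n}=n!\,w\,y_{1,m}^{\,n}+(\text{terms in }F_{n-1}V).
$$
I would prove this by induction on $n$, reducing it to the claim that $(w'y_{1,m}^{\,k})\must y_{1,m}=(k+1)\,w'y_{1,m}^{\,k+1}+(\text{terms in }F_kV)$ for $w'\in V^0$, which is itself handled by induction on $k$ via the recursive definition of $\must$:
$$
(w'y_{1,m}^{\,k-1})y_{1,m}\must y_{1,m}=\big((w'y_{1,m}^{\,k-1})\must y_{1,m}\big)y_{1,m}+w'y_{1,m}^{\,k+1}-\mu\,(w'y_{1,m}^{\,k-1})\,y_{2,m},
$$
where the last term ends in the admissible letter $y_{2,m}$ and hence lies in $F_0V$. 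The base case $k=0$ peels off the last letter of $w'$ and invokes the definition of $\odot$ (Lemma~\ref{lem:parFrac}): the only potentially degree-raising contribution, from $y_{k_r,m_r}\odot y_{1,m}$ with $m_r\ne m$, produces its weight-one part as $a_1(y_{1,m_r}-y_{1,m})$ precisely because $b_1=-a_1$ in the partial-fraction decomposition, so it is quasi-admissible and stays in $F_0V$ rather than raising the $y_{1,m}$-degree. Granting the triangularity, $\psi_{*,m}$ is filtered (sending $V^0T^k$ into $F_kV$) and induces on $\operatorname{gr}_nV\cong V^0$ the map $w\mapsto n!\,w$, an isomorphism since $n!\in\Q^\times$; as the filtrations are exhaustive and bounded below, an isomorphism on the associated graded forces $\psi_{*,m}$ to be an isomorphism.

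I would flag the triangularity computation, and especially the bookkeeping of the $y_{1,m}$-degree under the $\must$-recursion, as the main obstacle: one must verify that every term generated by $\odot$ (the partial-fraction product of letters) either ends in an admissible letter or occurs inside a quasi-admissible combination $y_{1,a}-y_{1,b}$, so that it contributes only to $F_0V$ and never competes with the leading term $n!\,w\,y_{1,m}^{\,n}$. This is exactly the structural reason the space $V^0$ was enlarged from $\Q[\mu]\langle Y\rangle^0_{\mathrm{pre}}$ to include the quasi-admissible words, and it is where the cancellation $b_1=-a_1$ from Lemma~\ref{lem:parFrac} does the decisive work.
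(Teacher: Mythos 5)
Your proposal is correct, and its computational core coincides with the paper's: both arguments hinge on the fact that $\must$-multiplication by $y_{1,m}$ acts triangularly with respect to the number of trailing concatenation factors $y_{1,m}$ (with leading coefficient equal to that number plus one), and that every correction term produced by $\odot$ lands in $\Q[\mu]\langle Y\rangle^0$ precisely because of the cancellation $b_1=-a_1$ from Lemma~\ref{lem:parFrac} --- the structural role of quasi-admissible words that you correctly flag as decisive. The packaging, however, differs. You first establish the concatenation direct sum $\Q[\mu]\langle Y\rangle=\bigoplus_{n\geqslant 0}\Q[\mu]\langle Y\rangle^0\,y_{1,m}^{\,n}$ (via Prop.~\ref{prop:exactSeq}) and then run an associated-graded argument that yields injectivity and surjectivity in one stroke; the paper never states this decomposition explicitly. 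Instead it proves surjectivity by a double induction --- on the length filtration $F_{\ell}\Q[\mu]\langle Y\rangle$, and within fixed length on the number of trailing $y_{1,\cdot}$'s, after splitting the tail into a pure $y_{1,m}^{\,s}$ part plus quasi-admissible pieces --- and proves injectivity separately by the leading-term observation $\psi_{*,m}(w_1T^s+\cdots)=s!\,w_1y_{1,m}^{\,s}+(\text{words with fewer than $s$ trailing }y_{1,m})$, which cannot cancel against the leading term. Your route is structurally cleaner and makes the phrase ``terms of lower order'' precise once and for all; what the paper's hands-on induction buys is the explicit length bookkeeping $\ell(w_j)=r+s-j$ for the polynomial coefficients $w_j$, and this is not cosmetic: it is exactly what feeds the error estimates $O\big(\ln^{r+s-i-1}(M)/M\big)$ in Theorem-Definition~\ref{thm:stReg_zetaMuT}. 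If you keep the graded formulation, you should record that your triangular leading term $n!\,w\,y_{1,m}^{\,n}$ has length $\ell(w)+n$ and that the lower-order terms can be chosen with controlled lengths, so that the downstream regularization estimates remain available.
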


\begin{proof} We will prove this result in two steps.

\noindent
Step 1. We first show that the map $\psi_{*,m}$ is surjective, which amounts to saying that any element $w\in\Q[\mu]\langle Y\rangle$ can be written as a polynomial in $y_{1,m}$ with coefficients in $\Q[\mu]\langle Y\rangle^0$.
To this end, we observe that there exists a filtration on $\Q[\mu]\langle Y\rangle$ induced by the length $\ell$ as follows
$$0\subset F_{0}\Q[\mu]\langle Y\rangle\subset F_{1}\Q[\mu]\langle Y\rangle\subset F_{2}\Q[\mu]\langle Y\rangle\subset\cdots$$
where
$$F_{\ell}\Q[\mu]\langle Y\rangle:=\Big\langle y_{\bfk;\bfm}| (\bfk;\bfm): \text{ positive bi-index},\, \ell(\bfk)\leqslant \ell\Big\rangle_{\Q[\mu]}.$$
To show the surjectivity by induction on $\ell$, we only need to show that, for a fixed length $\ell$ and a word
$w\in F_{\ell}\Q[\mu]\langle Y\rangle$, there are elements $v_1\in F_{\ell}\Q[\mu]\langle Y\rangle^0$ and
$v_2,v_3\in F_{\ell-1}\Q[\mu]\langle Y\rangle$ such that
$$w=v_1+v_2\must y_{1,m}+v_3.$$

Let $(\bfk;\bfm)=(k_1,\cdots,k_r,1,\cdots,1;m_1,\cdots,m_r,m_{r+1},\cdots,m_{r+s})$, $k_r>1$ and
$$y_{\bfk;\bfm}=y_{k_1,m_1}\cdots y_{k_r,m_r}y_{1,m_{r+1}}\cdots y_{1,m_{r+s}}.$$
Then,
\begin{align*}
y_{\bfk;\bfm}&=y_{k_1,m_1}\cdots y_{k_r,m_r}y_{1,m_{r+1}}\cdots y_{1,m_{r+s}}\\
&=y_{k_1,m_1}\cdots y_{k_r,m_r}y_{1,m}^s+y_{k_1,m_1}\cdots y_{k_r,m_r}(y_{1,m_{r+1}}-y_{1,m})y_{1,m}^{s-1}\\
&+y_{k_1,m_1}\cdots y_{k_r,m_r}y_{1,m_{r+1}}(y_{1,m_{r+2}}-y_{1,m})y_{1,m}^{s-2}+\cdots\\
&+y_{k_1,m_1}\cdots y_{k_r,m_r}y_{1,m_{r+1}}\cdots y_{1,m_{r+s-1}}(y_{1,m_{r+s}}-y_{1,m}).
\end{align*}
We now apply induction of the power of $y_{1,m}$ to
treat all the terms on the right-hand side by breaking them into two cases.

(i). For the word $y_{k_1,m_1}\cdots y_{k_r,m_r}y_{1,m}^s$, by recursive calculation, we have
\begin{align*}
&y_{k_1,m_1}\cdots y_{k_r,m_r}y_{1,m}^{s-1}\must y_{1,m}\\
&=s\cdot y_{k_1,m_1}\cdots y_{k_r,m_r}y_{1,m}^s+y_{1,m}y_{k_1,m_1}\cdots y_{k_r,m_r}y_{1,m}^{s-1}\\
&\quad+\sum_{i=1}^{r-1}y_{k_1,m_1}\cdots y_{k_i,m_i}y_{1,m}y_{k_{i+1},m_{i+1}}\cdots y_{k_r,m_r}y_{1,m}^{s-1}+v_3,
\end{align*}
with $v_3\in F_{r+s-1}\Q[\mu]\langle Y\rangle$. Applying the induction hypothesis with respect to $s$ we obtain the desired conclusion.

(ii). For the word $y_{k_1,m_1}\cdots y_{k_r,m_r}y_{1,m_{r+1}}\cdots y_{1,m_{r+i-1}}(y_{1,m_{r+i}}-y_{1,m})y_{1,m}^{s-i}, i=1,\cdots s$, by recursive calculation, we also have
\begin{align*}
&y_{k_1,m_1}\cdots y_{k_r,m_r}y_{1,m_{r+1}}\cdots y_{1,m_{r+i-1}}(y_{1,m_{r+i}}-y_{1,m})y_{1,m}^{s-i-1}\must y_{1,m}\\
&=(s-i)\cdot y_{k_1,m_1}\cdots y_{k_r,m_r}y_{1,m_{r+1}}\cdots y_{1,m_{r+i-1}}(y_{1,m_{r+i}}-y_{1,m})y_{1,m}^{s-i}\\
&\quad+y_{1,m}y_{k_1,m_1}\cdots y_{k_r,m_r}y_{1,m_{r+1}}\cdots y_{1,m_{r+i-1}}(y_{1,m_{r+i}}-y_{1,m})y_{1,m}^{s-i-1}\\
&\quad+\cdots\\
&\quad+y_{k_1,m_1}\cdots y_{k_r,m_r}y_{1,m_{r+1}}\cdots y_{1,m_{r+i-1}}y_{1,m}(y_{1,m_{r+i}}-y_{1,m})y_{1,m}^{s-i-1}+v_3
\end{align*}
with $v_3\in F_{r+s-1}\Q[\mu]\langle Y\rangle$. Applying the induction hypothesis with respect to $s-i$ we obtain the desired conclusion.

Moreover, by keeping track of the lengths of all the words involved we conclude that
$$y_{\bfk;\bfm}=w_s\must \overbrace{y_{1,m}\must \cdots\must y_{1,m}}^s+w_{s-1}\must \overbrace{y_{1,m}\must \cdots\must y_{1,m}}^{s-1}+\cdots+w_0$$
where $w_s,\cdots,w_0\in\Q[\mu]\langle Y\rangle^0$ such that $\ell(w_j)=r+s-j$ for all $r\leqslant j\leqslant r+s$. It follows that $\psi_{*,m}$ is surjective.

\noindent
Step 2. To prove the injectivity of $\psi_{*,m}$, we write each non-zero
$P\in(\Q[\mu]\langle Y\rangle^0,\must)[T]$ as
$$P=w_1T^s+w_2$$
where $0\neq w_1\in\Q[\mu]\langle Y\rangle^0$ and $w_2$ has $T$-degree less than $s$. Then
$$\psi_{*,m}(P)=s!w_1y_{1,m}^s+v_2$$
where all the words in $v_2$ have less than $s$ copies of $y_{1,m}$ at the end. Thus, $\psi_{*,m}(P)\neq0$ and $\psi_{*,m}$ is injective.

\end{proof}

Using the previous results, we can now prove one of the most important theorems in the paper.

\begin{thm-defn}\label{thm:stReg_zetaMuT}
Let $(\bfk;\bfm)=(k_1,\cdots,k_r,1_s;m_1,\cdots,m_{r+s})\in\N^{r+s}\times\Q^{r+s}$ with $k_r>1$.
For any fixed $m\in\Q$ we have
$$\zeta_M^{\mu}(\bfk;\bfm)=a_s(\zeta_M^{\mu}(1;m))^s+a_{s-1}(\zeta_M^{\mu}(1;m))^{s-1}
 +\cdots+a_0+O\left(\frac{\ln^{r+s-1}(M)}{M}\right),$$
where $a_0,\cdots,a_s\in\calZ^{\mu}$.  We define the \emph{$\mu$-stuffle regularization homomorphism} as follows
\begin{align*}
\zeta^{\mu,T}_{*,m}:(\Q[\mu]\langle Y\rangle,\must)&\longrightarrow(\calZ^{\mu},\cdot)[T],\\
y_{\bfk;\bfm}&\longmapsto\sum_{i=0}^sa_iT^i.
\end{align*}
\end{thm-defn}

\begin{proof} By Theorem~\ref{thm:stRegPsi},
$$y_{\bfk;\bfm}=w_s\must \overbrace{y_{1,m}\must \cdots\must y_{1,m}}^s+w_{s-1}\must
\overbrace{y_{1,m}\must \cdots\must y_{1,m}}^{s-1}+\cdots+w_0$$
where $w_s,\cdots,w_0\in\Q[\mu]\langle Y\rangle^0$ such that $\ell(w_j)=r+s-j$ for all $r\leqslant j\leqslant r+s$.
Hence,
\begin{align*}
\zeta^{\mu}_M(\bfk;\bfm)&=\zeta^{\mu}_{*,M}(y_{\bfk;\bfm})\\
&=\sum_{i=0}^s\zeta^{\mu}_{*,M}(w_i\must \overbrace{y_{1,m}\must \cdots\must y_{1,m}}^i)\\
&=\sum_{i=0}^s\zeta^{\mu}_{*,M}(w_i)\cdot(\zeta^{\mu}_{*,M}(y_{1,m}))^i\\
&\overset{\eqref{equ:Urr}}{=}\sum_{i=0}^s\left(a_i+O\left(\frac{\ln^{r+s-i-1}(M)}{M}\right)\right)\cdot(\zeta^{\mu}_{M}(1;m))^i \quad(\text{since } \ell(w_i)=r+s-i)\\
&=\sum_{i=0}^sa_i\cdot(\zeta^{\mu}_{M}(1;m))^i+O\left(\frac{\ln^{r+s-1}(M)}{M}\right)
\end{align*}
since $\zeta^{\mu}_{M}(1;m)=O(\ln(M))$ by \eqref{equ:Urr}, where $a_i=\zeta^{\mu}_{*}(w_i)\in\calZ^{\mu}$ for all $0\leqslant i\leqslant s$.
\end{proof}

\begin{eg}\label{nxbt}
Let $(\bfk;\bfm)=(1,1;1,2)$ be a positive bi-index. Then we have
\begin{align*}
y_{1,1}y_{1,2}&=y_{1,1}(y_{1,2}-y_{1,1})+y_{1,1}^2\\
&=\frac{y_{1,1}\must y_{1,1}+\mu y_{2,1}}{2}+y_{1,1}(y_{1,2}-y_{1,2}).
\end{align*}
Hence
\begin{align*}
\zeta_M^{\mu}(1,1;1,2)&=\zeta_{*,M}^{\mu}(y_{1,1}y_{1,2})\\
&=\frac{(\zeta^{\mu}_{*,M}(y_{1,1}))^2}{2}+\frac{\mu}{2}\zeta^{\mu}_{*,M}(y_{2,1})+\zeta^{\mu}_{*,M}(y_{1,1}(y_{1,2}-y_{1,1}))\\
&=\frac{(\zeta^{\mu}_{*,M}(y_{1,1}))^2}{2}+\frac{\mu}{2}\zeta^{\mu}_{*}(y_{2,1})+\zeta^{\mu}_{*}(y_{1,1}(y_{1,2}-y_{1,1}))+O\left(\frac{\ln^2(M)}{M}\right)\\
&=\frac{(\zeta^{\mu}_{*,M}(y_{1,1}))^2}{2}+\frac{\mu}{2}\zeta^{\mu}(2;1)+\calD^{\mu}(1,1;1,2;1,1)+O\left(\frac{\ln^2(M)}{M}\right).
\end{align*}
\end{eg}

\begin{thm}\label{rtxz}
The map $\zeta^{\mu,T}_{*,m}:(\Q[\mu]\langle Y\rangle,\must)\longrightarrow(\calZ^{\mu},\cdot)[T]$ is a $\Q[\mu]$-algebra homomorphism.
\end{thm}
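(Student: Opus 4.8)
The plan is to realize $\zeta^{\mu,T}_{*,m}$ as a composition of two maps that are already known to be $\Q[\mu]$-algebra homomorphisms, so that the homomorphism property follows formally rather than by a direct coefficient computation on $\must$.

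First I would record that the evaluation map $\zeta^{\mu}_{*}:(\Q[\mu]\langle Y\rangle^0,\must)\to(\calZ^{\mu},\cdot)$ of Theorem~\ref{thm:zetamuAstWellDefined} is itself a $\Q[\mu]$-algebra homomorphism. This is quick: for $v,w\in\Q[\mu]\langle Y\rangle^0$ we have $v\must w\in\Q[\mu]\langle Y\rangle^0$ by Proposition~\ref{prop:Y0subalgebra}, and since each $\zeta^{\mu}_{*,M}$ is an algebra homomorphism into $\R$ the identity $\zeta^{\mu}_{*,M}(v\must w)=\zeta^{\mu}_{*,M}(v)\cdot\zeta^{\mu}_{*,M}(w)$ holds for every $M$; all three of these partial sums converge as $M\to\infty$, so passing to the limit gives $\zeta^{\mu}_{*}(v\must w)=\zeta^{\mu}_{*}(v)\cdot\zeta^{\mu}_{*}(w)$. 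I would then extend $\zeta^{\mu}_{*}$ coefficientwise, fixing $T$, to a map
$$
\widetilde{\zeta^{\mu}_{*}}:(\Q[\mu]\langle Y\rangle^0,\must)[T]\longrightarrow(\calZ^{\mu},\cdot)[T],\qquad \sum_i w_iT^i\longmapsto\sum_i\zeta^{\mu}_{*}(w_i)T^i.
$$
Because the product on $(\Q[\mu]\langle Y\rangle^0,\must)[T]$ is $\must$ on coefficients with $T$ central, multiplicativity of $\zeta^{\mu}_{*}$ gives, after matching coefficients of each $T^k$, that $\widetilde{\zeta^{\mu}_{*}}$ is again a $\Q[\mu]$-algebra homomorphism.

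The key structural input is Theorem~\ref{thm:stRegPsi}, which states that $\psi_{*,m}:(\Q[\mu]\langle Y\rangle^0,\must)[T]\to(\Q[\mu]\langle Y\rangle,\must)$ is an isomorphism of $(\Q[\mu]\langle Y\rangle^0,\must)$-algebras; hence its inverse $\psi_{*,m}^{-1}$ is a $\Q[\mu]$-algebra homomorphism. I would then verify the factorization
$$
\zeta^{\mu,T}_{*,m}=\widetilde{\zeta^{\mu}_{*}}\circ\psi_{*,m}^{-1}.
$$
Indeed, by Theorem-Definition~\ref{thm:stReg_zetaMuT}, writing $\psi_{*,m}^{-1}(y_{\bfk;\bfm})=\sum_i w_iT^i$ with $w_i\in\Q[\mu]\langle Y\rangle^0$ is exactly the statement $y_{\bfk;\bfm}=\sum_i w_i\must y_{1,m}^{\must i}$, and the coefficients appearing there are $a_i=\zeta^{\mu}_{*}(w_i)$; thus $\zeta^{\mu,T}_{*,m}(y_{\bfk;\bfm})=\sum_i\zeta^{\mu}_{*}(w_i)T^i=\widetilde{\zeta^{\mu}_{*}}(\psi_{*,m}^{-1}(y_{\bfk;\bfm}))$. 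Since both sides are $\Q[\mu]$-linear in the word, the identity extends to all of $\Q[\mu]\langle Y\rangle$. As a composition of two $\Q[\mu]$-algebra homomorphisms, $\zeta^{\mu,T}_{*,m}$ is then a $\Q[\mu]$-algebra homomorphism.

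The only genuine work is the opening multiplicativity step and the bookkeeping in the factorization. The main obstacle is confirming that the coefficients $a_i=\zeta^{\mu}_{*}(w_i)$ in Theorem-Definition~\ref{thm:stReg_zetaMuT} are precisely the images of the components of $\psi_{*,m}^{-1}(y_{\bfk;\bfm})$; this is guaranteed by the \emph{uniqueness} of the expansion in $\must$-powers of $y_{1,m}$ supplied by the isomorphism $\psi_{*,m}$. Once this is pinned down, the homomorphism property is purely formal, since a composition of homomorphisms is a homomorphism.
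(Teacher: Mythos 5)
Your proposal is correct, but it proves the theorem by a genuinely different route than the paper. The paper's own proof is analytic: it uses the multiplicativity of the partial-sum map $\zeta^{\mu}_{*,M}$ (a consequence of Theorem~\ref{thm:ddog}), the asymptotic statement of Theorem-Definition~\ref{thm:stReg_zetaMuT} that $\zeta^{\mu}_{*,M}(w)$ equals the polynomial $\zeta^{\mu,T}_{*,m}(w)$ evaluated at $T=\zeta^{\mu}_M(1;m)$ up to an error $O(\ln^r(M)/M)$, and the growth estimate $\zeta^{\mu}_M(1;m)=O(\ln M)$ from \eqref{equ:Urr}; multiplicativity of the polynomials then follows because two polynomials that agree up to $o(1)$ at arguments tending to infinity must coincide (a step the paper leaves implicit). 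You instead argue purely algebraically, factoring $\zeta^{\mu,T}_{*,m}$ as the coefficientwise extension of $\zeta^{\mu}_{*}$ composed with $\psi_{*,m}^{-1}$, both of which are $\Q[\mu]$-algebra homomorphisms by Theorem~\ref{thm:zetamuAstWellDefined}, Proposition~\ref{prop:Y0subalgebra}, and Theorem~\ref{thm:stRegPsi}. The only delicate point in your argument, the identification $a_i=\zeta^{\mu}_{*}(w_i)$ of the coefficients of $\zeta^{\mu,T}_{*,m}(y_{\bfk;\bfm})$ with the images under $\zeta^{\mu}_{*}$ of the components of $\psi_{*,m}^{-1}(y_{\bfk;\bfm})$, is exactly how the $a_i$ are produced in the paper's proof of Theorem-Definition~\ref{thm:stReg_zetaMuT}, and that result precedes the present theorem, so there is no circularity. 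As for what each approach buys: the paper's argument is shorter given the asymptotic machinery already in place, whereas yours isolates the algebra from the analysis --- all limit arguments are confined to the well-definedness of $\zeta^{\mu}_{*}$ --- and it makes explicit the factorization $\zeta^{\mu,T}_{*,m}=\bigl(\zeta^{\mu}_{*}:T\mapsto T\bigr)\circ\psi_{*,m}^{-1}$, which is precisely the first commutative diagram the paper displays (without proof) at the start of its final section on the universal algebra.
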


\begin{proof} By the definition of $\zeta^{\mu}_{*,M}$, we know that
$$\zeta^{\mu}_{*,M}(w\must v)=\zeta^{\mu}_{*,M}(w)\cdot\zeta^{\mu}_{*,M}(v)\qquad\forall w,v\in\Q[\mu]\langle Y\rangle.$$
By the definition of $\zeta_{*,m}^{\mu,T}$, we have
$$\zeta^{\mu}_{*,M}(w)=\zeta_{*,m}^{\mu,T}(w)+O\left(\frac{\ln^r(M)}{M}\right).$$
Since $\zeta^{\mu}_{M}(1;m)=O(\ln(M))$ by \eqref{equ:Urr} we get
$$\zeta^{\mu,T}_{*,m}(w\must v)=\zeta^{\mu,T}_{*,m}(w)\cdot\zeta^{\mu,T}_{*,m}(v)\qquad\forall w,v\in\Q[\mu]\langle Y\rangle,$$
as desired.
\end{proof}

Furthermore, the map $\zeta^{\mu,T}_{*,m}$ is characterized by the following conditions.

\begin{thm}
For any given $m\in\Q_{>0}$, there exists a unique $\Q[\mu]$-linear map
$$\zeta_{*,m}^{\mu,T}:(\Q[\mu]\langle Y\rangle,\must)\longrightarrow(\calZ^{\mu},\cdot)[T]$$
that satisfies
$$\begin{cases}
\zeta_{*,m}^{\mu,T}(w)=\zeta_{*}^{\mu}(w) \qquad&\forall w\in\Q[\mu]\langle Y\rangle^0;\\
\zeta_{*,m}^{\mu,T}(y_{1,m})=T;\\
\zeta_{*,m}^{\mu,T}(v\must w)=\zeta_{*,m}^{\mu,T}(v)\cdot\zeta_{*,m}^{\mu,T}(w) &\forall v,w\in\Q[\mu]\langle Y\rangle.
\end{cases}$$
\end{thm}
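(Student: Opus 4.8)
The plan is to split the statement into existence and uniqueness and to observe that almost all of the genuine work has already been done in Theorem~\ref{thm:stRegPsi}, in Theorem-Definition~\ref{thm:stReg_zetaMuT}, and in Theorem~\ref{rtxz}. What remains is a bookkeeping argument resting on the fact that $\psi_{*,m}$ is an isomorphism: surjectivity lets me expand any word as a $\must$-polynomial in $y_{1,m}$ with admissible coefficients, and injectivity guarantees that this expansion is unique.

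For existence I would take the map $\zeta^{\mu,T}_{*,m}$ constructed in Theorem-Definition~\ref{thm:stReg_zetaMuT} and verify the three displayed conditions. The homomorphism property is precisely Theorem~\ref{rtxz}. For the first condition, if $w\in\Q[\mu]\langle Y\rangle^0$ then $\psi_{*,m}$ fixes $w$, i.e.\ its expansion has no positive power of $y_{1,m}$ ($s=0$, $w=w_0$), so $\zeta^{\mu,T}_{*,m}(w)=a_0=\zeta^{\mu}_*(w)$; by $\Q[\mu]$-linearity this covers both admissible and quasi-admissible generators. For the second, writing $y_{1,m}=1\must y_{1,m}$ gives $w_1=1$ and $w_0=0$, whence $\zeta^{\mu,T}_{*,m}(y_{1,m})=\zeta^{\mu}_*(1)\,T=T$.

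For uniqueness, suppose $g\colon(\Q[\mu]\langle Y\rangle,\must)\to(\calZ^{\mu},\cdot)[T]$ is any $\Q[\mu]$-linear map satisfying the three conditions. By the surjectivity half of Theorem~\ref{thm:stRegPsi}, every word admits an expansion
$$y_{\bfk;\bfm}=\sum_{i=0}^s w_i\must\overbrace{y_{1,m}\must\cdots\must y_{1,m}}^{i},\qquad w_i\in\Q[\mu]\langle Y\rangle^0.$$
Applying $g$, using first the homomorphism condition and then conditions~(1) and~(2) to evaluate $g(w_i)=\zeta^{\mu}_*(w_i)$ and $g(y_{1,m})=T$, I obtain
$$g(y_{\bfk;\bfm})=\sum_{i=0}^s g(w_i)\,g(y_{1,m})^i=\sum_{i=0}^s \zeta^{\mu}_*(w_i)\,T^i=\zeta^{\mu,T}_{*,m}(y_{\bfk;\bfm}).$$
Since the words $y_{\bfk;\bfm}$ form a $\Q[\mu]$-basis of $\Q[\mu]\langle Y\rangle$, linearity then forces $g=\zeta^{\mu,T}_{*,m}$.

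The argument is thus essentially formal given the earlier results, and the only point demanding care—already settled in Theorem~\ref{thm:stRegPsi}, which is where the real obstacle lies—is that $\psi_{*,m}$ is an isomorphism. I would spell out both of its roles explicitly: surjectivity ensures the expansion above exists with coefficients in the admissible subalgebra $\Q[\mu]\langle Y\rangle^0$, so that conditions~(1) and~(2) genuinely pin down $g$; injectivity ensures the coefficients $w_i$, and hence the polynomial $\sum_i\zeta^{\mu}_*(w_i)T^i$, are well defined, so that the prescription for $\zeta^{\mu,T}_{*,m}$ is consistent. Emphasizing these two uses makes clear that no further convergence or regularization estimates are needed at this final stage.
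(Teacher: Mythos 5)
Your proposal is correct and follows essentially the same route as the paper: existence comes from the map constructed in Theorem-Definition~\ref{thm:stReg_zetaMuT} together with the homomorphism property of Theorem~\ref{rtxz}, and uniqueness comes from expanding an arbitrary word via Theorem~\ref{thm:stRegPsi} and evaluating any candidate map term by term. The only difference is that you verify the first two defining conditions explicitly (which the paper leaves implicit in citing Theorem~\ref{rtxz}), a harmless and slightly more careful presentation of the same argument.
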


\begin{proof}
The existence of $\zeta_{*,m}^{\mu,T}$ is guaranteed by Theorem~\ref{rtxz}.
We only need to prove the uniqueness. By Theorem~\ref{thm:stRegPsi}, we have
$$w=w_s\must \overbrace{y_{1,m}\must \cdots\must y_{1,m}}^s+w_{s-1}\must \overbrace{y_{1,m}\must \cdots\must y_{1,m}}^{s-1}+\cdots+w_0$$
where $w_0,\cdots,w_s\in\Q[\mu]\langle Y\rangle^0$. If $f$ is another map satisfying the above properties, then we have
$$
f(w)=\sum_{i=0}^sf(w_i)\cdot(f(y_{1,m}))^i=\sum_{i=0}^sf(w_i)\cdot T^i.
$$
But $w_i$ is the $\Q[\mu]$-linear combination of (quasi-)admissible words and therefore
$$f(w)=\zeta_{*,m}^{\mu,T}(w).$$
This proves the uniqueness $\zeta_{*,m}^{\mu,T}$.
\end{proof}

\begin{eg}
From Example~\ref{nxbt}, we have
\begin{align*}
\zeta^{\mu,T}_{*,1}(y_{1,1}y_{1,2})&=\frac{(\zeta^{\mu,T}_{*,1}(y_{1,1}))^2}{2}
+\frac{\mu}{2}\zeta^{\mu}_{*}(y_{2,1})+\zeta^{\mu}_{*}(y_{1,1}(y_{1,2}-y_{1,1}))\\
&=\frac{T^2}{2}+\frac{\mu}{2}\zeta^{\mu}(2;1)+\calD^{\mu}(1,1;1,2;1,1).
\end{align*}
\end{eg}

\subsection{Regularization homomorphisms for shuffle product}
In this section, we will consider the regularization of $\zeta_{\shuffle}^{\mu}$ on non-admissible words.
Recall that each bi-index $(\bfk;\bfm)$ defines a word $(y,x)^{\mu}_{\bfk;\bfm}.$
In fact, there is a bijection between the set of positive bi-indices and the set of words in $\Q[\mu]\langle X^{\mu}\rangle^1$.

For $0<t<1$, we defines the $\Q[\mu]$-linear map
\begin{align*}
\zeta^{\mu}_{\shuffle,t}:(\Q[\mu]\langle X^{\mu}\rangle^1,\shuffle)&\longrightarrow C^{\infty}(0,1),\\
(y,x)^{\mu}_{\bfk;\bfm} &\longmapsto\int_{\Delta^{|\bfk|}(t)}\omega^{\mu}_{(\bfk;\bfm)}.
\end{align*}

\begin{lem}\label{kdov}
For $0<t<1$, we have

$(1)$. If $(\bfk;\bfm)=(k_1,\cdots,k_r;m_1,\cdots,m_r)$ is a positive bi-index, then
$$\int_{\Delta^{|\bfk|}(t)}\omega^{\mu}_{(\bfk;\bfm)}=O\Big(\ln^r\Big(\frac1{1-t}\Big)\Big)\quad\text{as}\quad t\to 1^-.$$

$(2)$. If $(\bfk;\bfm)=(k_1,\cdots,k_r;m_1,\cdots,m_r)$ is an admissible bi-index, then
$$\int_{\Delta^{|\bfk|}(t)}\omega^{\mu}_{(\bfk;\bfm)}
=\int_{\Delta^{|\bfk|}}\omega^{\mu}_{(\bfk;\bfm)}
 +O\Big((1-t)\ln^{r}\Big(\frac1{1-t}\Big)\Big)\quad\text{as}\quad t\to 1^-.
$$
\end{lem}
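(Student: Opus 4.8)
The plan is to prove both parts by induction on the weight $|\bfk|$, peeling off the outermost $1$-form of the iterated integral exactly as in the proof of Theorem~\ref{pzci}, and thereby reducing everything to two elementary estimates for logarithmic singularities at $t=1$. Write $I(t):=\int_{\Delta^{|\bfk|}(t)}\omega^{\mu}_{(\bfk;\bfm)}$. Following the normalization used in the proof of Lemma~\ref{epvh}, I may assume $\mu>0$ (the general case $\Ree(\mu)>0$ is handled identically with absolute values, using that $1-t_1^{\mu}\ne0$ for $t_1\in(0,1)$ and $|1-t_1^{\mu}|\sim|\mu|(1-t_1)$ as $t_1\to1^-$). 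Under this normalization $\omega_0(t_1)=dt_1/t_1$ stays bounded near $t_1=1$, whereas $\omega_{1,\mu}^{(\delta)}(t_1)\sim dt_1/(1-t_1)$; positivity of $(\bfk;\bfm)$ guarantees that every integrand appearing below is integrable near $t_1=0$ (the relevant power is $t_1^{m_r-1}$ with $m_r>0$).

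For part $(1)$ the base case is the empty bi-index, where $I(t)=1=O(\ln^0(\cdots))$. For the inductive step I distinguish two cases according to the last entry $k_r$. If $k_r>1$, then by Lemma~\ref{lem:Li-Iteration} the outermost form is $\omega_0(t_1)$ and
\[
I(t)=\int_0^t J(t_1)\,\frac{dt_1}{t_1},\qquad J(t_1)=\int_{\Delta^{|\bfk|-1}(t_1)}\omega^{\mu}_{(\bfk_-;\bfm)},\quad \bfk_-=(k_1,\dots,k_r-1),
\]
which has the same length $r$; by the induction hypothesis $J(t_1)=O(\ln^r(1/(1-t_1)))$, and since $\ln^r(1/(1-s))$ is integrable on $(0,1)$ this already gives $I(t)=O(1)\subseteq O(\ln^r(1/(1-t)))$. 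If $k_r=1$, then the outermost form is $\omega_{1,\mu}^{(m_r-m_{r-1})}(t_1)$ and $I(t)=\int_0^t \omega_{1,\mu}^{(m_r-m_{r-1})}(t_1)\,K(t_1)$ with $K(t_1)=\int_{\Delta^{|\bfk|-1}(t_1)}\omega^{\mu}_{(\bfk';\bfm')}$ of length $r-1$; the induction hypothesis gives $K(t_1)=O(\ln^{r-1}(1/(1-t_1)))$, so near $t_1=1$ the integrand is $O\big((1-t_1)^{-1}\ln^{r-1}(1/(1-t_1))\big)$, and the antiderivative identity $\int^t \frac{\ln^{r-1}(1/(1-s))}{1-s}\,ds=\tfrac1r\ln^{r}(1/(1-t))+O(1)$ yields $I(t)=O(\ln^{r}(1/(1-t)))$. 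When converting the pointwise bounds on $J,K$ into bounds on their outer integrals I split $(0,t)$ into a neighbourhood of $1$, where the inductive estimate applies, and its complement, where the inner integral is continuous hence bounded; this completes part $(1)$.

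For part $(2)$ admissibility forces $k_r>1$, so the outermost form is $\omega_0$, and by Corollary~\ref{cor:muMZVitIntegral} we have $\zeta^{\mu}(\bfk;\bfm)=\int_{\Delta^{|\bfk|}}\omega^{\mu}_{(\bfk;\bfm)}$. The error is therefore the integral over the complementary slab $\{t\leqslant t_1<1\}$:
\[
\zeta^{\mu}(\bfk;\bfm)-I(t)=\int_t^1 J(t_1)\,\frac{dt_1}{t_1},\qquad J(t_1)=\int_{\Delta^{|\bfk|-1}(t_1)}\omega^{\mu}_{(\bfk_-;\bfm)}.
\]
Here $\bfk_-=(k_1,\dots,k_r-1)$ is a positive bi-index of length $r$, so part $(1)$ gives $J(t_1)=O(\ln^r(1/(1-t_1)))$; bounding $1/t_1\leqslant1/t$ and substituting $u=1-s$ reduces everything to the single closed-form estimate
\[
\int_t^1\Big(\ln\tfrac1{1-s}\Big)^r ds=\int_0^{1-t}\Big(\ln\tfrac1u\Big)^r du=(1-t)\sum_{j=0}^r\frac{r!}{j!}\Big(\ln\tfrac1{1-t}\Big)^j=O\Big((1-t)\ln^{r}\tfrac1{1-t}\Big),
\]
which is obtained by repeated integration by parts (equivalently, it is the upper incomplete Gamma value $\Gamma(r+1,\ln\frac1{1-t})$). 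This is exactly the asserted bound.

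The main obstacle is not a single deep step but the bookkeeping of logarithmic singularities: I must pass rigorously from a pointwise asymptotic bound on the inner integral (valid only as $t_1\to1^-$) to a uniform bound on its outer integral, which forces the domain splitting mentioned above, and I must pin down the two integral identities with the correct leading power of $\ln(1/(1-t))$. A secondary point to treat carefully is the reduction to $\mu>0$, or else carrying out the same estimates with absolute values for general $\mu$ with $\Ree(\mu)>0$, noting $1-t_1^{\mu}\ne0$ on $(0,1)$ and $|1-t_1^{\mu}|\sim|\mu|(1-t_1)$.
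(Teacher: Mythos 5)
Your proof is correct, but it takes a genuinely different route from the paper's. The paper never inducts on the iterated integral: for part (1) it passes to the series $t^{m_r}\Li^{\mu}(\bfk;\bfm;t)$ via Theorem~\ref{pzci}, reduces by scaling to integer $m_j$'s, dominates everything by classical depth-$r$ polylogarithms $\Li_{1\circ\cdots\circ1}(t)$ after expanding $\frac{1}{t(1-t)}=\frac1t+\frac1{1-t}$, and quotes \cite[Lemma 3.3.20]{ZhaoBook}; for part (2) it writes the error as the series $\mu^r\sum_{0\leqslant n_1\leqslant\cdots\leqslant n_r}(1-t^{n_r\mu+m_r})\prod_i(n_i\mu+m_i)^{-k_i}$, splits the summation range by the sets $U_{r,i}\big(\frac1{1-t}\big)$, and invokes the discrete partial-sum estimates of Lemma~\ref{epvh}, using $1-t^{n_r\mu+m_r}\leqslant(n_r\mu+m_r)(1-t)$ when all indices are at most $\frac1{1-t}$ and the trivial bound $1$ otherwise. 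Your weight induction, peeling the outermost $1$-form off exactly as in Theorem~\ref{pzci}, stays on the integral side throughout and replaces this machinery by two elementary antiderivative identities ($\int^t\ln^{r-1}\frac1{1-s}\,\frac{ds}{1-s}=\frac1r\ln^r\frac1{1-t}+O(1)$ and the incomplete-Gamma evaluation of $\int_0^{1-t}\ln^r\frac1u\,du$); it is self-contained (no appeal to Lemma~\ref{epvh} or to the external polylogarithm bound), your domain-splitting correctly upgrades the asymptotic inner bounds to uniform ones on the outer integral, and your explicit handling of complex $\mu$ via $|1-t_1^{\mu}|\sim|\mu|(1-t_1)$ is more careful than the paper's bare reduction to $\mu>0$. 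What the paper's route buys is economy and coherence with the rest of the argument: the $U_{r,i}(M)$ estimates with $M=\frac1{1-t}$ are exactly the tool reused later to pass between the series and integral pictures (e.g.\ in the proof of Theorem~\ref{thm:mixedComparisonMap}), so the paper extracts both this lemma and the later comparisons from a single toolkit, whereas your argument establishes the lemma as a stand-alone fact about iterated integrals.
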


\begin{proof} (1). For $0<t<1$ we have
$$\frac{\mu}{1-t^{\mu}}\leqslant
\left\{
 \begin{array}{ll}
 \displaystyle \frac{\mu}{1-t}, \quad& \hbox{if $\mu\geqslant1$;} \\
 \displaystyle \frac{1}{1-t}, \quad& \hbox{if $0<\mu\leqslant1$.}
 \end{array}
\right.
 \Longrightarrow \quad \frac{\mu}{1-t^{\mu}}\leqslant\frac{c}{1-t}\quad \forall t\in(0,1),
$$
where $c:=\max\{1,\mu\}$. By Theorem~\ref{pzci}, we have
\begin{align*}
\int_{\Delta^{|\bfk|}(t)}\omega^{\mu}_{(\bfk;\bfm)}=t^{m_r}\Li^{\mu}(\bfk;\bfm;t)
&=\sum_{0\leqslant n_1\leqslant\cdots\leqslant n_r}\frac{\mu^rt^{n_r\mu+m_r}}{(n_1\mu+m_1)^{k_1}\cdots(n_r\mu+m_r)^{k_r}}.
\end{align*}
It is clear that for $t$ sufficiently close to 1 we always have $t^{m_r}<2 t$. Further,
by scaling we now may assume all $m_j$'s are positive integers. Hence,
\begin{align*}
\int_{\Delta^{|\bfk|}(t)}\omega^{\mu}_{(\bfk;\bfm)}&\leqslant\sum_{0\leqslant n_1\leqslant\cdots\leqslant n_r}\frac{2\mu^rt^{n_r\mu+1}}{(n_1\mu+1)\cdots(n_r\mu+1)}\\
&=2\int\limits_{0<t_r<\cdots<t_1<t}\frac{\mu dt_r}{1-t_r^{\mu}}\frac{\mu dt_{r-1}}{t_{r-1}(1-t_{r-1}^{\mu})}\frac{\mu dt_{r-2}}{t_{r-2}(1-t_{r-2}^{\mu})}\cdots\frac{\mu dt_{1}}{t_{1}(1-t_1^{\mu})}\\
&\leqslant 2c^r\int\limits_{0<t_r<\cdots<t_1<t}\frac{dt_r}{1-t_r}\frac{dt_{r-1}}{t_{r-1}(1-t_{r-1})}\frac{dt_{r-2}}{t_{r-2}(1-t_{r-2})}\cdots\frac{dt_{1}}{t_{1}(1-t_1)}\\
&=2c^r\int\limits_{0<t_r<\cdots<t_1<t}\frac{dt_r}{1-t_r}\left(\frac{dt_{r-1}}{t_{r-1}}+\frac{dt_{r-1}}{1-t_{r-1}}\right)\left(\frac{dt_{r-2}}{t_{r-2}}+\frac{dt_{r-2}}{1-t_{r-1}}\right)\cdots\left(\frac{dt_{1}}{t_{1}}+\frac{dt_{1}}{1-t_{1}}\right)\\
&=2c^r \Li_{\underbrace{\scriptstyle 1\circ \cdots \circ 1}_{\text{$r$ times}}}(t) \qquad\text{(where $\circ=$``+'' or ``,'', see \eqref{equ:MPLrel})} \\
&=O\Big(\ln^{r}\Big(\frac1{1-t}\Big)\Big).
\end{align*}
by \cite[Lemma 3.3.20]{ZhaoBook}.

(2). For $0<t<1$, we have the inequalities
$$0<1-t^{n_r\mu+m_r}<1,\quad 0<1-t^{n_r\mu+m_r}<(n_r\mu+m_r)(1-t).$$
Therefore,
\begin{align*}
0&<\int_{\Delta^{|\bfk|}}\omega^{\mu}_{(\bfk;\bfm)}-\int_{\Delta^{|\bfk|}(t)}\omega^{\mu}_{(\bfk;\bfm)}\\
&=\mu^r\sum_{0\leqslant n_1\leqslant\cdots\leqslant n_r}\frac{1-t^{n_r\mu+m_r}}{(n_1\mu+m_1)^{k_1}\cdots(n_r\mu+m_r)^{k_r}}\\
&\leqslant\mu^r\sum_{i=0}^r\sum_{U_{r,i}(\frac{1}{1-t})}\frac{1-t^{n_r\mu+m_r}}{(n_1\mu+m_1)^{k_1}\cdots(n_r\mu+m_r)^{k_r}}\\
&\leqslant\mu^r\sum_{U_{r,r}(\frac{1}{1-t})}\frac{(n_r\mu+m_r)(1-t)}{(n_1\mu+m_1)^{k_1}\cdots(n_r\mu+m_r)^{k_r}}
+\mu^r\sum_{i=0}^{r-1}\sum_{U_{r,i}(\frac{1}{1-t})}\frac{1-t^{n_r\mu+m_r}}{(n_1\mu+m_1)^{k_1}\cdots(n_r\mu+m_r)^{k_r}}\\
&\leqslant\mu^r\sum_{U_{r,r}\left(\frac{1}{1-t}\right)}\frac{1-t}{(n_1\mu+1)\cdots(n_r\mu+1)}
+\mu^r\sum_{i=0}^{r-1}\sum_{U_{r,i}\left(\frac{1}{1-t}\right)}\frac{1}{(n_1\mu+1)\cdots(x_{r-1}+1)(n_r\mu+1)^2}\\
&=\mu^r(1-t)O\Big(\ln^{r}\Big(\frac1{1-t}\Big)\Big)+
 \mu^r\sum_{i=0}^{r-1}O\Big((1-t)\ln^i\Big(\frac1{1-t}\Big)\Big)\qquad\text{(by\ lemma\ \ref{epvh})}\\
&=O\Big((1-t)\ln^{r}\Big(\frac1{1-t}\Big)\Big),
\end{align*}
as desired.
\end{proof}

\begin{lem}
Let $(\bfk;\bfm;\tbfm)$ be a special triple. Then we have
\begin{align*}
\int_{\Delta^{|\bfk|}(t)}(\omega^{\mu}_{(\bfk;\bfm)}-\omega^{\mu}_{(\bfk;\tbfm)})
=\int_{\Delta^{|\bfk|}}(\omega^{\mu}_{(\bfk;\bfm)}-\omega^{\mu}_{(\bfk;\tbfm)})
+O\Big((1-t)\ln^{r}\Big(\frac1{1-t}\Big)\Big).
\end{align*}
\end{lem}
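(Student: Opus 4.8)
The plan is to reduce the assertion to the quantitative tail estimates already established for admissible indices in Lemma~\ref{epvh}. By Theorem~\ref{pzci} and Corollary~\ref{cor:muMZVitIntegral} the quantity we must control is
\begin{equation*}
E(t):=\int_{\Delta^{|\bfk|}}\big(\omega^{\mu}_{(\bfk;\bfm)}-\omega^{\mu}_{(\bfk;\tbfm)}\big)-\int_{\Delta^{|\bfk|}(t)}\big(\omega^{\mu}_{(\bfk;\bfm)}-\omega^{\mu}_{(\bfk;\tbfm)}\big).
\end{equation*}
Since $\bfk=(k_1,\cdots,k_{r-1},1)$, expanding both polylogarithms as series and subtracting gives
\begin{equation*}
E(t)=\sum_{0\leqslant n_1\leqslant\cdots\leqslant n_r}\frac{\mu^r}{(n_1\mu+m_1)^{k_1}\cdots(n_{r-1}\mu+m_{r-1})^{k_{r-1}}}\,h(n_r),
\end{equation*}
where $h(n):=F(m_r)-F(\tm_r)$ with $F(a):=(1-t^{n\mu+a})/(n\mu+a)$. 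As in the preceding lemmas, I would first reduce by scaling to $\mu>0$ with all $m_j$ positive integers, and work with $t\in[\tfrac12,1)$.

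The heart of the argument is a two-sided bound for $h(n)$, obtained by applying the mean value theorem to $F$. Differentiating, $F'(a)=\big(-(1-t^{y})-y\,t^{y}\ln t\big)/y^{2}$ with $y=n\mu+a$, so $|F'(a)|\leqslant\big((1-t^{y})+y\,t^{y}|\ln t|\big)/y^{2}$. Using the elementary inequalities $1-t^{y}\leqslant y(1-t)$ (for $y\geqslant1$) and $|\ln t|\leqslant(1-t)/t\leqslant 2(1-t)$ (for $t\geqslant\tfrac12$) yields the \emph{refined} bound $|F'(a)|=O\big((1-t)/(n\mu+1)\big)$, hence $|h(n)|=O\big((1-t)/(n\mu+1)\big)$; on the other hand, using $1-t^{y}\leqslant1$ together with $w\,e^{-w}\leqslant e^{-1}$ for $w=y|\ln t|\geqslant0$ gives the \emph{uniform} bound $|F'(a)|=O\big(1/(n\mu+1)^{2}\big)$, hence $|h(n)|=O\big(1/(n\mu+1)^{2}\big)$. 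The implied constants depend only on $m_r,\tm_r,\mu$.

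With these in hand I would split the summation simplex into the pieces $U_{r,i}(M)$ of Lemma~\ref{epvh}, with $M=1/(1-t)$. On the bulk piece $U_{r,r}(M)$ I apply the refined estimate, factor out $(1-t)$, and bound the remaining sum by \eqref{equ:Urr} applied to the positive index $(1,\cdots,1)$; since $(1-t)\cdot O(\ln^{r}M)=O\big((1-t)\ln^{r}(1/(1-t))\big)$, this piece is of the required size. On each piece $U_{r,i}(M)$ with $i<r$ I apply the uniform bound $|h(n_r)|=O\big(1/(n_r\mu+1)^{2}\big)$; the resulting sum is exactly of the shape controlled by \eqref{equ:Uri} for the \emph{admissible} index $(1,\cdots,1,2)$, giving $O\big(\ln^{i}(M)/M\big)=O\big((1-t)\ln^{i}(1/(1-t))\big)$. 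Summing over $0\leqslant i\leqslant r$, the $i=r$ term dominates and $E(t)=O\big((1-t)\ln^{r}(1/(1-t))\big)$, which is the assertion.

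The main obstacle is the bulk estimate: bounding the two halves of $h(n_r)$ separately yields only $|h(n_r)|=O(1-t)$ with no decay in $n_r$, which after summation gives merely $O\big(\ln^{r-1}(1/(1-t))\big)$ and is too weak. One must exploit the near-cancellation between $F(m_r)$ and $F(\tm_r)$ — which is precisely what the mean value theorem supplies — to gain the extra factor $1/(n_r\mu+1)$ that produces one more logarithm while preserving the prefactor $(1-t)$. Establishing $|\ln t|\leqslant(1-t)/t$ and packaging the two bounds uniformly for $t$ near $1$ is the only delicate point; the rest is a routine application of the counting estimates in Lemma~\ref{epvh}.
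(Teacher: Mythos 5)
Your argument is correct, but it takes a somewhat different route than the paper's own proof, so it is worth recording the difference. The paper never writes the error as a single series: it expresses the truncated integral as $t^{m_r}\Li^{\mu}(\bfk;\bfm;t)-t^{\tm_r}\Li^{\mu}(\bfk;\tbfm;t)$ via Theorem~\ref{pzci}, adds and subtracts $\Li^{\mu}(\bfk;\bfm;t)$ and $\Li^{\mu}(\bfk;\tbfm;t)$ so that the two prefactor discrepancies $(t^{m_r}-1)\Li^{\mu}(\bfk;\bfm;t)$ and $(1-t^{\tm_r})\Li^{\mu}(\bfk;\tbfm;t)$ are each $O(1-t)\cdot O\big(\ln^{r}\big(\tfrac1{1-t}\big)\big)$ by Lemma~\ref{kdov}(1), and is then left with the prefactor-free difference $\Li^{\mu}(\bfk;\bfm;t)-\Li^{\mu}(\bfk;\tbfm;t)$, whose convergence to its $t=1$ value with error $O\big((1-t)\ln^{r}\big(\tfrac1{1-t}\big)\big)$ is essentially asserted rather than argued (it rests on the quadratic decay in $n_r$ coming from the cancellation $1/(n_r\mu+m_r)-1/(n_r\mu+\tm_r)$, handled by the same splitting as in Lemma~\ref{kdov}(2)). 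You instead treat the prefactor effect and the denominator cancellation in one stroke: the mean value theorem applied to $F(a)=(1-t^{n\mu+a})/(n\mu+a)$ packages both into the two-regime bounds $|h(n)|=O\big((1-t)/(n\mu+1)\big)$ and $|h(n)|=O\big(1/(n\mu+1)^{2}\big)$, after which you appeal directly to the counting estimates \eqref{equ:Urr} and \eqref{equ:Uri} with threshold $M=1/(1-t)$, bypassing Lemma~\ref{kdov} entirely. Both proofs ultimately rest on the same two ingredients, namely the $\bfm$ versus $\tbfm$ cancellation and Lemma~\ref{epvh}, so the routes are cousins; but yours is self-contained and supplies exactly the tail argument that the paper's final displayed step leaves implicit, which is a genuine gain in completeness. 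One caveat to keep visible: the inequality $1-t^{y}\leqslant y(1-t)$ in your refined bound requires $y\geqslant1$, so the scaling reduction to integer $m_j$ must be performed first (as you do state), or else one should replace it by $1-t^{y}\leqslant y|\ln t|\leqslant 2y(1-t)$, which is valid for all $y>0$ and $t\geqslant\tfrac12$.
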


\begin{proof}
We have
\begin{align*}
&\int_{\Delta^{|\bfk|}(t)}(\omega^{\mu}_{(\bfk;\bfm)}-\omega^{\mu}_{(\bfk;\tbfm)})\\
&=t^{m_r}\Li^{\mu}(\bfk;\bfm;t)-t^{\tm_r}\Li^{\mu}(\bfk;\tbfm;t)\\
&=t^{m_r}\Li^{\mu}(\bfk;\bfm;t)-\Li^{\mu}(\bfk;\bfm;t)+\Li^{\mu}(\bfk;\bfm;t)-\Li^{\mu}(\bfk;\tbfm;t)\\
&\quad+\Li^{\mu}(\bfk;\tbfm;t)-t^{\tm_r}\Li^{\mu}(\bfk;\tbfm;t)\\
&=(1-t^{-m_r})\cdot t^{m_r}\Li^{\mu}(\bfk;\bfm;t)+(t^{-\tm_r}-1)\cdot t^{\tm_r}\Li^{\mu}(\bfk;\tbfm;t)\\
&   \quad+\Li^{\mu}(\bfk;\bfm;t)-\Li^{\mu}(\bfk;\tbfm;t)\\
&=(1-t^{-m_r})\int_{\Delta^{|\bfk|}(t)}\omega^{\mu}_{(\bfk;\bfm)}
    +(t^{\tm_r}-1)\int_{\Delta^{|\bfk|}(t)}\omega^{\mu}_{(\bfk;\bfm)}
    +\Li^{\mu}(\bfk;\bfm;t)-\Li^{\mu}(\bfk;\tbfm;t)\\
&=O\Big((1-t)\ln^{r}\Big(\frac1{1-t}\Big)\Big)+\Li^{\mu}(\bfk;\bfm;t)-\Li^{\mu}(\bfk;\tbfm;t)\qquad\text{by\ lemma\ \ref{kdov}}\\
&=O\Big((1-t)\ln^{r}\Big(\frac1{1-t}\Big)\Big)+\Li^{\mu}(\bfk;\bfm;1)-\Li^{\mu}(\bfk;\tbfm;1)+O\Big((1-t)\ln^{r}\Big(\frac1{1-t}\Big)\Big)\\
&=\int_{\Delta^{|\bfk|}}(\omega^{\mu}_{(\bfk;\bfm)}-\omega^{\mu}_{(\bfk;\tbfm)})+O\Big((1-t)\ln^{r}\Big(\frac1{1-t}\Big)\Big),
\end{align*}
as desired.
\end{proof}

\begin{thm}\label{thm:shaRegPsi}
The map of $(\Q[\mu]\langle X^{\mu}\rangle^0 ,\shuffle)$-algebra
\begin{align*}
\psi_{\shuffle,m}:(\Q[\mu]\langle X^{\mu}\rangle^0 ,\shuffle)[T]&\longrightarrow(\Q[\mu]\langle X^{\mu}\rangle^1,\shuffle),\\
T&\longmapsto y^{\mu}_{m}
\end{align*}
is an isomorphism.
\end{thm}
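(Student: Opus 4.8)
The plan is to prove this in exact parallel with Theorem~\ref{thm:stRegPsi}, establishing surjectivity and injectivity separately, with the shuffle recursion playing the role of the $\must$-recursion. Well-definedness of $\psi_{\shuffle,m}$ as a homomorphism of $(\Q[\mu]\langle X^{\mu}\rangle^0,\shuffle)$-algebras is immediate from the universal property of the polynomial ring: since $(\Q[\mu]\langle X^{\mu}\rangle^1,\shuffle)$ is commutative and $\Q[\mu]\langle X^{\mu}\rangle^0$ is a $\shuffle$-subalgebra by Proposition~\ref{prop:X0subalgebra}, the assignment $T\mapsto y^{\mu}_m$ extends uniquely to an algebra map out of the polynomial ring, and the whole content is bijectivity. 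In fact the shuffle case should be \emph{cleaner} than the $\must$ case of Theorem~\ref{thm:stRegPsi}, because the defining recursion $w_1u\shuffle w_2v=(w_1\shuffle w_2v)u+(w_1u\shuffle w_2)v$ carries no $\odot$-correction term.

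For surjectivity I would show, by induction on the length $\ell(w)$ (the number of $y^{\mu}$-letters), that every $w\in\Q[\mu]\langle X^{\mu}\rangle^1$ can be written as $w=v_1+v_2\shuffle y^{\mu}_m+v_3$ with $v_1\in\Q[\mu]\langle X^{\mu}\rangle^0$ and $v_2,v_3$ of strictly smaller length, which closes the induction. Writing $w=w'\,y^{\mu}_{a_1}\cdots y^{\mu}_{a_s}$ where $w'$ is empty or ends in the letter $x$ and $s$ counts the trailing $y^{\mu}$-letters (so $s=0$ exactly when $w$ is admissible, in which case $v_1=w$), the case $s\geq1$ begins with the telescoping
$$w=w'(y^{\mu}_m)^s+\sum_{i=1}^s w'y^{\mu}_{a_1}\cdots y^{\mu}_{a_{i-1}}\big(y^{\mu}_{a_i}-y^{\mu}_m\big)(y^{\mu}_m)^{s-i},$$
in which each factor $\big(y^{\mu}_{a_i}-y^{\mu}_m\big)$ turns its summand into a (shifted) quasi-admissible word, and every word appearing remains positive because $m>0$ and a prefix of a positive word is positive. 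One then reduces $w'(y^{\mu}_m)^s$ and the corrected terms via the single-letter identity $v\shuffle y^{\mu}_m=(v''\shuffle y^{\mu}_m)u+v\,y^{\mu}_m$ (with $v=v''u$), which yields $w'(y^{\mu}_m)^{s-1}\shuffle y^{\mu}_m=s\,w'(y^{\mu}_m)^s+R$, where $R$ collects the terms with a $y^{\mu}_m$ inserted into $w'$ and hence carries strictly fewer trailing copies of $y^{\mu}_m$. A secondary induction on the number of trailing $y^{\mu}_m$-letters within fixed length then peels these off, exactly as in cases (i)–(ii) of the proof of Theorem~\ref{thm:stRegPsi}, expressing $w$ as a $\shuffle$-polynomial in $y^{\mu}_m$ with coefficients in $\Q[\mu]\langle X^{\mu}\rangle^0$.

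For injectivity I would mirror the argument of Theorem~\ref{thm:stRegPsi}: writing a nonzero $P=w_1T^s+w_2$ with $w_1\neq0$ and $w_2$ of $T$-degree $<s$, and using $(y^{\mu}_m)^{\shuffle s}=s!\,(y^{\mu}_m)^s$ together with the fact that shuffling with $(y^{\mu}_m)^s$ only appends $y^{\mu}_m$-letters, the component of $\psi_{\shuffle,m}(P)$ with the maximal number of trailing $y^{\mu}_m$-letters is a nonzero multiple of $w_1$ right-concatenated with $(y^{\mu}_m)^s$, and right-concatenation is an injective shift of the free module, so $\psi_{\shuffle,m}(P)\neq0$. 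More conceptually, both injectivity and surjectivity follow at once from the freeness of the shuffle algebra (Radford/Reutenauer): $(\Q[\mu]\langle X^{\mu}\rangle,\shuffle)$ is a polynomial algebra on Lyndon words, the single letter $y^{\mu}_m$ is Lyndon, and one checks that the Lyndon words lying in $\Q[\mu]\langle X^{\mu}\rangle^1$ are precisely $y^{\mu}_m$ together with those lying in $\Q[\mu]\langle X^{\mu}\rangle^0$, giving $\Q[\mu]\langle X^{\mu}\rangle^1=\Q[\mu]\langle X^{\mu}\rangle^0[y^{\mu}_m]$ directly.

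The main obstacle is the surjectivity bookkeeping: organizing the double induction (first on length, then on the number of trailing $y^{\mu}_m$) so that the insertion terms produced by the shuffle recursion are genuinely of strictly lower complexity, and verifying positivity is preserved at every step. A secondary delicate point is that the naive injectivity ``leading term'' argument is not quite airtight here, since a quasi-admissible coefficient in $\Q[\mu]\langle X^{\mu}\rangle^0$ may itself end in $y^{\mu}_m$; the cleanest fully rigorous route is then the Lyndon-word/freeness argument, where the only genuine verification is the clean split of the Lyndon words between $\Q[\mu]\langle X^{\mu}\rangle^0$ and its extension by $y^{\mu}_m$, taking the partial-sum positivity condition into account.
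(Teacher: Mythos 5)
Your surjectivity argument is essentially the paper's own proof: the paper, too, telescopes the trailing $y^{\mu}$-letters against $y^{\mu}_m$ and then peels off the trailing copies of $y^{\mu}_m$ by shuffling, with an induction that keeps track of lengths; your double induction and the positivity check are this same computation with the paper's ``simple calculation'' spelled out. Note, however, that the paper's printed proof stops there: it establishes only the decomposition (surjectivity) and never addresses injectivity, so everything beyond the paper hinges on how you close the injectivity half, and that is where your proposal has a genuine gap.

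You are right to distrust the leading-term argument, since elements of $\Q[\mu]\langle X^{\mu}\rangle^0$ can indeed end in $y^{\mu}_m$ (the quasi-admissible element $y^{\mu}_m(y^{\mu}_m-y^{\mu}_a)$ contains the word $y^{\mu}_my^{\mu}_m$; the same subtlety, incidentally, affects Step 2 of the paper's proof of Theorem~\ref{thm:stRegPsi}). But your fallback via Radford/Lyndon words does not repair it as stated. The asserted ``clean split'' --- that the Lyndon words in $\Q[\mu]\langle X^{\mu}\rangle^1$ are exactly $y^{\mu}_m$ together with the Lyndon words in $\Q[\mu]\langle X^{\mu}\rangle^0$ --- is false, because $\Q[\mu]\langle X^{\mu}\rangle^0$ is not spanned by words at all: its quasi-admissible generators are differences $(y,x)^{\mu}_{\bfk;\bfm}-(y,x)^{\mu}_{\bfk;\tbfm}$, and no single word ending in a $y$-letter lies in $\Q[\mu]\langle X^{\mu}\rangle^0$ (the linear functional sending every word ending in a $y$-letter to $1$ and every other word to $0$ annihilates all admissible words and all quasi-admissible differences). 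Concretely, for $a\in\Q_{>0}$ with $a\neq m$, the one-letter word $y^{\mu}_a$ is Lyndon and positive, yet it is neither $y^{\mu}_m$ nor an element of $\Q[\mu]\langle X^{\mu}\rangle^0$; only the difference $y^{\mu}_a-y^{\mu}_m$ is. To make a freeness argument work you would first have to change alphabet, say $z_a:=y^{\mu}_a-y^{\mu}_m$ for $a\neq m$ and $z_m:=y^{\mu}_m$ (a linear substitution on the letters, which does induce an automorphism of the shuffle algebra), and then re-describe $\Q[\mu]\langle X^{\mu}\rangle^1$ and $\Q[\mu]\langle X^{\mu}\rangle^0$ as spans of words in the new letters before ordering the alphabet and invoking Radford; this last step is nontrivial precisely because the defining positivity condition is a condition on partial sums of the indices and is not stable under substituting letters. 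As it stands, neither of your two injectivity arguments is complete, so your proposal proves surjectivity (matching what the paper actually proves) but not the claimed isomorphism.
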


\begin{proof}
Let $(\bfk;\bfm)=(k_1,\cdots,k_r,1,\cdots,1;m_1,\cdots,m_r,m_{r+1},\cdots,m_{r+s})$. Then by definition,
$$(y,x)^{\mu}_{\bfk;\bfm}=y^{\mu}_{m_1}x^{k_1-1}\cdots y^{\mu}_{m_r-m_{r-1}}x^{k_r-1}y^{\mu}_{m_{r+1}-m_r}\cdots y^{\mu}_{m_{r+s}-m_{r+s-1}}.
$$
We now prove the theorem by induction on $s$. If $s=0$, then $(\bfk;\bfm)$ is an admissible bi-index and we are done.
We assume now the theorem holds for $s-1$ and write
\begin{align*}
(y,x)^{\mu}_{\bfk;\bfm}&=y^{\mu}_{m_1}x^{k_1-1}\cdots y^{\mu}_{m_r-m_{r-1}}x^{k_r-1}y^{\mu}_{m_{r+1}-m_r}\cdots y^{\mu}_{m_{r+s}-m_{r+s-1}}\\
&=y^{\mu}_{m_1}x^{k_1-1}\cdots y^{\mu}_{m_r-m_{r-1}}x^{k_r-1}(y_m^{\mu})^s\\
&\quad+y^{\mu}_{m_1}x^{k_1-1}\cdots y^{\mu}_{m_r-m_{r-1}}x^{k_r-1}(y^{\mu}_{m_{r+1}-m_r}-y^{\mu}_m)(y^{\mu}_m)^{s-1}\\
&\quad+\cdots\\
&\quad+y^{\mu}_{m_1}x^{k_1-1}\cdots y^{\mu}_{m_r-m_{r-1}}x^{k_r-1}y^{\mu}_{m_{r+1}-m_r}\cdots y^{\mu}_{m_{r+s-1}-m_{r+s-2}}(y^{\mu}_{m_{r+s}-m_{r+s-1}}-y^{\mu}_m).
\end{align*}
By simple calculation while keeping track of the word lengths, we have
\begin{align*}
y_{m_1}x^{k_1-1}\cdots y_{m_r-m_{r-1}}x^{k_r-1}(y_m^{\mu})^s=w_s\shuffle\overbrace{y^{\mu}_m\shuffle\cdots\shuffle y^{\mu}_m}^s+w_{s-1}\shuffle\overbrace{y^{\mu}_m\shuffle\cdots\shuffle y^{\mu}_m}^{s-1}+\cdots+w_0,
\end{align*}
where $w_0,\cdots,w_0\in\Q[\mu]\langle X^{\mu}\rangle^0$ (in fact, $\Q[\mu]\langle X^{\mu}\rangle_{\rm pre}^0$)
such that $\ell(w_j)=r+s-j$ for $0\leqslant j\leqslant s$.
For $i=1,\cdots,s$, we have
\begin{align*}
&y^{\mu}_{m_1}x^{k_1-1}\cdots y^{\mu}_{m_r-m_{r-1}}x^{k_r-1}y^{\mu}_{m_{r+1}-m_r}\cdots y^{\mu}_{m_{r+i-1}-m_{r+i-2}}(y^{\mu}_{m_{r+i}-m_{r+i-1}}-y^{\mu}_m)(y_m^{\mu})^{s-i}\\
&=w^{(s-i)}_{s-i}\shuffle\overbrace{y^{\mu}_m\shuffle\cdots\shuffle y^{\mu}_m}^{s-i}+w^{(s-i)}_{s-i-1}\shuffle\overbrace{y^{\mu}_m\shuffle\cdots\shuffle y^{\mu}_m}^{s-i-1}+\cdots+w^{(s-i)}_0,
\end{align*}
where $w^{(s-i)}_0,\cdots,w^{(s-i)}_{s-i}\in\Q[\mu]\langle X^{\mu}\rangle^0$
such that $\ell^{(s-i)}(w_j)=r+s-j$ for $0\leqslant j\leqslant s-i$. Hence,
$$(y,x)^{\mu}_{\bfk;\bfm}=u_s\shuffle\overbrace{y^{\mu}_m\shuffle\cdots\shuffle y^{\mu}_m}^s+u_{s-1}\shuffle\overbrace{y^{\mu}_m\shuffle\cdots\shuffle y^{\mu}_m}^{s-1}+\cdots+u_0,$$
where $u_0,\cdots,u_s\in\Q[\mu]\langle X^{\mu}\rangle^0$ such that $\ell(u_j)=r+s-j$ for $0\leqslant j\leqslant s$.
This completes the proof of the theorem.
\end{proof}

\begin{thm}\label{thm:shaReg_zetaMuT}
Let $(\bfk;\bfm)=(k_1,\cdots,k_r,1,\cdots,1;m_1,\cdots,m_r,m_{r+1},\cdots,m_{r+s}),k_r>1$, and $m\in\Q_{>0}$.
Then
$$\int_{\Delta^{|\bfk|}(t)}\omega^{\mu}_{(\bfk;\bfm)}=
 a_s\left(\int_{\Delta^1(t)}\omega^{\mu}_{(1;m)}\right)^s
 +a_{s-1}\left(\int_{\Delta^1(t)}\omega^{\mu}_{(1;m)}\right)^{s-1}
 +\cdots+a_0+O\Big((1-t)\ln^{r+s}\Big(\frac1{1-t}\Big)\Big)$$
where $a_0,\cdots,a_s\in\calZ^{\mu}$. We define the \emph{shuffle regularization homomorphism} by
\begin{align*}
\zeta^{\mu,T}_{\shuffle,m}:(\Q[\mu]\langle X^{\mu}\rangle^1,\shuffle)&\longrightarrow(\calZ^{\mu},\cdot)[T],\\
(y,x)^{\mu}_{\bfk;\bfm}&\longmapsto\sum_{i=0}^sa_iT^i.
\end{align*}
\end{thm}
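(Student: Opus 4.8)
The plan is to mirror the proof of Theorem-Definition~\ref{thm:stReg_zetaMuT} in the shuffle setting, replacing the isomorphism $\psi_{*,m}$ by $\psi_{\shuffle,m}$ and the partial-sum estimates by the integral estimates of Lemma~\ref{kdov}. First I would invoke Theorem~\ref{thm:shaRegPsi}, whose proof already furnishes the explicit decomposition
$$(y,x)^{\mu}_{\bfk;\bfm}=u_s\shuffle\overbrace{y^{\mu}_m\shuffle\cdots\shuffle y^{\mu}_m}^s+u_{s-1}\shuffle\overbrace{y^{\mu}_m\shuffle\cdots\shuffle y^{\mu}_m}^{s-1}+\cdots+u_0,$$
with $u_0,\dots,u_s\in\Q[\mu]\langle X^{\mu}\rangle^0$ satisfying $\ell(u_j)=r+s-j$. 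Because $\psi_{\shuffle,m}$ is an isomorphism this decomposition is unique, which is exactly what will make the coefficients $a_i$ below well-defined.

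Next I would apply the $\Q[\mu]$-linear map $\zeta^{\mu}_{\shuffle,t}$. The crucial point is that $\zeta^{\mu}_{\shuffle,t}$ is an algebra homomorphism for $\shuffle$: this is Chen's shuffle formula for iterated integrals over $\Delta^{\bullet}(t)$ (Lemma~\ref{oogj}), the same mechanism underlying Theorem~\ref{ignn} and Corollary~\ref{utgv}, since the word product $\shuffle$ on $\Q[\mu]\langle X^{\mu}\rangle$ is set up precisely to match the combinatorial shuffling of the 1-forms. Applying it to the decomposition and using $\zeta^{\mu}_{\shuffle,t}(y^{\mu}_m)=\int_{\Delta^1(t)}\omega^{\mu}_{(1;m)}$ gives
$$\int_{\Delta^{|\bfk|}(t)}\omega^{\mu}_{(\bfk;\bfm)}=\zeta^{\mu}_{\shuffle,t}\big((y,x)^{\mu}_{\bfk;\bfm}\big)=\sum_{i=0}^s\zeta^{\mu}_{\shuffle,t}(u_i)\cdot\left(\int_{\Delta^1(t)}\omega^{\mu}_{(1;m)}\right)^i.$$

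Then I would estimate each factor as $t\to1^-$. Since $u_i\in\Q[\mu]\langle X^{\mu}\rangle^0$ is a $\Q[\mu]$-combination of admissible and quasi-admissible words of length $r+s-i$, Lemma~\ref{kdov}(2) together with its special-triple analogue (the lemma immediately following it) yields
$$\zeta^{\mu}_{\shuffle,t}(u_i)=a_i+O\!\Big((1-t)\ln^{r+s-i}\!\big(\tfrac1{1-t}\big)\Big),\qquad a_i:=\zeta^{\mu}_{\shuffle}(u_i)\in\calZ^{\mu},$$
while Lemma~\ref{kdov}(1) gives $\int_{\Delta^1(t)}\omega^{\mu}_{(1;m)}=O\!\big(\ln(\tfrac1{1-t})\big)$, so its $i$-th power is $O\!\big(\ln^i(\tfrac1{1-t})\big)$. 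Multiplying, the error of the $i$-th term is $O\!\big((1-t)\ln^{r+s-i}(\cdots)\big)\cdot O\!\big(\ln^i(\cdots)\big)=O\!\big((1-t)\ln^{r+s}(\tfrac1{1-t})\big)$, uniformly in $i$. Collecting the main terms $a_i\big(\int_{\Delta^1(t)}\omega^{\mu}_{(1;m)}\big)^i$ and absorbing all errors into a single $O\!\big((1-t)\ln^{r+s}(\tfrac1{1-t})\big)$ yields the claimed expansion; setting $\zeta^{\mu,T}_{\shuffle,m}\big((y,x)^{\mu}_{\bfk;\bfm}\big):=\sum_{i=0}^s a_iT^i$ defines the map, and uniqueness of the $a_i$ (hence well-definedness) is inherited from the uniqueness of the decomposition above.

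I do not expect a genuine obstacle, as every ingredient is already in place; the one point demanding care is the bookkeeping of logarithmic powers, namely verifying that the length drop $\ell(u_i)=r+s-i$ exactly compensates the $\ln^i$ growth of $\big(\int_{\Delta^1(t)}\omega^{\mu}_{(1;m)}\big)^i$ so that every cross term lands in the single error class $O\!\big((1-t)\ln^{r+s}(\tfrac1{1-t})\big)$, and checking that the \emph{quasi}-admissible constituents of $u_i$, not merely the admissible ones, enjoy the sharper $O\!\big((1-t)\ln^{r+s-i}(\cdots)\big)$ bound supplied by the special-triple lemma rather than only the cruder $O\!\big(\ln^{r+s-i}(\cdots)\big)$ bound of Lemma~\ref{kdov}(1).
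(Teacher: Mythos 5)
Your proposal is correct and follows essentially the same route as the paper: decompose $(y,x)^{\mu}_{\bfk;\bfm}$ via Theorem~\ref{thm:shaRegPsi}, apply the homomorphism $\zeta^{\mu}_{\shuffle,t}$, and combine Lemma~\ref{kdov} with the length bookkeeping $\ell(u_i)=r+s-i$ to absorb all errors into $O\big((1-t)\ln^{r+s}\big(\frac{1}{1-t}\big)\big)$. The two points you flag for care --- that well-definedness of the $a_i$ rests on the isomorphism $\psi_{\shuffle,m}$, and that the quasi-admissible constituents of $u_i$ need the special-triple lemma following Lemma~\ref{kdov} rather than only Lemma~\ref{kdov}(2) --- are exactly the details the paper's terser write-up uses implicitly.
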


\begin{proof} By Theorem~\ref{thm:shaRegPsi}, we conclude that
$$(y,x)^{\mu}_{\bfk;\bfm}=u_s\shuffle\overbrace{y^{\mu}_m\shuffle\cdots\shuffle y^{\mu}_m}^s+u_{s-1}\shuffle\overbrace{y^{\mu}_m\shuffle\cdots\shuffle y^{\mu}_m}^{s-1}+\cdots+u_0$$
where $u_0,\cdots,u_s\in\Q[\mu]\langle X^{\mu}\rangle^0$ such that $\ell(u_i)=r+s-i$. Then we have
\begin{align*}
\int_{\Delta^{|\bfk|}(t)}\omega^{\mu}_{(\bfk;\bfm)}&=\zeta^{\mu}_{\shuffle,t}((y,x)^{\mu}_{\bfk;\bfm})\\
&=\sum_{i=0}^s\zeta^{\mu}_{\shuffle,t}(u_i\shuffle\overbrace{y^{\mu}_m\shuffle\cdots\shuffle y^{\mu}_m}^i)\\
&=\sum_{i=0}^s\zeta^{\mu}_{\shuffle,t}(u_i)\cdot(\zeta^{\mu}_{\shuffle,t}\big(y^{\mu}_m)\big)^i\\
&=\sum_{i=0}^s\left(a_i+O\Big((1-t)\ln^{r+s-i}\Big(\frac1{1-t}\Big)\Big)\right)\cdot\left(\int_{\Delta^1(t)}\omega_{(1;m)}\right)^i\\
&=\sum_{i=0}^sa_i\cdot\left(\int_{\Delta^1(t)}\omega^{\mu}_{(1;m)}\right)^i+O\Big((1-t)\ln^{r+s}\Big(\frac1{1-t}\Big)\Big),
\end{align*}
where $a_i=\zeta^{\mu}_{\shuffle}(u_i)\in\calZ^{\mu}$, as desired.
\end{proof}

\begin{eg}
Let $m\in\Q_{>0}$ and $(\bfk;\bfm)=(2,1;m_1,m_2)$ be a positive bi-index. Then we have
\begin{align*}
(y,x)^{\mu}_{\bfk;\bfm}&=v_{(2,1;m_1,m_2)}\\
&=y^{\mu}_{m_1}xy^{\mu}_{m_2-m_1}\\
&=y^{\mu}_{m_1}xy^{\mu}_m+y^{\mu}_{m_1}x(y^{\mu}_{m_2-m_1}-y^{\mu}_m)\\
&=y^{\mu}_{m_1}x\shuffle y^{\mu}_m+y_{m_1}x(y^{\mu}_{m_2-m_1}-y^{\mu}_m)-y^{\mu}_my^{\mu}_{m_1}x-y^{\mu}_{m_1}y^{\mu}_mx.
\end{align*}
Hence,
\begin{align*}
\zeta^{\mu}_{\shuffle,t}((y,x)^{\mu}_{\bfk;\bfm})&=\zeta^{\mu}_{\shuffle_2,t}(y^{\mu}_{m_1}xy^{\mu}_{m_2-m_1})\\
&=\zeta^{\mu}_{\shuffle,t}(y^{\mu}_{m_1}x\shuffle y^{\mu}_m)+\zeta^{\mu}_{\shuffle,t}(y^{\mu}_{m_1}x(y^{\mu}_{m_2-m_1}-y^{\mu}_m))
-\zeta^{\mu}_{\shuffle,t}(y^{\mu}_my^{\mu}_{m_1}x)-\zeta^{\mu}_{\shuffle,t}(y^{\mu}_{m_1}y^{\mu}_mx)\\
&=\zeta^{\mu}_{\shuffle,t}(y^{\mu}_{m_1}x)\cdot\zeta^{\mu}_{\shuffle,t}(y^{\mu}_m)
+\zeta^{\mu}_{\shuffle,t}(y^{\mu}_{m_1}x(y^{\mu}_{m_2-m_1}-y^{\mu}_m))
-\zeta^{\mu}_{\shuffle,t}(y^{\mu}_my^{\mu}_{m_1}x)-\zeta^{\mu}_{\shuffle,t}(y^{\mu}_{m_1}y^{\mu}_mx)\\
&=\zeta^{\mu}(2;m_1)\cdot\int_{\Delta^1(t)}\omega^{\mu}_{(1;m)}+\calD^{\mu}(2,1;m_1,m_2;m_1,m+m_1)\\
&-\zeta^{\mu}(1,2;m,m+m_1)-\zeta^{\mu}(1,2;m_1,m_1+m)+O\Big((1-t)\ln^2\Big(\frac1{1-t}\Big)\Big).
\end{align*}
\end{eg}

\begin{thm}\label{zsok}
For all $m\in\Q_{>0}$, the map $\zeta^{\mu,T}_{\shuffle,m}:(\Q[\mu]\langle X^{\mu}\rangle^1,\shuffle)\longrightarrow(\calZ^{\mu},\cdot)[T]$ is a $\Q[\mu]$-algebra homomorphism.
\end{thm}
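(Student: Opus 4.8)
The plan is to transcribe the proof of Theorem~\ref{rtxz} to the shuffle setting, with the discrete truncation parameter $M$ replaced by the continuous parameter $t\to1^-$ and the partial sum $\zeta^{\mu}_M(1;m)$ replaced by the integral $L(t):=\int_{\Delta^1(t)}\omega^{\mu}_{(1;m)}$. First I would record the fact that for each fixed $t\in(0,1)$ the map $\zeta^{\mu}_{\shuffle,t}:(\Q[\mu]\langle X^{\mu}\rangle^1,\shuffle)\to C^{\infty}(0,1)$ is already a $\Q[\mu]$-algebra homomorphism. This is immediate from Chen's shuffle formula for iterated integrals (Lemma~\ref{oogj}) together with the very definition of the forms $\omega^{\mu}_{(\bfk;\bfm)}$: under the correspondence $x\leftrightarrow\omega_0$, $y^{\mu}_m\leftrightarrow\omega^{(m)}_{1,\mu}$, a word $(y,x)^{\mu}_{\bfk;\bfm}$ corresponds exactly to the product of $1$-forms $\omega^{\mu}_{(\bfk;\bfm)}$, and the product of the two iterated integrals over $\Delta^{|\bfk_1|}(t)$ and $\Delta^{|\bfk_2|}(t)$ equals the sum over shuffles, which is precisely $\zeta^{\mu}_{\shuffle,t}$ evaluated on the shuffle product of the two words. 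Hence
$$\zeta^{\mu}_{\shuffle,t}(w\shuffle v)=\zeta^{\mu}_{\shuffle,t}(w)\cdot\zeta^{\mu}_{\shuffle,t}(v)\qquad\forall\,w,v\in\Q[\mu]\langle X^{\mu}\rangle^1.$$

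Next I would feed in Theorem~\ref{thm:shaReg_zetaMuT}, which expresses $\zeta^{\mu}_{\shuffle,t}(w)$ as the polynomial $\zeta^{\mu,T}_{\shuffle,m}(w)\in(\calZ^{\mu},\cdot)[T]$ evaluated at $T=L(t)$, up to an error $O\big((1-t)\ln^{N}(1/(1-t))\big)$ for a suitable $N$ depending only on $w$. Substituting this expansion for $w$, $v$ and $w\shuffle v$ into the displayed homomorphism identity and multiplying out the right-hand side, every cross term pairs a polynomial in $L(t)$ with an error term; since $L(t)=O\big(\ln(1/(1-t))\big)$ by Lemma~\ref{kdov}$(1)$, each such cross term is again $O\big((1-t)\ln^{N'}(1/(1-t))\big)$ for some $N'$. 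Collecting terms, I obtain
$$\zeta^{\mu,T}_{\shuffle,m}(w\shuffle v)\big|_{T=L(t)}=\Big(\zeta^{\mu,T}_{\shuffle,m}(w)\cdot\zeta^{\mu,T}_{\shuffle,m}(v)\Big)\big|_{T=L(t)}+O\Big((1-t)\ln^{N'}\Big(\frac1{1-t}\Big)\Big).$$

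The decisive step is then to promote this asymptotic identity to an equality of polynomials. Set $Q(T):=\zeta^{\mu,T}_{\shuffle,m}(w\shuffle v)-\zeta^{\mu,T}_{\shuffle,m}(w)\cdot\zeta^{\mu,T}_{\shuffle,m}(v)\in\calZ^{\mu}[T]$. The previous display says exactly that $Q(L(t))\to0$ as $t\to1^-$. As in the lemmas above one may reduce to $\mu>0$, in which case $L(t)=\sum_{n\geqslant0}\mu t^{n\mu+m}/(n\mu+m)$ increases monotonically to $+\infty$ as $t\to1^-$, so $L(t)$ ranges over an unbounded set of real numbers. A nonzero $Q$ of degree $d$ would satisfy $Q(L(t))\sim c\,L(t)^{d}$ with $c\neq0$, which cannot tend to $0$; hence $Q\equiv0$. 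This is precisely the assertion that $\zeta^{\mu,T}_{\shuffle,m}$ is multiplicative, and $\Q[\mu]$-linearity is built into its definition, so it is a $\Q[\mu]$-algebra homomorphism.

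I expect the only genuinely delicate points to be purely bookkeeping: tracking the error exponents $N,N'$ through the product, and confirming that $L(t)$ is unbounded so that the limiting comparison forces $Q=0$. Both are direct analogues of the estimates already established for the stuffle regularization, so no new ideas beyond those in Theorem~\ref{rtxz} and Theorem~\ref{thm:shaReg_zetaMuT} are needed.
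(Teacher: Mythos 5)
Your proposal is correct and follows essentially the same route as the paper's proof: multiplicativity of $\zeta^{\mu}_{\shuffle,t}$ coming from Chen's shuffle formula, the asymptotic expansion of Theorem~\ref{thm:shaReg_zetaMuT} with $T=\int_{\Delta^1(t)}\omega^{\mu}_{(1;m)}$, and passage to the limit $t\to1^-$. The only difference is that you spell out the final step---that a nonzero polynomial $Q\in\calZ^{\mu}[T]$ cannot satisfy $Q(L(t))\to0$ when $L(t)$ is unbounded as $t\to1^-$---which the paper compresses into the phrase ``Taking $t\to1^-$ we see that\dots''.
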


\begin{proof} By the definition of $\zeta^{\mu}_{\shuffle,t}$, we know that
$$\zeta^{\mu}_{\shuffle,t}(w\shuffle v)=\zeta^{\mu}_{\shuffle,t}(w)\cdot\zeta^{\mu}_{\shuffle,t}(v).$$
By the definition of $\zeta_{\shuffle,m}^{\mu,T}$, we have
$$\zeta^{\mu}_{\shuffle,t}(w)=\zeta_{\shuffle,m}^{\mu,T}(w)+O\Big((1-t)\ln^j\Big(\frac1{1-t}\Big)\Big)$$
where $T=\int_{\Delta^1(t)}\omega^{\mu}_{(1;m)}=O\Big(\ln\Big(\frac1{1-t}\Big)\Big)$
by Lemma \ref{kdov}(1). Taking $t\to 1^-$ we see that
\begin{align*}
\zeta^{\mu,T}_{\shuffle,m}(w\shuffle v)=\zeta^{\mu,T}_{\shuffle,m}(w)\cdot\zeta^{\mu,T}_{\shuffle,m}(v),
\end{align*}
which shows that $\zeta^{\mu,T}_{\shuffle,m}$ is a $\Q[\mu]$-algebra homomorphism.
\end{proof}

Furthermore, the map $\zeta^{\mu,T}_{\shuffle,m}$ is characterized by the following conditions.

\begin{thm}
For any given $m\in\Q_{>0}$, there exists a unique $\Q[\mu]$-linear map
$$\zeta_{\shuffle,m}^{\mu,T}:(\Q[\mu]\langle X^{\mu}\rangle^1,\shuffle)\longrightarrow(\calZ^{\mu},\cdot)[T]$$
satisfying
$$\begin{cases}
\zeta_{\shuffle,m}^{\mu,T}(w)=\zeta_{\shuffle}^{\mu}(w)\qquad&\forall w\in\Q[\mu]\langle X^{\mu}\rangle^0 \\
\zeta_{\shuffle,m}^{\mu,T}(y^{\mu}_m)=T;\\
\zeta_{\shuffle,m}^{\mu,T}(v\shuffle w)=\zeta_{\shuffle,m}^{\mu,T}(v)\cdot\zeta_{\shuffle,m}^{\mu,T}(w)&\forall v,w\in\Q[\mu]\langle X^{\mu}\rangle^1.
\end{cases}$$
\end{thm}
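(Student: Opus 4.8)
The plan is to mirror exactly the argument used for the $\mu$-stuffle regularization homomorphism, replacing $\must$ by $\shuffle$, the letter $y_{1,m}$ by $y^{\mu}_m$, and the structural tool Theorem~\ref{thm:stRegPsi} by its shuffle counterpart Theorem~\ref{thm:shaRegPsi}. Existence and uniqueness decouple cleanly, with the former already essentially established and the latter being purely formal.

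For \emph{existence}, I would take $\zeta^{\mu,T}_{\shuffle,m}$ as defined in Theorem~\ref{thm:shaReg_zetaMuT} to be the required map. Theorem~\ref{zsok} already shows it is a $\Q[\mu]$-algebra homomorphism, which is precisely the third condition. To check the first condition, I note that an admissible or quasi-admissible word $w\in\Q[\mu]\langle X^{\mu}\rangle^0$ has $s=0$ in the decomposition furnished by Theorem~\ref{thm:shaRegPsi}, so $\zeta^{\mu,T}_{\shuffle,m}(w)=a_0=\zeta^{\mu}_{\shuffle}(w)$ by the very definition of the regularization and Proposition~\ref{prop:X0subalgebra}. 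For the second condition, the single letter $y^{\mu}_m=(y,x)^{\mu}_{(1;m)}$ decomposes as $1\shuffle y^{\mu}_m$ with empty-word coefficient, whence $\zeta^{\mu,T}_{\shuffle,m}(y^{\mu}_m)=\zeta^{\mu}_{\shuffle}(1)\cdot T=T$.

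For \emph{uniqueness}, suppose $f$ is any $\Q[\mu]$-linear map satisfying the three conditions. Given $w\in\Q[\mu]\langle X^{\mu}\rangle^1$, Theorem~\ref{thm:shaRegPsi} provides a decomposition
$$w=u_s\shuffle\overbrace{y^{\mu}_m\shuffle\cdots\shuffle y^{\mu}_m}^s+\cdots+u_0,\qquad u_0,\dots,u_s\in\Q[\mu]\langle X^{\mu}\rangle^0.$$
Applying the homomorphism property and then the first two conditions gives
$$f(w)=\sum_{i=0}^s f(u_i)\cdot\bigl(f(y^{\mu}_m)\bigr)^i=\sum_{i=0}^s\zeta^{\mu}_{\shuffle}(u_i)\,T^i,$$
which is forced by the prescribed data and coincides with $\zeta^{\mu,T}_{\shuffle,m}(w)$. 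Hence $f=\zeta^{\mu,T}_{\shuffle,m}$.

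The step requiring the most care is the existence verification of the first condition, namely that the already-constructed homomorphism $\zeta^{\mu,T}_{\shuffle,m}$ genuinely restricts to the evaluation map $\zeta^{\mu}_{\shuffle}$ on $\Q[\mu]\langle X^{\mu}\rangle^0$ rather than introducing spurious terms of positive $T$-degree. This is guaranteed because the isomorphism $\psi_{\shuffle,m}$ of Theorem~\ref{thm:shaRegPsi} makes the representation of a word as a polynomial in $y^{\mu}_m$ with coefficients in $\Q[\mu]\langle X^{\mu}\rangle^0$ \emph{unique}, so an admissible word really does have vanishing coefficients in all positive $T$-degrees. The uniqueness half of the proof is then automatic, as the three conditions pin down the value on the generating polynomial presentation.
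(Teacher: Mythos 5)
Your proposal is correct and follows essentially the same route as the paper: existence is taken from the already-constructed homomorphism $\zeta^{\mu,T}_{\shuffle,m}$ of Theorem~\ref{thm:shaReg_zetaMuT} together with Theorem~\ref{zsok}, and uniqueness is forced by the decomposition of Theorem~\ref{thm:shaRegPsi} exactly as in the paper. Your only addition is the explicit check that the map restricts to $\zeta^{\mu}_{\shuffle}$ on $\Q[\mu]\langle X^{\mu}\rangle^0$ and sends $y^{\mu}_m$ to $T$, which the paper leaves implicit; this is a harmless (indeed slightly more careful) elaboration of the same argument.
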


\begin{proof}
The existence of $\zeta_{\shuffle,m}^{\mu,T}$ is guaranteed by Theorem~\ref{zsok}.
Now, we only need to prove the uniqueness. By Theorem~\ref{thm:shaRegPsi}, we have
$$(y,x)^{\mu}_{\bfk;\bfm}=v_s\shuffle\overbrace{y^{\mu}_m\shuffle\cdots\shuffle y^{\mu}_m}^s+v_{s-1}\shuffle\overbrace{y_m\shuffle\cdots\shuffle y^{\mu}_m}^{s-1}+\cdots+v_0$$
where $v_0,\cdots,v_s\in\Q[\mu]\langle X\rangle^0$. If $f$ is another map satisfying the above properties, then we have
$$f((y,x)^{\mu}_{\bfk;\bfm})=\sum_{i=0}^sf(v_i)\cdot(f(y^{\mu}_m))^i=\sum_{i=0}^sf(v_i)\cdot T^i.$$
But $v_i$ is the $\Q$-linear combination of (quasi-)admissible words and therefore
$$f(v_i)=\zeta_{\shuffle,m}^{\mu,T}(v_i),$$
yielding the uniqueness of $\zeta_{\shuffle,m}^{\mu,T}$.
\end{proof}

\begin{eg} We have
\begin{align*}
\zeta^{\mu,T}_{\shuffle,m}(y_{m_1}xy_{m_2-m_1})
&=\zeta^{\mu}(2;m_1)\cdot T+\calD^{\mu}(2,1;m_1,m_2;m_1,m+m_1)\\
& -\zeta^{\mu}(1,2;m,m+m_1)-\zeta^{\mu}(1,2;m_1,m_1+m).
\end{align*}
\end{eg}

\subsection{The comparison theorem}
In the previous sections, we have put a lot of effort into constructing the $\mu$-stuffle regularization homomorphism $\zeta_{*,m}^{\mu,T}$ and the shuffle regularization homomorphism $\zeta_{\shuffle,m}^{\mu,T}$. As we just saw in the previous example, the regularization homomorphisms
$\zeta_{*,m}^{\mu,T}$ and $\zeta_{\shuffle,m}^{\mu,T}$ are in general different from each other.
In this section, we will discuss how to relate them to each other.

\begin{defn}
Let $x>-1$. Define the \emph{harmonic number function} by
$$
H(x):=\int_0^1\frac{1-t^x}{1-t}dt.
$$
\end{defn}

\begin{thm}\label{thm:stComparisonMap}
Fix $\mu\in\CC$ with $\Ree(\mu)>0$. For any given $m,m'\in\Q_{>0}$, there exists a unique $\R$-linear map
\begin{align*}
\myrho^{\mu,*}_{m,m'}:\R[T]&\longrightarrow\R[T],\\
T^s&\longmapsto(T+H(m'/\mu-1)-H(m/\mu-1))^s \quad \forall s\in\Z_{\geqslant0},
\end{align*}
called the \emph{$\mu$-stuffle comparison map}, that satisfies
\begin{equation}\label{equ:zetaSt}
\zeta_{*,m'}^{\mu,T}(w)=\myrho^{\mu,*}_{m,m'}(\zeta_{*,m}^{\mu,T}(w))\qquad\forall w\in \Q[\mu]\langle Y\rangle.
\end{equation}
That is, we have the following commutative diagram:
\begin{center}
\begin{tikzpicture}[scale=0.9]
\node (A) at (0,2) {$(\Q[\mu]\langle Y\rangle,\must)$};
\node (B) at (-2,0) {$(\calZ^{\mu},\cdot)[T]$};
\node (C) at (2,0) {$(\calZ^{\mu},\cdot)[T]$};
\draw[->] (A) to node[pos=.8,above] {$\zeta_{*,m}^{\mu,T}\ \ $}(B);
\draw[->] (A) to node[pos=.8,above] {$\ \ \zeta_{*,m'}^{\mu,T}$}(C);
\draw[->] (B) to (C) node[pos=.5,below] {$\myrho^{\mu,*}_{m,m'}$};
\end{tikzpicture}
\end{center}

\end{thm}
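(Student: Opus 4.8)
The plan is to reduce the entire statement to a single scalar computation: the value $c:=\zeta_{*,m'}^{\mu,T}(y_{1,m})$ taken by the \emph{other} regularization homomorphism on the letter that $\zeta_{*,m}^{\mu,T}$ sends to $T$. First I would invoke Theorem~\ref{thm:stRegPsi}: since $\psi_{*,m}$ is an isomorphism, every $w\in\Q[\mu]\langle Y\rangle$ has a unique expansion
$$
w=\sum_{i=0}^s w_i\must\underbrace{y_{1,m}\must\cdots\must y_{1,m}}_{i},\qquad w_i\in\Q[\mu]\langle Y\rangle^0.
$$
Because $\zeta_{*,m}^{\mu,T}$ is an algebra homomorphism restricting to $\zeta_*^\mu$ on $\Q[\mu]\langle Y\rangle^0$ with $\zeta_{*,m}^{\mu,T}(y_{1,m})=T$, we get $\zeta_{*,m}^{\mu,T}(w)=\sum_i\zeta_*^\mu(w_i)\,T^i$. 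Applying $\zeta_{*,m'}^{\mu,T}$ instead—again an algebra homomorphism restricting to $\zeta_*^\mu$ on $\Q[\mu]\langle Y\rangle^0$—gives $\zeta_{*,m'}^{\mu,T}(w)=\sum_i\zeta_*^\mu(w_i)\,c^{\,i}$. Thus the two regularizations differ exactly by the substitution $T\mapsto c$, and the theorem reduces to showing $c=T+H(m'/\mu-1)-H(m/\mu-1)$.

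The core step is the evaluation of $c$. Since $m\neq m'$, the element $y_{1,m}-y_{1,m'}$ is quasi-admissible (the length-one case of Definition~\ref{defn:Yquasi-admissible}), hence lies in $\Q[\mu]\langle Y\rangle^0$. Writing $y_{1,m}=y_{1,m'}+(y_{1,m}-y_{1,m'})$ and using $\zeta_{*,m'}^{\mu,T}(y_{1,m'})=T$ yields $c=T+\zeta_*^\mu(y_{1,m}-y_{1,m'})$. I would then evaluate the remaining constant directly from the series definition:
$$
\zeta_*^\mu(y_{1,m}-y_{1,m'})=\sum_{n=0}^\infty\left(\frac{1}{n+m/\mu}-\frac{1}{n+m'/\mu}\right).
$$
Substituting $\frac{1}{n+a}=\int_0^1 t^{n+a-1}\,dt$ and interchanging summation and integration gives $\int_0^1\frac{t^{m/\mu-1}-t^{m'/\mu-1}}{1-t}\,dt=H(m'/\mu-1)-H(m/\mu-1)$, exactly as required; the case $m=m'$ is trivial with $c=T$.

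With $c$ so identified, existence follows: the map $\myrho^{\mu,*}_{m,m'}$ defined by $T^s\mapsto(T+H(m'/\mu-1)-H(m/\mu-1))^s$ is $\R$-linear and makes the diagram commute, since $\myrho^{\mu,*}_{m,m'}\bigl(\sum_i\zeta_*^\mu(w_i)T^i\bigr)=\sum_i\zeta_*^\mu(w_i)c^{\,i}=\zeta_{*,m'}^{\mu,T}(w)$. Uniqueness is then immediate: $T^s=\zeta_{*,m}^{\mu,T}(y_{1,m}\must\cdots\must y_{1,m})$ ($s$ factors) lies in the image of $\zeta_{*,m}^{\mu,T}$, so any $\R$-linear map making the diagram commute is forced to agree with $\myrho^{\mu,*}_{m,m'}$ on each $T^s$, hence on all of $\R[T]$.

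The main obstacle is the constant computation in the second paragraph. Two points need care: first, justifying the interchange of summation and integration, which rests on the convergence estimate behind $\zeta_*^\mu$ on quasi-admissible words (the telescoped summand is $O(n^{-2})$, exactly as in the proof of Theorem~\ref{thm:zetamuAstWellDefined}); and second, matching the resulting integral to the harmonic-number function $H$, where the identity $H(x)=\psi(x+1)+\gamma$ makes the difference $H(m'/\mu-1)-H(m/\mu-1)$ transparent. Everything else is the formal bookkeeping of algebra homomorphisms already established in this section.
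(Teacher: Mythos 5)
Your proposal is correct and takes essentially the same route as the paper's proof: expand $w$ via Theorem~\ref{thm:stRegPsi}, use the homomorphism property of both regularization maps to reduce the whole statement to the single value $\zeta_{*,m'}^{\mu,T}(y_{1,m})=T+\zeta_{*}^{\mu}(y_{1,m}-y_{1,m'})$, and get uniqueness from the fact that every $T^s$ lies in the image of $\zeta_{*,m}^{\mu,T}$ as $\zeta_{*,m}^{\mu,T}(y_{1,m}\must\cdots\must y_{1,m})$. The only place you go beyond the paper is in explicitly verifying $\zeta_{*}^{\mu}(y_{1,m}-y_{1,m'})=H(m'/\mu-1)-H(m/\mu-1)$ by the series-to-integral computation; the paper's proof invokes this identity silently when it rewrites $(T+H(m'/\mu-1)-H(m/\mu-1))^i$ as $\bigl(\zeta_{*,m'}^{\mu,T}(y_{1,m'})+\zeta_{*,m'}^{\mu,T}(y_{1,m}-y_{1,m'})\bigr)^i$, so making it explicit is a welcome but not structurally different addition.
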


\begin{proof} We only need to prove the uniqueness of $\myrho^{\mu,*}_{m,m'}$ and \eqref{equ:zetaSt}.
Let $w\in\Q[\mu]\langle Y\rangle$ be any word. By Theorem~\ref{thm:stRegPsi}, we may write
\begin{align*}
w=v_s\must \overbrace{y_{1,m}\must \cdots\must y_{1,m}}^s+v_{s-1}\must \overbrace{y_{1,m}\must \cdots\must y_{1,m}}^{s-1}+\cdots+v_0
\end{align*}
where $v_0,\cdots,v_s\in\Q[\mu]\langle Y\rangle^0$. Then we have
\begin{align*}
\myrho^{\mu,*}_{m,m'}(\zeta_{*,m}^{\mu,T}(w))&=\myrho^{\mu,*}_{m,m'}\left(\sum_{i=0}^s\zeta_{*,m}^{\mu,T}(v_i)\cdot(\zeta_{*,m}^{\mu,T}(y_{1,m}))^i\right)\\
&=\myrho^{\mu,*}_{m,m'}\left(\sum_{i=0}^s\zeta_{*,m}^{\mu,T}(v_i)\cdot T^i\right)\\
&=\sum_{i=0}^s\zeta_{*,m}^{\mu,T}(v_i)\cdot\myrho^{\mu,*}_{m,m'}(T^i)\\
&=\sum_{i=0}^s\zeta_{*,m'}^{\mu,T}(v_i)\cdot(T+H(m'/\mu-1)-H(m/\mu-1))^i\\
&=\sum_{i=0}^s\zeta_{*,m'}^{\mu,T}(v_i)\cdot(\zeta_{*,m'}^{\mu,T}(y_{1,m'})+\zeta_{*,m'}^{\mu,T}(y_{1,m}-y_{1,m'}))^i\\
&=\sum_{i=0}^s\zeta_{*,m'}^{\mu,T}(v_i)\cdot(\zeta_{*,m'}^{\mu,T}(y_{1,m}))^i\\
&=\sum_{i=0}^s\zeta_{*,m'}^{\mu,T}(v_i)\cdot\zeta_{*,m'}^{\mu,T}(\overbrace{y_{1,m}\must \cdots\must y_{1,m}}^i)\\
&=\sum_{i=0}^s\zeta_{*,m'}^{\mu,T}(v_i\must \overbrace{y_{1,m}\must \cdots\must y_{1,m}}^i)\\
&=\zeta_{*,m'}^{\mu,T}\left(\sum_{i=0}^sv_i\must \overbrace{y_{1,m}\must \cdots\must y_{1,m}}^i\right)\\
&=\zeta_{*,m'}^{\mu,T}(w).
\end{align*}
Next, we prove the uniqueness of $\myrho^{\mu,*}_{m,m'}$. Assume there exists another $\R$-linear map $f:\R[T]\longrightarrow\R[T]$ satisfying
$$
\zeta_{*,m'}^{\mu,T}(w)=f(\zeta_{*,m}^{\mu,T}(w))\qquad\forall w\in \Q[\mu]\langle Y\rangle.
$$
By taking
$$w=\overbrace{y_{1,m}\must \cdots\must y_{1,m}}^s$$
we find that
\begin{align*}
f(T^s)&=f((\zeta_{*,m}^{\mu,T}(y_{1,m}))^s)\\
&=f(\zeta_{*,m}^{\mu,T}(\overbrace{y_{1,m}\must \cdots\must y_{1,m}}^s))\\
&=f(\zeta_{*,m}^{\mu,T}(w))\\
&=\zeta_{*,m'}^{\mu,T}(w)\\
&=\zeta_{*,m'}^{\mu,T}(\overbrace{y_{1,m}\must \cdots\must y_{1,m}}^s)\\
&=(\zeta_{*,m'}^{\mu,T}(y_{1,m}))^s\\
&=(\zeta_{*,m'}^{\mu,T}((y_{1,m}-y_{1,m'})+y_{1,m'}))^s\\
&=(\zeta_{*,m'}^{\mu,T}(y_{1,m}-y_{1,m'})+\zeta_{*,m'}^{\mu,T}(y_{1,m'}))^s\\
&=(T+H(m'/\mu-1)-H(m/\mu-1))^s\\
&=\myrho^{\mu,*}_{m,m'}(T^s).
\end{align*}
This proves the uniqueness of $\myrho^{\mu,*}_{m,m'}$.
\end{proof}

Next, we will compare the maps $\zeta_{\shuffle,m}^{\mu,T}$ and $\zeta_{\shuffle,m'}^{\mu,T}$.

\begin{thm}\label{thm:shaComparisonMap}
Fix $\mu\in\CC$ with $\Ree(\mu)>0$. For any given $m,m'\in\Q_{>0}$, there exists a unique $\R$-linear map
\begin{align*}
\myrho^{\mu,\shuffle}_{m,m'}:\R[T]&\longrightarrow\R[T],\\
T^s&\longmapsto(T+H(m'/\mu-1)-H(m/\mu-1))^s \quad \forall s\in\Z_{\geqslant0},
\end{align*}
called \emph{the shuffle comparison map}, that satisfies
\begin{equation}\label{equ:zetaSh}
\zeta_{\shuffle,m'}^{\mu,T}(w)=\myrho^{\mu,\shuffle}_{m,m'}(\zeta_{\shuffle,m}^{\mu,T}(w))\qquad\forall w\in \Q[\mu]\langle X^{\mu}\rangle^1.
\end{equation}
That is, we have the following commutative diagram:
\begin{center}
\begin{tikzpicture}[scale=0.9]
\node (A) at (0,2) {$(\Q[\mu]\langle X^{\mu}\rangle^1,\shuffle)$};
\node (B) at (-2,0) {$(\calZ^{\mu},\cdot)[T]$};
\node (C) at (2,0) {$(\calZ^{\mu},\cdot)[T]$};
\draw[->] (A) to node[pos=.8,above] {$\zeta_{\shuffle,m}^{\mu,T}\ \ $}(B);
\draw[->] (A) to node[pos=.8,above] {$\ \ \zeta_{\shuffle,m'}^{\mu,T}$}(C);
\draw[->] (B) to (C) node[pos=.5,below] {$\myrho^{\mu,\shuffle}_{m,m'}$};
\end{tikzpicture}
\end{center}
\end{thm}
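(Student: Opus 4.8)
The plan is to run, on the shuffle side, the exact analogue of the argument just given for Theorem~\ref{thm:stComparisonMap}, with the letter $y^{\mu}_m$ playing the role of $y_{1,m}$ and the isomorphism of Theorem~\ref{thm:shaRegPsi} replacing that of Theorem~\ref{thm:stRegPsi}. So I would first fix $w\in\Q[\mu]\langle X^{\mu}\rangle^1$ and use Theorem~\ref{thm:shaRegPsi} to write
$$w=\sum_{i=0}^s v_i\shuffle\overbrace{y^{\mu}_m\shuffle\cdots\shuffle y^{\mu}_m}^{i},\qquad v_0,\dots,v_s\in\Q[\mu]\langle X^{\mu}\rangle^0 .$$
Because each $v_i$ lies in $\Q[\mu]\langle X^{\mu}\rangle^0$, its regularized value is independent of the base point, i.e.\ $\zeta_{\shuffle,m}^{\mu,T}(v_i)=\zeta_{\shuffle}^{\mu}(v_i)=\zeta_{\shuffle,m'}^{\mu,T}(v_i)$, while $\zeta_{\shuffle,m}^{\mu,T}(y^{\mu}_m)=T$. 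Applying $\myrho^{\mu,\shuffle}_{m,m'}$ and invoking its definition on powers of $T$ then yields
$$\myrho^{\mu,\shuffle}_{m,m'}\bigl(\zeta_{\shuffle,m}^{\mu,T}(w)\bigr)=\sum_{i=0}^s\zeta_{\shuffle,m'}^{\mu,T}(v_i)\cdot\bigl(T+H(m'/\mu-1)-H(m/\mu-1)\bigr)^i .$$

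The crux of the whole proof is the single closed evaluation
$$\zeta_{\shuffle,m'}^{\mu,T}(y^{\mu}_m-y^{\mu}_{m'})=\calD^{\mu}(1;m;m')=H(m'/\mu-1)-H(m/\mu-1),$$
which I would obtain from the integral representation of Corollary~\ref{cor:muMZVitIntegral}. Since $\omega^{\mu}_{(1;m)}=\frac{\mu t^{m}\,dt}{t(1-t^{\mu})}$, we have $\calD^{\mu}(1;m;m')=\int_0^1\frac{\mu\,(t^{m-1}-t^{m'-1})}{1-t^{\mu}}\,dt$, and the substitution $u=t^{\mu}$ converts this into $\int_0^1\frac{u^{m/\mu-1}-u^{m'/\mu-1}}{1-u}\,du$, which is precisely $H(m'/\mu-1)-H(m/\mu-1)$ by the definition of $H$. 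Note that $y^{\mu}_m-y^{\mu}_{m'}$ is the quasi-admissible word attached to the special triple $(1;m;m')$, so it belongs to $\Q[\mu]\langle X^{\mu}\rangle^0$ and this evaluation is legitimate.

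Granting the displayed identity, and using $\zeta_{\shuffle,m'}^{\mu,T}(y^{\mu}_{m'})=T$, the shift rewrites as
$$T+H(m'/\mu-1)-H(m/\mu-1)=\zeta_{\shuffle,m'}^{\mu,T}(y^{\mu}_{m'})+\zeta_{\shuffle,m'}^{\mu,T}(y^{\mu}_m-y^{\mu}_{m'})=\zeta_{\shuffle,m'}^{\mu,T}(y^{\mu}_m) .$$
Substituting this back and applying the homomorphism property of $\zeta_{\shuffle,m'}^{\mu,T}$ (Theorem~\ref{zsok}), the sum collapses to $\sum_{i=0}^s\zeta_{\shuffle,m'}^{\mu,T}\bigl(v_i\shuffle\overbrace{y^{\mu}_m\shuffle\cdots\shuffle y^{\mu}_m}^{i}\bigr)=\zeta_{\shuffle,m'}^{\mu,T}(w)$, which is exactly \eqref{equ:zetaSh}. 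For uniqueness I would specialize to $w=\overbrace{y^{\mu}_m\shuffle\cdots\shuffle y^{\mu}_m}^{s}$: any $\R$-linear $f$ with $\zeta_{\shuffle,m'}^{\mu,T}=f\circ\zeta_{\shuffle,m}^{\mu,T}$ must send $T^s=\bigl(\zeta_{\shuffle,m}^{\mu,T}(y^{\mu}_m)\bigr)^s$ to $\zeta_{\shuffle,m'}^{\mu,T}(w)=\bigl(\zeta_{\shuffle,m'}^{\mu,T}(y^{\mu}_m)\bigr)^s=(T+H(m'/\mu-1)-H(m/\mu-1))^s$, pinning down $f=\myrho^{\mu,\shuffle}_{m,m'}$ on the basis $\{T^s\}_{s\geqslant0}$.

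I expect the only genuinely new ingredient to be the closed evaluation of $\calD^{\mu}(1;m;m')$; everything else is formal and runs parallel to the $\mu$-stuffle case. The substitution $u=t^{\mu}$ is what produces the \emph{same} harmonic-number shift $H(m'/\mu-1)-H(m/\mu-1)$ as in Theorem~\ref{thm:stComparisonMap}, and thereby explains the $\mu$-invariance emphasized in the paper.
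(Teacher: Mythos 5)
Your proof is correct and follows essentially the same route as the paper's: decompose $w$ via Theorem~\ref{thm:shaRegPsi}, use that $\zeta_{\shuffle,m}^{\mu,T}$ agrees with $\zeta_{\shuffle}^{\mu}$ (hence is independent of the base point) on $\Q[\mu]\langle X^{\mu}\rangle^0$, collapse the sum by the homomorphism property of $\zeta_{\shuffle,m'}^{\mu,T}$, and get uniqueness by evaluating on $w=y^{\mu}_m\shuffle\cdots\shuffle y^{\mu}_m$. Your only addition is to state and prove the evaluation $\zeta_{\shuffle,m'}^{\mu,T}(y^{\mu}_m-y^{\mu}_{m'})=\calD^{\mu}(1;m;m')=H(m'/\mu-1)-H(m/\mu-1)$ via the substitution $u=t^{\mu}$ in the integral representation, a fact the paper's proof uses silently when it rewrites $(T+H(m'/\mu-1)-H(m/\mu-1))^i$ as $\bigl(\zeta_{\shuffle,m'}^{\mu,T}(y^{\mu}_{m'})+\zeta_{\shuffle,m'}^{\mu,T}(y^{\mu}_m-y^{\mu}_{m'})\bigr)^i$; making that step explicit is a small but genuine improvement in rigor.
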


\begin{proof}
We only need to prove the uniqueness of $\myrho^{\mu,\shuffle}_{m,m'}$ and \eqref{equ:zetaSh}.
Let $w\in\Q[\mu]\langle X^{\mu}\rangle^1$ be any word. By Theorem~\ref{thm:shaRegPsi}, we may write
\begin{align*}
w=v_s\shuffle\overbrace{y^{\mu}_{m}\shuffle\cdots\shuffle y^{\mu}_{m}}^s+v_{s-1}\shuffle\overbrace{y^{\mu}_{m}\shuffle\cdots\shuffle y^{\mu}_{m}}^{s-1}+\cdots+v_0
\end{align*}
where $v_0,\cdots,v_s\in\Q[\mu]\langle X^{\mu}\rangle^0 $. Then we have
\begin{align*}
\myrho^{\mu,\shuffle}_{m,m'}(\zeta_{\shuffle,m}^{\mu,T}(w))&=\myrho^{\mu,\shuffle}_{m,m'}\left(\sum_{i=0}^s\zeta_{\shuffle,m}^{\mu,T}(v_i)\cdot(\zeta_{\shuffle,m}^{\mu,T}(y^{\mu}_m))^i\right)\\
&=\myrho^{\mu,\shuffle}_{m,m'}\left(\sum_{i=0}^s\zeta_{\shuffle,m}^{\mu,T}(v_i)\cdot T^i\right)\\
&=\sum_{i=0}^s\zeta_{\shuffle,m}^{\mu,T}(v_i)\cdot\myrho^{\mu,\shuffle}_{m,m'}(T^i)\\
&=\sum_{i=0}^s\zeta_{\shuffle,m'}^{\mu,T}(v_i)\cdot(T+H(m'/\mu-1)-H(m/\mu-1))^i\\
&=\sum_{i=0}^s\zeta_{\shuffle,m'}^{\mu,T}(v_i)\cdot(\zeta_{\shuffle,m'}^{\mu,T}(y^{\mu}_{m'})+\zeta_{\shuffle,m'}^{\mu,T}(y^{\mu}_m-y^{\mu}_{m'}))^i\\
&=\sum_{i=0}^s\zeta_{\shuffle,m'}^{\mu,T}(v_i)\cdot(\zeta_{\shuffle,m'}^{\mu,T}(y_{m}))^i\\
&=\sum_{i=0}^s\zeta_{\shuffle,m'}^{\mu,T}(v_i)\cdot\zeta_{\shuffle,m'}^{\mu,T}(\overbrace{y^{\mu}_{m}\shuffle\cdots\shuffle y^{\mu}_{m}}^i)\\
&=\sum_{i=0}^s\zeta_{\shuffle,m'}^{\mu,T}(v_i\shuffle\overbrace{y^{\mu}_{m}\shuffle\cdots\shuffle y^{\mu}_{m}}^i)
\end{align*}
\begin{align*}
&=\zeta_{\shuffle,m'}^{\mu,T}\left(\sum_{i=0}^sv_i\shuffle\overbrace{y^{\mu}_{m}\shuffle\cdots\shuffle y^{\mu}_{m}}^i\right)\\
&=\zeta_{\shuffle,m'}^{\mu,T}(w).
\end{align*}
Next, we prove the uniqueness of $\myrho^{\mu,\shuffle}_{m,m'}$. Assume there exists another $\R$-linear map $f:\R[T]\longrightarrow\R[T]$ satisfying
$$\zeta_{\shuffle,m'}^{\mu,T}(w)=f(\zeta_{\shuffle,m}^{\mu,T}(w))\qquad\forall w\in \Q[\mu]\langle X^{\mu}\rangle^1.$$
By taking
$$w=\overbrace{y^{\mu}_{m}\shuffle\cdots\shuffle y^{\mu}_{m}}^s$$
we see that
\begin{align*}
f(T^s)&=f((\zeta_{\shuffle,m}^{\mu,T}(y^{\mu}_m))^s)\\
&=f(\zeta_{\shuffle,m}^{\mu,T}(\overbrace{y^{\mu}_{m}\shuffle\cdots\shuffle y^{\mu}_{m}}^s))\\
&=f(\zeta_{\shuffle,m}^{\mu,T}(w))\\
&=\zeta_{\shuffle,m'}^{\mu,T}(w)\\
&=\zeta_{\shuffle,m'}^{\mu,T}(\overbrace{y^{\mu}_{m}\shuffle\cdots\shuffle y^{\mu}_{m}}^s)\\
&=(\zeta_{\shuffle,m'}^{\mu,T}(y^{\mu}_{m}))^s\\
&=(\zeta_{\shuffle,m'}^{\mu,T}((y^{\mu}_m-y^{\mu}_{m'})+y_{m'}))^s\\
&=(\zeta_{\shuffle,m'}^{\mu,T}(y^{\mu}_m-y^{\mu}_{m'})+\zeta_{\shuffle,m'}^{\mu,T}(y^{\mu}_{m'}))^s\\
&=(T+H(m'/\mu-1)-H(m/\mu-1))^s\\
&=\myrho^{\mu,\shuffle}_{m,m'}(T^s).
\end{align*}
This verifies the uniqueness of $\myrho^{\mu,\shuffle}_{m,m'}$.
\end{proof}

Finally, we will compare the maps $\zeta_{*,m}^{\mu,T}$ and $\zeta_{\shuffle,m'}^{\mu,T}$.
This is the most important and interesting case, of which we will give a detailed analysis.

\begin{lem}\label{oijx}
As $t\to 1^-$, we have
$$
\sum_{m=0}^{\infty}\frac{\ln^j(m+1)}{m+1}t^m=O\left(\ln^{j+1}\left(\frac{1}{1-t}\right)\right).
$$
\end{lem}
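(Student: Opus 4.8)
The plan is to exploit the heuristic that the factor $t^m$ effectively truncates the series near $m\approx 1/(1-t)$, and to estimate the contributions below and above this cutoff separately. Writing $s:=1-t$ and $N:=\lfloor 1/s\rfloor$, I would split
\[
\sum_{m=0}^{\infty}\frac{\ln^j(m+1)}{m+1}t^m
=\underbrace{\sum_{m=0}^{N}\frac{\ln^j(m+1)}{m+1}t^m}_{\text{head}}
+\underbrace{\sum_{m=N+1}^{\infty}\frac{\ln^j(m+1)}{m+1}t^m}_{\text{tail}},
\]
and show that the head is $O(\ln^{j+1}(1/s))$ while the tail is only $O(\ln^{j}(1/s))$. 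Since we are interested in the regime $s\to 0^+$, these combine to the asserted bound $O(\ln^{j+1}(1/(1-t)))$.

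For the head I would discard the exponential by $t^m\leqslant 1$ and compare the remaining sum to an integral. The function $x\mapsto \ln^j(x+1)/(x+1)$ is decreasing once $x+1>e^j$, so the finitely many initial terms contribute $O_j(1)$, and the rest is bounded by $\int_0^N \frac{\ln^j(x+1)}{x+1}\,dx=\frac{\ln^{j+1}(N+1)}{j+1}$. Because $\ln(N+1)=O(\ln(1/s))$, this gives head $=O(\ln^{j+1}(1/s))$, which is the dominant term.

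For the tail I would use $t^m\leqslant e^{-ms}$, which follows from $\ln(1-s)\leqslant -s$, and observe that for $s$ small the summand $g(x):=\frac{\ln^j(x+1)}{x+1}e^{-xs}$ is decreasing on $[N,\infty)$, so the tail is at most $\int_N^\infty g(x)\,dx$. The substitutions $v=x+1$ and then $w=vs$ transform this into $e^{s}\int_{(N+1)s}^{\infty}\frac{\ln^j(w/s)}{w}e^{-w}\,dw\leqslant e^{s}\int_{1}^{\infty}\frac{\ln^j(w/s)}{w}e^{-w}\,dw$, where I used $(N+1)s>1$ and positivity of the integrand to drop the lower limit to $1$. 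Expanding $\ln^j(w/s)=\sum_{i=0}^{j}\binom{j}{i}\ln^{j-i}(1/s)\,\ln^i w$ and noting that each constant $\int_1^\infty \frac{\ln^i w}{w}e^{-w}\,dw$ converges, the tail becomes a polynomial in $\ln(1/s)$ of degree $j$, hence $O(\ln^{j}(1/s))$.

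Adding the two estimates yields $O(\ln^{j+1}(1/(1-t)))$, completing the argument. The head is entirely routine; the main technical care lies in the tail, specifically in justifying the passage from the sum to the integral via the eventual monotonicity of $g$ and in performing the change of variables $w=(x+1)s$ so that the dependence on $\ln(1/s)$ is cleanly exposed and the decoupled integrals are seen to be finite. An alternative route would be Abel summation based on the partial-sum asymptotic $\sum_{m\leqslant M}\frac{\ln^j(m+1)}{m+1}\sim \frac{\ln^{j+1}M}{j+1}$, but this reintroduces a weighted series of the same type and does not appear to simplify matters.
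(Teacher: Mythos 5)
Your proof is correct. The paper itself gives no argument here: it simply cites \cite[Lemma 2.3.9]{ZhaoBook}, so your self-contained elementary proof is a genuinely different (and more informative) route. Your strategy --- splitting at the effective cutoff $N=\lfloor 1/(1-t)\rfloor$, bounding the head by $t^m\leqslant 1$ together with the exact integral $\int_0^N \frac{\ln^j(x+1)}{x+1}\,dx=\frac{\ln^{j+1}(N+1)}{j+1}$, and bounding the tail via $t^m\leqslant e^{-m(1-t)}$, eventual monotonicity of $g(x)=\frac{\ln^j(x+1)}{x+1}e^{-x(1-t)}$, the substitution $w=(x+1)(1-t)$, and the binomial expansion of $\ln^j(w/(1-t))$ --- is sound in every step; in particular the monotonicity claim holds because $\ln^j(x+1)/(x+1)$ is decreasing for $x+1>e^j$ and $e^{-x(1-t)}$ is decreasing everywhere, and the lower limit $(N+1)(1-t)>1$ together with positivity of $\ln(w/(1-t))$ on $w\geqslant 1$ justifies enlarging the domain of integration. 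Your analysis also sharpens the statement slightly: it shows the head carries the full $\ln^{j+1}$ growth while the tail is only $O(\ln^{j})$, matching the true asymptotic order $\frac{1}{j+1}\ln^{j+1}\bigl(\frac{1}{1-t}\bigr)$. What the citation buys the authors is brevity and consistency with the second author's book; what your argument buys is that the paper would become self-contained at this point, using nothing beyond sum--integral comparison, which is in the same elementary spirit as the Abel--Plana manipulations the authors carry out in Lemmas \ref{oonb} and \ref{sfiz}.
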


\begin{proof} See \cite[Lemma 2.3.9]{ZhaoBook}.
\end{proof}

\begin{thm}\label{thm:mixedComparisonMap}
Fix $\mu\in\CC$ with $\Ree(\mu)>0$. For any given $m,m'\in\Q_{>0}$, there exists a unique $\R$-linear map
$$\rho_{m,m'}^{\mu}:\R[T]\longrightarrow\R[T],$$
called the \emph{mixed comparison map}, that satisfies
$$
\zeta_{\shuffle,m'}^{\mu,T}(\varphi_{\mu}(w))=\rho_{m,m'}^{\mu}(\zeta_{*,m}^{\mu,T}(w))\qquad\forall w\in \Q[\mu]\langle Y\rangle,
$$
where $\varphi_{\mu}$ is defined by \eqref{equ:bijection-varphi}.
Namely, the following diagram commutes:
$$\begin{CD}
(\Q[\mu]\langle Y\rangle,\must)@>{\varphi_{\mu}}>>(\Q[\mu]\langle X^{\mu}\rangle^1,\shuffle)\\
@V\zeta_{*,m}^{\mu,T}VV@VV\zeta_{\shuffle,m'}^{\mu,T}V\\
(\calZ^{\mu},\cdot)[T]@>{\rho_{m,m'}^{\mu}}>>(\calZ^{\mu},\cdot)[T]\end{CD}$$
\end{thm}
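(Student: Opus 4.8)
Write $\Psi:=\zeta_{*,m}^{\mu,T}$ and $\Phi:=\zeta_{\shuffle,m'}^{\mu,T}\circ\varphi_{\mu}$, both $\Q[\mu]$-linear maps $\Q[\mu]\langle Y\rangle\to(\calZ^{\mu},\cdot)[T]$; note that $\Phi$ is linear but, because $\varphi_{\mu}$ does \emph{not} intertwine $\must$ with $\shuffle$, it is not a homomorphism for $\must$. Denote by $y_{1,m}^{\must s}$ the $s$-fold $\must$-power of $y_{1,m}$. For uniqueness, recall that $\Psi$ is a $\must$-algebra homomorphism (Theorem~\ref{rtxz}) with $\Psi(y_{1,m})=T$, so $\Psi(y_{1,m}^{\must s})=T^{s}$; feeding $w=y_{1,m}^{\must s}$ into the desired relation forces
$$\rho_{m,m'}^{\mu}(T^{s})=\Phi(y_{1,m}^{\must s})\qquad(s\ge 0),$$
which determines $\rho_{m,m'}^{\mu}$ on all of $\R[T]$ by linearity. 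Since $\Phi$ takes values in $\calZ^{\mu}[T]$, the right-hand side is a bona fide polynomial in $T$ (for $s=1$ one finds $\Phi(y_{1,m})=T+\calD^{\mu}(1;m;m')=T+H(m'/\mu-1)-H(m/\mu-1)$, the expected shift), so this prescription yields a well-defined linear map and is the only candidate.

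For existence, define $\rho_{m,m'}^{\mu}$ by the formula just obtained. By the structure isomorphism $\psi_{*,m}$ of Theorem~\ref{thm:stRegPsi}, every $w\in\Q[\mu]\langle Y\rangle$ is $w=\sum_{s\ge0}v_{s}\must y_{1,m}^{\must s}$ with $v_{s}\in\Q[\mu]\langle Y\rangle^{0}$; applying $\Psi$ and using $\Psi|_{\Q[\mu]\langle Y\rangle^{0}}=\zeta_{*}^{\mu}$ gives $\Psi(w)=\sum_{s}\zeta_{*}^{\mu}(v_{s})T^{s}$, so that $\rho_{m,m'}^{\mu}(\Psi(w))=\sum_{s}\zeta_{*}^{\mu}(v_{s})\,\Phi(y_{1,m}^{\must s})$, while $\Phi(w)=\sum_{s}\Phi(v_{s}\must y_{1,m}^{\must s})$ by linearity. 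Hence the theorem is \emph{equivalent} to the factorization identity
\begin{equation}\label{starfact}
\Phi\big(v\must y_{1,m}^{\must s}\big)=\zeta_{*}^{\mu}(v)\,\Phi\big(y_{1,m}^{\must s}\big)\qquad\text{for all admissible }v\text{ and all }s\ge0.
\end{equation}
Its $s=0$ instance is the already-established agreement $\Phi|_{\Q[\mu]\langle Y\rangle^{0}}=\zeta_{\shuffle}^{\mu}\circ\varphi_{\mu}=\zeta_{*}^{\mu}$ (the commuting triangle relating $\zeta_{*}^{\mu}$, $\zeta_{\shuffle}^{\mu}$ and $\varphi_{\mu}$), which in particular shows $\rho_{m,m'}^{\mu}$ fixes $\calZ^{\mu}$.

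It remains to prove \eqref{starfact}, which is the crux. On the $\must$-side the product truncates multiplicatively at every level: writing $v=y_{\bfk_{0};\bfm_{0}}$, one has $\zeta_{*,M}^{\mu}(v\must y_{1,m}^{\must s})=\zeta_{M}^{\mu}(\bfk_{0};\bfm_{0})\,(\zeta_{M}^{\mu}(1;m))^{s}$. The obstruction is that $\varphi_{\mu}$ does not send $\must$ to $\shuffle$, so the shuffle-truncated integral $\int_{\Delta^{|\bfk|}(t)}\omega^{\mu}_{\varphi_{\mu}(v\must y_{1,m}^{\must s})}$ is not visibly a product and \eqref{starfact} cannot be read off formally. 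The plan is to recover $\Phi$ from the expansion in the shuffle cutoff variable $V_{t}=\int_{\Delta^{1}(t)}\omega^{\mu}_{(1;m')}$ (Theorem~\ref{thm:shaReg_zetaMuT}) and to show that the admissible factor $\zeta_{\shuffle}^{\mu}(\varphi_{\mu}(v))=\zeta_{*}^{\mu}(v)$ splits off up to an error $O\big((1-t)\ln^{N}\tfrac{1}{1-t}\big)$, using the iterative structure of the $\mu$-polylogarithms (Lemma~\ref{lem:Li-Iteration}), Chen's shuffle formula (Lemma~\ref{oogj}), and the truncation estimates of Lemmas~\ref{epvh}, \ref{kdov} and \ref{oijx}.

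A convenient simplification is available: the one-variable comparison maps of Theorems~\ref{thm:stComparisonMap} and \ref{thm:shaComparisonMap} are invertible (their shift constants add under composition), so that $\rho_{m,m'}^{\mu}=\myrho^{\mu,\shuffle}_{1,m'}\circ\rho_{1,1}^{\mu}\circ\myrho^{\mu,*}_{m,1}$, which reduces \eqref{starfact} to the diagonal case $m=m'=1$. There \eqref{starfact} is equivalent to evaluating $\zeta_{\shuffle,1}^{\mu,T}$ on $\varphi_{\mu}$ of the $\must$-exponential $\exp_{\must}(u\,y_{1,1})=\exp_{\mathrm{conc}}\big(\sum_{j\ge1}\tfrac{(-\mu)^{j-1}}{j}y_{j,1}u^{j}\big)$, and it is precisely the mismatch between the series truncation and the iterated-integral truncation of the single logarithmically divergent generator that produces the $\mu$-invariant generating series $e^{\gamma u}\Gamma(1+u)$ of Theorem~\ref{oovn}. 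I expect this generating-function/asymptotic factorization to be the main obstacle; granting \eqref{starfact}, existence and the commutativity for every $w$ follow at once from the reduction above.
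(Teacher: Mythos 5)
Your uniqueness argument, the definition $\rho_{m,m'}^{\mu}(T^s):=\zeta_{\shuffle,m'}^{\mu,T}\big(\varphi_{\mu}(y_{1,m}^{\must s})\big)$, and the reduction of the whole theorem (via $\psi_{*,m}$) to the factorization identity $\Phi\big(v\must y_{1,m}^{\must s}\big)=\zeta_{*}^{\mu}(v)\,\Phi\big(y_{1,m}^{\must s}\big)$ for admissible $v$ are all correct, and they coincide with the skeleton of the paper's proof (the paper defines $\rho_{m,m'}^{\mu}$ by exactly these coefficients and proves uniqueness with the same test words). Your reduction to $m=m'=1$ through $\rho_{m,m'}^{\mu}=\myrho^{\mu,\shuffle}_{1,m'}\circ\rho_{1,1}^{\mu}\circ\myrho^{\mu,*}_{m,1}$ is also legitimate and not circular, since Theorems~\ref{thm:stComparisonMap} and \ref{thm:shaComparisonMap} are proved independently of this one. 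However, there is a genuine gap: the factorization identity --- equivalently, the statement that $\zeta_{\shuffle,m'}^{\mu,T}(\varphi_{\mu}(w))$ depends on $w$ only through $\zeta_{*,m}^{\mu,T}(w)$ --- \emph{is} the content of the theorem, and you never prove it; you only list which lemmas should enter and say you ``expect'' the factorization. Moreover, your closing remark misplaces the difficulty: evaluating $\Phi$ on the $\must$-exponential of $y_{1,1}$ is the case $v=1$, which holds trivially by your definition of $\rho_{1,1}^{\mu}$ (it is the computation relevant to the later Theorem~\ref{oovn}, the explicit form of $\rho_{1,1}^{\mu}$); the substance of the factorization identity is the case of general admissible $v$, about which the exponential computation says nothing.

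What is missing is the analytic bridge between the two cutoffs that the paper's proof runs on. For $w=y_{\bfk;\bfm}$ one has, by Abel summation,
\begin{equation*}
\int_{\Delta^{|\bfk|}(t)}\omega^{\mu}_{(\bfk;\bfm)}
=t^{m_r}\Li^{\mu}(\bfk;\bfm;t)
=(1-t^{\mu})\sum_{n=0}^{\infty}\zeta^{\mu}_{n}(\bfk;\bfm)\,t^{n\mu+m_r},
\end{equation*}
which rewrites the shuffle-truncated integral in terms of the stuffle-truncated sums $\zeta^{\mu}_{n}(\bfk;\bfm)$. Inserting the expansion $\zeta^{\mu}_{n}(\bfk;\bfm)=\sum_i a_i\big(\zeta^{\mu}_{n}(1;m)\big)^i+O\big(\ln^{r-1}(n+1)/(n+1)\big)$ of Theorem-Definition~\ref{thm:stReg_zetaMuT}, where $\sum_i a_iT^i=\zeta_{*,m}^{\mu,T}(w)$, controlling the error sum with Lemma~\ref{oijx}, and recognizing (again by Abel summation) that $(1-t^{\mu})\sum_n\big(\zeta^{\mu}_{n}(1;m)\big)^i t^{n\mu+m}=\zeta^{\mu}_{\shuffle,t}\big(\varphi_{\mu}(y_{1,m}^{\must i})\big)=\rho_{m,m'}^{\mu}(V_t^i)+O\big((1-t)\ln^{i}\tfrac1{1-t}\big)$ with $V_t=\int_{\Delta^1(t)}\omega^{\mu}_{(1;m')}$, one gets
$\int_{\Delta^{|\bfk|}(t)}\omega^{\mu}_{(\bfk;\bfm)}=\rho_{m,m'}^{\mu}\big(\zeta_{*,m}^{\mu,T}(w)\big)\big|_{T=V_t}+O\big((1-t)\ln^{r}\tfrac1{1-t}\big)$.
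Comparing this with the defining asymptotics of $\zeta_{\shuffle,m'}^{\mu,T}$ in Theorem~\ref{thm:shaReg_zetaMuT}, and using $V_t\to\infty$ as $t\to1^-$ so that such polynomial-plus-$o(1)$ expansions are unique, yields the claimed identity for every word $w$. Without this step, or a genuine substitute for it, your proposal establishes only the uniqueness and the formal reductions, not the existence.
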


\begin{proof}
We construct $\rho_{m,m'}^{\mu}$ first. Consider the word
\begin{align*}
&\varphi_{\mu}(\overbrace{y_{1,m}\must \cdots\must y_{1,m}}^s)\\
&=w_{s}^{(s)}\shuffle\overbrace{y^{\mu}_{m'}\shuffle\cdots\shuffle y^{\mu}_{m'}}^s
+w_{s-1}^{(s)}\shuffle\overbrace{y^{\mu}_{m'}\shuffle\cdots\shuffle y^{\mu}_{m'}}^{s-1}
+\cdots+w_1^{(s)}\shuffle y^{\mu}_{m'}+w_0^{(s)}
\end{align*}
where $w_0^{(s)},\cdots,w_{s}^{(s)}\in\Q[\mu]\langle X^{\mu}\rangle^0$. Let
$$a_i^{(s)}:=\zeta_{*,m}^{\mu,T}(\varphi_{\mu}^{-1}(w_i^{(s)}))=\zeta_{\shuffle,m'}^{\mu,T}(w_i^{(s)})\in\calZ^{\mu}.$$
We define $\rho_{m,m'}^{\mu}$ by setting
$$\rho_{m,m'}^{\mu}(T^s):=a_s^{(s)}T^s+a_{s-1}^{(s)}T^{s-1}+\cdots+a_0^{(s)}\in(\calZ^{\mu},\cdot)[T] \quad \forall s\geqslant 0.$$
Noticing that $\rho_{m,m'}^{\mu}$ is a $\R$-linear map, we obtain a map
$$
\rho_{m,m'}^{\mu}:(\calZ^{\mu},\cdot)[T]\longrightarrow(\calZ^{\mu},\cdot)[T]
$$
by linear expansion.

Next, we will prove that the map $\rho_{m,m'}^{\mu}$ constructed above satisfies
$$
\zeta_{\shuffle,m'}^{\mu,T}(\varphi_{\mu}(w))=\rho_{m,m'}^{\mu}(\zeta_{*,m}^{\mu,T}(w))\qquad\forall w\in \Q[\mu]\langle Y\rangle.
$$
Let $w=y_{\bfk;\bfm}\in\Q[\mu]\langle Y\rangle$, where $(\bfk;\bfm)$ is a positive bi-index. By Theorem-Definition~\ref{thm:shaReg_zetaMuT}, we have
$$\int_{\Delta^{|\bfk|}(t)}\omega^{\mu}_{(\bfk;\bfm)}=
b_s\left(\int_{\Delta^1(t)}\omega^{\mu}_{(1;m')}\right)^s
+b_{s-1}\left(\int_{\Delta^1(t)}\omega^{\mu}_{(1;m')}\right)^{s-1}
+\cdots+b_0+O\Big((1-t)\ln^{r}\Big(\frac1{1-t}\Big)\Big).$$
Hence,
$$\zeta_{\shuffle,m'}^{\mu,T}(\varphi_{\mu}(w))=b_sT^s+b_{s-1}T^{s-1}+\cdots+b_0$$
and
$$\int_{\Delta^{|\bfk|}(t)}\omega^{\mu}_{(\bfk;\bfm)}=
\zeta_{\shuffle,m'}^{\mu,T}(\varphi_{\mu}(w))+O\Big((1-t)\ln^{r}\Big(\frac1{1-t}\Big)\Big)$$
where $T=\int_{\Delta^1(t)}\omega^{\mu}_{(1;m')}$. By explicit calculations, we find
\begin{align*}
\int_{\Delta^{|\bfk|}(t)}\omega^{
\mu}_{(\bfk;\bfm)}&=t^{m_r}\Li^{\mu}(\bfk;\bfm;t)\\
&=\sum_{0\leqslant n_1\leqslant\cdots\leqslant n_r}\frac{\mu^rt^{n_r\mu+m_r}}{(n_1\mu+m_1)^{k_1}\cdots(n_r\mu+m_r)^{k_r}}\\
&=\sum_{n=0}^{\infty}\left(\sum_{0\leqslant n_1\leqslant\cdots\leqslant n_{r-1}\leqslant n}\frac{\mu^r}{(n_1\mu+m_1)^{k_1}\cdots(n_{r-1}\mu+m_{r-1})^{k_{r-1}}(n\mu+m_r)^{k_r}}\right)t^{n\mu+m_r}\\
&=\sum_{n=0}^{\infty}(\zeta_n^{\mu}(\bfk;\bfm)-\zeta_{n-1}^{\mu}(\bfk;\bfm))t^{n\mu+m_r}\\
&=(1-t^{\mu})\sum_{n=0}^{\infty}\zeta_n^{\mu}(\bfk;\bfm)t^{n\mu+m_r}.
\end{align*}
By Theorem-Definition~\ref{thm:stReg_zetaMuT}, $$\zeta_n^{\mu}(\bfk;\bfm)=a_s(\zeta_n^{\mu}(1;m))^s+a_{s-1}(\zeta_n^{\mu}(1;m))^{s-1}+\cdots+a_0+O\left(\frac{\ln^{r-1}(n+1)}{n+1}\right)$$
where $a_0,\cdots,a_s\in\calZ^{\mu}$. Then we see that
\begin{align*}
&(1-t^{\mu})\sum_{n=0}^{\infty}\zeta_n^{\mu}(\bfk;\bfm)t^{n\mu+m_r}\\
&=(1-t^{\mu})\sum_{n=0}^{\infty}\left(\sum_{i=0}^sa_i(\zeta^{\mu}_n(1;m))^i+O\left(\frac{\ln^{r-1}(n+1)}{n+1}\right)\right)\cdot t^{n\mu+m_r}\\
&=(1-t^{\mu})t^{m_r-m}\sum_{i=0}^sa_i\sum_{n=0}^{\infty}(\zeta^{\mu}_n(1;m))^i\cdot t^{n\mu+m}\\
&\quad+(1-t^{\mu})t^{m_r}\sum_{n=0}^{\infty}O\left(\frac{\ln^{r-1}(n+1)}{n+1}\right)\cdot t^{n\mu}\\
&=(1-t^{\mu})t^{m_r-m}\sum_{i=0}^sa_i\sum_{n=0}^{\infty}(\zeta^{\mu}_n(1;m))^i\cdot t^{n\mu+m}
+O\Big((1-t)\ln^{r}\Big(\frac1{1-t}\Big)\Big)
\end{align*}
by Lemma \ref{oijx}. Observe that
\begin{align*}
&(1-t^{\mu})\sum_{n=0}^{\infty}(\zeta^{\mu}_n(1;m))^i\cdot t^{n\mu+m}\\
&=(1-t^{\mu})\sum_{n=0}^{\infty}\zeta^{\mu}_{*,n}(\overbrace{y_{1,m}\must \cdots\must y_{1,m}}^i)\cdot t^{n\mu+m}\\
&=\sum_{n=0}^{\infty}(\zeta^{\mu}_{*,n}(\overbrace{y_{1,m}\must \cdots\must y_{1,m}}^i)-\zeta^{\mu}_{*,n-1}(\overbrace{y_{1,m}\must \cdots\must y_{1,m}}^i))\cdot t^{n\mu+m}\\
&=\zeta^{\mu}_{\shuffle,t}(\varphi_{\mu}(\overbrace{y_{1,m}\must \cdots\must y_{1,m}}^i))\\
&=a_i^{(i)}\left(\int_{\Delta^1(t)}\omega^{\mu}_{(1;m')}\right)^i+a_{i-1}^{(i)}\left(\int_{\Delta^1(t)}\omega^{\mu}_{(1;m')}\right)^{i-1}+\cdots+a_{0}^{(i)}+O((1-t)\ln^i(1/(1-t)))\\
&=\rho_{m,m'}^{\mu}\left(\left(\int_{\Delta^1(t)}\omega^{\mu}_{(1;m')}\right)^i\right)
 +O\Big((1-t)\ln^{i}\Big(\frac1{1-t}\Big)\Big).
\end{align*}
We thus conclude that
\begin{align*}
\int_{\Delta^{|\bfk|}(t)}\omega^{\mu}_{(\bfk;\bfm)}&=\rho_{m,m'}^{\mu}\left(\sum_{i=0}^sa_i\cdot\left(\int_{\Delta^1(t)}\omega^{\mu}_{(1;m')}\right)^i\right)+O\Big((1-t)\ln^{r}\Big(\frac1{1-t}\Big)\Big)\\
&=\rho_{m,m'}^{\mu}(\zeta_{*,m}^{\mu,T}(w))+O\Big((1-t)\ln^{r}\Big(\frac1{1-t}\Big)\Big)
\end{align*}
where $T=\int_{\Delta^1(t)}\omega^{\mu}_{(1;m')}$. This yields readily
$$\zeta_{\shuffle,m'}^{\mu,T}(\varphi_{\mu}(w))
=\rho_{m,m'}^{\mu}(\zeta_{*,m}^{\mu,T}(w))+O\Big((1-t)\ln^{r}\Big(\frac1{1-t}\Big)\Big).$$
Hence,
$$
\zeta_{\shuffle,m'}^{\mu,T}(\varphi_{\mu}(w))=\rho_{m,m'}^{\mu}(\zeta_{*,m}^{\mu,T}(w)).
$$

Next, we prove the uniqueness of $\rho_{m,m'}^{\mu}$. Assume there exists another $\R$-linear map $f:\R[T]\longrightarrow\R[T]$ satisfying
$$
\zeta_{\shuffle,m'}^{\mu,T}(\varphi_{\mu}(w))=f(\zeta_{*,m}^{\mu,T}(w))\qquad\forall w\in \Q[\mu]\langle Y\rangle.
$$
Then taking
$$w=\overbrace{y_{1,m}\must \cdots\must y_{1,m}}^s$$
we get
$$f(T^s)=f(\zeta_{*,m}^{\mu,T}(w))=\zeta_{\shuffle,m'}^{\mu,T}(\varphi(w))
=\rho_{m,m'}^{\mu}(\zeta^{\mu,T}_{*,m}(w))=\rho_{m,m'}^{\mu}(T^s).$$
This shows the uniqueness of $\rho_{m,m'}^{\mu}$.
\end{proof}

\begin{thm}\label{pozc}
We have the following commutative diagram
$$\begin{CD}
(\calZ^{\mu},\cdot)[T]@>\myrho^{\mu,*}_{m_1,m_1'}>>(\calZ^{\mu},\cdot)[T]\\
@V{\rho_{m_1,m_2}^{\mu}}VV@VV{\rho_{m_1',m_2'}^{\mu}}V\\
(\calZ^{\mu},\cdot)[T]@>\myrho^{\mu,\shuffle}_{m_2,m_2'} >>(\calZ^{\mu},\cdot)[T]\end{CD}$$
Hence, we have a network composed of comparative mappings as shown by the following diagram
$$\begin{tikzpicture}
\draw (-6,3) -- (6,3);
\draw (-6,-3) -- (6,-3);
\draw (-6,1) -- (6,1);
\draw (-6,-1) -- (6,-1);
\draw (4,-3) -- (4,3);
\draw (2,-3) -- (2,3);
\draw (0,-3) -- (0,3);
\draw (-2,-3) -- (-2,3);
\draw (-4,-3) -- (-4,3);
\draw (4,-1) -- (2,-3);
\draw (4,1) -- (0,-3);
\draw (4,3) -- (-2,-3);
\draw (2,3) -- (-4,-3);
\draw (0,3) -- (-4,-1);
\draw (-2,3) -- (-4,1);
\draw (2,3) -- (4,1);
\draw (0,3) -- (4,-1);
\draw (-2,3) -- (4,-3);
\draw (-4,3) -- (2,-3);
\draw (-4,1) -- (0,-3);
\draw (-4,-1) -- (-2,-3);
\draw (-4,3) -- (0,1);
\draw (-4,3) -- (2,1);
\draw (-4,3) -- (4,1);
\draw (-2,3) -- (2,1);
\draw (-2,3) -- (4,1);
\draw (0,3) -- (-4,1);
\draw (0,3) -- (4,1);
\draw (2,3) -- (-4,1);
\draw (2,3) -- (-2,1);
\draw (4,3) -- (-4,1);
\draw (4,3) -- (-2,1);
\draw (4,3) -- (0,1);
\draw (4,-1) -- (4,1);
\draw (4,-1) -- (2,1);
\draw (4,-1) -- (0,1);
\draw (4,-1) -- (-2,1);
\draw (4,-1) -- (-4,1);
\draw (2,-1) -- (4,1);
\draw (2,-1) -- (2,1);
\draw (2,-1) -- (0,1);
\draw (2,-1) -- (-2,1);
\draw (2,-1) -- (-4,1);
\draw (0,-1) -- (4,1);
\draw (0,-1) -- (2,1);
\draw (0,-1) -- (0,1);
\draw (0,-1) -- (-2,1);
\draw (0,-1) -- (-4,1);
\draw (-2,-1) -- (4,1);
\draw (-2,-1) -- (2,1);
\draw (-2,-1) -- (0,1);
\draw (-2,-1) -- (-2,1);
\draw (-2,-1) -- (-4,1);
\draw (-4,-1) -- (4,1);
\draw (-4,-1) -- (2,1);
\draw (-4,-1) -- (0,1);
\draw (-4,-1) -- (-2,1);
\draw (-4,-1) -- (-4,1);
\draw (-4,-3) -- (0,-1);
\draw (-4,-3) -- (2,-1);
\draw (-4,-3) -- (4,-1);
\draw (-2,-3) -- (2,-1);
\draw (-2,-3) -- (4,-1);
\draw (0,-3) -- (-4,-1);
\draw (0,-3) -- (4,-1);
\draw (2,-3) -- (-4,-1);
\draw (2,-3) -- (-2,-1);
\draw (4,-3) -- (-4,-1);
\draw (4,-3) -- (-2,-1);
\draw (4,-3) -- (0,-1);

\node [above] at (-4,3) {$1$};
\node [above] at (-2,3) {$2$};
\node [above] at (-0,3) {$3$};
\node [above] at (2,3) {$4$};
\node [above] at (4,3) {$5$};
\node [below] at (-4,-3) {$1$};
\node [below] at (-2,-3) {$2$};
\node [below] at (-0,-3) {$3$};
\node [below] at (2,-3) {$4$};
\node [below] at (4,-3) {$5$};
\node [above] at (5,3) {$\cdots$};
\node at (5,0) {$\cdots$};
\node at (5,2) {$\cdots$};
\node at (5,-2) {$\cdots$};
\node [below] at (5,-3) {$\cdots$};
\node [left] at (-6,1) {$y_{1,m_1}$};
\node [left] at (-6,3) {$y_{1,m_1'}$};
\node [left] at (-6,-1) {$y^{\mu}_{m_2}$};
\node [left] at (-6,-3) {$y^{\mu}_{m_2'}$};
\node at (-5,2) {$\myrho^{\mu,*}_{m_1',m_1}$};
\node at (-5,0) {$\rho_{m_1,m_2}^{\mu}$};
\node at (-5,-2) {$\myrho^{\mu,\shuffle}_{m_2,m_2'}$};
\draw [->] (-4.3,2.75) -- (-4.3,1.25);
\draw [->] (-4.3,0.75) -- (-4.3,-0.75);
\draw [->] (-4.3,-1.25) -- (-4.3,-2.75);
\draw [->] (-6.8,2.7) arc [radius=8, start angle=160, end angle= 199];
\node [left] at (-7.3,0) {$\rho_{m_1',m_2'}^{\mu}$};
\end{tikzpicture}$$
\end{thm}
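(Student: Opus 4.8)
The plan is to establish the fundamental commutative square purely formally, by chasing the three characterizing identities supplied by Theorems~\ref{thm:stComparisonMap}, \ref{thm:shaComparisonMap}, and \ref{thm:mixedComparisonMap}, and then to obtain the commutativity of the whole network by pasting copies of this square together. First I would record that each of the four maps in the square fixes the coefficient ring $\calZ^{\mu}$ and is therefore completely determined by its action on the powers $T^{s}$, $s\geqslant0$; hence it suffices to check that the two composites $\rho_{m_1',m_2'}^{\mu}\circ\myrho^{\mu,*}_{m_1,m_1'}$ and $\myrho^{\mu,\shuffle}_{m_2,m_2'}\circ\rho_{m_1,m_2}^{\mu}$ agree on every $T^{s}$. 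The crucial point enabling the chase is that each $T^{s}$ already lies in the image of $\zeta_{*,m_1}^{\mu,T}$: since $\zeta_{*,m_1}^{\mu,T}$ is an algebra homomorphism (Theorem~\ref{rtxz}) sending $y_{1,m_1}\mapsto T$, one has $\zeta_{*,m_1}^{\mu,T}\big(\overbrace{y_{1,m_1}\must\cdots\must y_{1,m_1}}^{s}\big)=T^{s}$.

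Next I would carry out the diagram chase for an arbitrary word $w\in\Q[\mu]\langle Y\rangle$, writing $\xi=\zeta_{*,m_1}^{\mu,T}(w)$. Along the top-then-right path, \eqref{equ:zetaSt} converts $\myrho^{\mu,*}_{m_1,m_1'}(\xi)$ into $\zeta_{*,m_1'}^{\mu,T}(w)$, after which Theorem~\ref{thm:mixedComparisonMap}, applied with the pair $(m_1',m_2')$, converts $\rho_{m_1',m_2'}^{\mu}\big(\zeta_{*,m_1'}^{\mu,T}(w)\big)$ into $\zeta_{\shuffle,m_2'}^{\mu,T}(\varphi_{\mu}(w))$. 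Along the left-then-bottom path, Theorem~\ref{thm:mixedComparisonMap} with the pair $(m_1,m_2)$ converts $\rho_{m_1,m_2}^{\mu}(\xi)$ into $\zeta_{\shuffle,m_2}^{\mu,T}(\varphi_{\mu}(w))$, and then \eqref{equ:zetaSh}, applied to the word $\varphi_{\mu}(w)\in\Q[\mu]\langle X^{\mu}\rangle^1$, converts $\myrho^{\mu,\shuffle}_{m_2,m_2'}\big(\zeta_{\shuffle,m_2}^{\mu,T}(\varphi_{\mu}(w))\big)$ into the same element $\zeta_{\shuffle,m_2'}^{\mu,T}(\varphi_{\mu}(w))$. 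Thus both composites agree on $\xi$ for every $w$; specializing to the $s$-fold $\must$-power of $y_{1,m_1}$ gives agreement on $T^{s}$ for all $s$, and the square commutes.

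Finally I would assemble the network. The horizontal edges in each $\mu$-stuffle row compose transitively, $\myrho^{\mu,*}_{m',m''}\circ\myrho^{\mu,*}_{m,m'}=\myrho^{\mu,*}_{m,m''}$, which is immediate from the shift formula $T^{s}\mapsto(T+H(m'/\mu-1)-H(m/\mu-1))^{s}$ because the successive shifts add and telescope; the analogous relation holds for the shuffle rows. Since the entire network is built from the fundamental square together with these two transitivity relations, any two directed paths joining the same pair of vertices yield the same composite, so the whole diagram commutes. I expect the only point that is not entirely formal to be the reduction to the powers $T^{s}$, namely confirming that the four maps are determined by their action on $\{T^{s}\}$ and that each $T^{s}$ indeed belongs to the image of $\zeta_{*,m_1}^{\mu,T}$; once this is secured, everything else is a routine chase requiring no computation.
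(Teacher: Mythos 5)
Your proposal is correct and follows essentially the same route as the paper, whose entire proof is the single line that the square ``follows from Theorems~\ref{thm:stComparisonMap}, \ref{thm:shaComparisonMap} and \ref{thm:mixedComparisonMap}''; your diagram chase through the three characterizing identities, together with the reduction to the powers $T^{s}$ via the $\must$-powers of $y_{1,m_1}$ and the telescoping of the shifts $H(m'/\mu-1)-H(m/\mu-1)$ for the network, is precisely the argument that citation leaves implicit. No gaps.
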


\begin{proof} This follows from Theorem~\ref{thm:stComparisonMap}, Theorem~\ref{thm:shaComparisonMap} and Theorem~\ref{thm:mixedComparisonMap}.
\end{proof}

We can now give a detailed description of the map $\rho_{m,m'}^{\mu}$. By Theorem~\ref{pozc}, we only need to consider the map $\rho_{1,1}^{\mu}$. We first recall a classical result.

\begin{thm}\label{loaz}
\emph{(Abel-Plana formula)}
Let $f:\left\{z\in\CC\big|\Ree(z)\geqslant0\right\}\longrightarrow\CC$ be a holomorphic function, and assume that $|f(z)|$ is bounded by $C/|z|^{1+\varepsilon}$ in $\left\{z\in\CC\big|\Ree(z)\geqslant0\right\}$ for some constants $C,\varepsilon>0$. Then we have
$$
\sum_{n=0}^{\infty}f(n+a)=\int_a^{\infty}f(x)dx+\frac{f(a)}{2}+\int_0^{\infty}\frac{f(a-ix)-f(a+ix)}{i(e^{2\pi x}-1)}dx.
$$
For the case $a=0$, we have
\begin{equation}\label{equ:Abel-Plana}
\sum_{n=0}^{\infty}f(n)=\int_0^{\infty}f(x)dx+\frac{f(0)}{2}+i\int_0^{\infty}\frac{f(ix)-f(-ix)}{e^{2\pi x}-1}dx.
\end{equation}
\end{thm}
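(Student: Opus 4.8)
The plan is to treat Theorem~\ref{loaz} as a classical result and establish it by contour integration and the residue theorem; since the statement is standard one could also simply cite a reference such as Whittaker--Watson, but I would give the following argument. First I would reduce the general formula to the case $a=0$ recorded in \eqref{equ:Abel-Plana}. Setting $g(z):=f(z+a)$, the hypotheses $\Ree(a)\geqslant 0$ and holomorphy of $f$ on $\{z:\Ree(z)\geqslant 0\}$ guarantee that $g$ is holomorphic on the same half-plane and inherits a decay bound $|g(z)|\leqslant C'/|z|^{1+\varepsilon}$. Applying \eqref{equ:Abel-Plana} to $g$ and substituting $g(n)=f(n+a)$, $g(0)=f(a)$, $g(\pm ix)=f(a\pm ix)$, and $\int_0^{\infty}g(x)\,dx=\int_a^{\infty}f(x)\,dx$ (the shift of the path being legitimate because $f$ is holomorphic and decays in the right half-plane) reproduces the general identity; the only algebraic check is that $i\big(g(ix)-g(-ix)\big)=\big(f(a-ix)-f(a+ix)\big)/i$, which is immediate from $1/i=-i$.

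For \eqref{equ:Abel-Plana} itself I would begin from the residue representation
\[
\sum_{n=1}^{N}f(n)=\frac{1}{2i}\oint_{\gamma_N}f(z)\cot(\pi z)\,dz,
\]
which holds because $\cot(\pi z)$ has a simple pole of residue $1/\pi$ at each integer. Here $\gamma_N$ is the positively oriented boundary of the rectangle $[0,N+\tfrac12]\times[-R,R]$, indented by a small semicircle into $\{\Ree(z)>0\}$ at the origin so that the pole at $z=0$ lies outside. Letting $R\to\infty$, the two horizontal edges drop out because $\cot(\pi z)$ stays bounded (tending to $\mp i$) while $f$ decays, and letting $N\to\infty$ the right edge $\Ree(z)=N+\tfrac12$ drops out since $|f(z)|\leqslant C/|z|^{1+\varepsilon}$ and $\cot(\pi z)$ is bounded on that line. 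What remains is the integral of $f(z)\cot(\pi z)$ down the indented imaginary axis.

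The heart of the argument is to evaluate this remaining integral. Writing $z=iy$ on the upper part and $z=-iy$ on the lower part and using $\cot(\pi i y)=-i\coth(\pi y)$ together with the expansion $\coth(\pi y)=1+\tfrac{2}{e^{2\pi y}-1}$, the integrand separates into a constant piece (from the $1$) and an exponentially decaying piece (from $2/(e^{2\pi y}-1)$). The decaying pieces from the two halves combine, after tracking the orientation and the prefactor $1/(2i)$, into $i\int_0^{\infty}\frac{f(iy)-f(-iy)}{e^{2\pi y}-1}\,dy$, which is precisely the Abel--Plana kernel. The constant pieces combine into $\tfrac{i}{2}\int_0^{\infty}\big(f(iy)-f(-iy)\big)\,dy$; applying Cauchy's theorem on the quarter-disc contours in the first and fourth quadrants, where $f$ is holomorphic and the circular arcs vanish as the radius tends to infinity by the decay hypothesis, this last quantity equals exactly $\int_0^{\infty}f(x)\,dx$. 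Together with the half-residue $f(0)/2$ produced by the indentation at the origin (and the convention that the sum starts at $n=0$), collecting the three contributions yields \eqref{equ:Abel-Plana}.

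I expect the main obstacle to be the sign-and-normalization bookkeeping in this last step: one must keep the orientation of $\gamma_N$, the direction of the indentation, and the two quarter-plane deformations mutually consistent, so that the constant part lands exactly on the real integral $\int_0^{\infty}f\,dx$ rather than on a purely imaginary-axis expression, and so that the kernel emerges with the correct factor $i$ and no stray factor of $2$. The remaining technical point is to justify the interchanges of limit and integration rigorously, which follows from the uniform bound $|f(z)|\leqslant C/|z|^{1+\varepsilon}$ by dominated convergence on each edge of $\gamma_N$.
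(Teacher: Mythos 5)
Your proposal is correct, but it necessarily takes a different route from the paper, because the paper does not prove Theorem \ref{loaz} at all: its proof consists solely of citations to Abel's original 1823 memoir and to Saharian's paper on a generalized Abel--Plana formula --- precisely the fallback you allow for in your opening sentence. Your self-contained contour argument is sound, and the bookkeeping you were worried about does work out: with the rectangle $[0,N+\tfrac12]\times[-R,R]$ indented into the right half-plane at the origin, the residues of $\tfrac{1}{2i}f(z)\cot(\pi z)$ give $\sum_{n=1}^{N}f(n)$; the horizontal edges and the edge $\Ree(z)=N+\tfrac12$ vanish under the decay hypothesis; the clockwise indentation contributes $-f(0)/2$, which becomes $+f(0)/2$ once the $n=0$ term is added to the sum; and on the imaginary axis the splitting $\coth(\pi y)=1+\tfrac{2}{e^{2\pi y}-1}$ yields the kernel term $i\int_0^{\infty}\frac{f(iy)-f(-iy)}{e^{2\pi y}-1}\,dy$ together with the constant piece $\tfrac{i}{2}\int_0^{\infty}\big(f(iy)-f(-iy)\big)\,dy$, which the two quarter-disc rotations ($\int_0^{\infty}f(iy)\,dy=-i\int_0^{\infty}f(x)\,dx$ and $\int_0^{\infty}f(-iy)\,dy=i\int_0^{\infty}f(x)\,dx$) turn into exactly $\int_0^{\infty}f(x)\,dx$. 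The one step you should make explicit is integrability of the $\coth$ integrand near $y=0$, which holds because $f(iy)-f(-iy)=O(y)$ by holomorphy at the origin. As for what each approach buys: the paper's bare citation is economical and defensible for a classical result used purely as a tool (it is invoked in Lemma \ref{oonb} and in the proof of Theorem \ref{mnzs}), whereas your argument buys self-containedness and, more to the point, independently verifies the exact normalization --- the factor $i$, the sign of the kernel, and the $f(0)/2$ term --- in the precise form \eqref{equ:Abel-Plana} that the paper actually applies.
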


\begin{proof}
For the original proof by Abel, see \cite{Abel1823}.
For the proof of a generalized version of the theorem, see \cite{Saharian2000}.
\end{proof}

\begin{lem}\label{zbxd}
As $t\to1^-$, we have
$$\int_0^t\frac{\mu}{1-\tau^{\mu}}d\tau=\ln\left(\frac{1}{1-t}\right)+c_0(\mu)+c_1(\mu)(1-t)+c_2(\mu)(1-t)^2+O((1-t)^3),$$
where
\begin{align*}
c_0(\mu)&=\ln\left(\frac{1}{\mu}\right)-H\left(\frac{1}{\mu}-1\right),\
c_1(\mu)=\frac{1-\mu}{2},\
c_2(\mu)=\frac{1-\mu^2}{24},\
c_3(\mu)=\frac{1-\mu^2}{72},\\
c_4(\mu)&=\frac{(1-\mu^2)(19-\mu^2)}{2880},\
c_5(\mu)=\frac{(1-\mu^2)(9-\mu^2)}{2400},
\ldots
\end{align*}
Generally, we have $c_n(\mu)\in\Q[\mu]$ for all $n\in\N$.
\end{lem}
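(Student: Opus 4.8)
The plan is to differentiate, expand, and integrate back. Write
$$F(t):=\int_0^t\frac{\mu}{1-\tau^{\mu}}d\tau-\ln\Big(\frac{1}{1-t}\Big),$$
so that the asserted formula is exactly the statement that $F(t)=c_0(\mu)+c_1(\mu)(1-t)+c_2(\mu)(1-t)^2+O((1-t)^3)$ with the stated coefficients. Since $\frac{d}{dt}\ln\frac{1}{1-t}=\frac{1}{1-t}$, we have $F'(t)=g(t):=\frac{\mu}{1-t^{\mu}}-\frac{1}{1-t}$. The key observation is that, although each summand of $g$ has a simple pole at $t=1$, the residues cancel (both equal $-1$, because $1-t^{\mu}=\mu(1-t)+O((1-t)^2)$), so $g$ extends to an analytic function of $t$ in a neighborhood of $t=1$. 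Hence the whole problem reduces to Taylor-expanding $g$ at $t=1$ and integrating term by term. Note also that for $\tau\in(0,1)$ one has $|\tau^{\mu}|=\tau^{\Ree(\mu)}<1$, so the integrand is well defined.

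First I would pin down the constant $c_0(\mu)=\lim_{t\to1^-}F(t)$. Substituting $u=\tau^{\mu}$ (so $d\tau=\tfrac1{\mu}u^{1/\mu-1}du$) gives $\int_0^t\frac{\mu}{1-\tau^{\mu}}d\tau=\int_0^{t^{\mu}}\frac{u^{1/\mu-1}}{1-u}du$. Splitting off $\int_0^{t^{\mu}}\frac{du}{1-u}=\ln\frac{1}{1-t^{\mu}}$ leaves $-\int_0^{t^{\mu}}\frac{1-u^{1/\mu-1}}{1-u}du$, which converges to $-H(1/\mu-1)$ as $t\to1^-$ by the definition of $H$. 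Finally $\ln\frac{1}{1-t^{\mu}}=\ln\frac{1}{1-t}-\ln\frac{1-t^{\mu}}{1-t}$ and $\frac{1-t^{\mu}}{1-t}\to\mu$, so $\ln\frac{1}{1-t^{\mu}}-\ln\frac{1}{1-t}\to-\ln\mu$. Combining these, $c_0(\mu)=\ln(1/\mu)-H(1/\mu-1)$, as claimed. (For complex $\mu$ with $\Ree(\mu)>0$ one has $\Ree(1/\mu)>0$, so $H(1/\mu-1)$ is given by the same convergent integral.)

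For the remaining coefficients I would expand $g$ at $t=1$ explicitly. Setting $\epsilon=1-t$ and using $(1-\epsilon)^{\mu}=\sum_{k\geqslant0}\binom{\mu}{k}(-\epsilon)^k$, write $1-t^{\mu}=\mu\epsilon\,D(\epsilon)$ with $D(\epsilon)=\sum_{j\geqslant0}d_j\epsilon^j$, where $d_0=1$ and $d_j=(-1)^j\binom{\mu}{j+1}/\mu\in\Q[\mu]$. Then $g(1-\epsilon)=\frac{1}{\epsilon}\big(D(\epsilon)^{-1}-1\big)$. Because $D(0)=1$, the reciprocal $D(\epsilon)^{-1}$ is a formal power series with coefficients in $\Q[\mu]$, and $D^{-1}-1$ has zero constant term; hence $g(1-\epsilon)=\sum_{n\geqslant0}b_n\epsilon^n$ with every $b_n\in\Q[\mu]$. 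Integrating $F(t)=c_0(\mu)-\int_t^1 g(\tau)d\tau=c_0(\mu)-\int_0^{\epsilon}g(1-u)du$ term by term gives $c_{n+1}(\mu)=-b_n/(n+1)\in\Q[\mu]$ for all $n\geqslant0$, proving the general rationality claim, while Taylor's theorem applied to the analytic function $g$ supplies the $O((1-t)^3)$ remainder after truncating at second order. A short computation with $d_1=\frac{1-\mu}{2}$ and $d_2=\frac{(\mu-1)(\mu-2)}{6}$ yields $b_0=\frac{\mu-1}{2}$ and $b_1=d_1^2-d_2=\frac{\mu^2-1}{12}$, hence $c_1(\mu)=-b_0=\frac{1-\mu}{2}$ and $c_2(\mu)=-b_1/2=\frac{1-\mu^2}{24}$, matching the stated values, and the higher $c_n(\mu)$ follow identically. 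The only mildly delicate step is the identification of $c_0(\mu)$ with the harmonic-number expression, since there the two divergent pieces must be regularized simultaneously; everything else is routine power-series bookkeeping.
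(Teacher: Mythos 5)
Your proof is correct, and while your treatment of $c_0(\mu)$ follows essentially the same route as the paper's (the substitution $u=\tau^{\mu}$, which the paper phrases as $\tau\to\tau^{1/\mu}$, combined with $\ln\frac{1}{1-t}=\int_0^t\frac{d\tau}{1-\tau}$ and the definition of $H$), your treatment of the higher coefficients is genuinely different. The paper sets $\psi_{\mu}(t)=\int_0^t\frac{\mu}{1-\tau^{\mu}}d\tau-\ln\frac{1}{1-t}-c_0(\mu)$ and computes the coefficients by repeatedly applying L'H\^opital's rule, carrying out only the case $c_1(\mu)=\frac{1-\mu}{2}$ and leaving the rest, including the claim $c_n(\mu)\in\Q[\mu]$, ``to the interested reader.'' You instead differentiate, observe that $g(t)=\frac{\mu}{1-t^{\mu}}-\frac{1}{1-t}$ extends analytically across $t=1$ because the two simple poles (both with residue $-1$) cancel, expand $g(1-\epsilon)=\sum_{n\geqslant 0}b_n\epsilon^n$ by inverting the series $D(\epsilon)$ with $D(0)=1$, and integrate term by term to get $c_{n+1}(\mu)=-b_n/(n+1)$. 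This buys you three things the paper's sketch does not obviously deliver: all coefficients are produced by a single uniform mechanism rather than one limit computation per coefficient; the $O((1-t)^3)$ remainder is automatic from analyticity of $g$ rather than needing a separate justification; and the general rationality statement $c_n(\mu)\in\Q[\mu]$ falls out immediately from the fact that inverting a power series with constant term $1$ over $\Q[\mu]$ stays in $\Q[\mu]$, whereas extracting it from iterated L'H\^opital limits would be increasingly unwieldy. Your computed values $b_0=\frac{\mu-1}{2}$ and $b_1=d_1^2-d_2=\frac{\mu^2-1}{12}$, hence $c_1(\mu)=\frac{1-\mu}{2}$ and $c_2(\mu)=\frac{1-\mu^2}{24}$, agree with the lemma, and your remark that $\Ree(1/\mu)>0$ guarantees convergence of the integral defining $H(1/\mu-1)$ correctly handles the complex-$\mu$ case that the paper glosses over.
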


\begin{proof}
To compute $c_0(\mu)$ we make the change of variable $\tau\to \tau^{1/\mu}$ on the left and then combine it with
$\ln(1-t)=-\int_0^t 1/(1-\tau) d\tau$. The other coefficients can be computed by repeatedly applying
L'H\^opital's rule. For example, setting
$$
\psi_{\mu}(t):=\int_0^t\frac{\mu}{1-\tau^{\mu}}d\tau-\ln\left(\frac{1}{1-t}\right)-c_0(\mu),\qquad0<t<1,
$$
we then get
$$
\lim_{t\to 1^-}\frac{\psi_{\mu}(t)}{1-t}=\frac{1-\mu}{2}
$$
by L'H\^opital's rule. We leave the details to the interested reader.
\end{proof}

\begin{lem}\label{oonb}
If $0<t<1$, then we have
$$\sum_{m=0}^{\infty}\ln^j(m+1)t^m=\int_0^{\infty}\ln^j(x+1)t^xdx+O(1)$$
and
$$\sum_{m=0}^{\infty}\frac{\ln^j(m+1)}{m+1}t^m=\int_0^{\infty}\frac{\ln^j(x+1)}{x+1}t^x dx+O(1).$$
\end{lem}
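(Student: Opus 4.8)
The plan is to read off both asymptotics from the Abel--Plana formula (Theorem~\ref{loaz}), the point being that on the imaginary axis the factor $t^{z}$ has modulus one, so the Abel--Plana correction term is governed by a convergent integral that carries \emph{no} $t$-dependence. First I would set, for the first identity, $f(z):=\ln^{j}(1+z)\,t^{z}$, using the principal branch of the logarithm cut along $(-\infty,-1]$, so that $f$ is holomorphic on a neighbourhood of $\{\Ree(z)\ge 0\}$. On the positive real axis $t^{x}$ decays exponentially, faster than the logarithmic growth of $\ln^{j}(1+x)$, while on each vertical line the weight $e^{-2\pi y}$ in the Abel--Plana kernel dominates $|\ln(1+x+iy)|^{j}$; thus \eqref{equ:Abel-Plana} applies and gives
\[
\sum_{m=0}^{\infty}f(m)=\int_{0}^{\infty}f(x)\,dx+\frac{f(0)}{2}+i\int_{0}^{\infty}\frac{f(ix)-f(-ix)}{e^{2\pi x}-1}\,dx .
\]
Since $\ln(1+0)=0$, the boundary term $f(0)/2$ vanishes for $j\ge1$ and equals $1/2$ when $j=0$; either way it is $O(1)$.

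The heart of the matter is the correction integral. Because $t$ and $j$ are real and $\Ree(1+z)>0$ near the imaginary axis, one has the reflection $f(-ix)=\overline{f(ix)}$, whence $|f(ix)-f(-ix)|=2|\Im f(ix)|\le 2|f(ix)|$; and since $|t^{ix}|=1$ this last quantity equals $2|\ln(1+ix)|^{j}$, which is entirely free of $t$. For $j\ge 1$ the estimate $|\ln(1+ix)|=O(x)$ near the origin cancels the $1/x$ singularity of $1/(e^{2\pi x}-1)$, so
\[
\Bigl|\,i\int_{0}^{\infty}\frac{f(ix)-f(-ix)}{e^{2\pi x}-1}\,dx\Bigr|\le 2\int_{0}^{\infty}\frac{|\ln(1+ix)|^{j}}{e^{2\pi x}-1}\,dx=:C_{j}<\infty,
\]
a finite constant independent of $t$, which is exactly the asserted $O(1)$. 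Substituting $f(z)=\ln^{j}(1+z)\,t^{z}/(1+z)$ and repeating the computation (now $|f(ix)|=|\ln(1+ix)|^{j}/|1+ix|$, still $O(x^{j})$ near $0$) yields the second identity.

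The main obstacle I expect is the behaviour near $x=0$ in the case $j=0$, where for both identities the crude bound $2|f(ix)|$ is no longer integrable against $1/(e^{2\pi x}-1)$. There one must keep the oscillation: writing $f(ix)-f(-ix)$ explicitly in terms of $t^{ix}-t^{-ix}=2i\sin(x\ln t)$ and using $|\sin(x\ln t)|\le x|\ln t|$ together with $|\ln t|\le 1$ for $t$ near $1$, one finds the integrand is bounded near the origin by a $t$-uniform constant, restoring convergence and again giving $O(1)$. A secondary technical point is that Theorem~\ref{loaz} as stated assumes the polynomial decay $|f(z)|\le C/|z|^{1+\varepsilon}$, which fails on the imaginary axis where $|t^{iy}|=1$; this is circumvented by invoking the more general hypotheses of~\cite{Saharian2000}, under which it suffices that the exponentially weighted integrals $\int_{0}^{\infty}e^{-2\pi y}|f(x\pm iy)|\,dy$ converge and vanish as $x\to\infty$, both of which hold here. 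As a consistency check, the instances $j=0$ reproduce $\tfrac1{1-t}-\tfrac1{|\ln t|}=\tfrac12+O(1-t)$ for the first identity and the exponential-integral asymptotics for the second, both manifestly $O(1)$.
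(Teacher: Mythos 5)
Your proof is exactly the paper's: apply the Abel--Plana formula to $f(x)=\ln^j(x+1)t^x$ (resp.\ to $\ln^j(x+1)t^x/(x+1)$) and observe that, since $|t^{\pm ix}|=1$, the correction integral is bounded by a convergent integral that is independent of $t$, hence $O(1)$. Your two refinements --- the oscillation argument $|t^{ix}-t^{-ix}|=2|\sin(x\ln t)|\leqslant x|\ln t|$ rescuing the case $j=0$, and the remark that the stated decay hypothesis $|f(z)|\leqslant C/|z|^{1+\varepsilon}$ of Theorem~\ref{loaz} fails on the imaginary axis and must be replaced by the weaker hypotheses of \cite{Saharian2000} --- address points the paper glosses over (its crude bound on the correction integral implicitly requires $j\geqslant 1$, and it invokes Theorem~\ref{loaz} outside its stated hypotheses), so they strengthen rather than deviate from the paper's argument.
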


\begin{proof}
Taking $f(x)=\ln^j(x+1)t^x$ in the Abel-Plana formula \eqref{equ:Abel-Plana}, we have
$$\sum_{m=0}^{\infty}\ln^j(m+1)t^m=\int_0^{\infty}\ln^j(x+1)t^xdx
 +i\int_0^{\infty}\frac{\ln^j(1+ix)t^{ix}-\ln^j(1-ix)t^{-ix}}{e^{2\pi x}-1}dx.
$$
For the second term above, we have
\begin{align*}
&\left|i\int_0^{\infty}\frac{\ln^j(1+ix)t^{ix}-\ln^j(1-ix)t^{-ix}}{e^{2\pi x}-1}dx\right|\\
&\leqslant\int_0^{\infty}\frac{|\ln(1+ix)|^j+|\ln(1-ix)|^j}{e^{2\pi x}-1}dx\\
&=\int_0^{1}\frac{|\ln(1+ix)|^j+|\ln(1-ix)|^j}{e^{2\pi x}-1}dx+\int_1^{\infty}\frac{|\ln(1+ix)|^j+|\ln(1-ix)|^j}{e^{2\pi x}-1}dx\\
&\leqslant\int_0^{1}\frac{|\ln(1+ix)|^j+|\ln(1-ix)|^j}{e^{2\pi x}-1}dx+2\int_1^{\infty}\left(\left(\frac{\ln(1+x^2)}{2}\right)^2+\left(\frac{\pi}{2}\right)^2\right)^\frac{j}{2}\frac{dx}{e^{2\pi x}-1}\\
&<+\infty.
\end{align*}
Moreover, if we let $f(x)=\frac{\ln^j(x+1)}{x+1}t^x$, then we may use the same procedure as above to get
$$
\sum_{m=0}^{\infty}\frac{\ln^j(m+1)}{m+1}t^m=\int_0^{\infty}\frac{\ln^j(x+1)}{x+1}t^xdx+O(1).
$$
This completes the proof of the lemma.
\end{proof}

\begin{lem}\label{sfiz}
Let $j$ be a positive integer. Then we have
$$\sum_{m=0}^{\infty}\frac{\Gamma^{(j)}(m+1)}{\Gamma(m+1)}t^m=
\frac{1}{1-t}O\Big(\ln^j\Big(\frac{1}{1-t}\Big)\Big)\quad \text{as} \quad t\to 1^-.$$
\end{lem}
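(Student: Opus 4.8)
The plan is to first reduce the size of the Dirichlet coefficients $\Gamma^{(j)}(m+1)/\Gamma(m+1)$ to a power of a logarithm, and then transfer the resulting bound to the generating series. Writing $\Gamma(x)=e^{\ln\Gamma(x)}$ and differentiating, one sees that $h_j(x):=\Gamma^{(j)}(x)/\Gamma(x)$ obeys the recursion $h_0(x)=1$ and $h_{j+1}(x)=h_j'(x)+\psi(x)h_j(x)$, where $\psi=(\ln\Gamma)'$ is the digamma function; equivalently $h_j(x)$ is the complete Bell polynomial in $\psi(x),\psi'(x),\dots,\psi^{(j-1)}(x)$, whose top term is $\psi(x)^j$. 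Using the classical asymptotics $\psi(x)=\ln x+O(1/x)$ and $\psi^{(k)}(x)=O(x^{-k})$ for $k\geqslant1$ as $x\to+\infty$, an easy induction on $j$ (tracking also that $h_j'(x)=O(\ln^j(x)/x)$) yields $h_j(x)=O(\ln^j(x))$. In particular $\Gamma^{(j)}(m+1)/\Gamma(m+1)=O(\ln^j(m+1))$, so it suffices to bound $\sum_{m\geqslant0}\ln^j(m+1)\,t^m$.

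Next I would invoke Lemma~\ref{oonb} to replace this series by the integral $\int_0^\infty\ln^j(x+1)\,t^x\,dx$ up to an $O(1)$ error, which is harmless since the target bound blows up as $t\to1^-$. Setting $s=-\ln t$ (so that $s\sim1-t$ and $\ln(1/s)\sim\ln(1/(1-t))$ as $t\to1^-$) and substituting $u=sx$ turns the integral into $\tfrac1s\int_0^\infty\ln^j(1+u/s)\,e^{-u}\,du$. Expanding $\ln(1+u/s)=\ln(1/s)+\ln(s+u)$ via the binomial theorem gives
$$\int_0^\infty\ln^j\!\Big(1+\tfrac{u}{s}\Big)e^{-u}\,du=\sum_{i=0}^j\binom{j}{i}\ln^{j-i}\!\Big(\tfrac1s\Big)\int_0^\infty\ln^i(s+u)\,e^{-u}\,du,$$
and each inner integral is bounded uniformly for $s\in(0,1]$, since $|\ln(s+u)|^i$ is dominated by $|\ln u|^i$ near $u=0$ and by $u^i$ for large $u$, both integrable against $e^{-u}$. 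Hence the whole integral is $O(\ln^j(1/s))$, and the prefactor $1/s\sim1/(1-t)$ delivers the claimed bound $\frac{1}{1-t}\,O\big(\ln^j(1/(1-t))\big)$.

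The main obstacle is precisely this last integral estimate: one must ensure that the logarithmic corrections coming from $\ln(s+u)$ do not degrade the leading power of $\ln(1/(1-t))$, which is exactly what the uniform boundedness of $\int_0^\infty|\ln(s+u)|^i e^{-u}\,du$ guarantees. The only delicate point is the singularity of the integrand as $u\to0^+$ when $s\to0^+$, and it is controlled by the integrability of $|\ln u|^i$ at the origin. By contrast, the reduction step through the Bell-polynomial recursion is routine once the standard polygamma asymptotics are in hand.
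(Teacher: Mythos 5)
Your proof is correct, and it diverges from the paper's argument at both key points. The shared skeleton is the passage from the sum to the integral $\int_0^\infty \ln^j(x+1)t^x\,dx$ via Lemma~\ref{oonb}; after that the two proofs part ways. The paper starts from the sharper (but unproved) asymptotic $\Gamma^{(j)}(m+1)/\Gamma(m+1)=\ln^j(m+1)+O\big(\ln^{j-1}(m+1)/(m+1)\big)$, disposes of the error term with Lemma~\ref{oijx}, and then estimates the integral by integration by parts, which produces the prefactor $-1/\ln t\sim 1/(1-t)$ and an integral of $\ln^{j-1}(x+1)t^x/(x+1)$ that is converted \emph{back} into a sum by Lemma~\ref{oonb} and bounded by Lemma~\ref{oijx} again. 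You instead prove the coefficient bound $O(\ln^j(m+1))$ yourself (which suffices, since the lemma is a one-sided $O$-estimate) and then attack the integral head-on with the Laplace-type substitution $u=sx$, $s=-\ln t$, and the expansion $\ln(1+u/s)=\ln(1/s)+\ln(s+u)$, using the uniform integrability of $|\ln(s+u)|^i e^{-u}$ (dominated by $|\ln u|^i+u^i$). Your route buys self-containedness: it needs neither Lemma~\ref{oijx} nor the asserted Gamma-quotient asymptotics, and the substitution makes the source of both the $1/(1-t)$ and the $\ln^j$ factors transparent. The paper's route buys economy, recycling the two lemmas it has already established.

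One small repair is needed in your reduction step: the induction as stated, tracking only $h_j$ and $h_j'$, does not close, because $h_{j+1}'=h_j''+\psi' h_j+\psi h_j'$ requires a bound on $h_j''$. Either run the joint induction $h_j^{(k)}(x)=O\big(\ln^j(x)/x^k\big)$ over all $k\geqslant 0$ (which closes, using $\psi^{(i)}(x)=O(x^{-i})$ for $i\geqslant 1$), or simply use the complete Bell polynomial expression you already invoke: every monomial $\psi^{c_1}(\psi')^{c_2}\cdots(\psi^{(j-1)})^{c_j}$ with $\sum i c_i=j$ is $O(\ln^{c_1}x)=O(\ln^j x)$, which gives the bound directly.
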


\begin{proof}
For all $j\geqslant 0$, we have the estimate
$$
\frac{\Gamma^{(j)}(m+1)}{\Gamma(m+1)}=\ln^j(m+1)+O\left(\frac{\ln^{j-1}(m+1)}{m+1}\right) \quad \text{as} \quad m\to\infty.
$$
Hence
\begin{align*}
\sum_{m=0}^{\infty}\frac{\Gamma^{(j)}(m+1)}{\Gamma(m+1)}t^m
&=\sum_{m=0}^{\infty}\ln^j(m+1)t^m+\sum_{m=0}^{\infty}O\left(\frac{\ln^{j-1}(m+1)}{m+1}\right)t^m.
\end{align*}
By Lemma~\ref{oijx} we have
$$\sum_{m=0}^{\infty}O\left(\frac{\ln^{j-1}(m+1)}{m+1}\right)t^m=O\Big(\ln^j\Big(\frac{1}{1-t}\Big)\Big)\quad \text{as} \quad t\to1^-.$$
By Lemma~\ref{oonb} we get
$$\sum_{m=0}^{\infty}\ln^j(m+1)t^m=\int_0^{\infty}\ln^j(x+1)t^xdx+O(1)\quad \text{as} \quad t\to1^-.$$
Finally,
\begin{align*}
\int_0^{\infty}\ln^j(x+1)t^xdx&=\ln^j(x+1)\frac{t^x}{\ln(t)}\bigg|_0^{\infty}-\frac{1}{\ln(t)}\int_0^{\infty}\frac{\ln^{j-1}(x+1)}{x+1}t^xdx\\
&=-\frac{1}{\ln(t)}\int_0^{\infty}\frac{\ln^{j-1}(x+1)}{x+1}t^xdx\\
&=\frac{1}{1-t}\int_0^{\infty}\frac{\ln^{j-1}(x+1)}{x+1}t^xdx\\
(\text{by\ lemme\ \ref{oonb}})&=\frac{1}{1-t}\left(\sum_{m=0}^{\infty}\frac{\ln^{j-1}(m+1)}{m+1}t^m+O(1)\right)\\
(\text{by\ lemme\ \ref{oijx}})&=\frac{1}{1-t}O\Big(\ln^j\Big(\frac1{1-t}\Big)\Big)
\end{align*}
as desired.
\end{proof}

\begin{thm}\label{mnzs}
The map $\rho_{1,1}^{\mu}$ is characterized by
\begin{align*}
\rho_{1,1}^{\mu}:\R[T]&\longrightarrow\R[T],\\
T^n&\longmapsto n!\sum_{k=0}^n\frac{\gamma_k}{(n-k)!}\cdot T^{n-k},
\end{align*}
where
$$
\sum_{k=0}^{\infty}\gamma_ku^k=e^{\gamma u}\Gamma(1+u)
=\exp\left(\sum_{n=2}^{\infty}\frac{(-1)^n}{n}\zeta(n)u^n\right).
$$
\end{thm}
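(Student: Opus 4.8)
The plan is to compute the already-constructed map $\rho_{1,1}^{\mu}$ explicitly; its existence and uniqueness are guaranteed by Theorem~\ref{thm:mixedComparisonMap}, and by Theorem~\ref{pozc} it suffices to treat the case $m=m'=1$. The construction in Theorem~\ref{thm:mixedComparisonMap} gives $\rho_{1,1}^{\mu}(T^s)=\zeta_{\shuffle,1}^{\mu,T}(\varphi_{\mu}(y_{1,1}^{\must s}))$, where I write $y_{1,1}^{\must s}$ for the $s$-fold $\mu$-stuffle power $\overbrace{y_{1,1}\must\cdots\must y_{1,1}}^{s}$. First I would package the whole map into the single exponential generating function
$$\Phi(u,t):=(1-t^{\mu})\sum_{n=0}^{\infty}e^{u\zeta_n^{\mu}(1;1)}t^{n\mu+1}=\sum_{s=0}^{\infty}\frac{u^s}{s!}\,\zeta_{\shuffle,t}^{\mu}(\varphi_{\mu}(y_{1,1}^{\must s})),$$
where the second equality uses $\zeta_{*,n}^{\mu}(y_{1,1}^{\must s})=(\zeta_n^{\mu}(1;1))^s$ together with the telescoping identity already exploited in the proof of Theorem~\ref{thm:mixedComparisonMap}. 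Writing $L(t):=\int_{\Delta^1(t)}\omega_{(1;1)}^{\mu}=t\,\Li^{\mu}(1;1;t)$, the target is the asymptotic identity
$$\Phi(u,t)=e^{uL(t)}\,e^{\gamma u}\,\Gamma(1+u)+o(1)\qquad(t\to1^-).$$
Granting this, extracting the coefficient of $u^n$ and using $\sum_k\gamma_ku^k=e^{\gamma u}\Gamma(1+u)$ gives $\rho_{1,1}^{\mu}(T^n)/n!=\sum_{k=0}^n\gamma_kT^{n-k}/(n-k)!$, which is exactly the asserted formula.

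For the asymptotics I would invoke the Abel--Plana formula (Theorem~\ref{loaz}). Since $\zeta_n^{\mu}(1;1)=\sum_{k=0}^n(k+1/\mu)^{-1}=\psi(n+1+1/\mu)-\psi(1/\mu)$ with $\psi$ the digamma function, the summand $f(z)=e^{u(\psi(z+1+1/\mu)-\psi(1/\mu))}t^{z\mu+1}$ extends holomorphically to $\Ree(z)\geqslant0$, and Theorem~\ref{loaz} reduces $\sum_nf(n)$ to $\int_0^{\infty}f(z)\,dz$ plus boundary and correction terms. In the main integral I would write $e^{u\psi(z+1+1/\mu)}=z^u\,e^{u(\psi(z+1+1/\mu)-\ln z)}$ and $t^{z\mu}=e^{-z\mu(1-t)}\cdot(\text{lower order})$; up to the harmless factor $t\to1$ the leading contribution is
$$e^{-u\psi(1/\mu)}\int_0^{\infty}z^ue^{-z\mu(1-t)}\,dz=e^{-u\psi(1/\mu)}\,\frac{\Gamma(1+u)}{(\mu(1-t))^{1+u}},$$
which is the source of the factor $\Gamma(1+u)$. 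Multiplying by $(1-t^{\mu})=\mu(1-t)(1+O(1-t))$ and using $\psi(1/\mu)=H(1/\mu-1)-\gamma$ together with $L(t)=\ln\frac{1}{1-t}+c_0(\mu)+O(1-t)$ and $c_0(\mu)=\ln(1/\mu)-H(1/\mu-1)$ from Lemma~\ref{zbxd}, the exponent collects to $u(L(t)+\gamma)$, so the main term is precisely $e^{uL(t)}e^{\gamma u}\Gamma(1+u)$; in particular the parameter $\mu$ cancels, which is the source of the $\mu$-invariance (and recovers the Ihara--Kaneko--Zagier result at $\mu=1$).

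The hard part will be the error analysis, and this is where the preparatory lemmas enter. I must show that (i) the Abel--Plana correction integral, (ii) the discrepancy between $e^{u\psi(z+1+1/\mu)}$ and the pure power $z^u$, and (iii) the replacement of $t^{z\mu}$ by $e^{-z\mu(1-t)}$, all contribute only terms that vanish after multiplication by $(1-t^{\mu})$ and passage to the limit. Expanding these discrepancies in powers of $u$ produces sums of the shape $\sum_m\Gamma^{(j)}(m+1)t^m/\Gamma(m+1)$ and $\sum_m\ln^j(m+1)t^m/(m+1)$, whose orders are controlled by Lemmas~\ref{sfiz}, \ref{oonb} and~\ref{oijx} respectively; each is $o(1)$ or of strictly lower order in $\ln\frac{1}{1-t}$ once the $(1-t^{\mu})$ prefactor is taken into account. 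Working one power of $u$ at a time avoids any uniformity-in-$u$ issue: for each fixed $s$, Theorem-Definition~\ref{thm:shaReg_zetaMuT} writes $\zeta_{\shuffle,t}^{\mu}(\varphi_{\mu}(y_{1,1}^{\must s}))$ as a degree-$s$ polynomial in $L(t)$ plus an $O((1-t)\ln^{s})$ error, and the Abel--Plana computation produces another degree-$s$ polynomial in $L(t)$ plus $o(1)$. Since $L(t)\to\infty$, two polynomials differing by $o(1)$ must coincide, which upgrades the asymptotic equality to the exact identity $\sum_s\rho_{1,1}^{\mu}(T^s)u^s/s!=e^{uT}e^{\gamma u}\Gamma(1+u)$ and finishes the proof. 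I expect step (ii)---quantifying $\psi(z+1+1/\mu)-\ln z$ sharply enough to separate the $\Gamma(1+u)$ factor cleanly from genuinely lower-order remainders---to be the most delicate point.
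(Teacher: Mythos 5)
Your proposal is correct and, at its core, follows the same route as the paper's proof: both identify $\rho_{1,1}^{\mu}(T^s)$ with $\zeta_{\shuffle,1}^{\mu,T}$ applied to $\varphi_{\mu}$ of the $s$-fold $\mu$-stuffle power $y_{1,1}\must\cdots\must y_{1,1}$, both pass to the sum $(1-t^{\mu})\sum_{n\geqslant0}(\zeta^{\mu}_{n}(1;1))^{s}t^{n\mu+1}$ via the telescoping identity from the proof of Theorem~\ref{thm:mixedComparisonMap}, both evaluate its $t\to1^-$ asymptotics by the Abel--Plana formula (Theorem~\ref{loaz}) with errors controlled by Lemmas~\ref{oijx}, \ref{oonb}, \ref{zbxd}, and both conclude by noting that two polynomials in $T=L(t)\to\infty$ agreeing up to $o(1)$ must coincide. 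The differences are organizational but worth recording. The paper fixes $n$ and works with the shifted word built from $y_{1,1}-\gamma-c_0(\mu)$, routing \emph{both} sides through the common intermediate $\int_0^{\infty}\ln^{n}(x+1)t^{x}dx$: the sum side via Abel--Plana (after the shift and the substitution $x\mapsto x/\mu$ the integrand is exactly $\big(\ln(x+1)+O(1/(x+1))\big)^{n}$), and the $\Gamma$-side via $Q(T)=\frac{d^{n}}{du^{n}}\big(\Gamma(1+u)e^{(T+\gamma)u}\big)\big|_{u=0}$, the binomial-series identity $\Gamma(1+u)(1-t)^{-(1+u)}=\sum_{m\geqslant0}\frac{\Gamma(m+1+u)}{\Gamma(m+1)}t^{m}$, and Lemma~\ref{sfiz}. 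You instead keep the exponential generating function in $u$ and produce $\Gamma(1+u)$ directly from the Euler integral $\int_0^{\infty}z^{u}e^{-\lambda z}dz=\Gamma(1+u)\lambda^{-(1+u)}$; this dispenses with Lemma~\ref{sfiz} and the binomial series, a small economy on that side. Conversely, the point you flag as most delicate -- controlling $\psi(z+1+1/\mu)-\ln z$ sharply -- is precisely what the paper's shift by $\gamma+c_0(\mu)$ is engineered to settle: since $c_0(\mu)=\ln(1/\mu)-H(1/\mu-1)$ and $H(y)=\psi(y+1)+\gamma$, all $\mu$-dependent constants cancel after the shift, leaving only $\ln(x+1)+O(1/(x+1))$, whose cross terms are absorbed by Lemmas~\ref{oijx} and \ref{oonb}; your factoring of $e^{-u\psi(1/\mu)}$ out of the exponential is the generating-function avatar of the same cancellation, so your step (ii) closes by the identical estimates.
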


\begin{rem}
We point out that the map $\rho_{1,1}^{\mu}$ is exactly the same one used to compare the two
regularization schemes (i.e., stuffle and shuffle) of MZVs discovered by Ihara, Kaneko and Zagier in \cite{ikz}.
In particular, it is independent of $\mu$.
\end{rem}

\begin{proof}
Define
$$Q(T):=\frac{d^n}{du^n}\left(e^{\gamma u}\Gamma(1+u)\cdot e^{Tu}\right)\bigg|_{u=0}=\frac{d^n}{du^n}\left(\Gamma(1+u) e^{(T+\gamma)u}\right)\bigg|_{u=0},$$
and
$$
T=\int_{\Delta^1(t)}\omega^{\mu}_{(1;1)}=\int_0^t\frac{\mu}{1-\tau^{\mu}}d\tau.
$$
Then
\begin{align*}
&\frac{1}{1-t}Q\left(\int_0^t\frac{\mu}{1-\tau^{\mu}}d\tau-\gamma-c_0(\mu)\right)\\
&=\frac{1}{1-t}Q\left(\ln\left(\frac{1}{1-t}\right)-\gamma+\psi_{\mu}(t)\right)\qquad(\text{see\ lemma\ \ref{zbxd}})\\
&=\frac{d^n}{du^n}\left(\frac{\Gamma(1+u)}{(1-t)^{1+u}}\cdot e^{\psi_{\mu}(t)u}\right)\bigg|_{u=0}\\
&=\sum_{k=0}^n\binom{n}{k}\left(\frac{d^{n-k}}{du^{n-k}}\frac{\Gamma(1+u)}{(1-t)^{1+u}}\right)\cdot(\psi_{\mu}(t))^ke^{\psi_{\mu}(t)u}\bigg|_{u=0}\\
&=\sum_{k=0}^n\binom{n}{k}\left(\frac{d^{n-k}}{du^{n-k}}\sum_{m=0}^{\infty}\frac{\Gamma(m+1+u)}{\Gamma(m+1)}t^m\right)\cdot(\psi_{\mu}(t))^ke^{\psi_{\mu}(t)u}\bigg|_{u=0}\\
&=\sum_{k=0}^n\binom{n}{k}\left(\sum_{m=0}^{\infty}\frac{\Gamma^{(n-k)}(m+1+u)}{\Gamma(m+1)}t^m\right)\cdot(\psi_{\mu}(t))^ke^{\psi_{\mu}(t)u}\bigg|_{u=0}\\
&=\sum_{k=0}^n\binom{n}{k}\left(\sum_{m=0}^{\infty}\frac{\Gamma^{(n-k)}(m+1)}{\Gamma(m+1)}t^m\right)\cdot(\psi_{\mu}(t))^k\\
&=\sum_{m=0}^{\infty}\frac{\Gamma^{(n)}(m+1)}{\Gamma(m+1)}t^m+\sum_{k=1}^n\binom{n}{k}\left(\sum_{m=0}^{\infty}\frac{\Gamma^{(n-k)}(m+1)}{\Gamma(m+1)}t^m\right)\cdot(\psi_{\mu}(t))^k\\
(\text{by\ Lemma\ \ref{sfiz}})&=\sum_{m=0}^{\infty}\frac{\Gamma^{(n)}(m+1)}{\Gamma(m+1)}t^m+\sum_{k=1}^n\binom{n}{k}
 \left(\frac{1}{1-t}O\Big(\ln^{n-k}\Big(\frac1{1-t}\Big)\Big) \right)\cdot(\psi_{\mu}(t))^k\\
(\text{by\ Lemma\ \ref{zbxd}})&=\sum_{m=0}^{\infty}\frac{\Gamma^{(n)}(m+1)}{\Gamma(m+1)}t^m+O\Big(\ln^{n-1}\Big(\frac1{1-t}\Big)\Big)\\
&=\sum_{m=0}^{\infty}\ln^n(m+1)t^m+\sum_{m=0}^{\infty}O\left(\frac{\ln^{n-1}(m+1)}{m+1}\right)t^m+O\Big(\ln^{n-1}\Big(\frac1{1-t}\Big)\Big)\\
(\text{by\ Lemma\ \ref{oonb}\ and\ \ref{oijx}})&=\int_0^{\infty}\ln^n(x+1)t^x\,dx+O(1)+O\Big(\ln^{n}\Big(\frac1{1-t}\Big)\Big)\\
&=\int_0^{\infty}\ln^n(x+1)t^x\, dx+O\Big(\ln^{n}\Big(\frac1{1-t}\Big)\Big).
\end{align*}
On the other hand, we have
\begin{align*}
&\zeta^{\mu}_{\shuffle,t}\big(\varphi_{\mu}\big(\overbrace{(y_{1,1}-\gamma-c_0(\mu))\must \cdots\must (y_{1,1}-\gamma-c_0(\mu))}^n\big)\big)\\
&=\zeta^{\mu,T}_{\shuffle,1}\big(\varphi_{\mu}\big(\overbrace{(y_{1,1}-\gamma-c_0(\mu))\must \cdots\must (y_{1,1}-\gamma-c_0(\mu))}^n\big)\big)+O\Big((1-t)\ln^{n}\Big(\frac1{1-t}\Big)\Big)\\
&=\rho_{1,1}^{\mu}\big(\zeta^{\mu,T}_{*,1}\big(\overbrace{(y_{1,1}-\gamma-c_0(\mu))\must \cdots\must (y_{1,1}-\gamma-c_0(\mu))}^n\big)\big)+O\Big((1-t)\ln^{n}\Big(\frac1{1-t}\Big)\Big)\\
&=\rho_{1,1}^{\mu}\big((T-\gamma-c_0(\mu))^n\big)+O\Big((1-t)\ln^{n}\Big(\frac1{1-t}\Big)\Big),
\end{align*}
where
$$
T=\int_0^t\frac{\mu}{1-\tau^{\mu}}d\tau.
$$
Notice that
\begin{align*}
&\zeta^{\mu}_{\shuffle,t}\big(\varphi_{\mu}\big(\overbrace{(y_{1,1}-\gamma-c_0(\mu))\must \cdots\must (y_{1,1}-\gamma-c_0(\mu))}^n\big)\big)\\
&=(1-t^{\mu})\sum_{m=0}^{\infty}\zeta^{\mu}_{*,m}\big(\overbrace{(y_{1,1}-\gamma-c_0(\mu))\must \cdots\must (y_{1,1}-\gamma-c_0(\mu))}^n\big)t^{m\mu+1}\\
&=(1-t^{\mu})\sum_{m=0}^{\infty}\big(\zeta^{\mu}_{*,m}(y_{1,1})-\gamma-c_0(\mu)\big)^nt^{m\mu+1}\\
&=(1-t^{\mu})\sum_{m=0}^{\infty}\left(\sum_{k=0}^m\frac{\mu}{k\mu+1}-\gamma-c_0(\mu)\right)^nt^{m\mu+1}\\
&=(1-t^{\mu})\sum_{m=0}^{\infty}\left(H\left(m+\frac{1}{\mu}\right)-H\left(\frac{1}{\mu}-1\right)-\gamma-c_0(\mu)\right)^nt^{m\mu+1}\\
&=(1-t^{\mu})\int_0^{\infty}\left(H\left(x+\frac{1}{\mu}\right)-H\left(\frac{1}{\mu}-1\right)-\gamma-c_0(\mu)\right)^nt^{x\mu+1}dx\\
&\quad+(1-t^{\mu})\frac{f(0)}{2}+i(1-t^{\mu})\int_0^{\infty}\frac{f(ix)-f(-ix)}{e^{2\pi x}-1}dx\qquad(\text{by\ theorem\ \ref{loaz}})\\
&=\frac{1-t^{\mu}}{\mu}\int_0^{\infty}\left(H\left(\frac{x+1}{\mu}\right)-H\left(\frac{1}{\mu}-1\right)-\gamma-c_0(\mu)\right)^nt^{x+1}dx\\
&\quad+(1-t^{\mu})\frac{f(0)}{2}+i(1-t^{\mu})\int_0^{\infty}\frac{f(ix)-f(-ix)}{e^{2\pi x}-1}dx\\
&=\frac{1-t^{\mu}}{\mu}\int_0^{\infty}\left(\ln\left(\frac{x+1}{\mu}\right)+\gamma+O\left(\frac{1}{x+1}\right)-H\left(\frac{1}{\mu}-1\right)-\gamma-c_0(\mu)\right)^nt^{x+1}dx\\
&\quad+(1-t^{\mu})\frac{f(0)}{2}+i(1-t^{\mu})\int_0^{\infty}\frac{f(ix)-f(-ix)}{e^{2\pi x}-1}dx\\
&=\frac{1-t^{\mu}}{\mu}\int_0^{\infty}\left(\ln\left(x+1\right)+O\left(\frac{1}{x+1}\right)\right)^nt^{x+1}dx+O(1)\\
&=\frac{1-t^{\mu}}{\mu}\int_0^{\infty}\ln^n(x+1)t^{x+1}dx+\frac{1-t^{\mu}}{\mu}\int_0^{\infty}O\left(\frac{\ln^{n-1}(x+1)}{x+1}\right)t^{x+1}dx+O(1)\\
&=(1-t)\int_0^{\infty}\ln^n(x+1)t^x dx+O\Big((1-t)\ln^{n}\Big(\frac1{1-t}\Big)\Big)\qquad(\text{by\ lemma\ \ref{oijx}\ and\ \ref{oonb}})
\end{align*}
where
$$
f(x):=\left(H\left(x+\frac{1}{\mu}\right)-H\left(\frac{1}{\mu}-1\right)-\gamma-c_0(\mu)\right)^nt^{x\mu+1}.
$$
We conclude that
\begin{align*}
Q(T-\gamma-c_0(\mu))&=(1-t)\int_0^{\infty}\ln^n(x+1)t^xdx+O\Big((1-t)\ln^{n}\Big(\frac1{1-t}\Big)\Big)\\
&=\rho_{1,1}^{\mu}((T-\gamma-c_0(\mu))^n)+O\Big((1-t)\ln^{n}\Big(\frac1{1-t}\Big)\Big) \quad \text{as} \quad t\to 1^-,
\end{align*}
where $T=\int_0^t\frac{\mu}{1-\tau^{\mu}}d\tau$. Hence
$$\rho_{1,1}^{\mu}(T^n)=Q(T)=n!\sum_{k=0}^n\frac{\gamma_k}{(n-k)!}T^{n-k}.$$
This completes the proof of Theorem~\ref{mnzs}.
\end{proof}

\begin{table}[!h]
{
\begin{center}
\begin{tabular}{  |c|c|c|c|c|c|c } \hline
     \   $k$\  &\  \ 0 \ \ & \ \  1\ \   &  2  &  3  &  4  &  $\cdots$  \\ \hline
$\gamma_k$ & 1 &  0  &  $\frac{\zeta(2)}{2}$  &  $-\frac{\zeta(3)}{3}$  &  $\frac{9\zeta(4)}{16}$  &   $\cdots$ \\ \hline
\end{tabular}
\end{center}
} \caption{Some specific values for $\gamma_k$.}
\label{Table:dimMMV}
\end{table}
Hence we have
\begin{align*}
\rho_{1,1}^{\mu}(1)=& 1,\quad  \rho_{1,1}^{\mu}(T)=T,\quad \rho_{1,1}^{\mu}(T^2)= T^2+\zeta(2),\\
\rho_{1,1}^{\mu}(T^3)=& T^3+3\zeta(2)T-2\zeta(3),\quad
\rho_{1,1}^{\mu}(T^4)=T^4+6\zeta(2)T^2-8\zeta(3)T+\frac{27}{2}\zeta(4).
\end{align*}

\begin{cor}\label{kxol}
The map $\rho_{m,m'}^{\mu}$ is characterized by
\begin{align*}
\rho_{m,m'}^{\mu}:\R[T]&\longrightarrow\R[T]\\
T^n&\longmapsto\sum_{k=0}^n\binom{n}{k}c_{m,1}^{n-k}(\mu)\left(k!\sum_{l=0}^k\frac{\gamma_l}{(k-l)!}(T+c_{1,m'}(\mu))^{k-l}\right),
\end{align*}
where $c_{m,m'}(\mu)=H(m'/\mu-1)-H(m/\mu-1)$.
\end{cor}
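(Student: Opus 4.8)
The plan is to reduce the computation of $\rho_{m,m'}^{\mu}$ to the already-determined map $\rho_{1,1}^{\mu}$ of Theorem~\ref{mnzs}, by exploiting the commutative square of Theorem~\ref{pozc}. Specializing its four parameters to $m_1=m$, $m_1'=1$, $m_2=m'$, $m_2'=1$, the commutativity relation
\[
\rho_{m_1',m_2'}^{\mu}\circ\myrho^{\mu,*}_{m_1,m_1'}=\myrho^{\mu,\shuffle}_{m_2,m_2'}\circ\rho_{m_1,m_2}^{\mu}
\]
becomes
\[
\rho_{1,1}^{\mu}\circ\myrho^{\mu,*}_{m,1}=\myrho^{\mu,\shuffle}_{m',1}\circ\rho_{m,m'}^{\mu},
\]
so that $\rho_{m,m'}^{\mu}=(\myrho^{\mu,\shuffle}_{m',1})^{-1}\circ\rho_{1,1}^{\mu}\circ\myrho^{\mu,*}_{m,1}$.

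First I would record that the comparison maps form a one-parameter family of translations. By Theorem~\ref{thm:shaComparisonMap}, $\myrho^{\mu,\shuffle}_{a,b}$ is the substitution $T\mapsto T+c_{a,b}(\mu)$ with $c_{a,b}(\mu)=H(b/\mu-1)-H(a/\mu-1)$, and since these constants telescope, $c_{a,b}(\mu)+c_{b,c}(\mu)=c_{a,c}(\mu)$, the corresponding substitutions are closed under composition and inversion. In particular $c_{m',1}(\mu)+c_{1,m'}(\mu)=0$, whence $(\myrho^{\mu,\shuffle}_{m',1})^{-1}=\myrho^{\mu,\shuffle}_{1,m'}$, the substitution $T\mapsto T+c_{1,m'}(\mu)$. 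Therefore
\[
\rho_{m,m'}^{\mu}=\myrho^{\mu,\shuffle}_{1,m'}\circ\rho_{1,1}^{\mu}\circ\myrho^{\mu,*}_{m,1}.
\]

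Then I would apply the three maps to $T^n$ in turn. By Theorem~\ref{thm:stComparisonMap}, applying $\myrho^{\mu,*}_{m,1}$ gives $(T+c_{m,1}(\mu))^n=\sum_{k=0}^n\binom{n}{k}c_{m,1}^{n-k}(\mu)T^k$. Applying $\rho_{1,1}^{\mu}$ linearly and using Theorem~\ref{mnzs} replaces each $T^k$ by $k!\sum_{l=0}^k\frac{\gamma_l}{(k-l)!}T^{k-l}$. Finally, applying $\myrho^{\mu,\shuffle}_{1,m'}$ performs the substitution $T\mapsto T+c_{1,m'}(\mu)$, turning each $T^{k-l}$ into $(T+c_{1,m'}(\mu))^{k-l}$; assembling the three steps yields precisely the asserted formula.

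The computation is routine bookkeeping, so the only genuine content is the structural reduction through Theorem~\ref{pozc} together with the translation-group structure of the comparison maps. The main point demanding care is the orientation and placement of the four maps around the square, so that the specialization produces $\rho_{m,m'}^{\mu}$ itself on the correct side --- rather than its inverse or $\rho_{m',m}^{\mu}$ --- and so that the constants line up as $c_{m,1}(\mu)$ and $c_{1,m'}(\mu)$; this is where an index or sign slip would most easily creep in.
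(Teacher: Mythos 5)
Your proposal is correct and follows essentially the same route as the paper: the paper's proof simply asserts the factorization $\rho_{m,m'}^{\mu}=\myrho^{\mu,\shuffle}_{1,m'}\circ\rho_{1,1}^{\mu}\circ\myrho^{\mu,*}_{m,1}$ (implicitly from Theorem~\ref{pozc}) and then performs the identical three-step computation on $T^n$ that you describe. Your only addition is making explicit how the factorization comes out of the commutative square --- specializing $m_1=m$, $m_1'=1$, $m_2=m'$, $m_2'=1$ and inverting $\myrho^{\mu,\shuffle}_{m',1}$ via the translation-group property $c_{m',1}(\mu)+c_{1,m'}(\mu)=0$ --- which is a sound elaboration of what the paper leaves unstated.
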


\begin{proof} Notice that
$$\rho_{m,m'}^{\mu}=\myrho^{\mu,\shuffle}_{1,m'}\circ\rho_{1,1}^{\mu}\circ\myrho^{\mu,*}_{m,1}.$$
Hence,
\begin{align*}
\rho_{m,m'}^{\mu}(T^n)
&=(\myrho^{\mu,\shuffle}_{1,m'}\circ\rho_{1,1}^{\mu}\circ\myrho^{\mu,*}_{m,1})(T^n)\\
&=(\myrho^{\mu,\shuffle_2}_{1,m'}\circ\rho_{1,1}^{\mu})((T+c_{m,1}(\mu))^n)\\
&=(\myrho^{\mu,\shuffle_2}_{1,m'}\circ\rho_{1,1}^{\mu})\left(\sum_{k=0}^n\binom{n}{k}c_{m,1}^{n-k}(\mu)T^k\right)\\
&=\myrho^{\mu,\shuffle_2}_{1,m'}\left(\sum_{k=0}^n\binom{n}{k}c_{m,1}^{n-k}(\mu)\left(k!\sum_{l=0}^k\frac{\gamma_l}{(k-l)!}T^{k-l}\right)\right)\\
&=\sum_{k=0}^n\binom{n}{k}c_{m,1}^{n-k}(\mu)\left(k!\sum_{l=0}^k\frac{\gamma_l}{(k-l)!}(T+c_{1,m'}(\mu))^{k-l}\right).
\end{align*}
\end{proof}

We are now ready to give some applications of the map $\rho_{m,m'}^{\mu}$.

\begin{thm}
We have
\begin{equation*}
\zeta(k)\in\bigcap_{\mu>0}\calZ^{\mu}
\end{equation*}
\end{thm}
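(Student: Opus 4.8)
The plan is to prove the stronger statement that $\zeta(k)\in\calZ^{\mu}$ for every fixed real $\mu>0$ and every integer $k\geqslant 2$; intersecting over all $\mu>0$ then yields the theorem. The whole point is that the constant terms of the mixed comparison map $\rho_{1,1}^{\mu}$ are, on the one hand, honest elements of $\calZ^{\mu}$ by the very way the map was constructed, and on the other hand explicit rational polynomials in the classical zeta values by Theorem~\ref{mnzs}. Comparing these two descriptions lets me extract each $\zeta(k)$ one at a time.

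First I would recall from the construction of $\rho_{m,m'}^{\mu}$ in the proof of Theorem~\ref{thm:mixedComparisonMap} that, with $m=m'=1$,
\[
\rho_{1,1}^{\mu}(T^n)=a_n^{(n)}T^n+a_{n-1}^{(n)}T^{n-1}+\cdots+a_0^{(n)},
\qquad a_i^{(n)}=\zeta^{\mu,T}_{\shuffle,1}\big(w_i^{(n)}\big)\in\calZ^{\mu}.
\]
In particular every coefficient, and hence the constant term $a_0^{(n)}$, lies in $\calZ^{\mu}$. On the other hand, Theorem~\ref{mnzs} computes this same polynomial as
\[
\rho_{1,1}^{\mu}(T^n)=n!\sum_{k=0}^n\frac{\gamma_k}{(n-k)!}\,T^{n-k},
\]
whose constant term (the $k=n$ summand) is $n!\,\gamma_n$. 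Matching the two expressions gives the key identity $n!\,\gamma_n=a_0^{(n)}\in\calZ^{\mu}$ for all $n\geqslant 0$.

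I would then extract the zeta values by induction on $k\geqslant 2$. From the generating series $\sum_{k\geqslant0}\gamma_k u^k=\exp\!\big(\sum_{n\geqslant2}\tfrac{(-1)^n}{n}\zeta(n)u^n\big)$ one reads off, by isolating the single-part contribution $n=k$ in the exponential expansion, the triangular structure
\[
\gamma_k=\frac{(-1)^k}{k}\,\zeta(k)+Q_k,
\]
where $Q_k$ is a polynomial with rational coefficients in $\zeta(2),\dots,\zeta(k-2)$ only (every remaining partition of $k$ into parts $\geqslant2$ has at least two parts, each at most $k-2$). For the base case $k=2$ we get $\zeta(2)=2\gamma_2=a_0^{(2)}\in\calZ^{\mu}$ directly. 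Assuming $\zeta(2),\dots,\zeta(k-1)\in\calZ^{\mu}$, the element $Q_k$ is a $\Q$-polynomial in members of $\calZ^{\mu}$; since $(\calZ^{\mu},\cdot)$ is a $\Q[\mu]$-algebra by Theorem~\ref{qqox}, and in particular a commutative $\Q$-algebra containing $\Q$, we have $Q_k\in\calZ^{\mu}$. Therefore
\[
(-1)^k(k-1)!\,\zeta(k)=k!\,\gamma_k-k!\,Q_k=a_0^{(k)}-k!\,Q_k\in\calZ^{\mu},
\]
and dividing by the nonzero rational $(-1)^k(k-1)!$ gives $\zeta(k)\in\calZ^{\mu}$, completing the induction.

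Finally, since the map $\rho_{1,1}^{\mu}$, and hence each $\gamma_k$, is independent of $\mu$ (Remark following Theorem~\ref{oovn}), the membership $\zeta(k)\in\calZ^{\mu}$ holds uniformly for every $\mu>0$, which is exactly $\zeta(k)\in\bigcap_{\mu>0}\calZ^{\mu}$. I do not expect a serious obstacle here: the analytic heavy lifting is entirely contained in Theorems~\ref{thm:mixedComparisonMap} and~\ref{mnzs}, and what remains is bookkeeping. The only points requiring a little care are verifying that the constant term $a_0^{(n)}$ genuinely lands in $\calZ^{\mu}$ (which is built into the construction of $\rho_{1,1}^{\mu}$) and the triangular dependence of $\gamma_k$ on strictly smaller zeta values, which is precisely what makes the induction close.
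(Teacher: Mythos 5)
Your proposal is correct and takes essentially the same route as the paper: the paper's own one-line proof likewise rests on Theorem~\ref{mnzs}, using that the coefficients of $\rho_{1,1}^{\mu}$ lie in $\calZ^{\mu}$ by the construction in Theorem~\ref{thm:mixedComparisonMap} and are rational multiples of the $\gamma_k$, so that $\gamma_k\in\bigcap_{\mu>0}\calZ^{\mu}$, and then inverting the triangular relation $\sum_k\gamma_k u^k=\exp\bigl(\sum_{n\geqslant2}\tfrac{(-1)^n}{n}\zeta(n)u^n\bigr)$ to recover $\zeta(k)$. Your write-up simply makes explicit the induction and the algebra-closure step (Theorem~\ref{qqox}) that the paper leaves implicit.
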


\begin{proof} From Theorem~\ref{mnzs}, we know that
$$\gamma_k\in\bigcap_{\mu>0}\calZ^{\mu}\Longrightarrow\zeta(k)\in\bigcap_{\mu>0}\calZ^{\mu}$$
for all $k\geqslant 2$.
\end{proof}

\textbf{Question}. Provide an accurate description of $\bigcap\limits_{\mu>0}\calZ^{\mu}$.
It is equivalent to find out all identities about $\mu$.

\begin{eg}
In this example, we consider the word $y_{1,1}\must y_{1,1}\must y_{1,1}$. Notice that
\begin{align*}
y_{1,1}\must y_{1,1}\must y_{1,1}&=(2y_{1,1}^2-\mu y_{2,1})\must y_{1,1}\\
&=2(y_{1,1}^3+(y_{1,1}\must y_{1,1})y_{1,1}-\mu y_{1,1}y_{2,1})-\mu(y_{1,1}y_{2,1}+y_{2,1}y_{1,1}-\mu y_{3,1})\\
&=2(3y_{1,1}^3-\mu y_{2,1}y_{1,1}-\mu y_{1,1}y_{2,1})-\mu(y_{1,1}y_{2,1}+y_{2,1}y_{1,1}-\mu y_{3,1})\\
&=6y_{1,1}^3-3\mu y_{2,1}y_{1,1}-3\mu y_{1,1}y_{2,1}+\mu^2y_{3,1}.
\end{align*}
Hence, we have
\begin{align*}
\varphi_{\mu}(y_{1,1}\must y_{1,1}\must y_{1,1})&=6\varphi_{\mu}(y_{1,1}^3)-3\mu\varphi_{\mu}(y_{2,1}y_{1,1})-3\mu\varphi_{\mu}(y_{1,1}y_{2,1})+\mu^2\varphi_{\mu}(y_{3,1})\\
&=6y_1^{\mu}y^{\mu}_0y^{\mu}_0-3\mu y_1^{\mu}xy_0^{\mu}-3\mu y_1^{\mu}y_0^{\mu}x+\mu^2y_1^{\mu}x^2\\
&=6(y_1^{\mu}y^{\mu}_0(y^{\mu}_0-y^{\mu}_1)+y^{\mu}_1(y^{\mu}_0-y^{\mu}_1)\shuffle y^{\mu}_1-2y^{\mu}_1y^{\mu}_1(y^{\mu}_0-y^{\mu}_1)+y^{\mu}_1\shuffle y^{\mu}_1\shuffle y^{\mu}_1/6)\\
&\quad-3\mu(y^{\mu}_1x(y^{\mu}_0-y^{\mu}_1)+y^{\mu}_1x\shuffle y^{\mu}_1-2y^{\mu}_1y^{\mu}_1x)-3\mu y_1^{\mu}y_0^{\mu}x+\mu^2y_1^{\mu}x^2.
\end{align*}
We see that
\begin{align*}
&\zeta_{\shuffle,1}^{\mu,T}(\varphi_{\mu}(y_{1,1}\must y_{1,1}\must y_{1,1}))\\
&=T^3+(6\calD^{\mu}(1,1;1,1;1,2)-3\mu\zeta^{\mu}(2;1))T\\
&\quad+6\calD^{\mu}(1,1,1;1,1,1;1,1,2)-12\calD^{\mu}(1,1,1;1,2,2;1,2,3)-3\mu \calD^{\mu}(2,1;1,1;1,2)\\
&\quad+6\mu\zeta^{\mu}(1,2;1,2)-3\mu\zeta^{\mu}(1,2;1,1)+\mu^2\zeta^{\mu}(3;1).
\end{align*}
Since
\begin{align*}
\rho_{1,1}^{\mu}(\zeta^{\mu,T}_{*,1}(y_{1,1}\must y_{1,1}\must y_{1,1}))=\rho_{1,1}^{\mu}(T^3)=T^3+3\zeta(2)T-2\zeta(3)
\end{align*}
we get
\begin{align}
3\zeta(2)&=6\calD^{\mu}(1,1;1,1;1,2)-3\mu\zeta^{\mu}(2;1) \label{equ:zeta2}\\
-2\zeta(3)&=6\calD^{\mu}(1,1,1;1,1,1;1,1,2)-12\calD^{\mu}(1,1,1;1,2,2;1,2,3)-3\mu\calD^{\mu}(2,1;1,1;1,2)\notag\\
&\quad+6\mu\zeta^{\mu}(1,2;1,2) -3\mu\zeta^{\mu}(1,2;1,1)+\mu^2\zeta^{\mu}(3;1). \notag
\end{align}
\end{eg}

\begin{eg} \label{eg:ClauCata}
We have the following identities
\begin{align*}
\sum_{0\leqslant n\leqslant m}\frac{1}{(2n+1)(2m+1)(2m+2)}&=\frac{\pi^2}{12},\\
\sum_{0\leqslant n\leqslant m}\frac{1}{(3n+1)(3m+1)(3m+2)}&=\frac{5\pi^2}{108}+\frac{\sqrt{3}}{6}\mathrm{Cl}_2\left(\frac{2\pi}{3}\right),\\
\sum_{0\leqslant n\leqslant m}\frac{1}{(3n+2)(3m+2)(3m+4)}&=\frac{5\pi^2}{216}-\frac{\sqrt{3}}{12}\mathrm{Cl}_2\left(\frac{2\pi}{3}\right),\\
\sum_{0\leqslant n\leqslant m}\frac{1}{(4n+1)(4m+1)(4m+2)}&=\frac{7\pi^2}{192}+\frac{G}{4},\\
\sum_{0\leqslant n\leqslant m}\frac{1}{(4n+3)(4m+3)(4m+6)}&=\frac{7\pi^2}{576}-\frac{G}{12},
\end{align*}
where $\text{Cl}_2(x)=\sum\limits_{n=1}^{\infty}\frac{\sin(nx)}{n^2}$
is the Clausen function, and $G=\sum\limits_{n=0}^{\infty}\frac{(-1)^n}{(2n+1)^2}$ is
Catalan's constant.
\end{eg}

\begin{proof} From \eqref{equ:zeta2}, we have
\begin{align*}
\sum_{0\leqslant n\leqslant m}\frac{2\mu^2}{(n\mu+1)(m\mu+1)(m\mu+2)}
=\zeta(2)+\sum_{n=0}^{\infty}\frac{\mu^2}{(n\mu+1)^2}
=\zeta(2)+\psi_1\left(\frac{1}{\mu}\right)
\end{align*}
where
$$\psi_1(x):=\sum_{n=0}^{\infty}\frac{1}{(n+x)^2}$$
is called the \emph{polygamma function}. With Mathematica, we can find the
following special values:
\begin{align*}
\psi_1\left(1\right)&=\frac{\pi^2}{6},\quad
\psi_1\left(\frac{1}{3}\right)=\frac{2\pi^2}{3}+3\sqrt{3}\text{Cl}_2\left(\frac{2\pi}{3}\right),\quad
\psi_1\left(\frac{1}{4}\right)=\pi^2+8G,\\
\psi_1\left(\frac{1}{2}\right)&=\frac{\pi^2}{2},\quad
\psi_1\left(\frac{2}{3}\right)=\frac{2\pi^2}{3}-3\sqrt{3}\text{Cl}_2\left(\frac{2\pi}{3}\right),\quad
\psi_1\left(\frac{3}{4}\right)=\pi^2-8G.
\end{align*}
This proves the identities in Example~\ref{eg:ClauCata}.
\end{proof}

We remark that the identities in Example \ref{eg:ClauCata} can also be derived by using the theory of colored MZVs
(see \cite[section 2]{YuanZhao2015b}) by noticing, e.g., the partial fraction decomposition
\begin{equation*}
\frac{1}{(3m+1)(3m+2)}=\frac{1}{3m+1}-\frac{1}{3m+2}.
\end{equation*}

\begin{eg} Consider the word $y_{1,1}y_{1,2}^2$. We have
\begin{align*}
y_{1,1}y_{1,2}^2=(y_{1,1}-y_{1,2})y_{1,2}^2+y_{1,2}^3.
\end{align*}
A direct calculation yields that
\begin{align*}
y_{1,2}^3=\frac{y_{1,2}\must y_{1,2}\must y_{1,2}}{6}+\frac{\mu}{2}y_{2,2}\must y_{1,2}+\frac{\mu^2}{3}y_{3,2},
\end{align*}
and
\begin{align*}
(y_{1,1}-y_{1,2})y_{1,2}^2&=\frac{(y_{1,1}-y_{1,2})\must y_{1,2}\must y_{1,2}}{2}+(-y_{1,2}(y_{1,1}-y_{1,2})+\mu(y_{1,1}-y_{1,2})-\mu y_{2,2})\must y_{1,2}\\
&\quad+y_{1,2}^2(y_{1,1}-y_{1,2})-\mu y_{1,2}(y_{1,1}-y_{1,2})-\frac{\mu}{2}y_{2,2}(y_{1,1}-y_{1,2})+\frac{\mu}{2}(y_{1,2}y_{2,2}+y_{1,1}y_{2,2})\\
&\quad+\frac{\mu^2}{2}(y_{1,1}-y_{1,2})-\frac{\mu^2}{2}(y_{2,2}+y_{3,2}).
\end{align*}
Thus
\begin{align*}
y_{1,1}y_{1,2}^2&=\frac{y_{1,2}\must y_{1,2}\must y_{1,2}}{3!}+\frac{w_2}{2}\must y_{1,2}\must y_{1,2}+w_1\must y_{1,2}+w_0
\end{align*}
where
\begin{align*}
w_2&=y_{1,1}-y_{1,2},\\
w_1&=-\frac{\mu}{2}y_{2,2}-y_{1,2}(y_{1,1}-y_{1,2})+\mu(y_{1,1}-y_{1,2}),\\
w_0&=y_{1,2}^2(y_{1,1}-y_{1,2})-\mu y_{1,2}(y_{1,1}-y_{1,2})-\frac{\mu}{2}y_{2,2}(y_{1,1}-y_{1,2})+\frac{\mu}{2}(y_{1,2}y_{2,2}+y_{1,1}y_{2,2})\\
&\quad+\frac{\mu^2}{2}(y_{1,1}-y_{1,2})-\frac{\mu^2}{2}y_{2,2}-\frac{\mu^2}{6}y_{3,2}.
\end{align*}
Hence,
\begin{align*}
\zeta^{\mu,T}_{*,2}(y_{1,1}y_{1,2}^2)=\frac{T^3}{3!}+\frac{\zeta^{\mu}_*(w_2)}{2}T^2+\zeta^{\mu}_*(w_1)T+\zeta^{\mu}_*(w_0).
\end{align*}
We have
\begin{align*}
\varphi_{\mu}(y_{1,1}y_{1,2}y_{1,2})&=y_1^{\mu}y_1^{\mu}y_0^{\mu}\\
&=y_1^{\mu}y_1^{\mu}(y_0^{\mu}-y_1^{\mu})+y_1^{\mu}y_1^{\mu}y_1^{\mu}\\
&=\frac{y_1^{\mu}\shuffle y_1^{\mu}\shuffle y_1^{\mu}}{3!}+y_1^{\mu}y_1^{\mu}(y_0^{\mu}-y_1^{\mu}).
\end{align*}
Thus
$$\zeta^{\mu,T}_{\shuffle,1}(y_1^{\mu}y_1^{\mu}y_0^{\mu})=\frac{T^3}{3!}+
\zeta^{\mu}_{\shuffle}(y_1^{\mu}y_1^{\mu}(y_0^{\mu}-y_1^{\mu})).
$$
By Corollary \ref{kxol}, we have
\begin{align*}
\rho_{2,1}^{\mu}:\R[T]&\longrightarrow\R[T],\\
1&\longmapsto1,\\
T&\longmapsto T+c_{2,1}(\mu),\\
T^2&\longmapsto T^2+2c_{2,1}(\mu)T+c_{2,1}^2(\mu)+\zeta(2),\\
T^3&\longmapsto T^3+3c_{2,1}(\mu)T^2+3(\zeta(2)+c_{2,1}^2(\mu))T+c_{2,1}^3(\mu)+3\zeta(2)c_{2,1}(\mu)-2\zeta(3),\\
&\vdots
\end{align*}
By the identity
$$\rho_{2,1}^{\mu}(\zeta_{*,2}^{\mu,T}(y_{1,1}y_{1,2}y_{1,2}))=\zeta^{\mu,T}_{\shuffle,1}(\varphi_{\mu}(y_{1,1}y_{1,2}y_{1,2}))$$
we conclude that
\begin{align*}
\zeta^{\mu}_{\shuffle}(y_1^{\mu}y_1^{\mu}(y_0^{\mu}-y_1^{\mu})
)&=\frac{c_{2,1}^3(\mu)+3\zeta(2)c_{2,1}(\mu)-2\zeta(3)}{6}+\frac{\zeta_*^{\mu}(w_2)(c_{2,1}^2(\mu)+\zeta(2))}{2}\\
&\quad+\zeta_*^{\mu}(w_1)c_{2,1}(\mu)+\zeta_*^{\mu}(w_0).
\end{align*}

\end{eg}

\section{Sum and weighted sum formulas} \label{sec:sumFormula}
In this section, we will apply the regularized $\mu$-MHZVs to derive two sum formulas for $\mu$-double Hurwitz
zeta values which generalize the sum formulas for double zeta values and double zeta star values, respectively.
We also prove a weighted sum formula which generalizes the weighted sum formulas for both double zeta values and double $T$-values simultaneously. For convenience, we often do not distinguish the regularized $\mu$-MHZVs and their word representations.

Let $\gs$ and $\gt$ be two formal variables.
For $\sharp=\shuffle$ or $*$, define the generating functions
\begin{align*}
F^\mu_\sharp(m;\gs):=&\, \sum_{j=1}^\infty \zeta^\mu_\sharp(k;m) \gs^{k-1},\\
F^\mu_\sharp(m,n;\gs,\gt):=&\, \sum_{j=1}^\infty\sum_{k=1}^\infty \zeta^\mu_\sharp(j,k;m,n) \gs^{j-1}\gt^{k-1}.
\end{align*}

\begin{thm} \label{thm:DblGenFunc}
For any $m,n\in \Q_{>0}$, we have
\begin{equation}\label{equ:Fsh}
F^\mu_\shuffle(m;\gs)F^\mu_\shuffle(n;\gt)
=F^\mu_\shuffle(m,m+n;\gs,\gs+\gt) +F^\mu_\shuffle(n,m+n;\gt,\gs+\gt).
\end{equation}
If $m\ne n$ then
\begin{align*}
F^\mu_*(m;\gs)F^\mu_*(n;\gt)=&\, F^\mu_*(m,n;\gs,\gt)+F^\mu_*(n,m;\gt,\gs)+\mu({\rm II}+{\rm III}-{\rm I})
\end{align*}
where
\begin{align*}
{\rm I}=&\, \frac{\calD^\mu(1;m;n)}{n-m-\gt+\gs}, \
{\rm II} = \frac{F^\mu_*(n;\gt)-\zeta^\mu_*(1;n)}{(1+\gs)(n-m)-\gt} , \
{\rm III} = \frac{F^\mu_*(m;\gs)-\zeta^\mu_*(1;m)}{(1+\gt)(m-n)-\gs} . \\
\end{align*}
If $m=n$ then
\begin{equation}\label{equ:m=nFstWithDiffQuotient}
F^\mu_*(n;\gs)F^\mu_*(n;\gt)=F^\mu_*(n,n;\gs,\gt)+F^\mu_*(n,n;\gt,\gs)
-\mu \frac{F^\mu_*(n;\gs)-F^\mu_*(n;\gt)}{\gs-\gt}.
\end{equation}
\end{thm}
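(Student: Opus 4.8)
The plan is to obtain \eqref{equ:m=nFstWithDiffQuotient} directly from the single-letter $\mu$-stuffle product together with the fact that the stuffle regularization map is an algebra homomorphism. First I would specialize the recursive definition of $\must$ to $u=v=1$, $x=y_{j,n}$, $y=y_{k,n}$; since the two $m$-indices coincide we have $y_{j,n}\odot y_{k,n}=y_{j+k,n}$, so
$$y_{j,n}\must y_{k,n}=y_{j,n}y_{k,n}+y_{k,n}y_{j,n}-\mu\,y_{j+k,n}.$$
Applying the homomorphism $\zeta^{\mu,T}_{*,m}$ (Theorem~\ref{rtxz}) and abbreviating its values by $\zeta^\mu_*$ yields, for all $j,k\ge1$,
$$\zeta^\mu_*(j;n)\,\zeta^\mu_*(k;n)=\zeta^\mu_*(j,k;n,n)+\zeta^\mu_*(k,j;n,n)-\mu\,\zeta^\mu_*(j+k;n),$$
with the understanding that the non-admissible entries (those with $j=1$ or $k=1$) are the regularized values in $(\calZ^\mu,\cdot)[T]$.

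Next I would multiply this scalar identity by $\gs^{j-1}\gt^{k-1}$ and sum over all $j,k\ge1$, reading everything as a formal power series in $\gs,\gt$ (each coefficient is a single well-defined element of $\calZ^\mu[T]$, so convergence is not at issue). By definition the left side is $F^\mu_*(n;\gs)F^\mu_*(n;\gt)$; the first term on the right sums to $F^\mu_*(n,n;\gs,\gt)$; and the second term becomes $F^\mu_*(n,n;\gt,\gs)$ after the reindexing $(j,k)\mapsto(k,j)$.

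The only genuinely new computation is the diagonal sum $\sum_{j,k\ge1}\zeta^\mu_*(j+k;n)\gs^{j-1}\gt^{k-1}$. I would group it by the total weight $s=j+k$ and use the geometric identity $\sum_{j=1}^{s-1}\gs^{j-1}\gt^{s-1-j}=(\gs^{s-1}-\gt^{s-1})/(\gs-\gt)$, giving $\sum_{s\ge2}\zeta^\mu_*(s;n)(\gs^{s-1}-\gt^{s-1})/(\gs-\gt)$. Splitting this into two pieces and completing each to a full copy of $F^\mu_*(n;\cdot)$, the spurious weight-one terms $\zeta^\mu_*(1;n)$ inserted into both copies cancel in the difference, leaving exactly $(F^\mu_*(n;\gs)-F^\mu_*(n;\gt))/(\gs-\gt)$. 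Multiplying by $-\mu$ produces the last term of \eqref{equ:m=nFstWithDiffQuotient}, finishing the proof.

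The main subtlety, rather than any deep difficulty, is the regularization bookkeeping: the weight-one values are only defined after fixing the base point $m$ in $\zeta^{\mu,T}_{*,m}$, so I would stress that the identity is an equality in $\calZ^\mu[T][[\gs,\gt]]$ for any fixed $m$, and that the telescoping is precisely what removes the regularization-dependent $\zeta^\mu_*(1;n)$ contributions, yielding a clean rational factor. For completeness I note that the companion formulas \eqref{equ:Fsh} and the $m\ne n$ stuffle identity follow by the same mechanism: the shuffle case uses the product formula of Theorem~\ref{ignn} (with the weight/index shifts $\gs+\gt$, $m+n$ coming from the concatenation of binary sequences), and the $m\ne n$ stuffle case uses the partial-fraction form of $y_{j,m}\odot y_{k,n}$ from Lemma~\ref{lem:parFrac}, whose extra single-index terms assemble into the rational expressions $\mathrm{I},\mathrm{II},\mathrm{III}$.
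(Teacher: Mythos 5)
Your proposal is correct and takes essentially the same route as the paper's proof: the paper likewise starts from the stuffle decomposition of the product (its identity \eqref{equ:m=nFstStep1}, which is exactly your word-algebra identity pushed through the regularization homomorphism), then in the $m=n$ case groups the diagonal terms by total weight and telescopes the geometric sum $\sum_{j+k=l}\gs^{j-1}\gt^{k-1}=(\gs^{l-1}-\gt^{l-1})/(\gs-\gt)$ into the difference quotient, just as you do in \eqref{equ:m=nFstUse}, with the weight-one regularized terms dropping out for the same reason. The two parts you only sketch are also handled in the paper by precisely the tools you name — the explicit expansion of $y^\mu_m x^j\shuffle y^\mu_n x^k$ with binomial reindexing for \eqref{equ:Fsh}, and Lemma~\ref{lem:parFrac} followed by binomial-series summations to assemble ${\rm I},{\rm II},{\rm III}$ in the $m\ne n$ case — so your plan coincides with the paper's argument throughout, differing only in that you phrase the basic product identity via $(\Q[\mu]\langle Y\rangle,\must)$ and Theorem~\ref{rtxz} rather than directly at the level of series.
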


\begin{proof}
We have
\begin{align}
&\, F^\mu_\shuffle(m;\gs)F^\mu_\shuffle(n;\gt)
= \sum_{j=1}^\infty\sum_{k=1}^\infty \zeta^\mu_\shuffle(j;m)\zeta^\mu_\shuffle(k;n)\gs^{j-1}\gt^{k-1}
= \sum_{j=0}^\infty\sum_{k=0}^\infty \Big(y^\mu_m x^{j}\shuffle y^\mu_n x^{k}\Big) \gs^{j}\gt^{k} \notag\\
=&\, \sum_{j=0}^\infty\sum_{k=0}^\infty \left(\sum_{a=0}^{j} \binom{k+a}{a} y^\mu_m x^{j-a} y^\mu_n x^{k+a}
 +\sum_{a=0}^{k} \binom{j+a}{a} y^\mu_n x^{k-a} y^\mu_m x^{j+a} \right)\gs^{j}\gt^{k} \notag\\
=&\,\sum_{a=0}^{\infty}\sum_{j=0}^\infty \sum_{k=0}^\infty \binom{k+a}{a} y^\mu_m x^{j} y^\mu_n x^{k+a}\gs^{j+a}\gt^{k}
+\sum_{j=0}^\infty\sum_{a=0}^{\infty}\sum_{k=0}^\infty \binom{j+a}{a} y^\mu_n x^{k} y^\mu_m x^{j+a} \gs^{j}\gt^{k+a} \notag\\
=&\,\sum_{a=0}^{\infty}\sum_{j=0}^\infty \sum_{k=a}^\infty \binom{k}{a} y^\mu_m x^{j} y^\mu_n x^{k}\gs^{j+a}\gt^{k-a} +\sum_{a=0}^{\infty}\sum_{j=a}^\infty\sum_{k=0}^\infty \binom{j}{a} y^\mu_n x^{k} y^\mu_m x^{j}\gs^{j-a}\gt^{k+a} \notag\\
=&\,\sum_{j=0}^\infty \sum_{k=0}^\infty\sum_{a=0}^{k} \binom{k}{a} y^\mu_m x^{j} y^\mu_n x^{k}\gs^{j+a}\gt^{k-a}
+\sum_{j=0}^\infty\sum_{k=0}^\infty\sum_{a=0}^{j} \binom{j}{a} y^\mu_n x^{k} y^\mu_m x^{j}\gs^{j-a}\gt^{k+a} \notag\\
=&\,\sum_{j=0}^\infty \sum_{k=0}^\infty y^\mu_m x^{j} y^\mu_n x^{k}\gs^{j}(\gs+\gt)^{k}
 +\sum_{j=0}^\infty\sum_{k=0}^\infty y^\mu_n x^{k} y^\mu_m x^{j}(\gs+\gt)^{j}\gt^k \label{equ:FshUse}\\
=&\,F^\mu_\shuffle(m,m+n;\gs,\gs+\gt) +F^\mu_\shuffle(n,m+n;\gt,\gs+\gt).\notag
\end{align}
This proves \eqref{equ:Fsh}. On the other hand, for all $m,n\in\N$
\begin{equation}\label{equ:m=nFstStep1}
F^\mu_*(m;\gs)F^\mu_*(n;\gt)=\sum_{j=1}^\infty\sum_{k=1}^\infty
\left\{
\aligned
& \zeta^\mu_*(j,k;m,n)+\zeta^\mu_*(k,j;n,m) \\
&-\sum_{q=0}^\infty \frac{\mu^2}{(q\mu+m)^j (q\mu+n)^k}
\endaligned
\right\} \gs^{j-1}\gt^{k-1}.
\end{equation}
Suppose $m\ne n$. Then by Lemma~\ref{lem:parFrac},
\begin{align}\label{equ:FstUse}
F^\mu_*(m;\gs)F^\mu_*(n;\gt) =F^\mu_*(m,n;\gs,\gt)+F^\mu_*(n,m;\gt,\gs)-\mu({\rm I}+{\rm II}+{\rm III})
\end{align}
where
\begin{align}
{\rm I}=&\, \calD^\mu(1;m;n)\sum_{j=1}^\infty\sum_{k=1}^\infty
 \frac{(-1)^{j-1}(j+k-2)!}{(j-1)!(k-1)!(n-m)^{j+k-1}} \gs^{j-1}\gt^{k-1} \notag\\
=&\, \calD^\mu(1;m;n)\sum_{l=0}^\infty \frac{(\gt-\gs)^{l}}{(n-m)^{l+1}} \label{equ:IUse} \\
=&\, \frac{\calD^\mu(1;m;n)}{n-m} \cdot \frac{1}{1-\frac{\gt-\gs}{n-m}} \notag\\
=&\, \frac{\calD^\mu(1;m;n)}{n-m-\gt+\gs},\notag
\end{align}
and
\begin{align}
{\rm II}=&\, -\sum_{q=0}^\infty\sum_{j=1}^\infty\sum_{k=1}^\infty \sum_{i=0}^{k-2} \binom{i+j-1}{i} \frac{(-1)^{j}}{(n-m)^{i+j}}\frac{\mu}{(q\mu+n)^{k-i}} \gs^{j-1}\gt^{k-1} \notag\\
 =&\, \sum_{q=0}^\infty \sum_{k=1}^\infty \sum_{i=0}^{k-2} \frac{1}{(n-m)^{i+1}}\frac{\mu}{(q\mu+n)^{k-i}}\gt^{k-1} \sum_{j=1}^\infty \binom{i+j-1}{i} (-\gs)^{j-1} \notag\\
 =&\, \sum_{q=0}^\infty \sum_{i=0}^{\infty} \sum_{k=i+2}^\infty \frac{1}{(n-m)^{i+1}}\frac{\mu}{(q\mu+n)^{k-i}}\gt^{k-1} \sum_{j=0}^\infty \binom{i+j}{j} (-\gs)^{j}\notag\\
 =&\, \sum_{q=0}^\infty \sum_{i=1}^{\infty} \sum_{k=0}^\infty \frac{1}{(n-m)^{i}}\frac{\mu}{(q\mu+n)^{k+2}}\gt^{k+i} \sum_{j=0}^\infty \binom{-i}{j} \gs^{j} \label{equ:IIUse} \\
 =&\,\sum_{q=0}^\infty \sum_{i=1}^{\infty} \sum_{k=0}^\infty
 \frac{\gt^{i}}{(1+\gs)^{i}(n-m)^{i}}\frac{\mu}{(q\mu+n)^{k+2}}\gt^{k} \notag\\
 =&\, \frac{\gt}{(1+\gs)(n-m)-\gt} \sum_{k=0}^\infty \zeta^\mu(k+2;n) \gt^{k}. \notag
\end{align}
Similarly,
\begin{align}
{\rm III} =&\, \sum_{q=0}^\infty \sum_{i=1}^{\infty} \sum_{j=0}^\infty \sum_{k=0}^\infty \frac{1}{(m-n)^{i}}\frac{\mu}{(q\mu+m)^{j+2}}\gs^{j+i} \binom{-i}{k} \gt^{k} \label{equ:IIIUse} \\
=&\, \frac{\gs}{(1+\gt)(m-n)-\gs} \sum_{j=0}^\infty \zeta^\mu(j+2;m) \gs^{j}. \notag
\end{align}
Finally, if $m=n$ then by \eqref{equ:m=nFstStep1} we have
\begin{align}
F^\mu_*(n;\gs)F^\mu_*(n;\gt)=&\, F^\mu_*(n,n;\gs,\gt)+F^\mu_*(n,n;\gt,\gs)
-\mu \sum_{j,k\geqslant 1}\zeta^\mu(j+k;n) \gs^{j-1}\gt^{k-1}\notag \\
=&\, F^\mu_*(n,n;\gs,\gt)+F^\mu_*(n,n;\gt,\gs)
-\mu \sum_{l\geqslant 2}\zeta^\mu(l;n)\sum_{j+k=l,j,k\geqslant 1} \gs^{j-1}\gt^{k-1} \label{equ:m=nFstUse}\\
=&\, F^\mu_*(n,n;\gs,\gt)+F^\mu_*(n,n;\gt,\gs)
-\mu \sum_{l\geqslant 2}\zeta^\mu(l;n)\frac{\gs^{l-1}-\gt^{l-1}}{\gs-\gt} \notag
\end{align}
and \eqref{equ:m=nFstWithDiffQuotient} follows at once.
\end{proof}

\begin{cor} \label{cor:DblGenFunc}
For any $m,n\in \Q_{>0}$ and $N\in\N_{\geqslant 3}$, we have
\begin{multline}\label{equ:shCoeff}
{\rm Coeff}_{t^{N-2}}\Big(F^\mu_\shuffle(m;t\gs)F^\mu_\shuffle(n;t\gt)\Big)\\
=\sum_{j+k=N,j,k\geqslant 1} \Big(\zeta^\mu_\shuffle(j,k;m,m+n)\gs^{j-1}+\zeta^\mu_\shuffle(j,k;n,m+n)\gt^{j-1}\Big)(\gs+\gt)^{k-1}.
\end{multline}
If $m\ne n$ then
\begin{multline}\label{equ:stCoeff}
{\rm Coeff}_{t^{N-2}}\Big(F^\mu_*(m;\gs)F^\mu_*(n;\gt)\Big)
= \mu({\rm II}+{\rm III}-{\rm I})\\
+\sum_{j+k=N,j,k\geqslant 1}\Big(\zeta^\mu_*(j,k;m,n)\gs^{j-1}\gt^{k-1}+\zeta^\mu_*(j,k;n,m)\gt^{j-1}\gs^{k-1}\Big)
\end{multline}
where
\begin{align}
{\rm I}=&\, \calD^\mu(1;m;n) \frac{(\gt-\gs)^{N-2}}{(n-m)^{N-1}}, \label{equ:IUse1} \\
{\rm II} =&\, \sum_{q=0}^\infty \sum_{i+j+k=N-2,i,j,k\geqslant 0} \frac{1}{(n-m)^{i+}}\frac{\mu}{(q\mu+n)^{k+2}} \binom{-i}{j} \gt^{k+i} \gs^{j} , \label{equ:IIUse1} \\
{\rm III} =&\, \sum_{q=0}^\infty \sum_{i+j+k=N-2,i,j,k\geqslant 0}
\frac{1}{(m-n)^{i}}\frac{\mu}{(q\mu+m)^{j+2}} \binom{-i}{k}\gs^{j+i} \gt^{k}. \label{equ:IIIUse1}
\end{align}
If $m=n$ then
\begin{multline} \label{equ:m=nFstUse1}
{\rm Coeff}_{t^{N-2}}\Big(F^\mu_*(m;\gs)F^\mu_*(n;\gt)\Big)
= -\mu \zeta^\mu(N;n)\sum_{j+k=N,j,k\geqslant 1} \gs^{j-1}\gt^{k-1} \\
+\sum_{j+k=N,j,k\geqslant 1}\zeta^\mu_*(j,k;n,n)\Big(\gs^{j-1}\gt^{k-1}+\gt^{j-1}\gs^{k-1}\Big).
\end{multline}
\end{cor}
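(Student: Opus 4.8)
The plan is to derive all four coefficient identities uniformly by homogenizing the three product formulas of Theorem~\ref{thm:DblGenFunc} with a single bookkeeping variable $t$ and then reading off the coefficient of $t^{N-2}$. Concretely, I would substitute $\gs\mapsto t\gs$ and $\gt\mapsto t\gt$ throughout the identity \eqref{equ:Fsh} and its two stuffle analogues. The key observation is that every double generating function becomes homogeneous under this substitution: since
\[
F^\mu_\sharp(a,b;t\gs,t(\gs+\gt))=\sum_{j,k\geqslant1}\zeta^\mu_\sharp(j,k;a,b)\,\gs^{j-1}(\gs+\gt)^{k-1}\,t^{\,j+k-2},
\]
the monomial attached to each $\zeta^\mu_\sharp(j,k;a,b)$ carries exactly the power $t^{j+k-2}$. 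Hence ${\rm Coeff}_{t^{N-2}}$ selects precisely the terms of total degree $N-2$ in $(\gs,\gt)$, i.e. those with $j+k=N$.

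For the shuffle identity \eqref{equ:Fsh} this is immediate: after the substitution the left side becomes $F^\mu_\shuffle(m;t\gs)F^\mu_\shuffle(n;t\gt)$ and the right side the sum of two homogenized double generating functions, so extracting $t^{N-2}$ and invoking the displayed homogeneity gives \eqref{equ:shCoeff} at once, with the constraint $j+k=N$, $j,k\geqslant1$.

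For the stuffle product with $m\ne n$ I would start from \eqref{equ:FstUse}. The two main terms $F^\mu_*(m,n;\cdot,\cdot)$ and $F^\mu_*(n,m;\cdot,\cdot)$ are handled exactly as in the shuffle case and produce the symmetric double sum in \eqref{equ:stCoeff}. The three correction series ${\rm I},{\rm II},{\rm III}$ have already been written out as explicit power series in $\gs,\gt$ in the proof of Theorem~\ref{thm:DblGenFunc}, so I would simply reuse those expansions: from \eqref{equ:IUse} the $t^{N-2}$-layer of ${\rm I}$ is the single summand $l=N-2$, yielding \eqref{equ:IUse1}; from \eqref{equ:IIUse} and \eqref{equ:IIIUse} the exponent of $t$ equals the total degree $i+j+k$ of the triple sum, so extracting $t^{N-2}$ imposes $i+j+k=N-2$ and yields \eqref{equ:IIUse1} and \eqref{equ:IIIUse1}. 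Throughout I would keep the summation over $q$ intact, recording that $\sum_{q\geqslant0}\mu/(q\mu+n)^{k+2}=\zeta^\mu(k+2;n)$ converges because $k+2\geqslant2$.

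Finally, the diagonal case $m=n$ follows directly from the intermediate line \eqref{equ:m=nFstUse}: the two double generating functions again contribute the symmetric double sum at total degree $N-2$, while the single correction $-\mu\sum_{l\geqslant2}\zeta^\mu(l;n)\sum_{j+k=l}\gs^{j-1}\gt^{k-1}$ contributes only its $l=N$ layer, producing \eqref{equ:m=nFstUse1}. I expect the only real difficulty to be bookkeeping rather than conceptual: I must check that the $t$-grading is genuinely well defined, i.e. that each coefficient of $t^{N-2}$ is a finite $\Q[\mu]$-combination of (quasi-)$\mu$-MHZVs, and that the surviving index ranges together with the binomial coefficients $\binom{-i}{j}$, $\binom{-i}{k}$ in ${\rm II}$ and ${\rm III}$ are exactly those obtained after the reindexings in \eqref{equ:IIUse}--\eqref{equ:IIIUse}; confirming that nothing is lost or double-counted when passing to a fixed total degree is the heart of the argument.
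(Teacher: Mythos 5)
Your proposal is correct and is essentially the paper's own proof: the paper likewise applies the change of variables $\gs\to t\gs$, $\gt\to t\gt$ and extracts the coefficient of $t^{N-2}$ from the intermediate identities \eqref{equ:FshUse}, \eqref{equ:FstUse}, \eqref{equ:IUse}--\eqref{equ:IIIUse}, and \eqref{equ:m=nFstUse} established in the proof of Theorem~\ref{thm:DblGenFunc}. The bookkeeping check you flag is worth carrying out in one spot: reading off \eqref{equ:IIUse}--\eqref{equ:IIIUse} forces the range $i\geqslant 1$, $j,k\geqslant 0$ in ${\rm II}$ and ${\rm III}$ (which is what the paper actually uses later in Theorem~\ref{thm:m=nSumFormula}), so the ranges $i,j,k\geqslant 0$ printed in \eqref{equ:IIUse1}--\eqref{equ:IIIUse1} are typos rather than a defect of your argument.
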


\begin{proof}
The corollary follows from the proof of Theorem~\ref{thm:DblGenFunc} after we apply the change of variables
$\gs\to t\gs$, $\gt\to t\gt$, and extract the coefficient of $t^{N-2}$ in \eqref{equ:FshUse}, \eqref{equ:FstUse}
\eqref{equ:IUse}--\eqref{equ:IIIUse}, and \eqref{equ:m=nFstUse}.
\end{proof}

\begin{thm} \emph{(Sum Formula 1)} \label{thm:m=nSumFormula}
For any $m,n\in\Q_{>0}$, $m\ne n$, and $N\in \N_{\geqslant 3}$, we have
\begin{multline}\label{equ:sumFormmneqn}
 \sum_{j+k=N,k\geqslant 2} \zeta^\mu(j,k;n,m+n)=
\zeta^\mu(1,N';m,n)-\zeta^\mu(1,N';m,m+n)-\mu\frac{\calD^\mu(1;m;n)}{(n-m)^{N-1}} \\
+\calD^\mu(N',1;n,m;n,m+n)
+\mu \sum_{\substack{i+k=N-2\\ i\geqslant 1,k\geqslant 0}} \frac{\zeta^\mu(k+2;n)}{(n-m)^{i}} .
\end{multline}
where $N'=N-1$. Moreover,
\begin{multline}\label{equ:sumFormm=n}
\sum_{j+k=N,k\geqslant 2} \zeta^\mu(j,k;n,2n)=
\zeta^\mu(1,N';n,n)-\zeta^\mu(1,N';n,2n) -\mu\zeta^\mu(N;n)\\
+\calD^\mu(N',1;n,n;n,2n).
\end{multline}
\end{thm}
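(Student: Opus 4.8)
The plan is to read both sum formulas off the double-shuffle package of Corollary~\ref{cor:DblGenFunc} by the single specialization $\gs=0$, followed by comparison of the coefficient of $\gt^{N-2}$. First I would substitute $\gs=0$ in the shuffle identity \eqref{equ:shCoeff}: the factor $(\gs+\gt)^{k-1}$ collapses to $\gt^{k-1}$, so the $(n,m+n)$-family contracts to $\sum_{j+k=N}\zeta^\mu_\shuffle(j,k;n,m+n)$, while the $(m,m+n)$-family retains only its $j=1$ term $\zeta^\mu(1,N';m,m+n)$ (here $N'=N-1$, and $(1,N')$ is admissible); the common left factor is $\zeta^\mu_\shuffle(1;m)\,\zeta^\mu(N';n)$. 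Splitting off the single trailing-one term, the shuffle side reads $\zeta^\mu_\shuffle(1;m)\zeta^\mu(N';n)=\zeta^\mu(1,N';m,m+n)+\zeta^\mu_\shuffle(N',1;n,m+n)+S$, where $S:=\sum_{j+k=N,\,k\geqslant2}\zeta^\mu(j,k;n,m+n)$ is the target sum.

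Next I would substitute $\gs=0$ in the stuffle identity \eqref{equ:stCoeff}. The $(m,n)$- and $(n,m)$-families keep only $\zeta^\mu(1,N';m,n)$ and $\zeta^\mu_*(N',1;n,m)$ respectively; the term ${\rm III}$ of \eqref{equ:IIIUse1} vanishes for $N\geqslant3$; the term ${\rm I}$ of \eqref{equ:IUse1} contributes $\calD^\mu(1;m;n)/(n-m)^{N-1}$; and the term ${\rm II}$ of \eqref{equ:IIUse1} contributes $\sum_{i+k=N-2,\,i,k\geqslant0}\zeta^\mu(k+2;n)/(n-m)^i$ after using $\sum_{q\geqslant0}\mu/(q\mu+n)^{k+2}=\zeta^\mu(k+2;n)$. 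Since the two left factors $\zeta^\mu_\shuffle(1;m)$ and $\zeta^\mu_*(1;m)$ are regularizations of the same divergent letter at the common base point $m$, they agree because the weight-one comparison map is the identity on $T$ (Theorem~\ref{thm:mixedComparisonMap} and Corollary~\ref{kxol} give $\rho_{m,m}^{\mu}(T)=T$). Equating the two coefficients of $\gt^{N-2}$ as elements of $(\calZ^\mu,\cdot)[T]$ and solving for $S$ then yields all the admissible pieces of \eqref{equ:sumFormmneqn}, leaving the residual combination $\zeta^\mu_*(N',1;n,m)-\zeta^\mu_\shuffle(N',1;n,m+n)$ of the two non-admissible, trailing-index-one double values.

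The main obstacle is to reassemble this residual into the convergent quasi-value $\calD^\mu(N',1;n,m;n,m+n)$. Both regularized values carry only a simple $T^1$-divergence whose leading coefficient equals $\zeta^\mu(N';n)$ independently of the trailing argument, so these $T$-parts cancel and the residual is a genuine element of $\calZ^\mu$; by the defining limit of the quasi-$\mu$-MHZV it equals $\calD^\mu(N',1;n,m;n,m+n)$ up to the explicit correction $\mu\zeta^\mu(N;n)$. Absorbing this correction is precisely what converts the range $i\geqslant0$ produced by ${\rm II}$ into the range $i\geqslant1$ stated in \eqref{equ:sumFormmneqn}, the excised $i=0$ summand being $\mu\zeta^\mu(N;n)$. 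Keeping this regularization bookkeeping consistent across the shuffle and stuffle sides, and verifying the $T^1$-cancellation, is the delicate part of the argument.

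For the diagonal formula \eqref{equ:sumFormm=n} I would run the identical specialization $\gs=0$ and coefficient extraction, but replace \eqref{equ:stCoeff} by the $m=n$ stuffle identity \eqref{equ:m=nFstUse1}, so that $m+n=2n$ throughout and no rational factors $(n-m)^{-i}$ occur. Here the term $-\mu\zeta^\mu(N;n)$ of \eqref{equ:sumFormm=n} is produced immediately by the $j=1$ part of $-\mu\zeta^\mu(N;n)\sum_{j+k=N}\gs^{j-1}\gt^{k-1}$, the admissible part gives $\zeta^\mu(1,N';n,n)-\zeta^\mu(1,N';n,2n)$, and the residual $\zeta^\mu_*(N',1;n,n)-\zeta^\mu_\shuffle(N',1;n,2n)$ is identified with $\calD^\mu(N',1;n,n;n,2n)$ by the same $T^1$-cancellation argument, which is cleaner in this case since the correction is already exhibited by the stuffle side.
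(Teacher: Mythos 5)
Your overall route coincides with the paper's: specialize Corollary~\ref{cor:DblGenFunc} at $\gs=0$, $\gt=1$ (your ``coefficient of $\gt^{N-2}$ after $\gs=0$'' is the same thing), use the fact that the comparison map is the identity in $T$-degree at most one to equate the two divergent left factors and the two regularized trailing-one double values, and convert the leftover non-admissible difference into a quasi-value. However, there is a genuine error in your bookkeeping of the term ${\rm II}$. You take the range $i,k\geqslant 0$, so that the $i=0$ summand produces $\mu\zeta^\mu(N;n)$. The correct range is $i\geqslant 1$: in the derivation \eqref{equ:IIUse} inside the proof of Theorem~\ref{thm:DblGenFunc} the sum over $i$ starts at $1$ (the constraint ``$i,j,k\geqslant 0$'' printed in \eqref{equ:IIUse1} is a typo), and this is forced by the closed form ${\rm II}=\frac{\gt}{(1+\gs)(n-m)-\gt}\sum_{k\geqslant0}\zeta^\mu(k+2;n)\gt^{k}$, whose prefactor has order at least one in $t$ after $\gs\to t\gs$, $\gt\to t\gt$, so the weight-$N$ value $\zeta^\mu(N;n)$ can never appear in the $t^{N-2}$-coefficient. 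A direct check at $N=3$ confirms this:
$$
\mu^2\sum_{q\geqslant 0}\frac{1}{(q\mu+m)(q\mu+n)^2}
=\frac{\mu}{(n-m)^2}\,\calD^\mu(1;m;n)-\frac{\mu}{n-m}\,\zeta^\mu(2;n),
$$
with no $\zeta^\mu(3;n)$ term.

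To absorb this spurious term you then assert, without proof, that the residual $\zeta^\mu_*(N',1;n,m)-\zeta^\mu_\shuffle(N',1;n,m+n)$ equals $\calD^\mu(N',1;n,m;n,m+n)$ ``up to the explicit correction $\mu\zeta^\mu(N;n)$''. This is false: the residual equals the quasi-value \emph{exactly}. Indeed, applying Theorem~\ref{thm:mixedComparisonMap} to $w=y_{N',n}y_{1,m}$, whose regularization has degree $1$ in $T$, and using $\rho^{\mu}_{1,1}(T)=T$, one gets $\zeta^\mu_*(N',1;n,m)=\zeta^\mu_\shuffle(N',1;n,m)$, whence
$$
\zeta^\mu_\shuffle(N',1;n,m)-\zeta^\mu_\shuffle(N',1;n,m+n)
=\zeta^\mu_\shuffle\big(y^\mu_n x^{N-2}(y^\mu_{m-n}-y^\mu_{m})\big)
=\calD^\mu(N',1;n,m;n,m+n),
$$
since the word in the middle is quasi-admissible; this is precisely how the paper concludes. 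With the correct residual, your $i\geqslant0$ range would force an extra $+\mu\zeta^\mu(N;n)$ on the right-hand side of \eqref{equ:sumFormmneqn}, contradicting the $N=3$ computation above; so your two deviations do not actually cancel, they are simply both wrong in compensating directions. (Note also the internal inconsistency: in your diagonal case you identify the analogous residual with $\calD^\mu(N',1;n,n;n,2n)$ with \emph{no} correction, which is the correct statement; the identical argument applies when $m\ne n$.) Your treatment of \eqref{equ:sumFormm=n} itself is correct and agrees with the paper.
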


\begin{proof}
Taking $\gs=0$ and $\gt=1$ in Corollary~\ref{cor:DblGenFunc}, we see that
 II contributes only when $j=0$ while III does not contribute at all. Thus,
\begin{align*}
&{\rm Coeff}_{t^{N-2}}\Big(F^\mu_\shuffle(m;t\gs)F^\mu_\shuffle(n;t\gt)\Big)\Big|_{\gs=0,\gt=1}
=\zeta^\mu_\shuffle(1,N';m,m+n)+ \sum_{j+k=N} \zeta^\mu_\shuffle(j,k;n,m+n),\\
&{\rm Coeff}_{t^{N-2}}\Big(F^\mu_*(m;t\gs)F^\mu_*(n;t\gt)\Big)\Big|_{\gs=0,\gt=1} = \zeta^\mu_*(1,N';m,n)+\zeta^\mu_*(N',1;n,m) \\
& \hskip2cm -\frac{\calD^\mu(1;m;n)}{(n-m)^{N'}}
+\mu\sum_{q=0}^\infty \sum_{\substack{i+k=N-2\\ i\geqslant 1,k\geqslant 0}} \frac{1}{(n-m)^{i}}\frac{\mu}{(q\mu+n)^{k+2}},
\end{align*}
Note that $\zeta^\mu_*(N',1;m,n)=\rho^\mu_{1,1}\zeta^\mu_*(N',1;m,n)=\zeta^\mu_\shuffle(N',1;m,n)$
since $\rho^\mu_{1,1}(T)=T$.
Hence, using the comparison map $\rho^\mu_{1,1}$ we get \eqref{equ:sumFormmneqn} easily.

Further, taking $m=n$, $\gs=0$ and $\gt=1$ in Corollary~\ref{cor:DblGenFunc} and
applying the comparison map $\rho^\mu_{1,1}$, we have
\begin{align*}
\zeta^\mu_\shuffle(1,N';n,2n)+ \sum_{j+k=N} \zeta^\mu_\shuffle(j,k;n,2n)
=&\, \zeta^\mu_*(1,N';n,n)+\zeta^\mu_*(N',1;n,n)-\mu \zeta^\mu(N;n)
\end{align*}
which readily yields \eqref{equ:sumFormm=n}.
\end{proof}

\begin{rem} When $\mu=n=1$ \eqref{equ:sumFormm=n} reduces to the classical
sum formula for double zeta values:
\begin{align*}
 \sum_{j+k=N,k\geqslant 2} \zeta(j,k)=\zeta(N)
\end{align*}
since for all $j+k=N$ ($j,k>0$) we always have
\begin{equation*}
 \calD^{\mu=1}(j,k;1,1;1,2)
 =\sum_{0<n_1\leqslant n_2} \frac{1}{n_1^j}\left(\frac{1}{n_2^k}-\frac{1}{(n_2+1)^k} \right)=\zeta(N).
\end{equation*}
Further, when $\mu=2$ and $n=1$ \eqref{equ:sumFormm=n} reduces to the
sum formula for double $T$-values first proved as \cite[Corollary~4.3]{BCJXXZhao2020c}.
\end{rem}

\begin{thm} \emph{(Sum Formula 2)} \label{thm:m=0SumFormula}
For any $n\in\Q_{>0}$ and $N\in \N_{\geqslant 3}$, we have
\begin{align*}
 \sum_{j+k=N,k\geqslant 2} \zeta^\mu(j,k;n,n)=&\, \mu (N-1)\zeta^\mu(N;n)+\calD^\mu(N-1,1;n,\mu;n,n).
\end{align*}
\end{thm}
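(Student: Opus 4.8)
The plan is to deduce the identity from the sum formula \eqref{equ:sumFormmneqn} of Theorem~\ref{thm:m=nSumFormula} by specializing its free parameter to the value $\mu$. The bridge is an elementary diagonal splitting: for any admissible pair $(j,k)$ with $j+k=N$ (so $k\geqslant 2$) one has
$$\zeta^{\mu}(j,k;n,n)=\zeta^{\mu}(j,k;n,n+\mu)+\mu\,\zeta^{\mu}(N;n).$$
Indeed, the strict part $n_1<n_2$ of the series for $\zeta^{\mu}(j,k;n,n)$ becomes, after the substitution $n_2\mapsto n_2+1$ (which uses $(n_2+1)\mu+n=n_2\mu+(n+\mu)$), exactly $\zeta^{\mu}(j,k;n,n+\mu)$, while the diagonal $n_1=n_2$ contributes $\sum_{p\geqslant 0}\mu^2/(p\mu+n)^N=\mu\,\zeta^{\mu}(N;n)$. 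Summing over the $N-2$ admissible pairs gives
$$\sum_{j+k=N,\,k\geqslant 2}\zeta^{\mu}(j,k;n,n)=(N-2)\mu\,\zeta^{\mu}(N;n)+\sum_{j+k=N,\,k\geqslant 2}\zeta^{\mu}(j,k;n,n+\mu),$$
so it remains to prove $\sum_{j+k=N,\,k\geqslant 2}\zeta^{\mu}(j,k;n,n+\mu)=\mu\,\zeta^{\mu}(N;n)+\calD^{\mu}(N-1,1;n,\mu;n,n)$.

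The left-hand side above is precisely the left-hand side of \eqref{equ:sumFormmneqn} with $m$ replaced by $\mu$. Although \eqref{equ:sumFormmneqn} was established for $m\in\Q_{>0}$ with $m\neq n$, both sides are holomorphic in $m$ on $\{\Ree(m)>0\}\setminus\{n\}$ (each $\mu$-MHZV and quasi-$\mu$-MHZV is holomorphic in its last $m$-parameter, and the factors $1/(n-m)^i$ are regular away from $m=n$). Since $\Q_{>0}$ has limit points in this domain and $\Ree(\mu)>0$, the identity theorem lets me continue \eqref{equ:sumFormmneqn} to $m=\mu$ (assuming $\mu\neq n$), and I would then substitute $m=\mu$ into its right-hand side.

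The core of the argument is to simplify this specialized right-hand side to $\mu\,\zeta^{\mu}(N;n)+\calD^{\mu}(N-1,1;n,\mu;n,n)$. Three reductions drive this. First, the quasi-value $\calD^{\mu}(N-1,1;n,\mu;n,n+\mu)$ occurring in \eqref{equ:sumFormmneqn} combines with the target through the telescoping additivity $\calD^{\mu}(\bfk;\bfm;\tbfm)=\calD^{\mu}(\bfk;\bfm;\bfm'')+\calD^{\mu}(\bfk;\bfm'';\tbfm)$ (valid since $\calD^{\mu}$ is the regularized difference $\zeta^{\mu}(\bfk;\bfm)-\zeta^{\mu}(\bfk;\tbfm)$), with intermediate index $\bfm''=(n,n+\mu)$. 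Second, the two single-shifted terms $\zeta^{\mu}(1,N-1;\mu,n)-\zeta^{\mu}(1,N-1;\mu,\mu+n)$, after the reindexing $n_1\mapsto n_1+1$ forced by the first argument $\mu$, collapse to the single sum $\sum_{p\geqslant 0}\mu^2/\big((p\mu+\mu)(p\mu+n)^{N-1}\big)$. Third, the pieces $-\mu\,\calD^{\mu}(1;\mu;n)/(n-\mu)^{N-1}$ and $\mu\sum_{i=1}^{N-2}\zeta^{\mu}(N-i;n)/(n-\mu)^i$ are expanded by the partial-fraction identity \eqref{equ:basecase} applied to $1/\big((n_2\mu+n+\mu)(n_2\mu+n)\big)$ together with Lemma~\ref{lem:parFrac}; this is the same mechanism that produced terms II and III of Corollary~\ref{cor:DblGenFunc}.

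The hard part will be the final bookkeeping: I must show that, after these substitutions, the four terms other than $\calD^{\mu}(N-1,1;n,\mu;n,n+\mu)$ recombine exactly into $\mu\,\zeta^{\mu}(N;n)+\calD^{\mu}(N-1,1;n,n+\mu;n,n)$, i.e. that the mixed harmonic-type single sums and the geometric sums in $1/(n-\mu)$ telescope without remainder. I expect to handle this by summing the inner index $n_1$ first, turning each first-argument-$\mu$ sum into a partial sum $\zeta^{\mu}_{n_2}(\,\cdot\,;n)$, and then applying \eqref{equ:basecase} to $1/\big((n_2\mu+\mu)(n_2\mu+n)\big)$ exactly as in the base case of Theorem~\ref{thm:ddog}. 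Finally, the excluded degenerate case $\mu=n$ follows by continuity: there $\calD^{\mu}(N-1,1;n,\mu;n,n)\to 0$ and the identity reduces to $\sum_{j+k=N,\,k\geqslant 2}\zeta^{\mu}(j,k;n,n)=(N-1)\mu\,\zeta^{\mu}(N;n)$, the $\mu$-analogue of the double zeta-star sum formula $\sum_{j+k=N,\,k\geqslant 2}\zeta^{\star}(j,k)=(N-1)\zeta(N)$.
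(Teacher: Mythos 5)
Your proposal is correct, and it reaches Sum Formula 2 by a genuinely different route from the paper's. Both arguments specialize the free parameter $m$ of Sum Formula 1 (Theorem~\ref{thm:m=nSumFormula}), but the paper sends $m\to 0^+$: individual terms then develop poles in $m$ (e.g.\ a term $\mu^2/(n^{N-1}m)$ split off from $\zeta^\mu_*(N-1,1;n,m)$), so the proof must shift summation indices --- which is how $\mu$ enters the slot of $\calD^\mu(N-1,1;n,\mu;n,n)$ --- and regroup so that the singular parts cancel before passing to the limit. You instead evaluate at the interior point $m=\mu$, after peeling off the diagonal via $\zeta^\mu(j,k;n,n)=\mu\zeta^\mu(N;n)+\zeta^\mu(j,k;n,n+\mu)$: every intermediate object remains a convergent series and the delicate limit is replaced by finite bookkeeping, at the price of an analytic continuation in $m$ (unavoidable, since $\mu$ need not lie in $\Q_{>0}$) and a continuity-in-$\mu$ argument for the degenerate case $\mu=n$; both are sound, as all terms in \eqref{equ:sumFormmneqn} are locally uniformly convergent series, holomorphic in $m$ on $\{\Ree(m)>0\}\setminus\{n\}$. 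The bookkeeping you deferred does close, in fact more simply than you anticipate: Lemma~\ref{lem:parFrac} with $(j,k)=(1,N-1)$ gives
\begin{align*}
\zeta^\mu(1,N-1;\mu,n)-\zeta^\mu(1,N-1;\mu,n+\mu)
&=\sum_{p\geqslant0}\frac{\mu^2}{(p\mu+\mu)(p\mu+n)^{N-1}}\\
&=\mu\frac{\calD^\mu(1;\mu;n)}{(n-\mu)^{N-1}}
-\mu\sum_{i=1}^{N-2}\frac{\zeta^\mu(N-i;n)}{(n-\mu)^{i}},
\end{align*}
so the four terms of the specialized right-hand side other than $\calD^\mu(N-1,1;n,\mu;n,n+\mu)$ cancel identically; and the telescoping $\sum_{n_2\geqslant n_1}\bigl(\frac{1}{n_2\mu+n}-\frac{1}{(n_2+1)\mu+n}\bigr)=\frac{1}{n_1\mu+n}$ gives $\calD^\mu(N-1,1;n,n;n,n+\mu)=\mu\zeta^\mu(N;n)$, the $\mu$-analogue of the identity $\calD^{\mu=1}(j,k;1,1;1,2)=\zeta(N)$ recorded after Theorem~\ref{thm:m=nSumFormula}. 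Combining these two facts with your additivity $\calD^\mu(N-1,1;n,\mu;n,n)=\calD^\mu(N-1,1;n,\mu;n,n+\mu)+\calD^\mu(N-1,1;n,n+\mu;n,n)$ yields exactly the asserted formula, so your plan goes through as written.
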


\begin{proof}
The key idea is to treat $m$ as a variable and let $m\to 0^+$ in Corollary~\ref{thm:m=nSumFormula}.
Set $N'=N-1$. Observe that
\begin{align*}
&\lim_{m\to 0^+}\zeta^\mu(1,N';m,n)-\zeta^\mu(1,N';m,m+n)\\
& \hskip1cm =\lim_{m\to 0^+}\sum_{1\leqslant q_1 \leqslant q_2} \left(\frac{\mu^2}{(q_1\mu+m)(q_1\mu+n)^{N'}}-\frac{\mu^2}{(q_1\mu+m)(q_1\mu+m+n)^{N'}}\right) \\
& \hskip1cm + \lim_{m\to 0^+} \frac{\mu}m\sum_{0\leqslant q} \left( \frac{\mu}{(q\mu+n)^{N'}}-\frac{\mu}{(q\mu+m+n)^{N'}} \right)
= \mu N' \zeta^\mu(N;n),\\
& \mu\frac{\calD^\mu(1;m;n)}{(n-m)^{N'}}=\frac{\mu^2}{m(n-m)^{N'}}-\frac{\mu^2}{n(n-m)^{N'}}
 +\mu\frac{\calD^\mu(1;m+\mu;n+\mu)}{(n-m)^{N'}}, \\
&\, \zeta^\mu_M(N',1;n,m) = \left(\sum_{0\leqslant q_1< q_2 \leqslant M}+\sum_{0\leqslant q_1=q_2 \leqslant M}\right) \frac{\mu^2}{(q_1\mu+n)^{N'}(q_2\mu+m)}\\
&\,\hskip1cm =\frac{\mu^2}{n^{N'}m}+\sum_{0\leqslant q_1\leqslant q_2 \leqslant M-1}\frac{\mu^2}{(q_1\mu+n)^{N'}(q_2\mu+\mu+m)} + \sum_{1\leqslant q \leqslant M} \frac{\mu^2}{(q\mu+n)^{N'}(q\mu+m)}.
\end{align*}
Hence,
\begin{align*}
\zeta^\mu_*(N',1;n,m)=\frac{\mu^2}{n^{N'}m} +\zeta^\mu_*(N',1;n,\mu+m) +f(m) \quad \text{as} \quad M\to\infty,
\end{align*}
where by Lemma \ref{lem:parFrac},
\begin{align*}
f(m) =&\sum_{1\leqslant q} \frac{\mu}{(n-m)^{N'}}\left(\frac{\mu}{q\mu+m}-\frac{\mu}{q\mu+n}\right)
-\sum_{1\leqslant q} \sum_{i=0}^{N-3} \frac{\mu^2}{(n-m)^{i+1}} \frac{1}{(q\mu+n)^{N'-i}} \\
=&\mu\frac{\calD^\mu(1;m+\mu;n+\mu)}{(n-m)^{N'}}
 -\mu \sum_{\substack{i+k=N-2\\ i\geqslant 1,k\geqslant 0}} \frac{\zeta^\mu(k+2;n)}{(n-m)^{i}}
 +\sum_{i=0}^{N-3} \frac{\mu^2}{(n-m)^{i+1}n^{N'-i}} .
\end{align*}
Thus
\begin{align*}
\lim_{m\to 0^+} f(m)= &\, \mu\frac{\calD^\mu(1;\mu;n+\mu)}{n^{N'}}
-\mu \sum_{\substack{i+k=N-2\\ i\geqslant 1,k\geqslant 0}} \frac{\zeta^\mu(k+2;n)}{n^{i}}
+\mu^2 \frac{N-2}{n^{N}}.
\end{align*}
By Theorem~\ref{thm:m=nSumFormula} we see that
\begin{align*}
\sum_{j+k=N,k\geqslant 2} \zeta^\mu(j,k;n,n)
=&\, \mu N' \zeta^\mu(N;n) +\calD^\mu(N',1;n,\mu;n,n) \\
&\,+ \lim_{m\to 0^+} \left(\frac{\mu^2}{n^{N'}m}-\frac{\mu^2}{m(n-m)^{N'}}+\frac{\mu^2}{n(n-m)^{N'}}\right) +\mu^2 \frac{N-2}{n^{N}}\\
=&\, \mu N' \zeta^\mu(N;n)+\calD^\mu(N',1;n,\mu;n,n),
\end{align*}
as desired.
\end{proof}

\begin{rem} When $\mu=n=1$ \eqref{thm:m=0SumFormula} reduces to the classical
sum formula for double zeta \emph{star} values:
\begin{align*}
 \sum_{j+k=N,k\geqslant 2} \zeta^\star(j,k)=N\zeta(N)
\end{align*}
since for all $j+k=N$ ($j>0,k>1$) we always have
\begin{equation*}
\zeta^\star(j,k) :=\sum_{0<n_1\leqslant n_2} \frac{1}{n_1^j n_2^k} =\zeta(N)+\zeta(j,k).
\end{equation*}
Further, when $\mu=2$ and $n=1$ \eqref{thm:m=0SumFormula} reduces to the
sum formula for double $t$-values
\begin{align*}
\sum_{j+k=N,k\geqslant 2} t(j,k)=&\, 2\Big(\zeta(N-1,\bar1)-\zeta(\overline{N-1},\bar1)\Big),
\end{align*}
which also follows from \cite[Theorem~4.2]{BCJXXZhao2020c}, where the terms on the
right-hand side are Euler sums defined by \eqref{equ:EulerSums}.
\end{rem}

\begin{thm}\emph{(Weighted Sum Formula)}\label{thm:weightedSumFormula}
For any $n\in\Q_{>0}$ and $N\in \N_{\geqslant 3}$, we have
\begin{align*}\label{equ:wtSum_m=n}
 \sum_{j+k=N, j\geqslant 1, k\geqslant 2} 2^k \zeta^\mu(j,k;n,2n)=&\, \mu (N-1)\zeta^\mu(N;n)
 +2\calD^\mu(N-1,1;n,\mu;n,2n).
\end{align*}
\end{thm}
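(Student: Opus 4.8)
The plan is to obtain the weighted sum from the diagonal specialization $\gt=\gs$ of the two product expansions in Theorem~\ref{thm:DblGenFunc}, and then to collapse the resulting $T$-polynomial identity to convergent values using Sum Formula~2 (Theorem~\ref{thm:m=0SumFormula}) together with the comparison map $\rho^\mu_{1,1}$. I would fix the regularization parameter $=1$ throughout, so that $\zeta^\mu_*$ and $\zeta^\mu_\shuffle$ abbreviate $\zeta^{\mu,T}_{*,1}$ and $\zeta^{\mu,T}_{\shuffle,1}$. The starting observation is that the two schemes agree on every length-one word: $\zeta^\mu_\shuffle(k;n)=\zeta^\mu_*(k;n)$ for all $k\geqslant1$ — equal to $\zeta^\mu(k;n)$ when $k\geqslant2$ and to $T+\calD^\mu(1;n;1)$ when $k=1$ — because $\rho^\mu_{1,1}$ fixes every polynomial of $T$-degree $\leqslant1$ by Theorem~\ref{mnzs} and Theorem~\ref{thm:mixedComparisonMap}. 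Hence $F^\mu_\shuffle(n;\gs)=F^\mu_*(n;\gs)=:F(n;\gs)$, and the shuffle expansion \eqref{equ:Fsh} (at $m=n$) and the stuffle expansion \eqref{equ:m=nFstWithDiffQuotient} both compute the single element $F(n;\gs)F(n;\gt)$.

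Setting $\gt=\gs$ sends the shuffle side to $2F^\mu_\shuffle(n,2n;\gs,2\gs)$ and the difference quotient in \eqref{equ:m=nFstWithDiffQuotient} to the derivative $\partial_\gs F(n;\gs)$, giving
\begin{equation*}
2F^\mu_\shuffle(n,2n;\gs,2\gs)=2F^\mu_*(n,n;\gs,\gs)-\mu\,\partial_\gs F(n;\gs).
\end{equation*}
Reading off the coefficient of $\gs^{N-2}$ (the derivative contributes $(N-1)\zeta^\mu(N;n)$, which is genuine since $N\geqslant3$) produces the regularized identity
\begin{equation*}
\sum_{j+k=N}2^k\,\zeta^\mu_\shuffle(j,k;n,2n)=2\sum_{j+k=N}\zeta^\mu_*(j,k;n,n)-\mu(N-1)\zeta^\mu(N;n),
\end{equation*}
an identity in $(\calZ^\mu,\cdot)[T]$, in which on both sides only the $k=1$ summand is non-admissible and hence $T$-dependent.

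To pass to convergent values I would first replace $\zeta^\mu_\shuffle(N-1,1;n,2n)$ by $\zeta^\mu_*(N-1,1;n,2n)$, which is legitimate because this value has $T$-degree $\leqslant1$ and is therefore fixed by $\rho^\mu_{1,1}$ (Theorem~\ref{thm:mixedComparisonMap}); now every term is stuffle-regularized. The difference of the two boundary terms is then $T$-free and equals the convergent quasi-value
\begin{equation*}
\zeta^\mu_*(N-1,1;n,n)-\zeta^\mu_*(N-1,1;n,2n)=\calD^\mu(N-1,1;n,n;n,2n),
\end{equation*}
being the image under $\zeta^\mu_*$ of the quasi-admissible word $y_{N-1,n}(y_{1,n}-y_{1,2n})$. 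Substituting Sum Formula~2 for $\sum_{j+k=N,\,k\geqslant2}\zeta^\mu(j,k;n,n)$ and cancelling the matched $T$-linear parts, the claim reduces to the telescoping identity
\begin{equation*}
\calD^\mu(N-1,1;n,\mu;n,n)+\calD^\mu(N-1,1;n,n;n,2n)=\calD^\mu(N-1,1;n,\mu;n,2n),
\end{equation*}
which is immediate from the series definition of $\calD^\mu$ since $\big(\tfrac1{q\mu+\mu}-\tfrac1{q\mu+n}\big)+\big(\tfrac1{q\mu+n}-\tfrac1{q\mu+2n}\big)=\tfrac1{q\mu+\mu}-\tfrac1{q\mu+2n}$ in the last summation variable $q$.

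The step I expect to require the most care is the bookkeeping of this non-admissible $k=1$ column: one must be sure that equating the shuffle and stuffle expansions is valid as an identity of $T$-polynomials (which rests squarely on $F^\mu_\shuffle=F^\mu_*$ for single values), that precisely one boundary term survives on each side, and that its $T$-linear parts coincide so that only the convergent quasi-value $\calD^\mu(N-1,1;n,n;n,2n)$ remains after subtraction. Once this is in place the rest is mechanical: routine coefficient extraction, the already-proved Sum Formula~2, and the one-line telescoping above.
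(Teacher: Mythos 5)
Your proposal is correct and follows essentially the same route as the paper: both rest on the double generating-function identities of Theorem~\ref{thm:DblGenFunc} specialized along the diagonal (your $\gt=\gs$ coefficient extraction is equivalent to setting $\gs=\gt=1$ in Corollary~\ref{cor:DblGenFunc}), followed by an application of Sum Formula~2 (Theorem~\ref{thm:m=0SumFormula}). The extra steps you spell out --- replacing the shuffle-regularized boundary term $\zeta^{\mu}_{\shuffle}(N-1,1;n,2n)$ by its stuffle counterpart because $\rho^{\mu}_{1,1}$ fixes polynomials of $T$-degree at most one, identifying the resulting difference as the quasi-value $\calD^{\mu}(N-1,1;n,n;n,2n)$, and the final telescoping of $\calD^{\mu}$-values --- are precisely the details the paper compresses into ``follows easily.''
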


\begin{proof}
Taking $\gs=\gt=1$ in Corollary~\ref{cor:DblGenFunc}, we see that
\begin{equation*}
\sum_{j+k=N,j,k\geqslant 1} 2^k\zeta^\mu_\shuffle(j,k;n,2n)
= -\mu N'\zeta^\mu(N;n) +2\sum_{j+k=N,j,k\geqslant 1} \zeta^\mu_*(j,k;n,n).
\end{equation*}
The theorem now follows from Theorem~\ref{thm:m=0SumFormula} easily.
\end{proof}

Taking $\mu=n=1$ in Theorem~\ref{thm:weightedSumFormula} we recover
the classical weighted sum formula
\begin{align*}\label{equ:wtSum_m=n}
 \sum_{j+k=N, j\geqslant 1, k\geqslant 2} 2^k \zeta^\mu(j,k)=(N+1)\zeta^\mu(N).
\end{align*}
Taking $\mu=2$ and $n=1$ in Theorem~\ref{thm:weightedSumFormula} we recover
the weighted sum formula for double $T$-values
\begin{align*}
 \sum_{j+k=N, j\geqslant 1, k\geqslant 2} 2^k T(j,k)=2(N-1)T(N)
\end{align*}
where
\begin{equation*}
T(N)=\sum_{0<n, 2\nmid n} \frac{2}{n^N}, \quad\text{and} \quad
T(j,k)=\sum_{0<n<m, 2\nmid n, 2|m } \frac{4}{n^j m^k}.
\end{equation*}
See \cite[Theorem~4.4]{BCJXXZhao2020c} or \cite[Theorem~3.2]{KanekoTs2020}.

\begin{rem}
By considering a two variable variant of multiple polylogarithms, Kaneko and Tsumura \cite{KanekoTs2014a}
discovered another
approach to generalizing the weighted sum formulas as above simultaneously.
\end{rem}

\section{The universal algebra and a conjecture}

We now introduce the regularized double shuffle relations. We first recall the two commutative diagrams
\begin{center}
\begin{tikzpicture}[scale=0.9]
\node (A) at (0,2) {$(\Q[\mu]\langle Y\rangle,\must)$};
\node (B) at (-2.5,0) {$(\Q[\mu]\langle Y\rangle^0,\must)[T]$};
\node (C) at (2.5,0) {$\R[T]$};
\draw[->] (A) to node[pos=.8,above] {$\psi_{*,m}^{-1}\ \ $}(B);
\draw[->] (A) to node[pos=.8,above] {$\ \ \zeta^{\mu,T}_{*,m}$}(C);
\draw[->] (B) to node[pos=.55,below] {$\zeta^{\mu}_{*}:T\mapsto T$}(C);
\end{tikzpicture}
\begin{tikzpicture}[scale=0.9]
\node (A) at (0,2) {$(\Q[\mu]\langle X^{\mu}\rangle^1,\shuffle)$};
\node (B) at (-2.5,0) {$(\Q[\mu]\langle X^{\mu}\rangle^0 ,\shuffle)[T]$};
\node (C) at (2.5,0) {$\R[T]$};
\draw[->] (A) to node[pos=.8,above] {$\psi_{\shuffle,m}^{-1}\ \ $}(B);
\draw[->] (A) to node[pos=.8,above] {$\ \ \zeta^{\mu,T}_{\shuffle,m}$}(C);
\draw[->] (B) to node[pos=.55,below] {$\zeta^{\mu}_{\shuffle}:T\mapsto T$}(C) ;
\end{tikzpicture}
\end{center}

\begin{defn}
Let $(R,\cdot)$ be a $\Q[\mu]$-algebra, $Z^R_{*}:\Q[\mu]\langle Y\rangle^0\longrightarrow R$ and $Z^R_{\shuffle}:\Q[\mu]\langle X^{\mu}\rangle^0 \longrightarrow R$ are two $\Q[\mu]$-linear maps. We say that $(R,Z^R_{*},Z^R_{\shuffle})$ satisfies the finite double shuffle relations if $Z^R_{*}$ is an algebra homomorphism $Z^R_{*}:(\Q[\mu]\langle Y\rangle^0,\must)\longrightarrow(R,\cdot)$, $Z^R_{\shuffle}$ is an algebra homomorphism $Z^R_{\shuffle}:(\Q[\mu]\langle X^{\mu}\rangle^0 ,\shuffle)\longrightarrow(R,\cdot)$, and the following diagram commutes
\begin{center}
\begin{tikzpicture}[scale=0.9]
\node (A) at (-2,2) {$(\Q[\mu]\langle Y\rangle^0,\must)$};
\node (B) at (2,2) {$(\Q[\mu]\langle X^{\mu}\rangle^0 ,\shuffle)$};
\node (C) at (0,0) {$(R,\cdot)$};
\draw[->] (A) to node[pos=.5,above] {${\varphi_{\mu}}$}(B);
\draw[->] (A) to node[pos=.2,below] {$Z^R_{*}\ $}(C);
\draw[->] (B) to node[pos=.2,below] {$Z^R_{\shuffle}$}(C);
\end{tikzpicture}
\end{center}
\end{defn}

Composing $Z^R_{*}$ (resp.\ $Z^R_{\shuffle}$) with the map $\psi^{-1}_{*,m}$ (resp.\ $\psi^{-1}_{\shuffle,m}$)
(see Theorem~\ref{thm:stRegPsi} and Theorem~\ref{thm:shaRegPsi}), we obtain two ways to regularize
\begin{align*}
Z^{R,T}_{*,m}&:(\Q[\mu]\langle Y\rangle,\must)\longrightarrow (R,\cdot)[T],\\
Z^{R,T}_{\shuffle,m}&:(\Q[\mu]\langle X^{\mu}\rangle^1,\shuffle)\longrightarrow (R,\cdot)[T].
\end{align*}
Next, we define the comparing maps $\myrho^{\mu,*,R}_{m,m'},\myrho^{\mu,\shuffle,R}_{m,m'}$ and $\rho_{m,m'}^{\mu,R}$ by
\begin{align*}
\myrho^{\mu,*,R}_{m,m'}:(R,\cdot)[T]&\longrightarrow (R,\cdot)[T]\\
T^s&\longmapsto(T+Z^R_{*}(y_{1,m}-y_{1,m'}))^s,\\
\myrho^{\mu,\shuffle,R}_{m,m'}:(R,\cdot)[T]&\longrightarrow(R,\cdot)[T]\\
T^s&\longmapsto(T+Z^R_{\shuffle}(y^{\mu}_{m}-y^{\mu}_{m'}))^s,
\end{align*}
and
\begin{align*}
\rho_{m,m'}^{\mu,R}:(R,\cdot)[T]&\longrightarrow(R,\cdot)[T]\\
T^s&\longmapsto\sum_{i=0}^sa^{(s)}_iT^i,
\end{align*}
where
$$\varphi_{\mu}(\overbrace{y_{1,m}\must \cdots\must y_{1,m}}^s)=w_{s}^{(s)}\shuffle\overbrace{y^{\mu}_{m'}\shuffle\cdots\shuffle y^{\mu}_{m'}}^s+w_{s-1}^{(s)}\shuffle\overbrace{y^{\mu}_{m'}\shuffle\cdots\shuffle y^{\mu}_{m'}}^{s-1}+\cdots+w_1^{(s)}\shuffle y^{\mu}_{m'}+w_0^{(s)}$$
and $a_i^{(s)}=Z_{\shuffle}^R(w_{i}^{(s)}),$ for all $0\leqslant i\leqslant s$.

\begin{defn}
Assume that $(R,Z_{*}^R,Z_{\shuffle}^R)$ satisfies the finite double shuffle relations. We say $(R,Z_{*}^R,Z_{\shuffle}^R)$ satisfies the regularized double shuffle relations if, in addition, for all $w\in\Q[\mu]\langle Y\rangle$, one has
\begin{align*}
Z^{R,T}_{*,m'}(w)&=\myrho^{\mu,*,R}_{m,m'}(Z^{R,T}_{*,m}(w)),\\
Z^{R,T}_{\shuffle,m'}(\varphi_{\mu}(w))&=\myrho^{\mu,\shuffle,R}_{m,m'}(Z^{R,T}_{\shuffle,m}(\varphi_{\mu}(w))),\\
Z^{R,T}_{\shuffle,m'}(\varphi_{\mu}(w))&=\rho_{m,m'}^{\mu,R}(Z^{R,T}_{*,m}(w)).
\end{align*}
\end{defn}

Combining Theorem~\ref{thm:stComparisonMap}, Theorem~\ref{thm:shaComparisonMap} and Theorem~\ref{thm:mixedComparisonMap}, we obtain the main result of this section.

\begin{thm}
The pair $(\R,\zeta^{\mu}_{*},\zeta^{\mu}_{\shuffle})$ satisfies the regularized double shuffle relations.
\end{thm}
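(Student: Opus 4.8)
The plan is to recognize this theorem as a direct repackaging of the three comparison theorems of \S\ref{sec:RegHomo}, once the abstract framework of the present section is specialized to $R=\R$, $Z^R_*=\zeta^\mu_*$, and $Z^R_\shuffle=\zeta^\mu_\shuffle$. So first I would dispatch the standing hypothesis, namely that $(\R,\zeta^\mu_*,\zeta^\mu_\shuffle)$ satisfies the \emph{finite} double shuffle relations. That $\zeta^\mu_*:(\Q[\mu]\langle Y\rangle^0,\must)\to(\calZ^\mu,\cdot)$ is a well-defined $\Q[\mu]$-algebra homomorphism follows by passing to the limit $M\to\infty$ in the multiplicativity of $\zeta^\mu_{*,M}$ (Theorem~\ref{thm:ddog}, cf.\ Theorem~\ref{qqox}), the required convergence being exactly the content of Theorem~\ref{thm:zetamuAstWellDefined}; that $\zeta^\mu_\shuffle:(\Q[\mu]\langle X^\mu\rangle^0,\shuffle)\to(\calZ^\mu,\cdot)$ is a homomorphism is Proposition~\ref{prop:X0subalgebra}(2); and the commutativity $\zeta^\mu_\shuffle\circ\varphi_\mu=\zeta^\mu_*$ on $\Q[\mu]\langle Y\rangle^0$ is the last theorem of Section~\ref{sec:algSetup}.

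Next I would identify the induced objects with their concrete counterparts. Because $\psi^{-1}_{*,m}$ is precisely the decomposition of any word into $\must$-powers of $y_{1,m}$ with coefficients in $\Q[\mu]\langle Y\rangle^0$ (Theorem~\ref{thm:stRegPsi}), and $\zeta^\mu_*$ sends $T\mapsto T$, the regularization $Z^{\R,T}_{*,m}$ coincides with $\zeta^{\mu,T}_{*,m}$; symmetrically $Z^{\R,T}_{\shuffle,m}=\zeta^{\mu,T}_{\shuffle,m}$ via Theorem~\ref{thm:shaRegPsi}. For the comparison maps, the calculation carried out inside the proof of Theorem~\ref{thm:stComparisonMap} gives $\zeta^\mu_*(y_{1,m}-y_{1,m'})=H(m'/\mu-1)-H(m/\mu-1)$, so the abstract map $\myrho^{\mu,*,\R}_{m,m'}$ is exactly $\myrho^{\mu,*}_{m,m'}$; the parallel identity for $y^\mu_m-y^\mu_{m'}$ yields $\myrho^{\mu,\shuffle,\R}_{m,m'}=\myrho^{\mu,\shuffle}_{m,m'}$; and $\rho^{\mu,\R}_{m,m'}$ is defined through the very decomposition of $\varphi_\mu$ applied to the $s$-fold $\must$-power of $y_{1,m}$ that was used to build $\rho^\mu_{m,m'}$ in Theorem~\ref{thm:mixedComparisonMap}, so $\rho^{\mu,\R}_{m,m'}=\rho^\mu_{m,m'}$.

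With these identifications in hand, the three relations in the definition of the regularized double shuffle relations become, in order, the assertions of Theorem~\ref{thm:stComparisonMap}, Theorem~\ref{thm:shaComparisonMap}, and Theorem~\ref{thm:mixedComparisonMap}. For the middle relation I would observe that $\varphi_\mu$ is a bijection onto $\Q[\mu]\langle X^\mu\rangle^1$, so that as $w$ runs over $\Q[\mu]\langle Y\rangle$ the argument $\varphi_\mu(w)$ runs over every word for which Theorem~\ref{thm:shaComparisonMap} is stated, making that theorem applicable verbatim. The main obstacle—indeed essentially the only substantive step—is the bookkeeping of the previous paragraph: one must check carefully that the abstractly defined maps $\myrho^{\mu,*,\R}_{m,m'}$, $\myrho^{\mu,\shuffle,\R}_{m,m'}$, and $\rho^{\mu,\R}_{m,m'}$ really do agree with the explicit maps of \S\ref{sec:RegHomo}, since the definitions unwind through $\psi^{-1}_{*,m}$, $\psi^{-1}_{\shuffle,m}$, and $\varphi_\mu$. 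Once this coincidence is confirmed, no further computation is needed, as the three comparison theorems furnish the commuting squares directly.
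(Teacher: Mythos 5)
Your proposal is correct and follows the paper's own route exactly: the paper proves this theorem simply by combining Theorem~\ref{thm:stComparisonMap}, Theorem~\ref{thm:shaComparisonMap}, and Theorem~\ref{thm:mixedComparisonMap}, which is precisely your plan. The additional bookkeeping you supply (the finite double shuffle relations via Theorem~\ref{thm:zetamuAstWellDefined}, Proposition~\ref{prop:X0subalgebra}, and the compatibility $\zeta^{\mu}_{\shuffle}\circ\varphi_{\mu}=\zeta^{\mu}_{*}$, plus the identification of the abstract maps $\myrho^{\mu,*,\R}_{m,m'}$, $\myrho^{\mu,\shuffle,\R}_{m,m'}$, $\rho^{\mu,\R}_{m,m'}$ with the concrete ones, using $\zeta^{\mu}_{*}(y_{1,m}-y_{1,m'})=H(m'/\mu-1)-H(m/\mu-1)$) is exactly the verification the paper leaves implicit, and it is carried out correctly.
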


Let $(R_{\text{RDS}},Z_{*}^{\text{RDS}},Z_{\shuffle}^{\text{RDS}})$ be the universal algebra satisfying the regularized double shuffle relations. This means that, for any $(R,Z_{*}^R,Z_{\shuffle}^R)$ satisfying the regularized double shuffle relations, there exists a pair of unique map $(\varphi^R_{*},\varphi^R_{\shuffle})$ such that the following diagram commutes
\begin{center}
\begin{tikzpicture}[scale=0.9]
\node (A) at (0,2) {$(\Q[\mu]\langle Y\rangle,\must)$};
\node (B) at (-2,0) {$(R_{\text{RDS}},\cdot)$};
\node (C) at (2,0) {$(R,\cdot)$};
\draw[->] (A) to node[pos=.8,above] {$Z^{\text{RDS}}_{*}\ \ $}(B);
\draw[->] (A) to node[pos=.8,above] {$\ \ Z^{R}_{*}$}(C);
\draw[->] (B) to node[pos=.55,below] {$\varphi^R_{*}$}(C);
\end{tikzpicture} $\qquad$
\begin{tikzpicture}[scale=0.9]
\node (A) at (0,2) {$(\Q[\mu]\langle X^{\mu}\rangle^1,\shuffle)$};
\node (B) at (-2,0) {$(R_{\text{RDS}},\cdot)$};
\node (C) at (2,0) {$(R,\cdot)$};
\draw[->] (A) to node[pos=.8,above] {$Z^{\text{RDS}}_{\shuffle}\ \ $}(B);
\draw[->] (A) to node[pos=.8,above] {$\ \ Z^{R}_{\shuffle}$}(C);
\draw[->] (B) to node[pos=.55,below] {$\varphi^R_{\shuffle}$}(C);
\end{tikzpicture}
\end{center}

The following conjecture describes the combinatorial structure of the algebra of $\mu$-MHZVs.

\begin{prob}
Is the map $(\varphi_{*}^{\R},\varphi_{\shuffle}^{\R})$ always injective? Equivalently, is the algebra $(\calZ^{\mu},\cdot)$ of $\mu$-MHZVs isomorphic to $(R_{\text{RDS}},\cdot)$?
\end{prob}

When $\mu=1$ and $m$'s are positive integers,
 we expect the answer to be affirmative since this is essentially the level one case
where we have the well-known conjecture by Ihara, Kaneko, and Zagier \cite{ikz}. But when
$\mu=2$ and $m$'s are positive integers, we are at level two and we know the double shuffle
relations do not generate all $\Q$-linear
relations among Euler sums (see the remark after \cite[Theorem~1.1]{Zhao2010a}. However, there is
some subtle difference between $\mu$-MHZVs and the Euler sums so we do not know the answer to the
problem in this case.


\begin{thebibliography}{1}

\bibitem{Abel1823}
N.H.\ Abel, \emph{Solution de quelques probl\`emes \`a l'aide d'in\'egrales d\'efinies }
Magazin for Naturvidenskaberne, Kristiania, t. 1. (1823), 55--63. Also available:
Oeuvres compl\`etes de Niels Henrik Abel Nouvelle \'edition, pp. 11--27,
Cambridge University Press, 2012. doi.org/10.1017/CBO9781139245807.003.

\bibitem{BCJXXZhao2020c}
A.\ Berger, A.\ Chandra, J.\ Jain, D.\ Xu, C.\ Xu and J.\ Zhao,
Proof of Kaneko--Tsumura conjecture on triple $T$-values, \emph{Intl. J.\ Number Thy.} \textbf{19}(3)(2023), pp.\ 495--510.

\bibitem{bj}
J. I. Burgos Gil, J. Fresan, \emph{Multiple zeta values: from number to motives},
chapter 1. http://javier.fresan.perso.math.cnrs.fr/mzv.pdf

\bibitem{KTChen1971}
K.-T.\ Chen, Algebras of iterated path integrals and fundamental groups,
\emph{Trans.\ Amer.\ Math.\ Soc.} \textbf{156} (1971),  359--379.

\bibitem{Essouabri1997}
D.\ Essouabri, \emph{Singularit\'e de s\'eries de Dirichlet associ\'ees \`a des polyn\^omes de
plusieurs variables et applications en th\'eorie analytique des nombres} (in French),
Ann.\ Inst.\ Fourier Grenoble \textbf{47} (2) (1997), 429--483.

\bibitem{EssouabriMaTs2011}
D.\ Essouabri, K.\ Matsumoto and H.\ Tsumura,
\emph{Multiple zeta-functions associated with
linear recurrence sequences and the vectorial sum formula,}
Canad.\ J.\ Math. \textbf{63} (2) (2011), 241---276.

\bibitem{EssouabriMa2020}
D.\ Essouabri and K.\ Matsumoto, \emph{Values at non-positive integers of
generalized Euler–Zagier multiple zeta-functions,} Acta Arith. \textbf{193} (2) (2020), 109--131.

\bibitem{Ho}
M. E. Hoffman, \emph{An odd variant of multiple zeta values}, Commun.\ Number Theory Phys. \textbf{13} (2019), 529--567.

\bibitem{ikz}
K. Ihara, M. Kaneko, D. Zagier,
\emph{Derivation and double shuffle relations for multiple zeta values},
Compositio Math.\ \textbf{142} (2006), 307--338.

\bibitem{KanekoTs2020}
M. Kaneko and H. Tsumura, \emph{On multiple zeta values of level two,} Tsukuba J. Math.
\textbf{44}(2) (2020), pp.\ 213--234

\bibitem{KanekoTs2014a}
M. Kaneko and H. Tsumura, \emph{Two formulas for certain double and multiple polylogarithms in two variables,} arxiv.org:2401.03213.

\bibitem{KelliherMa2008}
J.P.\ Kelliher and R.\ Masri,
\emph{Analytic continuation of multiple Hurwitz zeta functions,}
Math.\ Proc.\ Camb.\ Phil.\ Soc. \textbf{145}(3) (2008), pp.\ 605--617.

\bibitem{Saharian2000}
A. A. Saharian, \emph{The generalized Abel-Plana formula, applications to Bessel functions and Casimir effect.}
arxiv:hep-th/0002239.

\bibitem{XuZhao2020a}
C.\ Xu and J.\ Zhao, Variants of multiple zeta values with even and odd summation indices,
\emph{Math.\ Zeit.} \textbf{300}(2022), pp. 3109--3142.
doi:10.1007/s00209-021-02889-2, arxiv.org/2008.13157 % CMH, Trans AMS.

\bibitem{YuanZhao2015b}
H.\ Yuan and J.\ Zhao, \emph{Double shuffle relations of double zeta values and double Eisenstein series at level $N$},
J.\ London Math.\ Soc. \textbf{92} (2) (2015), pp.\ 520--546.

\bibitem{Zhao2010a}
J.\ Zhao, \emph{On a conjecture of Borwein, Bradley and Broadhurst,}
J.\ Reine Angew.\ Math. \textbf{639} (2010), pp.\ 223--233.

\bibitem{ZhaoBook}
J.\ Zhao, Multiple Zeta Functions, \emph{Multiple Polylogarithms and Their Special Values}, 620 pp.,
Series on Number Theory and Its Applications: Volume 12, World Scientific Publishing, 2016.

\bibitem{Zhao2023Nov}
J.\ Zhao, Weighted and restricted sum formulas of Euler sums. arXiv:2311.02547


\end{thebibliography}
\end{document}